\documentclass[11pt]{article}
\usepackage[margin=0.75in]{geometry}

\usepackage{amsthm}
\usepackage{amssymb}
\usepackage{amsmath}
\usepackage{comment}
\usepackage{thm-restate}
\usepackage{url} 
\usepackage{hyperref}
\usepackage[noabbrev,capitalise]{cleveref}

\usepackage[utf8]{inputenc}

\usepackage{color}
\usepackage[normalem]{ulem}



\newcommand{\mc}[1]{\mathcal{#1}}

\newcounter{i}
\setcounter{i}{1}

\theoremstyle{plain}
\newtheorem{thm}{Theorem}[section]
\newtheorem{lem}[thm]{Lemma}
\newtheorem{proposition}[thm]{Proposition}

\newtheorem{cor}[thm]{Corollary}

\newtheorem{conj}[thm]{Conjecture}


\newenvironment{proofclaim}[1][]%
{\noindent \emph{Proof.} {}{#1}{}}{\hfill
	$\Diamond$\vspace{1em}}

\usepackage{amsthm,thmtools}

\declaretheorem[
  style=plain,
  name=Claim,
  within=theorem,
]{claim}


\newenvironment{lateproof}[1]
 {%
  \begin{proof}[Proof of~\cref{#1}]%
 }
 {\end{proof}}

\newenvironment{nonlateproof}[1]
 {%
  \begin{proof}%
 }
 {\end{proof}}

\usepackage{etoolbox}
 \AtEndEnvironment{proof}{\setcounter{claim}{0}}


\theoremstyle{plain} 
\newcommand{\thistheoremname}{}
\newtheorem{genericthm}{\thistheoremname}

\theoremstyle{definition}
\newtheorem{definition}[thm]{Definition}

\newtheorem{remark}[thm]{Remark}

\usepackage[dvipsnames, table]{xcolor}

\newcommand{\Prob}[1]{\ensuremath{%
\mathbb P\left[#1\right]
}}

\newcommand{\Expect}[1]{\ensuremath{%
\mathbb E\left[#1\right]
}}

\title{Proof of the High Girth Existence Conjecture via Refined Absorption}

\author{
Michelle Delcourt
\thanks{Department of Mathematics, Toronto Metropolitan University (formerly named Ryerson University),
Toronto, Ontario M5B 2K3, Canada {\tt mdelcourt@torontomu.ca}. Research supported by NSERC under Discovery Grant No. 2019-04269.}
\and
Luke Postle
\thanks{Combinatorics and Optimization Department,
University of Waterloo, Waterloo, Ontario N2L 3G1, Canada {\tt lpostle@uwaterloo.ca}. Partially supported by NSERC
under Discovery Grant No. 2019-04304.}}
\date{February 29, 2024}

\begin{document}

\maketitle

\begin{abstract} 
We prove the High Girth Existence Conjecture - the common generalization of the Existence Conjecture for Combinatorial Designs originating from the 1800s and Erd\H{o}s' Conjecture from 1973 on the Existence of High Girth Steiner Triple Systems. 
\end{abstract}

\section{Introduction}

\subsection{Existence Conjecture}

A \emph{Steiner system} with parameters $(n,q,r)$ is a set $S$ of $q$-subsets of an $n$-set $X$ such that every $r$-subset of $X$ belongs to exactly one element of $S$. More generally, a \emph{design} with parameters $(n,q,r,\lambda)$ is a set $S$ of $q$-subsets of an $n$-set $X$ such that every $r$-subset of $X$ belongs to exactly $\lambda$ elements of $S$. 

The notorious Existence Conjecture originating from the mid-1800's asserts that designs exist for large enough $n$ provided that the obvious necessary divisibility conditions are satisfied as follows.

\begin{conj}[Existence Conjecture]\label{conj:Existence}
Let $q > r \ge 2$ and $\lambda \geq 1$ be integers. If $n$ is sufficiently large and $\binom{q-i}{r-i}~|~\lambda \binom{n-i}{r-i}$ for all $0\le i \le r-1$, then there exists a design with parameters $(n,q,r,\lambda)$.
\end{conj}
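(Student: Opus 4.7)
The plan is to reduce the Existence Conjecture to a perfect matching problem in an auxiliary hypergraph and then to resolve that matching problem by a combination of the semi-random nibble method and an absorbing structure. Concretely, let $H$ be the $\binom{q}{r}$-uniform hypergraph whose vertex set is $\binom{X}{r}$ and whose edges are indexed by $\binom{X}{q}$, each $q$-set $Q$ giving the edge consisting of its $\binom{q}{r}$ $r$-subsets. For $\lambda = 1$ a Steiner system with parameters $(n,q,r)$ is precisely a perfect matching in $H$; the general $\lambda$ case is analogous working with the appropriate covering multigraph. The divisibility hypotheses of the conjecture guarantee that no local obstruction prevents the existence of such a matching.

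First I would set up a template and reduce to a typicality/pseudorandomness statement. The hypergraph $H$ is extremely symmetric and vertex-transitive, all $r$-sets have the same degree, and all pairs of $r$-sets at Hamming distance $d$ have the same codegree; so $H$ is $(c,h)$-typical in the sense used in absorption arguments. I would therefore aim to prove the following general result: every sufficiently typical $r$-uniform regular hypergraph whose clique structure satisfies the analogous divisibility conditions contains a perfect matching, and then verify typicality in the design hypergraph.

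Second, I would construct an \emph{absorber}. The goal is a small collection $\mathcal{A}$ of $q$-subsets, chosen in advance, such that for any sufficiently small residual set of $r$-subsets $R$ with the right divisibility, there is a sub-decomposition of $\mathcal{A}\cup R$ into $q$-sets. The refined absorption approach builds this from two layers: a combinatorial \emph{cover-down} structure which reduces the leftover to a bounded-degree configuration, and an \emph{algebraic} absorbing gadget, along the lines of Keevash's octahedral absorber, which can digest any divisibility-admissible bounded remainder. The outputs must be designed so that removing the $r$-sets spanned by $\mathcal{A}$ still leaves a typical hypergraph on the remaining $r$-sets.

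Third, I would run the R\"odl nibble on the hypergraph $H$ minus the $r$-subsets used by the absorber. By classical results for typical hypergraphs (Pippenger–Spencer), this yields an almost perfect matching covering all but a $o(1)$ fraction of the $r$-sets. I would then show that the leftover, suitably trimmed by iterated cover-down steps, falls within the absorbing window, so that the absorber completes the matching. The main obstacle, and the heart of the paper's contribution, is the absorber construction: one must simultaneously ensure (i) it has the correct divisibility profile so the absorbed leftover is legal, (ii) it is small enough and vertex-spread enough that its removal preserves typicality of the residual hypergraph for the nibble, and (iii) it is flexible enough to handle \emph{any} admissible remainder, not just a single one. The high-girth version, which is the true aim of the paper, further requires that $\mathcal{A}$ together with any possible nibble output avoids short configurations, which is why the refined absorption is needed rather than Keevash's algebraic gadget directly.
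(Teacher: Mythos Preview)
Your high-level plan (reduce to a perfect matching in the design hypergraph, reserve an absorbing structure, run nibble, then absorb the leftover) is correct in spirit and is indeed the common skeleton of all known proofs. However, this paper does not itself give a proof of Conjecture~\ref{conj:Existence}; it cites it as resolved (Keevash via randomized algebraic constructions, Glock--K\"uhn--Lo--Osthus via iterative absorption, and the authors' own previous paper~\cite{DPI} via \emph{refined absorption}), and only summarizes the refined-absorption proof outline in Section~1.4 as background for the high-girth result.

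Where your proposal diverges from the paper's approach is in the absorber. You describe a hybrid of the GKLO ``cover-down'' mechanism together with a Keevash-style algebraic octahedral gadget. The refined-absorption method used here is genuinely a third route: it builds a single \emph{omni-absorber} $A$ for a random reserve set $X$ (Theorem~\ref{thm:Omni}), with no iterative cover-down and no algebraic template. The distinguishing feature is that $A$ is \emph{$C$-refined}: every edge lies in only a bounded number of cliques of the decomposition family. This refinement is not needed for the bare Existence Conjecture, but it is exactly what makes the high-girth extension possible, because it allows each clique in the decomposition family to be individually ``girth-boosted'' without the boosters interfering too much. Your proposal would prove Conjecture~\ref{conj:Existence} (modulo the substantial work of actually building the absorbers you describe), but it would not set up the high-girth argument that is the paper's real goal; the last sentence of your proposal correctly identifies this tension but does not resolve it.
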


In 1847, Kirkman~\cite{K47} proved this when $q=3$, $r=2$ and $\lambda=1$. In the 1970s, Wilson~\cite{WI, WII, WIII} proved the Existence Conjecture for graphs, i.e.~when $r=2$ (for all $q$ and $\lambda$). In 1985, R\"{o}dl~\cite{R85} introduced his celebrated ``nibble method'' to prove that there exists a set $S$ of $q$-subsets of an $n$-set $X$ with $|S| = (1-o(1))\binom{n-r}{q-r}$ such that every $r$-subset is in at most one element of $S$, thereby settling the approximate version of the Existence Conjecture (known as the Erd\H{o}s-Hanani Conjecture~\cite{EH63}). Only in the last decade was the Existence Conjecture fully resolved as follows.

\begin{thm}[Keevash~\cite{K14}]\label{thm:Existence}
Conjecture~\ref{conj:Existence} is true.   
\end{thm}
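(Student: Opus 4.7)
The plan is to reformulate \cref{conj:Existence} as an exact-cover problem in an auxiliary hypergraph and then deploy an iterative (``refined'') absorption argument. Let $H=H(n,q,r)$ be the $\binom{q}{r}$-uniform hypergraph with vertex set $\binom{X}{r}$ and one hyperedge for each $Q\in\binom{X}{q}$, consisting of the $\binom{q}{r}$ many $r$-subsets contained in $Q$. A Steiner system with parameters $(n,q,r)$ is precisely a perfect matching in $H$, and an $(n,q,r,\lambda)$-design corresponds to a $\lambda$-regular exact cover of $V(H)$ by hyperedges. The divisibility conditions of \cref{conj:Existence} are exactly the conditions that make $H$ ``divisible'' at every level of its link hierarchy, i.e., the natural necessary conditions for such an exact cover to exist at every scale.

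First I would verify that $H$ has strong pseudorandom properties: it is vertex-regular of degree $\binom{n-r}{q-r}$, its $i$-th links for $i\le r-1$ are nearly regular, and all bounded codegrees are close to their expected values. A R\"odl nibble (or a modern Pippenger--Spencer / Frankl--R\"odl type pseudorandom matching theorem) then produces a partial matching covering all but an $o(1)$ fraction of the vertices, with the uncovered set well-spread. This disposes of the Erd\H{o}s--Hanani part of the problem; the entire difficulty is absorbing the leftover while respecting divisibility at every scale.

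The core of the proof is the construction of an absorber. The refined absorption strategy fixes in advance a \emph{vortex} $X=X_0 \supseteq X_1 \supseteq \cdots \supseteq X_t$ with $|X_{i+1}|/|X_i|\to 0$, and at each level reserves a small set $A_i$ of $q$-subsets whose role is to transfer any admissible leftover supported on $\binom{X_i}{r}$ into an admissible leftover supported on $\binom{X_{i+1}}{r}$. Each transfer is achieved by \emph{boosters}, i.e., local swaps among the reserved $q$-subsets that preserve the divisibility profile of what remains while pushing uncovered $r$-sets strictly inwards. Once $|X_t|$ is bounded in terms of $q,r,\lambda$ alone, an explicit Wilson-style (or brute-force) construction completes the design. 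Running the nibble on $H$ restricted to $V(H)\setminus\bigcup_i A_i$ and then cascading through the absorbers down the vortex yields the desired design.

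The main obstacle is constructing the absorber so that it is simultaneously (i) divisibility-preserving at every scale, (ii) able to absorb an \emph{arbitrary} admissible remainder and not just a random one, and (iii) built from a vanishing fraction of $q$-subsets. This is where refined absorption is meant to go beyond Keevash's randomised algebraic templates: one builds a purely combinatorial ``omni-absorber'' via a boosting lemma that amplifies divisibility control across scales, combined with a Wilson-style completion at the smallest scale. Controlling the accumulation of error terms down the vortex, and verifying that each cover-down step still admits a valid fractional relaxation so that a nibble can be re-run at the next level, is the technical crux on which the whole argument hinges.
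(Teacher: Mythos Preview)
The paper does not prove this theorem; it is stated as a known result of Keevash, with two later proofs (Glock--K\"uhn--Lo--Osthus via iterative absorption, and the authors' own via refined absorption in their earlier paper) mentioned in the surrounding text. There is therefore no proof in the present paper to compare against directly, only the four-step outline of the authors' earlier argument given in the ``High Level Proof Overview'' subsection.

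That outline, however, shows that your sketch conflates two distinct approaches. The vortex $X_0 \supseteq X_1 \supseteq \cdots \supseteq X_t$ with cover-down steps cascading the leftover inward is \emph{iterative absorption} (the Glock--K\"uhn--Lo--Osthus proof), not refined absorption. Refined absorption, as summarised in this paper, has no vortex and no iteration: one reserves a single random sparse $X \subseteq E(K_n^r)$, constructs in one shot a $K_q^r$-omni-absorber $A$ for $X$ (a gadget that can extend \emph{any} $K_q^r$-divisible $L\subseteq X$ to a decomposition of $A\cup L$), regularity-boosts $K_n^r \setminus (A\cup X)$, runs a single nibble-with-reserves to cover everything outside $X\cup A$, and then invokes the omni-absorber once on the leftover in $X$. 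The ``refined'' qualifier refers to the fact that $A$ can be taken $C$-refined (each edge lies in at most $C$ cliques of the decomposition family) with $\Delta(A)=O(\Delta(X))$; this is what makes single-step absorption work and, crucially for the present paper, what allows the decomposition family to be girth-boosted clique by clique.

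Your vortex sketch is a plausible high-level outline of the iterative-absorption proof, but labelling it refined absorption and placing an ``omni-absorber'' inside a vortex framework is a conceptual muddle: the whole purpose of the omni-absorber is to eliminate the vortex.
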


Namely in 2014, Keevash~\cite{K14} proved the Existence Conjecture using \emph{randomized algebraic constructions}. Thereafter in 2016, Glock, K\"{u}hn, Lo, and Osthus~\cite{GKLO16} gave a purely combinatorial proof of the Existence Conjecture via \emph{iterative absorption}. Both approaches offer different benefits, and each has led to substantial subsequent work. We refer the reader to~\cite{W03, K14, GKLO16} for more history on the conjecture.

In our previous paper~\cite{DPI}, we provided a new third proof of the Existence Conjecture via our novel method of \emph{refined absorption}; we note that our proof assumed the existence of $K_q^r$ absorbers (as proved by Glock, K\"uhn, Lo, and Osthus) but sidestepped the use of iterative absorption. Here we build on this new approach to prove the existence of high girth Steiner systems as follows.

\subsection{High Girth Steiner Systems}

In a partial $(n,q,r)$-Steiner system, a \emph{$(j, i)$-configuration} is a set of $i$ $q$-element subsets spanning at most $j$ vertices. The \emph{girth} of a partial $(n,q,r)$-Steiner system is the smallest integer $g \geq 2$ for which it has a $((q-r)g+r, g)$-configuration.

It is easy to see that an $(n,3,2)$-Steiner system contains an $(i+3,i)$-configuration for every fixed $i$ (hence the definition of girth in this case). In 1973, Erd\H{o}s~\cite{E73} conjectured the following. 

\begin{conj}[Existence of High Girth Steiner Triple Systems - Erd\H{o}s~\cite{E73}]\label{conj:Erdos}
For every integer $g\geq2$, there exists $n_g$ such that for all $n \geq n_g$ where $n\equiv 1,3 \mod 6$, there exists an $(n,3,2)$-Steiner system with girth at least $g$.
\end{conj}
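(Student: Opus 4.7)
The plan is to adapt the refined absorption framework of our previous paper~\cite{DPI} to the high-girth setting. At the top level, we construct the Steiner triple system as the union of a pre-built \emph{high-girth absorbing structure} $A$ on a small vertex set $V(A) \subseteq [n]$, a \emph{high-girth partial Steiner triple system} $P$ produced by a nibble-type process on $[n]\setminus V(A)$, and a rewiring $A'$ inside $V(A)$ that absorbs the residual leave of $P$. Throughout we must verify that no $(i+2,i)$-configuration with $i < g$ (that is, no $i$ triples spanning at most $i+2$ vertices) is ever created, either internally in $A$ or $P$, across their boundary, or after the rewiring.

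For the high-girth nibble, I would run R\"odl's nibble on the pairs of $[n]\setminus V(A)$, but at each step forbid not only triples whose pairs have already been covered but also any triple whose addition would complete an $(i+2,i)$-configuration for some $i < g$ with previously chosen triples from $A\cup P$. Each forbidden triple of the second kind is determined by a short sub-configuration of already-chosen triples, so the forbidden set at any pair has a lower-order density; a Bohman--Warnke-type differential-equation analysis (or martingale concentration) then yields a partial Steiner triple system covering all but a negligible fraction of pairs, with a quasi-random leave compatible with $A$.

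The main new ingredient, and the primary obstacle, is the construction of $A$ itself. Using the $K_3^2$-absorbers of~\cite{GKLO16} as building blocks inside the refined-absorption template of~\cite{DPI}, I would pack many such gadgets randomly into $V(A)$ so that (i) each absorber has internal girth at least $g$; (ii) no short configuration is created between two absorbers, or between an absorber and the nibble triples; and (iii) every rewiring used in the absorption phase preserves girth. Point (iii) is the most delicate: each rewiring swaps only $O(1)$ triples, yet one must show that no swap creates an $(i+2,i)$-configuration with $i<g$ together with the remaining triples of $A\cup P$. The natural tool is the Lov\'asz Local Lemma on the random placement, forbidding all ``pre-configurations'' that could become violating ones after any admissible rewiring. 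Bundling (i)--(iii) into a single random-placement argument is the heart of the proof; once this is achieved, the refined-absorption machinery of~\cite{DPI}, applied on the $V(A)$-side, completes the system into a Steiner triple system of girth at least $g$.
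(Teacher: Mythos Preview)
Your high-level plan (build an absorber, run a forbidden-configuration nibble, absorb the leave) is in the right spirit, but there are two genuine gaps.

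First, the architecture you describe does not match the refined absorption framework of~\cite{DPI}. There the omni-absorber $A$ lives on the \emph{full} vertex set $[n]$ and occupies only a sparse set of \emph{edges}; what is reserved is a random edge set $X\subseteq E(K_n^2)$, and the nibble runs on all of $K_n^2\setminus A$ with the aim of pushing the leave into $X$. Your description---$A$ on a small vertex set $V(A)$, nibble on $[n]\setminus V(A)$, rewiring inside $V(A)$---leaves the crossing pairs between $V(A)$ and its complement unaccounted for and gives no mechanism by which a rewiring inside $V(A)$ could absorb a leave living in $[n]\setminus V(A)$. This reads like a conflation with the iterative-absorption vortex rather than refined absorption.

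Second, and more substantively, the paper isolates exactly the obstacle you pass over in (ii)--(iii), and records that the Lov\'asz Local Lemma does \emph{not} appear to resolve it. Ensuring that the absorber has high collective girth after every rewiring is indeed achievable by LLL or a deterministic bad-set argument. The hard part is \emph{extrinsic}: once $A$ is fixed, every possible absorption $\mathcal{Q}_A(L)$ forbids, in the nibble hypergraph, all partial configurations that would extend with $\mathcal{Q}_A(L)$ to a low-girth configuration; one must show that these projected constraints do not destroy the degree and codegree regularity the nibble needs (in the paper's language, that the girth-$g$ \emph{projection treasury} stays regular). The paper's solution is a two-stage construction absent from your sketch: (a) attach to each clique $H$ in the decomposition family of a $C$-refined omni-absorber a private \emph{rooted girth booster} $B_H$---a gadget with two disjoint $K_3$-decompositions and high rooted girth, built from high-cogirth boosters one uniformity down; (b) choose these boosters not by LLL but by randomly \emph{sparsifying} the set of all candidate placements of $B_H$ down to polylogarithmic size and then applying Kim--Vu polynomial concentration to control simultaneously the intrinsic girth and the extrinsic configuration counts. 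The paper explicitly remarks that neither the deterministic embedding of~\cite{DPI} nor an LLL-with-slots argument seems to handle the extrinsic side, so your proposed tool for the ``main new ingredient'' is one the authors tried and abandoned.
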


In 1993, Lefmann, Phelps, and R\"odl~\cite{LPR93} proved that for every integer $g\geq2$, there exists a constant $c_g>0$ such that for all $n\geq 3$ there exists a partial $(n,3,2)$-Steiner system $S$ with $|S|\ge c_g \cdot n^2$ and girth at least $g$. Recently, Glock, K\"{u}hn, Lo, and Osthus~\cite{GKLO20} and independently Bohman and Warnke~\cite{BW19} proved there exists a partial $(n,3,2)$-Steiner system $S$ with $|S|=(1-o(1))n^2/6$ and girth at least $g$, thus settling the approximate version of Conjecture~\ref{conj:Erdos}. Thereafter, in 2022, Kwan, Sah, Sawhney, and Simkin~\cite{KSSS22}, building on these approximate versions while incorporating iterative absorption, impressively proved Conjecture~\ref{conj:Erdos} in full as follows.

\begin{thm}[Kwan, Sah, Sawhney, and Simkin~\cite{KSSS22}]
Conjecture~\ref{conj:Erdos} is true.    
\end{thm}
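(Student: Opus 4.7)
The plan is to adapt the refined absorption method developed in our previous paper~\cite{DPI} so that every step of the construction preserves the girth condition. Since Erd\H{o}s' conjecture is the case $q=3$, $r=2$ of the High Girth Existence Conjecture that is the actual target of this paper, I describe the strategy in this general setting, from which the triple system statement follows immediately.

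First, I would set aside a carefully chosen reserve $R$ inside $K_n^r$, a sparse $r$-graph of girth at least $g$ that will host the absorbing structure and is rich enough that, whatever small leftover $L$ remains at the end of the main nibble-like process, $R$ contains a high-girth absorber $A$ for $L$: a family of $q$-subsets such that both $A$ and $(A\setminus A')\cup L$ are valid high-girth partial Steiner systems for some distinguished $A'\subseteq A$ whose $r$-shadow equals that of $L$. Second, I would apply the approximate high-girth theorems of Glock--K\"uhn--Lo--Osthus~\cite{GKLO20} or Bohman--Warnke~\cite{BW19} to $K_n^r\setminus R$, obtaining a near-optimal high-girth partial Steiner system that covers all but a set $L$ of $o(n^{r-1})$ $r$-edges. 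Third, I would invoke refined absorption within $R$ to exchange $A'$ for $L$, completing the design while preserving girth, because every $q$-subset that is swapped in was already certified to be girth-compatible with the rest of the design when $R$ was constructed.

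The main obstacle is the construction of high-girth absorbers. Standard $K_q^r$-absorbers, as used in~\cite{GKLO16,DPI}, make no effort to avoid short configurations, and indeed their rigid gadget structure tends to create many. To produce a high-girth version, I would first build a pre-absorber as in~\cite{GKLO16} inside $R$, then locally modify it via girth-aware boosting, along the lines of the high-girth nibble in~\cite{GKLO20,BW19,KSSS22}, so that every $q$-subset in the absorber is certified not to be part of any forbidden $((q-r)g+r,g)$-configuration once spliced into the global design. Verifying both that such modifications are possible with positive probability and that they compose correctly across all possible leftovers $L$ is where the bulk of the technical work lies. A secondary difficulty is controlling the interaction between $R$ and its complement: short configurations can in principle form by combining $q$-subsets from both sides, so $R$ must be constructed to be ``girth-independent'' of the outer design. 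I expect that the single-layer nature of refined absorption, which allows all absorbing to happen in one well-controlled step rather than through the iterative scheme of~\cite{KSSS22}, will substantially simplify this coupling and let the entire argument be carried out with the same kind of decomposition-plus-exchange template as in~\cite{DPI}, but with every exchange restricted to girth-certified $q$-subsets.
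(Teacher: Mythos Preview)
Your high-level framework --- reserve a set, build a high-girth absorber on it, run a high-girth nibble outside, absorb the leftover --- matches the skeleton of the paper's proof of the more general Theorem~\ref{thm:HighGirthSteiner}, of which the stated triple-system result is the $q=3$, $r=2$ special case (note that the paper itself attributes this specific statement to~\cite{KSSS22} and does not give it a separate proof). But two of your steps gloss over exactly the places where the real work lies, and the tools you name for them are not the ones that succeed. Your proposal to ``locally modify [a pre-absorber] via girth-aware boosting, along the lines of the high-girth nibble'' is not what the paper does and is not something the paper found workable: the high-girth omni-absorber is built by attaching to each clique of a $C$-refined omni-absorber a private \emph{rooted booster of high rooted girth} (Theorem~\ref{thm:HighRootedGirthBooster}), and for general $q>r$ constructing such boosters requires an induction on uniformity through $K_{q-1}^{r-1}$-boosters of high \emph{cogirth}, which in turn forces the authors to prove the strictly stronger Theorem~\ref{thm:HighCogirthSteiner} on pairs of high-cogirth Steiner systems. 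None of this is a local random tweak of a GKLO absorber.

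Your ``girth-independent'' coupling between the reserve and its complement is precisely the condition the paper formalises as regularity of the projection treasury ${\rm Proj}_g(A,G,X)$ (Definition~\ref{def:OmniAbsorberProj}, Theorem~\ref{thm:HighGirthAbsorber}), and it is not automatic. The paper obtains it by randomly \emph{sparsifying} a full quantum omni-booster to polylogarithmically many candidate boosters per clique and then applying Kim--Vu polynomial concentration many times to control the degrees and codegrees of several auxiliary configuration hypergraphs (Lemmas~\ref{lem:RandomQuantumIntrinsic} and~\ref{lem:RandomQuantumExtrinsic}); the authors explicitly remark that neither a deterministic bad-set argument nor a Lov\'asz Local Lemma approach appeared to work here. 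Correspondingly, the nibble step cannot just quote~\cite{GKLO20,BW19}: the paper needs the stronger ``Forbidden Submatchings with Reserves'' black box (Theorem~\ref{thm:ForbiddenSubmatchingReserves}) so that the leftover is forced into $X$ while simultaneously avoiding all the projected partial configurations coming from the absorber. So your outline is the right skeleton, but the three pieces it is missing --- high-rooted-girth boosters via cogirth induction, projection treasuries with controlled regularity, and Kim--Vu sparsification --- are exactly the technical content of the paper.
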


Glock, K\"{u}hn, Lo, and Osthus~\cite{GKLO20} observed that every $(n,q,r)$-Steiner system contains a $((q-r)i+r+1,i)$-configuration for every fixed $i\ge 2$ (hence the definition of the girth of Steiner systems above). Thus they (and also Keevash and Long~\cite{KL20} in 2020) made the following common generalization of the Existence Conjecture and Erd\H{o}s' conjecture.

\begin{conj}[Existence of High Girth Designs - Glock, K\"{u}hn, Lo, and Osthus~\cite{GKLO20}; Keevash and Long~\cite{KL20}]\label{conj:HighGirthAllUniformities}
For all integers $q > r \geq 2$ and every integer $g\ge 2$, there exists $n_0$ such that for all $n\ge n_0$ where $\binom{q-i}{r-i}~|~\binom{n-i}{r-i}$ for all $0\le i \le r-1$, there exists an $(n,q,r)$-Steiner system with girth at least $g$.
\end{conj}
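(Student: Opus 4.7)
The plan is to prove \cref{conj:HighGirthAllUniformities} by adapting the refined absorption framework developed in \cite{DPI} for \cref{conj:Existence} so that it respects the girth constraint throughout. Our proof of the Existence Conjecture proceeded by constructing a single ``refined absorber'' structure that could absorb any admissible leave arising from an approximate $K_q^r$-decomposition, thereby bypassing the iterative absorption machinery of Glock, K\"{u}hn, Lo, and Osthus. To obtain high girth, I would need both the approximate step and the absorbing step to avoid creating any $((q-r)i+r,i)$-configuration for $2 \le i < g$, and I would need the interaction between the two to also respect this constraint.

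First, I would establish a high-girth approximate decomposition result for arbitrary $q,r$: namely, that the complement of a reserved absorber skeleton admits a near-perfect $K_q^r$-packing whose girth is at least $g$. The blueprint would be the girth-constrained R\"{o}dl nibble used by Glock--K\"{u}hn--Lo--Osthus~\cite{GKLO20} and Bohman--Warnke~\cite{BW19} in the $(q,r)=(3,2)$ case, generalised to arbitrary uniformities. The analysis tracks the expected number of short configurations created during each round of the nibble and shows, via concentration, that they can be suppressed either by a self-correcting choice of transition probabilities or by a suitable random deletion.

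The second and more delicate step is to build \emph{high-girth refined absorbers}: bounded-size gadgets that (i) absorb any admissible leave on a constant number of vertices, (ii) contain no short configurations internally, and (iii) create no short configuration when composed with the high-girth approximate decomposition on the outside. I would construct these by starting from the refined absorbers of \cite{DPI} and locally re-routing any nascent short configuration through additional auxiliary vertices, in the spirit of the high-girth absorber surgery of Kwan, Sah, Sawhney, and Simkin~\cite{KSSS22}. Crucially the surgery must preserve the absorbing property, and one must verify across \emph{all} possible leaves that no absorbing choice re-introduces a forbidden configuration; this uniform control over an exponentially large family of absorbing alternatives is where I expect the principal technical difficulty to lie.

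Combining the two steps then proceeds in analogy with \cite{DPI}: reserve vertices to host the high-girth refined absorber, run the high-girth nibble on the complement, and absorb the leave using the absorber. Because each of the three parts---nibble, absorber, and their interface---has been engineered to be girth-safe, the resulting $(n,q,r)$-Steiner system has girth at least $g$, as required. The divisibility hypotheses on $n$ are inherited unchanged from the Existence Conjecture since they are already necessary and we merely impose an additional structural constraint on the design.
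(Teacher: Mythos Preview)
Your high-level outline matches the paper's: reserve a random edge set, build a girth-safe omni-absorber on the reserve, run a girth-constrained nibble on the complement, and combine. However, two of the paper's central technical mechanisms are absent from your plan, and the paper explicitly argues that more naive substitutes do not obviously work.

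First, on making the absorber girth-safe: the paper does not modify absorbers by local surgery in the KSSS style. Instead, starting from a $C$-refined omni-absorber (Theorem~\ref{thm:Omni}), it attaches to each clique $H$ in the decomposition family a private \emph{rooted $K_q^r$-booster} of high rooted girth, so that every use of $H$ can be replaced by one of two high-girth decompositions internal to the booster. Constructing such boosters for general $q>r\ge 2$ is itself nontrivial: the paper proceeds by induction on the uniformity and needs $K_{q-1}^{r-1}$-boosters of high \emph{cogirth} (Definition~\ref{def:Cogirth}), which in turn forces the authors to prove the stronger Theorem~\ref{thm:HighCogirthSteiner} (existence of a \emph{pair} of high-girth Steiner systems with high mutual cogirth). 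Your re-routing sketch does not suggest this inductive structure or the cogirth notion, and the authors indicate they found no alternative construction for general $r$.

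Second, on the interface between absorber and nibble: you correctly flag that every absorbing alternative must be girth-safe against the outside, but the paper stresses that this is precisely where simple embedding arguments fail. Ensuring high \emph{collective} girth of the omni-booster alone could be handled by a greedy or Lov\'asz Local Lemma argument, but one must additionally guarantee that the booster choices do not \emph{shrink too many configurations}, i.e.\ do not overload the projected configuration hypergraph that the forbidden-submatching nibble (Theorem~\ref{thm:ForbiddenSubmatchingReserves}) must avoid. The paper's solution is a \emph{random $p$-sparsification} of the full set of candidate boosters down to $\mathrm{polylog}\,n$ options per clique, followed by many applications of the Kim--Vu polynomial concentration inequality to control all relevant degree and codegree parameters of the projected treasury (Lemmas~\ref{lem:RandomQuantumIntrinsic} and~\ref{lem:RandomQuantumExtrinsic}). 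This sparsification step is the main analytic novelty of the paper and is not hinted at in your outline.
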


For $r=2$ and all $q\geq 3$, this was asked by F\"uredi and Ruszink\'o~\cite{FR13} in 2013. In 2022, the authors~\cite{DP22} and independently Glock, Joos, Kim, K\"uhn, and Lichev~\cite{GJKKL22} proved the approximate version of this conjecture, namely that partial $(n,q,r)$-Steiner systems of high girth with almost full size exist.

Our main result of this paper is that we prove Conjecture~\ref{conj:HighGirthAllUniformities} in full as follows.

\begin{thm}[Existence of High Girth  Designs]\label{thm:HighGirthSteiner}
Conjecture~\ref{conj:HighGirthAllUniformities} is true.
\end{thm}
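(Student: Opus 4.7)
The plan is to adapt the refined absorption framework developed in our previous paper~\cite{DPI} (which gave a combinatorial proof of the Existence Conjecture assuming $K_q^r$-absorbers) to the high-girth setting. The argument splits into three phases. \textbf{Phase 1 (High-girth absorbers):} For each plausible ``leave'' $L$ arising in the construction, I would build a \emph{high-girth $K_q^r$-absorber}, namely a family $A$ of $q$-subsets on $O(1)$ vertices such that both $A$ and $A\cup L$ decompose into copies of $K_q^r$, while $A$ itself and its union with any girth-$g$ partial Steiner system on disjoint vertices contains no $((q-r)i+r,i)$-configuration for $i<g$. The standard absorbers of Glock, K\"uhn, Lo, and Osthus~\cite{GKLO16} are far too dense to respect the girth constraint; I would instead construct the absorber gadget by gadget (augmenters, switchers, transformers), give each a high-girth variant via random perturbation or careful rewiring, and glue them by an embedding that avoids creating short configurations.

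\textbf{Phase 2 (Nibble with a pre-placed template):} Randomly embed a ``template'' built from these high-girth absorbers into the host on $[n]$, choosing the embedding uniformly among placements that themselves form a girth-$g$ partial Steiner system. Then apply the approximate high-girth result of~\cite{DP22} (equivalently~\cite{GJKKL22}) to cover all but $o(n^r)$ of the remaining $r$-subsets by a partial girth-$g$ Steiner system, using the nibble with forbidden configurations from~\cite{GKLO20,BW19} now restricted to be disjoint from the template. \textbf{Phase 3 (Single-shot refined absorption):} Assign each uncovered $r$-subset to an absorber via a single matching argument as in~\cite{DPI} (rather than iteratively as in~\cite{KSSS22}), and verify by a union bound over the relevant ``bad'' configuration events that activating the assigned absorbers produces no $((q-r)i+r,i)$-configuration in the final system.

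The main obstacle is \textbf{Phase 1}. Absorbers are inherently \emph{dense}---they pack many $q$-cliques on few vertices---while the girth constraint demands \emph{local sparsity}; these two requirements pull in opposite directions, and the naive constructions from the Existence Conjecture literature fail badly. Resolving this tension is the heart of the problem; I expect the bulk of the work to be devoted to defining the correct sub-gadgets and verifying that their random embeddings yield absorbers compatible with both the double-decomposition property and the girth requirement, most delicately at the interfaces where gadgets are glued. Once such absorbers are available, the \emph{single-shot} nature of refined absorption in~\cite{DPI} is what makes Phases 2 and 3 tractable: iterative absorption would force one to preserve girth across many scales simultaneously, whereas refined absorption requires the girth check to be carried out only once, at the final absorption step.
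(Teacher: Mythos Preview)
Your overall architecture (reserve $X$, build an absorbing structure, nibble with forbidden configurations, then absorb once) matches the paper's, and you correctly isolate the absorber construction as the crux. However, your Phase~1 plan has a genuine gap, and the proposal misses a second technical obstacle entirely.

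First, the paper does \emph{not} build high-girth absorbers from scratch. Trying to make the gadgets of~\cite{GKLO16} (augmenters, transformers, etc.) individually high-girth while preserving the double-decomposition property looks hopeless for general $q>r$: those gadgets are dense by design, and ``random perturbation or careful rewiring'' is not a plan. The paper's idea is different: take the $C$-refined omni-absorber $A$ from~\cite{DPI} \emph{unchanged}, and then attach to each clique $H$ in its decomposition family $\mathcal{H}$ a private \emph{rooted $K_q^r$-booster} $B_H$ of high rooted girth. A booster is a gadget with two disjoint $K_q^r$-decompositions, one covering $H$ and one not; whenever the absorber would have used $H$, one switches to the booster's ``on'' decomposition, which spreads $H$'s contribution over many new vertices. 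The $C$-refined property is precisely what makes this feasible: each edge of $X\cup A$ lies in only $O(1)$ cliques of $\mathcal{H}$, so the boosters can be embedded essentially disjointly. Constructing boosters of arbitrary rooted girth is itself nontrivial; it proceeds by induction on $r$ and requires pairs of $K_{q-1}^{r-1}$-Steiner systems with high \emph{cogirth}, which forces one to prove a strengthening (Theorem~\ref{thm:HighCogirthSteiner}) of the main theorem in lower uniformity. None of this inductive cogirth structure appears in your outline.

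Second, your Phase~3 hand-waves over what the paper calls the \emph{extrinsic} constraint. Even if every possible output of the absorber has high girth on its own, the union of the nibble packing with the absorber's cliques can create short configurations spanning both. To apply the forbidden-submatching theorem one must \emph{project} all of the absorber's possible decompositions onto the configuration hypergraph seen by the nibble and show that this projection does not inflate degrees or codegrees beyond what Theorem~\ref{thm:ForbiddenSubmatchingReserves} tolerates. A union bound after the fact does not control this; indeed the paper remarks that neither the deterministic ``avoid bad sets'' embedding nor a Lov\'asz Local Lemma argument appears to work here. The paper's solution is to randomly \emph{sparsify} the family of candidate boosters for each $H\in\mathcal{H}$ down to polylogarithmic size and then verify, via many applications of the Kim--Vu polynomial concentration inequality, that the resulting projection treasury remains sufficiently regular while simultaneously some choice of disjoint high-girth boosters survives. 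Your proposal contains no analogue of this step.
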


Furthermore, our proof of Theorem~\ref{thm:HighGirthSteiner} differs from that of Kwan, Sah, Sawhney, and Simkin's proof of the existence of high girth Steiner triple systems in several key aspects and hence provides an independent and conceptually streamlined proof of their result. 

\begin{remark} We note the conjectures and results above are for Steiner systems (i.e.~the $\lambda=1$ case of designs) instead of for designs (e.g.~the constant $\lambda$ case). The reason is that for all $\lambda \ge 2$, a design with parameters $(n,q,r,\lambda)$ necessarily contains a $((q-r)2+r,2)$-configuration (as there are at least two $q$-sets containing any given $r$-set) and hence has girth $2$. Thus the conjectures above would need to be amended in the $\lambda\ge 2$ regime to permit these obvious necessary configurations. We think our methods would also apply to prove such amended conjectures but we do not pursue this here.
\end{remark}

\subsection{Refined Absorbers}

We note the Existence Conjecture may be rephrased in graph theoretic terms. We refer the reader to the notation section (Section~\ref{ss:Notation} below) for the relevant background definitions and notation.

A transformative concept that upended design theory and many similar exact structural decomposition problems is the \emph{absorbing method}, a method for transforming almost perfect decompositions into perfect ones, wherein a set of absorbers are constructed that can absorb any particular random leftover of a random greedy or nibble process into a decomposition. 

For $K_q^r$ decompositions, the key definition of absorber is as follows.

\begin{definition}[Absorber]
Let $L$ be a $K_q^r$-divisible hypergraph. A hypergraph $A$ is a \emph{$K_q^r$-absorber} for $L$ if $V(L)\subseteq V(A)$ is independent in $A$ and both $A$ and $A\cup L$ admit $K_q^r$ decompositions.
\end{definition}

The main use of absorbers is to absorb the potential `leftover' $K_q^r$-divisible graph of a specific set/graph $X$ after a nibble/random-greedy process constructs a $K_q^r$-decomposition covering all edges outside of $X$. To that end, we need the following definition from~\cite{DPI}.

\begin{definition}[Omni-Absorber]
Let $q > r\ge 1$ be integers. Let $X$ be a hypergraph. We say a hypergraph $A$ is a \emph{$K_q^r$-omni-absorber} for $X$ with \emph{decomposition family} $\mathcal{H}$ and \emph{decomposition function} $\mathcal{Q}_A$ if $V(X)=V(A)$, $X$ and $A$ are edge-disjoint, such that $|H\cap X|\le 1$ for all $H\in\mathcal{H}$ and for every $K_q^r$-divisible subgraph $L$ of $X$, there exists $\mathcal{Q}_A(L)\subseteq \mathcal{H}$ that are pairwise edge-disjoint and such that $\bigcup \mathcal{Q}_A(L)=L\cup A$. 
\end{definition}

In our previous paper~\cite{DPI}, we proved that extremely efficient omni-absorbers exist (that is with $\Delta(A)=O(\Delta(X))$, provided $\Delta(X)$ is large). Furthermore, we proved that such omni-absorbers exist where each edge is only in a constant number of elements of the decomposition family. This extra property will be crucial to our proof of Theorem~\ref{thm:HighGirthSteiner} since it will permit us to `girth boost' the elements of the decomposition family in such a way as to increase the overall girth.

We recall the following definition from~\cite{DPI}.

\begin{definition}[Refined Omni-Absorber]
 Let $C\ge 1$ be real. We say a $K_q^r$-omni-absorber $A$ for a hypergraph $X$ with decomposition family $\mathcal{H}$ is \emph{$C$-refined} if $|\{H\in \mathcal{H} : e\in E(H) \}| \le C$ for every edge $e\in X\cup A$.
\end{definition}

Here was the main result for omni-absorbers from our previous paper~\cite{DPI}.

\begin{thm}[Refined Omni-Absorber Theorem]\label{thm:Omni}
For all integers $q > r\ge 1$, there exist an integer $C\ge 1$ and real $\varepsilon\in (0,1)$ such that the following holds: Let $G$ be an $r$-uniform hypergraph on $n$ vertices with $\delta(G)\ge (1-\varepsilon)n$. If $X$ is a spanning subhypergraph of $G$ with $\Delta(X) \le \frac{n}{C}$ and we let $\Delta:= \max\left\{\Delta(X),~n^{1-\frac{1}{r}}\cdot \log n\right\}$, then there exists a $C$-refined $K_q^r$-omni-absorber $A\subseteq G$ for $X$ such that $\Delta(A)\le C \cdot \Delta$. 
\end{thm}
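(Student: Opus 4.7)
The plan is to construct the omni-absorber $A$ as the union of two layers, a \emph{cover layer} $A_0$ and a \emph{correction layer} $A_1$, with a corresponding split $\mathcal{H} = \mathcal{H}^{\mathrm{cov}} \cup \mathcal{H}^{\mathrm{cor}}$ of the decomposition family. For each edge $e \in X$, $\mathcal{H}^{\mathrm{cov}}$ supplies a short list $\mathcal{H}_e$ of candidate $K_q^r$-copies in $G$, each containing $e$ and having its other $\binom{q}{r}-1$ edges in $A_0$; these are the only members of $\mathcal{H}$ meeting $X$, so the ``$|H \cap X| \le 1$'' condition is automatic. The family $\mathcal{H}^{\mathrm{cor}}$ consists of $K_q^r$-copies lying entirely inside $A = A_0 \cup A_1$, whose role is to decompose whatever the cover layer leaves behind. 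Given a divisible $L \subseteq X$, the decomposition $\mathcal{Q}_A(L)$ would select one $H_e \in \mathcal{H}_e$ per edge $e \in L$ together with correction copies that decompose the residual $R_L := A_0 \setminus \bigcup_{e \in L}(H_e - e)$ in $A$.

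For the cover layer, I would randomly select each $\mathcal{H}_e$ inside $G$. Near-completeness $\delta(G) \ge (1-\varepsilon)n$ provides $\Theta(n^{q-r})$ candidate $K_q^r$-copies through every $r$-edge, which is plentifully many even after avoiding the relatively sparse $X$ (since $\Delta(X) \le n/C$). A Lovász Local Lemma / semi-random argument yields $A_0 := \bigcup_{e,\, H \in \mathcal{H}_e}(H - e)$ with $\Delta(A_0) = O(\Delta)$ and with each edge of $A_0$ appearing in at most $O(1)$ members of $\bigcup_e \mathcal{H}_e$. The threshold $n^{1-1/r}\log n$ inside the definition of $\Delta$ is precisely the scale at which Chernoff/LLL concentration for these random edge-degrees becomes tight, so padding to this value below suffices for the construction.

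For the correction layer, observe that since $A_0$ and $L$ are both $K_q^r$-divisible and differ from $R_L$ by a disjoint union of complete $K_q^r$-copies, $R_L$ is automatically $K_q^r$-divisible. The goal is to build $A_1 \subseteq G$, edge-disjoint from $X \cup A_0$, and a \emph{fixed} family $\mathcal{H}^{\mathrm{cor}}$ such that $R_L \cup A_1$ admits a $K_q^r$-decomposition from $\mathcal{H}^{\mathrm{cor}}$ for every admissible $L$ simultaneously. Here I would invoke the $K_q^r$-absorber theorem of Glock, K\"uhn, Lo, and Osthus~\cite{GKLO16} as a black box, but applied gadget-wise: the candidates $\mathcal{H}_e$ are chosen with enough structural flexibility that the set of possible residual patterns $\{R_L\}$, when restricted to any small region of $A_0$, comes from a bounded family of local configurations; for each such configuration I attach a GKLO-type $K_q^r$-absorber gadget to $A_1$, ensuring every possible $R_L$ can be decomposed by taking the appropriate gadget on each region.

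The main obstacle, and the essential innovation required beyond existing absorber technology, is the universal quantifier over $L$: a single $A$ must absorb \emph{every} $K_q^r$-divisible leftover in $X$, not just one prescribed target. Refined absorption resolves this by ensuring the cover layer induces only a structured, locally-bounded family of residual patterns, and by tiling the correction gadgets so that each edge of $A$ participates in only $O(1)$ of them---yielding the $C$-refined conclusion. Controlling the three competing budgets---the max-degree bound $\Delta(A) \le C\Delta$, the ply bound $C$ on each edge of $X \cup A$, and the requirement that every admissible $L$ be absorbable---will be the core technical difficulty, handled through an interleaved probabilistic construction in which $\delta(G) \ge (1-\varepsilon)n$ supplies abundant candidate copies at every step and $\Delta(X) \le n/C$ keeps the deterministic load comfortably below the random noise.
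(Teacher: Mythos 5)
This statement is not proved in the present paper: it is the Refined Omni-Absorber Theorem from the authors' earlier paper~\cite{DPI}, quoted here as Theorem~\ref{thm:Omni} and used as a black box (see the sentence ``Here was the main result for omni-absorbers from our previous paper~\cite{DPI}''). So there is no ``paper's own proof'' in this source to compare against, and a self-contained argument would require reproducing the construction of~\cite{DPI}.

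Taking your proposal on its own terms, the two-layer cover/correction shape is reasonable and consistent with the general philosophy, and your divisibility bookkeeping for $R_L$ is fine \emph{provided} the selected copies $\{H_e\}_{e\in L}$ are pairwise edge-disjoint and $A_0$ is itself $K_q^r$-divisible --- both of which you would still need to arrange. But the proposal leaves the actual mathematical content of the theorem unaddressed. The correction layer must supply a \emph{single fixed} graph $A_1$ and a \emph{single fixed} $O(1)$-ply family $\mathcal{H}^{\mathrm{cor}}$ that decomposes $R_L\cup A_1$ for \emph{every} $K_q^r$-divisible $L\subseteq X$ simultaneously. The residuals $R_L$ range over an exponential-size set, and they depend on $L$ globally: a change of $L$ anywhere can propagate through the divisibility constraints, so there is no a priori reason they are ``locally bounded'' in the sense you invoke. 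You assert that the cover layer ``induces only a structured, locally-bounded family of residual patterns'' and that GKLO-type gadgets attached per local configuration can be glued together; this is precisely the conclusion one would need to prove, and the proposal offers no mechanism for it. Moreover, absorbability and $K_q^r$-divisibility are global conditions, so even if the residual were locally bounded, one cannot generally stitch local decompositions together to get a global one without a dedicated argument. A concrete symptom of the missing idea: the most naive per-edge version --- a private gadget toggling each $H_e$ on or off --- fails immediately because $H_e - e$ is not $K_q^r$-divisible, so GKLO absorbers do not apply edge-by-edge. Finally, the $C$-refined property (each edge of $X\cup A$ in at most $C$ cliques of $\mathcal H$) must be preserved by $\mathcal{H}^{\mathrm{cor}}$, and the gadget-tiling you sketch gives no control on the ply of edges of $A_1$ across the many local configurations.

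In short, you have correctly identified that the universal quantification over $L$ is the central difficulty, but the proposal resolves it by assumption rather than by construction; that resolution is the entire substance of Theorem~\ref{thm:Omni} and is carried out in~\cite{DPI}, not in the present paper.
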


Refined absorption thus provides a more conceptually streamlined proof of Theorem~\ref{thm:HighGirthSteiner} than is possible with iterative absorption. We will only need two other major ingredients. The first ingredient is a \emph{high girth nibble} theorem except with \emph{reserves}. This will follow from our more general \emph{Forbidden Submatching Method} from~\cite{DP22} (independently developed by Glock, Joos, Kim, K\"{u}hn, and Lichev~\cite{GJKKL22} as \emph{conflict-free hypergraph matchings}; in particular though, we need the new main nibble theorem (``Forbidden Submatchings with Reserves'') from the latest version of our earlier paper~\cite{DP22}. See Section~\ref{ss:ForbiddenSubmatching} for its statement.

The second main ingredient is to prove the existence of \emph{high girth omni-absorbers}, which is the main work of this paper. Specifically we first prove the existence of \emph{girth boosters} for $K_q^r$ decompositions for all $q > r \ge 2$ (the $q=3$, $r=2$ case was constructed by Kwan, Sah, Sawhney, and Simkin in~\cite{KSSS22}). Then we need to simultaneously boost the elements of the decomposition family of an omni-absorber (which we call an \emph{omni-booster}) in such a way that they collectively have high girth and such that there are not too many potential forbidden configurations in the remainder of the graph. Thus, even though we have access to two very general black-box theorems from our previous papers~\cite{DP22, DPI} whose full power we will require, the remaining work to prove Theorem~\ref{thm:HighGirthSteiner} is still highly non-trivial, very technical and requires an intricate weaving together of these various ingredients to actually complete the proof. We overview these novel contributions next. 

\subsection{High Level Proof Overview}

First we recall the outline of our new proof of the Existence Conjecture from~\cite{DPI}:

\begin{enumerate}
    \item[(1)] `Reserve' a random subset $X$ of $E(K_n^r)$.
    \item[(2)] Construct an omni-absorber $A$ of $X$.
    \item[(3)] ``Regularity boost'' $K_n^r\setminus (A\cup X)$.
    \item[(4)] Apply ``nibble with reserves'' theorem to find a $K_q^r$ packing of $K_n^r\setminus A$ covering $K_n^r\setminus (A\cup X)$ and then extend this to a $K_q^r$ decomposition of $K_n^r$ by definition of omni-absorber.  
\end{enumerate}
The required lemma for (1) followed easily from the Chernoff bounds. For (3), we used a special case of the Boosting Lemma of Glock, K\"{u}hn, Lo, and Osthus~\cite{GKLO16}. For (4), we invoked our ``nibble with reserves'' theorem from~\cite{DP22}.  For (2), we used Theorem~\ref{thm:Omni}, which is the general black-box theorem from our first paper on refined absorption~\cite{DPI}.

To prove the High Girth Existence Conjecture, we follow the same plan with modifications. (1) and (3) remain the same, even using the same results as before. For (4), we apply our ``Forbidden Submatchings with Reserves'' Theorem (see Theorem~\ref{thm:ForbiddenSubmatchingReserves} of this paper), which is our very general black-box theorem from our paper on hypergraph matchings~\cite{DP22} and whose full power we will require; since it requires many definitions, we defer those and its statement to Section~\ref{ss:ForbiddenSubmatching}.

Thus the main proof part of this paper is to modify (2) as follows:
\begin{enumerate}
    \item[(2')] Construct a \emph{high girth} omni-absorber $A$ of $X$ that \emph{does not shrink `too many' configurations}. 
\end{enumerate}

The required definition of high girth omni-absorber is straightforward as follows. 

\begin{definition}\label{def:OmniAbsorberCollectiveGirth}
Let $X$ be an $r$-uniform hypergraph. We say a graph a $K_q^r$-omni-absorber $A$ for $X$ with decomposition function $\mathcal{Q}_A$ has \emph{collective girth at least $g$} if for every $K_q^r$-divisible subgraph $L$ of $X$, $\mathcal{Q}_A(L)$ has girth at least $g$. 
\end{definition}

An omni-absorber having high collective girth is not enough for our purposes to prove Theorem~\ref{thm:HighGirthSteiner}. The issue is that the various $\mathcal{Q}_A(L)$, individually or collectively, may forbid subconfigurations in $K_n^r\setminus (X\cup A)$ that would extend to forbidden configurations with $\mathcal{Q}_A(L)$. We will need to ensure there are not too many such forbidden subconfigurations. Namely, the required assumption is whatever is necessary to apply the ``Forbidden Submatchings with Reserves'' Theorem (Theorem~\ref{thm:ForbiddenSubmatchingReserves} below). Again, to state the precise technical definition of ``not shrink too many'' requires many other definitions first and so we delay its statement to Section~\ref{ss:OmniProjection}. Our main High Girth Omni-Absorber Theorem (Theorem~\ref{thm:HighGirthAbsorber} below) will thus produce the required omni-absorber for (2'), but we similarly defer its statement to Section~\ref{ss:OmniProjection}.

There are two important proof parts then to enact (2'). First, how do we build an omni-absorber of collective girth at least $g$? Second, how do we ensure it does not shrink ``too many'' configurations?

For the first part, we start with Theorem~\ref{thm:Omni} which crucially provides a $C$-refined omni-absorber. Then we add a private gadget to each clique in the decomposition family that we call a \emph{booster}. If we add the right booster, we can ensure the new resulting omni-absorber has collective girth at least $g$ - at least with high probability provided we embed the boosters randomly. 

To formalize this, we introduce the following definitions.

\begin{definition}[Booster]
Let $q > r\ge 1$ be integers. A a \emph{$K_q^r$-booster} is an $r$-uniform hypergraph $B$ along with two $K_q^r$ decompositions $\mathcal{B}_1,\mathcal{B}_2$ of $B$ such that $\mathcal{B}_1\cap \mathcal{B}_2 =\emptyset$ (that is, they do not have a $K_q^r$ in common). 
\end{definition}

A booster has the desirable property that it has two disjoint decompositions. In practice, we use this to `boost' some particular $K_q^r$-clique $S$. For this then we think of $S$ as fixed and want to add an object to boost it. Thus we define a rooted booster as follows.

\begin{definition}[Rooted Booster]\label{def:rootedbooster}
Let $q > r\ge 1$ be integers. A \emph{rooted $K_q^r$-booster} rooted at $S\cong K_q^r$ is an $r$-uniform hypergraph $B$ that is edge-disjoint from $S$ along with two disjoint $K_q^r$ packings $\mathcal{B}_{{\rm off}},\mathcal{B}_{{\rm on}}$ where $\mathcal{B}_{{\rm off}}$ is a $K_q^r$-decomposition of $B$ and $\mathcal{B}_{{\rm on}}$ is a $K_q^r$-decomposition of $B\cup S$ with $S\not\in \mathcal{B}_{{\rm on}}$.
\end{definition}

The idea then is that $B_{\rm on}$ and $B_{\rm off}\cup \{S\}$ are the two decompositions of the $K_q^r$-booster $B\cup S$. However, now when building a decomposition, if we desired to use the clique $S$ in the decomposition, we could instead replace it with $\mathcal{B}_{{\rm on}}$ which decomposes $B\cup S$; where as if we do not desire to use $S$, we simply use $\mc{B}_{\rm off}$ to decompose the edges of $B$. In this way, we can replace $S$ with the cliques of the two decompositions $\mc{B}_{\rm on}$ and $\mc{B}_{\rm off}$ which may have more desirable properties than $S$ (in particular for us, they will have high girth).

To that end, we define the notion of the rooted girth of a booster as follows.

\begin{definition}[Rooted Girth]
Let $q > r\ge 1$ be integers. Let $\mathcal{B}$ be a $K_q^r$-packing of a graph $G$. For a vertex set $S\subseteq V(G)$, we define the \emph{rooted girth} of $\mathcal{B}$ at $S$ as the smallest integer $g\ge 1$ such that there exists a subset $\mathcal{B}'\subseteq \mathcal{B}$  with $|\mathcal{B}'|=g$ and $|V(\bigcup \mathcal{B}')\setminus S| < (q-r)\cdot g$.  

Similarly, if $B$ is a rooted $K_q^r$-booster rooted at $S$, then we define the \emph{rooted girth} of $B$ as the minimum of the girth of $\mathcal{B}_{{\rm on}}$, the girth of $\mathcal{B}_{{\rm off}}\cup \{S\}$, and the rooted girth of $\mathcal{B}_{{\rm on}}$ at $V(S)$.
\end{definition}

The first key technical result of this paper is the existence of rooted boosters of high rooted girth as follows.

\begin{thm}[High Rooted Girth Booster Theorem]\label{thm:HighRootedGirthBooster}
Let $q > r\ge 2$ and $g\ge 1$ be integers. If Theorem~\ref{thm:HighCogirthBooster} holds for $r':=r-1$, then there exists a rooted $K_q^r$-booster $B$ with rooted girth at least $g$. 
\end{thm}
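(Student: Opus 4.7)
The plan is to prove Theorem~\ref{thm:HighRootedGirthBooster} via a coning construction that reduces the problem by one dimension. Given the root $S \cong K_q^r$, choose a distinguished vertex $v \in V(S)$; the edges of $S$ containing $v$ correspond bijectively, via deletion of $v$, to the edges of the $(r-1)$-uniform clique $S^* \cong K_{q-1}^{r-1}$ on $V(S) \setminus \{v\}$. Apply Theorem~\ref{thm:HighCogirthBooster} with $r'=r-1$, $q'=q-1$, and a sufficiently large cogirth parameter $g' = g'(g,q,r)$, yielding a rooted $K_{q-1}^{r-1}$-booster $B'$ at $S^*$ with on/off packings $\mathcal{B}'_{{\rm on}}, \mathcal{B}'_{{\rm off}}$ of high cogirth.

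Next, cone $B'$ over $v$ to produce the core $B_0$ on $V(B') \cup \{v\}$: include the apex edges $\{v\} \cup T$ for each $T \in E(B')$ together with the horizontal edges $\binom{V(K)}{r}$ contributed by each $K \in \mathcal{B}'_{{\rm on}} \cup \mathcal{B}'_{{\rm off}}$. Since any two distinct cliques in an $(r-1)$-uniform packing share at most $r-2$ vertices, horizontal edges carry no conflicting multiplicity, so each $K$ lifts cleanly to $K \cup \{v\} \cong K_q^r$, giving $K_q^r$-packings $\mathcal{B}_{0,{\rm off}}, \mathcal{B}_{0,{\rm on}}$ that decompose $B_0$ and $B_0$ plus the star of $v$ in $S$, respectively. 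To cover the remaining $\binom{q-1}{r}$ edges of $S$ not containing $v$ (a copy of $K_{q-1}^r$ on $V(S) \setminus \{v\}$), attach a small auxiliary gadget $B_1$ on fresh vertices supporting $K_q^r$-cliques not through $v$, with compatible off/on decompositions. Setting $B := B_0 \cup B_1$ and combining packings yields the desired rooted $K_q^r$-booster.

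The rooted-girth verification is then a projection argument. Any candidate witness of rooted girth below $g$ is a short subpacking of $\mathcal{B}_{{\rm on}}$ or $\mathcal{B}_{{\rm off}} \cup \{S\}$ that decomposes into cone cliques and a bounded number of gadget cliques. Removing $v$ from the cone cliques and discarding the gadget cliques yields an $(r-1)$-uniform subpacking of $\mathcal{B}'_{{\rm on}}$ or $\mathcal{B}'_{{\rm off}} \cup \{S^*\}$; because coning reduces the vertex count by exactly one per clique and the root dimension also drops by one, vertex-deficit comparisons carry through, and the projected subpacking violates the cogirth of $B'$ once $g'$ is chosen large enough to absorb the bounded gadget slack.

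The main obstacle is the bookkeeping around the auxiliary gadget $B_1$ and the precise alignment of \emph{cogirth} with what the cone projection produces: the statement of Theorem~\ref{thm:HighCogirthBooster} must be formulated so that short rooted configurations involving the apex $v$ correspond exactly to a dually defined deficit in the $(r-1)$-uniform packing. Once these definitions are aligned, the induction step from $r-1$ to $r$ is largely formal, leaning on the combinatorial identity $\binom{q}{r} = \binom{q-1}{r-1} + \binom{q-1}{r}$ and a pigeonhole argument showing that any short $r$-uniform configuration must predominantly consist of cone cliques.
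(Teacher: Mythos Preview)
Your cone-over-one-vertex construction has a genuine gap at the very first step: the packings $\mathcal{B}_{0,{\rm off}}$ and $\mathcal{B}_{0,{\rm on}}$ you describe do not decompose the same edge set. You put into $B_0$ the horizontal edges $\binom{V(K)}{r}$ for every $K\in\mathcal{B}'_{{\rm on}}\cup\mathcal{B}'_{{\rm off}}$, but the lift of $\mathcal{B}'_{{\rm off}}$ over $v$ only covers the horizontal edges coming from cliques of $\mathcal{B}'_{{\rm off}}$; the horizontal edges contributed by cliques of $\mathcal{B}'_{{\rm on}}$ are simply left uncovered by $\mathcal{B}_{0,{\rm off}}$ (and symmetrically for $\mathcal{B}_{0,{\rm on}}$). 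So neither of your two packings is a decomposition of $B_0$, and the object you build is not a booster at all. This is exactly why the paper cones over \emph{two} new vertices $v_1,v_2$: one decomposition pairs $v_1$ with $\mathcal{B}'_1$ and $v_2$ with $\mathcal{B}'_2$, the other swaps them, so that each decomposition now covers all apex edges and all horizontal edges. The cogirth hypothesis on $\mathcal{B}'_1\cup\mathcal{B}'_2$ is precisely what makes the two horizontal edge-sets disjoint, so that this swap yields two genuine decompositions of the same $r$-graph.

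A second issue is that your ``auxiliary gadget $B_1$'' is doing real work that you have not specified: you need two $K_q^r$-decompositions of some $r$-graph extending $K_{q-1}^r$, one using the root edges and one not, and you need these to be compatible with the girth analysis. The paper avoids any such ad hoc gadget entirely. Its two-vertex cone (Lemma~\ref{lem:HighGirthBoost}) produces a booster containing cliques $S\in\mathcal{B}_1$ and $S'\in\mathcal{B}_2$ with $|V(S)\cap V(S')|=q-1$, together with girth and rooted-girth control at $V(S)\cup V(S')$; it then iterates, gluing a fresh copy of the Lemma~\ref{lem:HighGirthBoost} booster at $S'$ to push the intersection $|V(S)\cap V(S'')|$ down by one each time, until it reaches $0$ and a genuine rooted booster at $S$ falls out. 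Your proposal would need either the two-vertex cone or a concrete construction of $B_1$ with its own girth analysis; as written, both pieces are missing.
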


 The proof of Theorem~\ref{thm:HighRootedGirthBooster} uses induction on the uniformity, namely the existence of $K_{q-1}^{r-1}$-boosters; however it is not enough to simply have these of high rooted girth. Rather, we need the existence of $K_{q-1}^{r-1}$-boosters of high \emph{cogirth} (see Definition~\ref{def:Cogirth}) as proved in Theorem~\ref{thm:HighCogirthBooster}. The only way we are able to construct these is to prove a stronger form of Theorem~\ref{thm:HighGirthSteiner}, namely Theorem~\ref{thm:HighCogirthSteiner}, which asserts that there exist two disjoint high girth Steiner systems with high cogirth. The proof of Theorem~\ref{thm:HighCogirthSteiner} follows similarly to that of Theorem~\ref{thm:HighGirthSteiner} with some minor modifications which we outline in Section~\ref{s:CogirthPair}.

To prove Theorem~\ref{thm:HighGirthAbsorber} then, we add a private rooted booster of rooted girth at least $g$ for each clique in the decomposition family of the omni-absorber; we do this simultaneously (and randomly such that with high probability this is done in a high girth manner). To that end, we define the following object. But first a definition.

\begin{definition}[Design Hypergraph]
Let $F$ be a hypergraph. If $G$ is a hypergraph, then the \emph{$F$-design hypergraph of $G$}, denoted ${\rm Design}_F(G)$, 
is the hypergraph $\mathcal{D}$ with $V(\mathcal{D}):=E(G)$ and $E(\mathcal{D}) := \{S\subseteq E(G): S \text{ is isomorphic to } F\}$.
\end{definition}

\begin{definition}[Omni-Booster]\label{def:OmniBooster}
Let $q > r\ge 1$ be integers. Let $X$ be an $r$-uniform hypergraph and let $A$ be a $K_q^r$-omni-absorber for $X$. We say an $r$-uniform hypergraph $B$ is a \emph{$K_q^r$-omni-booster} for $A$ and $X$ if $B$ is edge-disjoint from $A\cup X$ and $B$ is the edge-disjoint union of a set of hypergraphs $\mathcal{B}:= (B_H: H\in\mathcal{H})$ (called the \emph{booster family} of $B$) where for each $H\in \mathcal{H}$, $B_H$ is a rooted $K_q^r$-booster rooted at $H$.

The \emph{matching set} of $\mc{B}$ is 
$$\mc{M}(\mc{B}):= \bigg\{ M \in \prod_{H\in \mathcal{H}} \big\{ (\mathcal{B}_{H})_{{\rm on}},~ (\mathcal{B}_{H})_{{\rm off}} \big\} : M \text{ is a matching of } {\rm Design}_{K_q^r}(X\cup A\cup B)\bigg\}.$$
We say $B$ has \emph{collective girth at least $g$} if every $M\in \mc{M}(\mc{B})$ has girth at least $g$.
\end{definition}

We note that $K_q^r$-omni-boosters yield $K_q^r$-omni-absorbers with a canonical decomposition function (see Proposition~\ref{prop:CanonicalBoost}). Again though, it does not suffice just to construct an omni-booster of collective girth at least $g$ since we also have to ensure it does not shrink too many configurations. Thus the second key technical result of this paper is Theorem~\ref{thm:HighGirthOmniBooster} which guarantees the existence of an omni-booster of high collective girth that does not shrink ``too many'' configuration, but similarly due to its technical nature, we defer its statement to Section~\ref{s:GirthOmniBoosters} where we also prove it assuming certain key lemmas that are proved in Sections~\ref{s:Intrinsic} and~\ref{s:Extrinsic}.

The second part of not shrinking too many configurations is also in fact handled by embedding the boosters randomly. However, there was a technical issue on how to achieve this embedding proof. Ensuring collective girth at least $g$ would follow similarly to the other embedding proofs in this series of papers: we could have used the deterministic `avoid the bad sets' proof of our previous paper~\cite{DPI} or a Lov\'asz Local Lemma plus slots proof as in our other paper with Tom Kelly~\cite{DKPIV}. However neither seem to work to ensure not shrinking too many configurations. It is conceivable to us that a random greedy process does work here, but this would likely require redoing much of the work from Glock, Joos, Kim, K\"{u}hn, and Lichev~\cite{GJKKL22} and still require more work to actually ensure every clique is boosted.

The only proof we were able to come up with is a `sparsification proof'. Namely, we randomly sparsify the possible choices of booster for each clique to a very small amount but still large enough to concentrate. Then we apply many applications of the Kim-Vu concentration inequality~\cite{KV00,V02} to show the various parameters of the shrinkings are all highly concentrated (at least for their upper bound with a possible over-counting). Similarly, we show within such a sparsification, there is a choice of disjoint boosters of collective girth at least $g$ (with high probability). 

\subsection{Outline of Paper}

In Section~\ref{s:HighGirthExistence}, we first recall the results for (1) and (3), the machinery for (4'), and then define and state the theorem for (2'). Then we prove Theorem~\ref{thm:HighGirthSteiner} assuming the aforementioned theorems.

In Section~\ref{s:GirthBoosters}, we prove Theorem~\ref{thm:HighRootedGirthBooster}. First we define the notion of cogirth for a booster and explain the need for high co-girth boosters. Assuming their existence inductively (which follows from Theorem~\ref{thm:HighCogirthSteiner}, a stronger version of Theorem~\ref{thm:HighGirthSteiner}), we then provide the construction to prove Theorem~\ref{thm:HighRootedGirthBooster} starting with a basic construction and then iterating it.

In Section~\ref{s:GirthOmniBoosters}, we prove Theorem~\ref{thm:HighGirthAbsorber}, our high girth omni-absorber theorem. First we define girth-$g$ projections for omni-boosters and state the high girth omni-booster theorem, Theorem~\ref{thm:HighGirthOmniBooster}, and show how it implies Theorem~\ref{thm:HighGirthAbsorber}. We then define quantum omni-boosters and state a quantum omni-booster theorem, Theorem~\ref{thm:HighGirthQuantumOmniBooster}, which implies Theorem~\ref{thm:HighGirthOmniBooster}. The latter is broken into two key lemmas, Lemmas~\ref{lem:RandomQuantumIntrinsic} and~\ref{lem:RandomQuantumExtrinsic} about the intrinsic (low max degree, large collective girth) and extrinsic properties (a regular enough projection treasury) of a random sparsification of a full quantum omni-booster.

In Section~\ref{s:Intrinsic}, we recall a corollary (Corollary~\ref{cor:KimVu}) of the Kim-Vu concentration inequality~\cite{KV00} and apply it many times to prove Lemma~\ref{lem:RandomQuantumIntrinsic}. Similarly in Section~\ref{s:Extrinsic}, we prove Lemma~\ref{lem:RandomQuantumExtrinsic} via many applications of said corollary.

In Section~\ref{s:CogirthPair}, we describe the modifications necessary for a proof of Theorem~\ref{thm:HighCogirthSteiner}. 
In Section~\ref{s:Conclusion}, we discuss have some concluding remarks and future directions.

\subsection{Notation}\label{ss:Notation}

A \emph{(multi-)hypergraph} $H$ consists of a pair $(V, E)$ where $V$ is a set whose elements are called \emph{vertices} and $E$ is a
(multi-)set of subsets of $V$ called \emph{edges}; we also write $H$ for its set of edges $E(H)$ for brevity. Similarly, we write $v(H)$ for the number of vertices of $H$ and either $e(H)$ or alternatively $|H|$ for the number of edges of $H$. 

For an integer $r\ge 1$, a hypergraph $H$ is said to be \emph{$r$-bounded} if every edge of $H$ has size at most $r$ and  \emph{$r$-uniform} if every edge has size exactly $r$; an \emph{$r$-uniform hypergraph} is called an \emph{$r$-graph} for short.

Let $F$ and $G$ be hypergraphs. An \emph{$F$ packing} of $G$ is a collection of copies of $F$ such that every edge of $G$ is in at most one element of $\mathcal{F}$.  An \emph{$F$ decomposition} of $G$ is an $F$ packing such that every edge of $G$ is in exactly one element of $\mathcal{F}$. 

For a (multi)-hypergraph $G$ and subset $S\subseteq V(G)$, we let $G(S)$ denote the set $\{e\in G: S\subseteq e\}$, which as is practice, we also view as a hypergraph, in particular it is a subhypergraph of $G$. Note this differs from the normal notation of using $G(S)$ to denote the related concept of a `link hypergraph' where the set $S$ is removed from every edge of $G(S)$; since, we do not need the link hypergraph in this paper but extensively use $G(S)$, we opted to define our notation as such. If $G$ is an $r$-graph and $|S|=r$, then $|G(S)|$ is called the \emph{multiplicity} of $S$. 

The complete $r$-graph on $q$ vertices, denoted $K_q^r$ is the $r$-graph with vertex set $[q]$ and edge set $\binom{[q]}{r}$. Note that a Steiner system with parameters $(n,q,r)$ is equivalent to a $K_q^r$ decomposition of $K_n^r$. A necessary condition for an $r$-graph $G$ to admit a $K_q^r$ decomposition is that $G$ is \emph{$K_q^r$-divisible}, that is, $\binom{q-i}{r-i}~|~|G(S)|$ for all $0\le i \le r-1$ and $S\subseteq V(G)$ with $|S|=i$. 

Let $G$ be a (multi-)hypergraph. For an integer $k\ge 1$, we let $G^{(k)}$ denote the $k$-uniform subhypergraph of $G$ consisting of all edges of $G$ of size $k$. If $G$ is uniform, then for a vertex $v \in V(G)$, we let $d_G(v)$ denote the number of edges of $G$ containing $v$. If $G$ is not uniform, then for an integer $i\ge 1$, we thus write $d_{G^{(i)}}(v)$ for the \emph{$i$-degree of $v$ in $G$}, which is defined to be the number of edges of $G$ of size $i$ containing $v$. 

If $G$ is uniform, then \emph{maximum $i$-codegree of $G$}, denoted $\Delta_i(G)$ is the maximum of $|G(U)|$ for $U\subseteq V(G)$ with $|U|=i$. If $G$ is $k$-uniform, then the \emph{maximum degree of $G$}, denoted $\Delta(G)$ is $\Delta_{k-1}(G)$; similarly we write $\delta(G)$ for the \emph{minimum degree of $G$} which is $\delta_{k-1}(G)$. If $G$ is not uniform, then the \emph{maximum $(s,t)$-codegree of $G$}, denoted $\Delta_{t}\left(G^{(s)}\right)$ is the maximum of $H^{(s)}(U)$ for $U\in \binom{V(G)}{t}$.
 
\section{Proof of the High Girth Existence Conjecture}\label{s:HighGirthExistence}

In this section, we state the necessary definitions and theorems for formalizing the outline of the proof of Theorem~\ref{thm:HighGirthSteiner} given in the introduction. Then we prove Theorem~\ref{thm:HighGirthSteiner} assuming said theorems.

\subsection{Random Subsets and Regularity Boosting}

Rephrasing a $K_q^r$ decomposition problem in terms of a perfect matching of an auxiliary hypergraph is useful. Indeed, R\"odl's proof of the Erd\H{o}s-Hanani conjecture proceeded by using his nibble method to show that ${\rm Design}_{K_q^r}(K_n^r)$ has an almost perfect matching and hence that $K_n^r$ has an almost $K_q^r$ decomposition. Since we desire a complete $K_q^r$ decomposition, it is useful to force the leftover (the undecomposed edges) into a sparser `reserve' set of edges $X$ which we will choose uniformly at random with some small probability $p$. Forcing the leftover into $X$ will be accomplished with our ``Forbidden Submatchings with Reserves" theorem from~\cite{DP22}. To that end, it is useful to define a `reserve' design hypergraph as follows.

\begin{definition}[Reserve Design Hypergraph]
Let $F$ be a hypergraph. If $G$ is a hypergraph and $A,B$ are disjoint subsets of $E(G)$, then the \emph{$F$-design reserve hypergraph of $G$ from $A$ to $B$}, denoted ${\rm Reserve}_F(G,A,B)$, 
is the bipartite hypergraph $\mathcal{D}=(A,B)$ with $V(\mathcal{D}):=A\cup B$ and $$E(\mathcal{D}) := \{S\subseteq A\cup B: S \text{ is isomorphic to } F,~|S\cap A|=1\}.$$
\end{definition}

To apply our ``Forbidden Submatchings with Reserves" theorem from~\cite{DP22}, we need that vertices of $A$ in the definition above have large enough degree in $\mc{D}$. To that end, we recall the following lemma from our earlier paper~\cite{DPI} (which follows easily from the Chernoff bound, e.g. see~\cite{AS16}) which handles step (1) of our proof outline:

\begin{lem}\label{lem:RandomX}
For all integers $q> r\ge 1$, there exists a real $\varepsilon \in (0,1)$ such that the following holds: Let $G$ be an $r$-uniform hypergraph with $\delta(G)\ge (1-\varepsilon)\cdot v(G)$ and let $p$ be a real number with $v(G)^{-\varepsilon} \le p \le 1$. If $v(G)\ge \frac{1}{\varepsilon}$, then there exists a spanning subhypergraph  $X\subseteq G$ such that $\Delta(X)\le 2p\cdot v(G)$, and for all $e\in G\setminus X$, there exist at least $\varepsilon\cdot p^{\binom{q}{r}-1}\cdot v(G)^{q-r}$ $K_q^r$'s in $X\cup \{e\}$ containing $e$.
\end{lem}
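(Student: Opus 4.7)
The plan is to take $X$ by including each edge of $G$ in $X$ independently with probability $p$, and to show that both required properties hold simultaneously with positive probability. I will treat the degree bound and the codegree-of-extensions bound separately and then union-bound.

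\textbf{Degree condition.} For any $(r-1)$-subset $U \subseteq V(G)$, the variable $|X(U)|$ is a sum of $|G(U)| \le v(G)$ independent $\operatorname{Bernoulli}(p)$ variables with mean at most $p \cdot v(G)$. A standard Chernoff bound yields
$$\Prob{|X(U)| > 2p\cdot v(G)} \le \exp(-\Omega(p\cdot v(G))) \le \exp(-\Omega(v(G)^{1-\varepsilon})),$$
using $p \ge v(G)^{-\varepsilon}$. Since there are at most $v(G)^{r-1}$ such subsets, a union bound makes the failure probability $o(1)$ (in fact $v(G)^{-\omega(1)}$), provided $\varepsilon$ is small and $v(G) \ge 1/\varepsilon$. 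Thus $\Delta(X) \le 2p\cdot v(G)$ with probability at least $3/4$.

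\textbf{Extension count.} Fix $e \in G$ and let $Y_e$ denote the number of $K_q^r$'s in $X \cup \{e\}$ containing $e$. Each such $K_q^r$ is specified by a $(q-r)$-set $W \subseteq V(G) \setminus e$ whose $\binom{q}{r}-1$ other $r$-subsets lie in $G$; the minimum degree assumption $\delta(G) \ge (1-\varepsilon)v(G)$ readily implies that the number of such $W$ is at least $(1 - O_{q,r}(\varepsilon))\binom{v(G)-r}{q-r}$, so
$$\Expect{Y_e} \ge (1 - O_{q,r}(\varepsilon))\binom{v(G)-r}{q-r}\cdot p^{\binom{q}{r}-1} \ge 2\varepsilon \cdot p^{\binom{q}{r}-1}\cdot v(G)^{q-r}$$
for $\varepsilon$ small enough relative to $q, r$. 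I would then apply Janson's inequality (or equivalently Kim--Vu for the low-degree polynomial $Y_e$ in the edge-indicator variables) to obtain
$$\Prob{Y_e < \varepsilon\cdot p^{\binom{q}{r}-1}\cdot v(G)^{q-r}} \le \exp(-\Omega(v(G)^{\eta})),$$
for some $\eta = \eta(q,r,\varepsilon) > 0$, using that $\Expect{Y_e}$ grows polynomially in $v(G)$ since $p \ge v(G)^{-\varepsilon}$ and $\varepsilon$ is small. A union bound over the at most $v(G)^r$ edges of $G$ absorbs this loss and produces failure probability $o(1)$.

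\textbf{Conclusion.} Combining the two estimates, both events hold simultaneously with positive probability, so a suitable $X$ exists. The only delicate point is choosing $\varepsilon$ small enough (depending on $q,r$) so that the constant $1 - O_{q,r}(\varepsilon)$ in the expectation survives to exceed $2\varepsilon$ and so that the Janson/Kim--Vu concentration for $Y_e$ beats the union bound over edges; neither is an obstacle since both the expectation and the concentration rate are polynomial in $v(G)$. The main (very minor) technical step is verifying that Janson's inequality applies with the right parameters; Chernoff alone suffices for the degree bound, matching the paper's remark.
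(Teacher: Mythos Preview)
Your proposal is correct and matches the natural approach: take $X$ edge-random at rate $p$, control $\Delta(X)$ by Chernoff, and control the clique-extension counts $Y_e$ by a lower-tail concentration inequality, then union bound. The paper does not give a proof here but cites \cite{DPI} with the remark that the lemma ``follows easily from the Chernoff bound''; your argument is the standard realization of this.

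One minor remark on your final paragraph: the paper's phrasing suggests that Chernoff alone can handle the extension count as well, and indeed it can, via an iterative link argument rather than Janson. Namely, for every $j\in[1,q-r]$ and every $(r{+}j{-}1)$-set $S\supseteq e$, the number of vertices $w$ with all $\binom{r+j-1}{r-1}$ edges $\{f: w\in f\subseteq S\cup\{w\}\}$ lying in $X$ is a sum of \emph{independent} indicators (distinct $w$ contribute disjoint edge sets), so Chernoff plus a union bound over all such $S$ shows each one-vertex extension count is close to its mean; multiplying along a chain $e\subset S_1\subset\cdots\subset S_{q-r}$ gives the bound on $Y_e$. This avoids checking the Janson overlap parameter $\Delta$, though your Janson route is equally valid and arguably more direct. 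Either way the proof is routine, and your write-up is fine as is.
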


Similarly to handle step (3) of our proof outline, we used a special case of the Boost Lemma of Glock, K\"{u}hn, Lo, and Osthus~\cite{GKLO16} and applied it to $J:= G\setminus (A\cup X)$. Here, we need a stronger special case of said Boost Lemma as follows.

\begin{lem}\label{lem:RegBoost}
For all integers $q> r \ge 1$ and reals $d, \xi, \varepsilon > 0$ such that $d \ge \xi \ge 2(2\sqrt{e})^r \varepsilon$, there exists an integer $n_0\ge 1$ such that the following holds for all $n\ge n_0$: Let $J$ be an $r$-uniform hypergraph on $n$ vertices. If $\mathcal{J} \subseteq {\rm Design}_{K_q^r}(J)$ satisfies both of the following:
\begin{enumerate}
    \item[(a)] $\mathcal{J}$ is $(d\pm \varepsilon)\cdot n^{q-r}$-regular, and
    \item[(b)] for every $e\in E(J)=V(\mathcal{J})$, we have that
    $\left|\left\{ S\subseteq \binom{V(J)\setminus V(e)}{q}: \binom{S\cup V(e)}{q}\subseteq \mathcal{J} \right\}\right| \ge \xi\cdot n^q$,
\end{enumerate} 
then there exists a subhypergraph $H$ of $\mathcal{J}$ such that $d_H(v) = \left(1 \pm n^{-(q-r)/3}\right) \cdot \frac{d}{2} \cdot n^{q-r}$ for all $v\in V(H)$. 
\end{lem}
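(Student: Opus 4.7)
The plan is to construct $H$ as a random subhypergraph of $\mathcal{J}$ by independently including each $K\in \mathcal{J}$ with a pre-selected probability $p_K\in [0,1]$, where the $p_K$ are chosen so that $\sum_{K\in\mathcal{J},\, v\in K}p_K = \frac{d}{2}n^{q-r}$ for every $v\in V(\mathcal{J})$. Given such a weighting, for every vertex $v$ the random variable $d_H(v)$ is a sum of independent indicators with mean exactly $\frac{d}{2}n^{q-r}$; a Chernoff bound yields $|d_H(v)-\frac{d}{2}n^{q-r}|\le n^{(q-r)/2}\log n$ with probability $1-n^{-\omega(1)}$. Since $q>r$ implies $(q-r)/2<2(q-r)/3$, this deviation is much smaller than the target tolerance $n^{-(q-r)/3}\cdot\frac{d}{2}n^{q-r}=\Theta(n^{2(q-r)/3})$, and a union bound over the at most $\binom{n}{r}$ vertices $v$ then produces the required $H$.

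The main work is to establish the existence of such a weighting $\{p_K\}$. The naive choice $p_K\equiv 1/2$ gives $\sum_{K\ni v}p_K = d_{\mathcal{J}}(v)/2 = \tfrac{d}{2}n^{q-r}\pm\tfrac{\varepsilon}{2}n^{q-r}$ by condition (a), so one must perturb each $p_K$ within $[0,1]$ so as to kill a per-vertex error of magnitude at most $\tfrac{\varepsilon}{2}n^{q-r}$ while preserving the balance constraints. This is precisely where condition (b) enters: each $v\in V(\mathcal{J})$ lies in at least $\xi n^q$ ``super-extensions'' $T=S\cup V(e)$ whose full ``entourage'' $\binom{T}{q}$ lies in $\mathcal{J}$. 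Within any such $T$, the $\binom{q+r}{q}$ cliques form a highly symmetric sub-structure on which one can carry out structured shifts of the $p_K$'s that adjust $\sum_{K\ni w}p_K$ at each $w\subseteq T$ by prescribed, controlled amounts. Combining these local moves as a linear program (whose feasibility follows from a Hall/network-flow style argument, using that $\xi n^q$ comfortably exceeds the adjustment $O(\varepsilon n^{q-r})$ needed at each vertex, and that the quantitative slack $\xi\ge 2(2\sqrt e)^r\varepsilon$ controls overlaps among the $r$ vertices touched by each hyperedge) produces the desired $\{p_K\}$.

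The main obstacle is this feasibility/construction step: the per-vertex perturbations are coupled because each $K\in\mathcal{J}$ touches $r$ vertices of $V(J)$ at once, so one cannot correct each $v$ in isolation. Condition (b) is exactly what overcomes this, by providing a dense, structured family of local moves with enough slack to realize any sufficiently small regularity-error correction simultaneously at all vertices. Once $\{p_K\}$ is in hand, the remaining Chernoff concentration and union bound are routine, following the same pattern used in the original proof of the Boost Lemma in~\cite{GKLO16}.
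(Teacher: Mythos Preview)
The paper does not supply its own proof of this lemma; it is stated (without proof) as a special case of the Boost Lemma of Glock, K\"uhn, Lo, and Osthus~\cite{GKLO16}. Your sketch is consistent with the argument in that reference: one uses the $(q{+}r)$-vertex gadgets supplied by condition~(b) to perturb the naive weighting $p_K\equiv 1/2$ into an exact fractional weighting with $\sum_{K\ni v}p_K=\tfrac{d}{2}\,n^{q-r}$ at every vertex $v$, and then samples each $K$ independently with probability $p_K$ and concentrates $d_H(v)$ via Chernoff and a union bound over the at most $\binom{n}{r}$ vertices.

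One small slip worth noting: each $K\in\mathcal J$, viewed as a hyperedge of $\mathcal J$, contains $\binom{q}{r}$ vertices of $\mathcal J$ (that is, $\binom{q}{r}$ edges of $J$), not $r$; your phrase ``the $r$ vertices touched by each hyperedge'' conflates the two levels of the construction. Also, in~\cite{GKLO16} the perturbation of the $p_K$'s is carried out by an explicit averaging of signed local corrections coming from the gadgets rather than by appealing to a linear program or a Hall-type argument, though your framing as a feasibility problem is essentially equivalent and the quantitative hypothesis $\xi\ge 2(2\sqrt{e})^r\varepsilon$ is indeed what makes the required shifts fit inside $[0,1]$.
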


\subsection{Forbidden Submatching with Reserves}\label{ss:ForbiddenSubmatching}

To finish our proof of Theorem~\ref{thm:Existence}, we used a ``nibble with reserves'' theorem. Similarly, to finish the proof of Theorem~\ref{thm:HighGirthSteiner}, we need a ``forbidden submatching with reserves'' theorem from our work in~\cite{DP22}; in our original version of~\cite{DP22}, we prove a forbidden submatching bipartite hypergraph theorem. However, this does not seem to imply the more general `reserves' version. Hence in our latest version of~\cite{DP22}, we prove an even stronger version, a ``forbidden submatchings with reserves'' theorem. In order to state this theorem, we first need to recall a few definitions from~\cite{DP22}. 

First we recall notions of $A$-perfect matching and of a bipartite hypergraph.

\begin{definition}[$A$-perfect matching]
Let $G$ be a hypergraph and $A\subseteq V(G)$. A matching of $G$ is \emph{$A$-perfect} if every vertex of $A$ is in an edge of the matching.
\end{definition}

\begin{definition}[Bipartite Hypergraph]
We say a hypergraph $G=(A,B)$ is \emph{bipartite with parts $A$ and $B$} if $V(G)=A\cup B$ and  every edge of $G$ contains exactly one vertex from $A$. 
\end{definition}

Next, we need the notion of a configuration hypergraph, which here we define slightly more generally to allow $E(G)\subseteq V(H)$ instead of $E(G)=V(H)$ as this will be more convenient for our proofs.

\begin{definition}[Configuration Hypergraph]
Let $G$ be a (multi)-hypergraph. We say a hypergraph $H$ is a \emph{configuration hypergraph} for $G$ if $E(G)\subseteq V(H)$ and every edge $e$ of $H$ has size at least two and $e\cap E(G)$ is a matching of $G$. We say a matching of $G$ is \emph{$H$-avoiding} if it spans no edge of $H$.
\end{definition}

Finally we recall some more degree and codegree notions as follows.

\begin{definition}
Let $G$ be a hypergraph and let $H$ be a configuration hypergraph of $G$. We define the \emph{$i$-codegree} of a vertex $v\in V(G)$ and $e\in E(G)\subseteq V(H)$ with $v\notin e$ as the number of edges of $H$ of size $i$ who contain $e$ and an edge incident with $v$. We then define the \emph{maximum $i$-codegree} of $G$ with $H$ as the maximum $i$-codegree over vertices $v\in V(G)$ and edges $e\in E(G)\subseteq V(H)$ with $v\notin e$. 
\end{definition}

\begin{definition}
Let $G$ be a hypergraph and let $H$ be a configuration hypergraph of $G$. We define the \emph{common $2$-degree} of distinct vertices $u, v\in V(H)$ as $|\{w\in V(H): uw, vw\in E(H)\}|$. Similarly, we define the \emph{maximum common $2$-degree} of $H$ with respect to $G$ as the maximum of the common $2$-degree of $u$ and $v$ over all distinct pairs of vertices $u,v$ of $H$ where $u$ and $v$ are vertex-disjoint in $G$.    
\end{definition}

Here we introduce one more definition which simplifies the statement of the necessary theorem and will be useful later when discussing the projections of omni-absorbers.

\begin{definition}[Treasury]
Let $r,g \ge 2$ be integers. An \emph{$(r,g)$-treasury} is a tuple $T=(G_1,G_2,H)$ where $G_1$ is an $r$-uniform hypergraph and $G_2=(A,B)$ is an $r$-bounded bipartite hypergraph such that $V(G_1)\cap V(G_2)=A$, and $H$ is a $g$-bounded configuration hypergraph of $G_1\cup G_2$.

A \emph{perfect matching} of $T$ is an $H$-avoiding $(V(G_1)\cap V(G_2))$-perfect matching of $G_1\cup G_2$.

Let $D, \sigma\ge 1$ and $\beta,\alpha \in (0,1)$ be reals. We say that the treasury $T$ is \emph{$(D,\sigma,\beta,\alpha)$-regular} if all of the following hold:
\begin{itemize}
\item (Quasi-Regular)  every vertex of $G_1$ has degree at most $D$ in $G_1$ and every vertex of $A$ has degree at least $D-\sigma$ in $G_1$,
\item (Reserve/Weighted Degrees) every vertex of $B$ has degree at most $D$ in $G_2$, every vertex of $A$ has degree at least $D^{1-\alpha}$ in $G_2$, and $\Delta_1\left(H^{(i)}\right) \le \alpha \cdot D^{i-1}\log D$ for all $2\le i\le g$,
\item (Codegrees) $G_1\cup G_2$ has codegrees at most $D^{1-\beta}$,  ~$\Delta_{t}\left(H^{(s)}\right) \le D^{s-t-\beta}$ for all $2\le t< s\le g$, and the maximum $2$-codegree of $G$ with $H$ and the maximum common $2$-degree of $H$ with respect to $G$ are both at most $D^{1-\beta}$.
\end{itemize}
\end{definition}
It might be helpful to the reader to note that if $T$ is $(D,\sigma,\beta,\alpha)$-regular, then $T$ is also $(D,\sigma',\beta',\alpha')$-regular for all $\sigma'\ge \sigma$, $\beta' \le \beta$ and $\alpha'\ge \alpha$.

We are now ready to state the main nibble theorem from~\cite{DP22} we need as follows.

\begin{thm}[Forbidden Submatchings with Reserves]\label{thm:ForbiddenSubmatchingReserves}
For all integers $r,g \ge 2$ and real $\beta \in (0,1)$, there exist an integer $D_{\beta}\ge 0$ and real $\alpha > 0$ such that following holds for all $D\ge D_{\beta}$: 
\vskip.05in
If $T$ is an $(r,g)$-treasury that is $(D,D^{1-\beta},\beta,\alpha)$-regular, then there exists a perfect matching of $T$. 
\end{thm}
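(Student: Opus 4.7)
The plan is to prove Theorem~\ref{thm:ForbiddenSubmatchingReserves} by a semi-random ``nibble'' in the tradition of R\"odl, extended in two ways: (i) the conflict-free / forbidden submatching framework (as developed in~\cite{DP22} and independently in~\cite{GJKKL22}) to keep the matching $H$-avoiding throughout, and (ii) a reserve mechanism so that the small set of $A$-vertices that escape the $G_1$ nibble can still be saturated using edges of the sparse reserve hypergraph $G_2$. Informally, the treasury definition is engineered precisely so that one can run the usual nibble on $G_1$ while verifying that $G_2$ remains a viable fallback at every step.

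First I would build the matching iteratively over $T=\Theta(\log D / \log\log D)$ rounds. In each round, select each remaining $G_1$-edge independently with probability $\gamma/D$ for a small constant $\gamma$, discard any chosen edge that either shares a vertex with another chosen edge or creates a configuration in $H$ together with the previously committed edges, and commit the survivors. Using a martingale concentration inequality (Freedman's suffices for most quantities; Talagrand may be convenient for some configuration counts), I would track round-by-round: for each surviving $A$-vertex $v$, its degree in the remaining $G_1$; its \emph{compatible} reserve degree in $G_2$ (reserve edges at $v$ still available, i.e.\ not forced into an $H$-violation by the current matching); the $i$-degrees $d_{H^{(i)}}(\cdot)$ for $2 \le i \le g$; and the relevant pair- and cross-codegrees. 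The regularity assumptions guarantee that each of these quantities shrinks by the expected factor $(1-\gamma/D)$ per round, up to a multiplicative $(1\pm o(1))$ error absorbed by the $D^{-\beta}$-type codegree slack, so quasi-regularity is preserved throughout.

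After sufficiently many rounds, the $G_1$-degree of each uncovered vertex has dropped below some tiny threshold ($D^{1-\alpha'}$, say) and the uncovered set $A'\subseteq A$ has size at most a $D^{-\alpha''}$-fraction of $|A|$. The crucial output of this phase is twofold: $A'$ is small, and for every $v\in A'$ the compatible reserve degree in $G_2$ is still $(1-o(1))\cdot D^{1-\alpha}$, because the $G_1$-nibble only shaves off a $D^{-\beta+o(1)}$ fraction of reserve edges through cross-configurations in $H$. This is exactly why the theorem demands $\alpha$ small relative to $\beta$. To finish, I would cover $A'$ by an $H$-avoiding matching in the compatible part of $G_2$. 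Since $|A'|$ is sublinear in $|A|$ and each $v\in A'$ still has a large reserve degree, this final matching can be produced either by one further (much easier) forbidden-submatching nibble on the bipartite reserve hypergraph, or greedily, using the $D^{-\beta}$ codegree bounds to avoid configurations.

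The main obstacle is the coupling between the $G_1$-nibble and the reserve $G_2$: every chosen $G_1$-edge potentially destroys compatible reserve edges, both through shared $A$-vertices and through $H$-edges spanning both $G_1$ and $G_2$. The proof must show that these losses accumulate only at the expected rate, so the reserve remains dense enough at the end to finish. This is the role of the parameter calibration $(D, D^{1-\beta}, \beta, \alpha)$: the $D^{-\beta}$-codegree bounds govern how fast reserve edges are consumed by cross-conflicts, while the $D^{1-\alpha}$ lower bound on reserve degrees provides a buffer large enough to guarantee a perfect cover of $A'$. Chaining the Freedman/Talagrand concentration across all $O(\log D)$ rounds simultaneously for every tracked statistic — most delicately the cross-configuration counts in $H$ that bridge $G_1$ and $G_2$ — is where the bulk of the technical work lies and is the step I expect to be hardest.
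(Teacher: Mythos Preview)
The paper does not actually prove Theorem~\ref{thm:ForbiddenSubmatchingReserves} here; it is quoted as a black-box result from the authors' earlier paper~\cite{DP22} (see the discussion in Section~\ref{ss:ForbiddenSubmatching}). So there is no in-paper proof to compare your proposal against.

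That said, your high-level strategy --- a conflict-free nibble on $G_1$ that simultaneously tracks the compatible reserve degrees in $G_2$, followed by a cleanup phase that saturates the leftover $A$-vertices using the reserve hypergraph --- is exactly the architecture one expects for this kind of result, and it matches the informal description the authors give of what~\cite{DP22} does. Your identification of the key tension (cross-configurations in $H$ bridging $G_1$ and $G_2$ eat into reserve degrees, and the calibration $\alpha \ll \beta$ is what keeps the reserve viable) is correct and is precisely why the treasury definition is set up the way it is.

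Where your sketch is thin is in the details that make forbidden-submatching nibbles genuinely harder than ordinary nibbles: you need to track not just degrees but, for each surviving edge $e$, the number of ``dangerous'' $i$-tuples in $H^{(i)}$ that contain $e$ and have all other elements still alive, and show these shrink at the right rate; the maximum common $2$-degree and maximum $2$-codegree hypotheses are there specifically to control second-moment-type quantities in this tracking, and your outline does not mention how they enter. Also, the cleanup phase is not quite as easy as ``greedy suffices'': the reserve hypergraph $G_2$ is $r$-bounded rather than $r$-uniform, and you must still avoid $H$-configurations that involve both already-committed $G_1$-edges and new $G_2$-edges, so a second (easier) forbidden-submatching argument is typically needed rather than a pure greedy finish. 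None of this is a gap in the sense of a wrong idea, but a full proof along your lines would need to spell these out carefully, and that is where the bulk of~\cite{DP22} lives.
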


\subsection{Configuration Hypergraphs for High Girth Designs}

We in part developed the ``forbidden submatching'' machinery in~\cite{DP22} to prove the approximate version of Conjecture~\ref{conj:HighGirthAllUniformities}. It is useful to recall how the two are related.

\begin{definition}[Erd\H{o}s configuration]
Let $q>r\ge 1$ and $i\ge 3$ be integers. An \emph{$(i(q-r)+r,i)$ Erd\H{o}s-configuration} of a $r$-graph $G$ is an $(i(q-r)+r,i)$-configuration that does not contain an $(i'(q-r)+r,i')$-configuration for any $2\le i' < i$.
\end{definition}

\begin{definition}[Girth Configuration Hypergraph]
Let $q> r\ge 1$ be integers and let $g\ge 3$ be an integer. Let $G$ be an $r$-uniform hypergraph and let $\mathcal{D}:={\rm Design}_{K_q^r}(G)$. 
Then the \emph{girth-$g$ $K_q^r$-configuration hypergraph} of $G$, denoted ${\rm Girth}_{K_q^r}^g(G)$ is the configuration hypergraph $H$ of $\mathcal{D}$ with $V(H):=E(\mathcal{D})$ and $E(H) := \{S\subseteq E(\mathcal{D}): S \text{ is an $(i(q-r)+r,i)$ Erd\H{o}s-configuration of } G \text{ for some $3\le i\le g$}\}$.       \end{definition}

Note here that definition of Erd\H{o}s-configuration is only for $i\ge 3$. Thus two $q$-cliques that intersect in at least one edge is a $(2(q-r)+r,2)$ but not an Erd\H{o}s configuration. In addition, since an Erd\H{o}s configuration contains no $(i'(q-r)+r,i')$- configuration for $2\le i' < i$, this means that an an Erd\H{o}s configuration is a matching of $\mc{D}$ and hence all the edges of $H$ are matchings of $\mc{D}$.

We note that a weaker version (namely just the almost perfect matching version) of Theorem~\ref{thm:ForbiddenSubmatchingReserves} applied to ${\rm Design}_{K_q^r}(K_n^r)$ and ${\rm Girth}_{K_q^r}^g(K_n^r)$ suffices to prove the existence of approximate girth-$g$ designs for all $q > r\ge 2$. 

We now use Lemma~\ref{lem:RandomX} to make a similar version about a regular enough treasury but first a definition.

\begin{definition}\label{def:GirthTreasury}
Let $q > r \ge 1$ be integers and let $g\ge 3$ be an integer. Let $G$ be an $r$-uniform hypergraph and let $X,G'\subseteq G$. The \emph{girth-$g$ $K_q^r$-design treasury} of $G$ on $G'$ with reserve $X$, denoted ${\rm Treasury}_q^g(G,G',X)$, is the treasury $T:=\left({\rm Design}_{K_q^r}(G'\setminus X),~{\rm Reserve}_{K_q^r}(G, G'\setminus X, X),~{\rm Girth}_{K_q^r}^g(G)\right)$.
\end{definition}

We also need the following easy proposition (which for example follows from Proposition 3.18 in~\cite{DPI}).

\begin{proposition}\label{prop:Turan}
For all integers $q > r\ge 1$ and real $\alpha > 0$, there exists a real $\varepsilon\in (0,1)$ and integer $n_0\ge 1$ such that the following holds for all $n\ge n_0$: if $G$ is an $r$-uniform hypergraph on $n$ vertices with $\delta(G)\ge (1-\varepsilon)n$, then for every edge $e$ of $G$, there exists at least $(1-\alpha)\cdot \binom{n}{q-r}$ $K_q^r$'s that contain $e$.
\end{proposition}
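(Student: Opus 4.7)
\textbf{Proof Plan for Proposition~\ref{prop:Turan}.}

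The plan is a straightforward counting argument using the minimum codegree condition. Fix an edge $e \in G$ with vertex set $V(e)$. The number of $K_q^r$'s of $G$ containing $e$ equals the number of $(q-r)$-subsets $S \subseteq V(G)\setminus V(e)$ such that every $r$-subset of $V(e)\cup S$ is an edge of $G$. There are $\binom{n-r}{q-r}$ such subsets in total, so it suffices to show that at most $\alpha \binom{n-r}{q-r}$ of them are \emph{bad}, meaning some $r$-subset of $V(e)\cup S$ (other than $e$ itself) fails to be an edge of $G$.

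The main step is a union bound over non-edges. I would estimate
\[
\#\{\text{bad } S\} \;\le\; \sum_{f} \bigl|\{S : |S|=q-r,\ S\subseteq V(G)\setminus V(e),\ f\subseteq V(e)\cup S\}\bigr|,
\]
where $f$ ranges over non-edges of $G$ with $f\ne e$. Partitioning by $j := |f\cap V(e)|\in\{0,\dots,r-1\}$ and letting $k := r-j$, each non-edge $f$ forces $k$ specified vertices into $S$, so the inner count is $\binom{n-r-k}{q-r-k}$ (or $0$ if $k>q-r$). To bound the number of non-edges with a prescribed intersection pattern, I would use the codegree hypothesis via a standard double-counting argument: for each $(r{-}1)$-subset $T$, there are at most $\varepsilon n$ vertices $v$ with $T\cup\{v\}$ a non-edge, hence the number of non-edges $f$ with $|f\cap V(e)|=j$ is at most $\binom{r}{j}\binom{n-r}{k-1}\cdot \varepsilon n/k$. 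Multiplying by $\binom{n-r-k}{q-r-k}$ and summing over $k$ yields, via the Vandermonde identity $\sum_k \binom{r}{r-k}\binom{q-r}{k} = \binom{q}{r}$, an upper bound of the form $\varepsilon \cdot \binom{q}{r} \cdot \binom{n-r}{q-r} \cdot (1+o(1))$ as $n\to\infty$.

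Choosing $\varepsilon$ small enough in terms of $q, r, \alpha$ (say $\varepsilon \le \alpha/(2\binom{q}{r})$) and $n_0$ large enough to absorb the $o(1)$ error term then gives the claimed lower bound of $(1-\alpha)\binom{n-r}{q-r}$ good subsets. There is no real obstacle here; the only mildly delicate point is the per-$j$ counting of non-edges, which is handled cleanly by the double-counting argument above. Alternatively, one could simply cite Proposition~3.18 of~\cite{DPI} directly, since the statement is essentially identical.
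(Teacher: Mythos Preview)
Your proposal is correct and matches the paper's approach: the paper does not give a proof but simply notes that the proposition follows from Proposition~3.18 in~\cite{DPI}, exactly the citation you mention at the end. Your additional direct counting argument is sound (modulo harmless slack in the binomial coefficients and the easy passage from $\binom{n-r}{q-r}$ to $\binom{n}{q-r}$ via the $o(1)$ term you already flagged).
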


\begin{lem}\label{lem:RandomXTreasury}
For all integers $q> r\ge 1$ and $g \ge 3$ and real $\alpha > 0$, there exist a real $\varepsilon \in (0,1)$ and integer $n_0 \ge 1$ such that the following holds: Let $G$ be an $r$-uniform hypergraph on $n\ge n_0$ vertices with $\delta(G)\ge (1-\varepsilon)\cdot n$. Then there exists a spanning subhypergraph  $X\subseteq G$ such that $\Delta(X)\le 2n^{1-\varepsilon}$, ${\rm Treasury}^g_q(G,G,X)$ is $\bigg(\binom{n}{q-r},~\alpha\cdot \binom{n}{q-r},~\frac{1}{2g(q-r)},~\alpha\bigg)$-regular. 
\end{lem}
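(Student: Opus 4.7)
The plan is to apply \cref{lem:RandomX} with reserve probability $p = n^{-\varepsilon'}$ for a sufficiently small $\varepsilon' > 0$ depending on $q, r, g, \alpha$, obtaining $X \subseteq G$ with $\Delta(X) \le 2pn$ and the guarantee that for every $e \in E(G) \setminus X$ there are at least $c_0\, p^{\binom{q}{r}-1} n^{q-r}$ copies of $K_q^r$ in $X \cup \{e\}$ through $e$, where $c_0 = c_0(q,r) > 0$. We then verify each condition defining $(D,\sigma,\beta,\alpha)$-regularity for $T := {\rm Treasury}^g_q(G,G,X)$ with $D = \binom{n}{q-r}$, $\sigma = \alpha D$, and $\beta = \frac{1}{2g(q-r)}$; note that $A = V(G_1) = E(G \setminus X)$ and $B = E(X)$.

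The quasi-regular upper bound $d_{G_1}(e) \le \binom{n-r}{q-r} \le D$ is immediate, while for the lower bound \cref{prop:Turan} supplies at least $(1-\alpha/2)\binom{n}{q-r}$ copies of $K_q^r$ through $e$ in $G$, from which we subtract those meeting $X$. Double-counting from $\Delta(X) \le 2pn$ yields the standard bound $\Delta_s(X) = O(pn^{r-s})$ for $0 \le s \le r-1$, so summing over the intersection pattern of the additional $X$-edge with $V(e)$ gives a correction of size $O(pn^{q-r}) \le (\alpha/2)\binom{n}{q-r}$ once $p$ is small in $\alpha$. The upper bound $d_{G_2}(f) \le D$ is again immediate, and the reserve lower bound $d_{G_2}(e) \ge D^{1-\alpha}$ follows directly from the reserve guarantee once $\varepsilon' \le \alpha(q-r)/(2(\binom{q}{r}-1))$. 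For $\Delta_1(H^{(i)}) \le \alpha D^{i-1}\log D$: extending a fixed $K_q^r$ to a size-$i$ Erd\H{o}s-configuration adds exactly $(i-1)(q-r)$ new vertices, yielding $\Delta_1(H^{(i)}) = O_{q,r,g}(n^{(i-1)(q-r)}) = O(D^{i-1})$, which the $\log D$ factor absorbs for $n$ large.

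The codegree conditions are where the Erd\H{o}s property is essential, and they constitute the main subtlety of the proof. Pairwise codegrees in $G_1 \cup G_2$ are at most $\binom{n-r-1}{q-r-1} = O(n^{q-r-1}) \le D^{1-\beta}$ because $\beta(q-r) = \frac{1}{2g} < 1$. For $\Delta_t(H^{(s)}) \le D^{s-t-\beta}$ with $2 \le t < s \le g$: if the $t$ fixed cliques span at most $t(q-r)+r$ vertices they would form a forbidden $(t(q-r)+r, t)$-sub-configuration inside any Erd\H{o}s $s$-configuration and thus contribute $0$; otherwise they span at least $t(q-r)+r+1$ vertices, forcing the remaining $s-t$ cliques into at most $(s-t)(q-r)-1$ additional vertices and giving $\Delta_t(H^{(s)}) = O(n^{(s-t)(q-r)-1}) \le D^{s-t-\beta}$. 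Without the Erd\H{o}s condition the naive count $n^{(s-t)(q-r)}$ would match $D^{s-t}$ exactly and leave no room for the required $D^{-\beta}$ factor; the Erd\H{o}s condition saves precisely one power of $n$, which comfortably dominates the $n^{-1/(2g)}$ demand encoded in $\beta$. Finally, the maximum $2$-codegree of $G$ with $H$ and the maximum common $2$-degree of $H$ with respect to $G$ both vanish, since $H$ has no edges of size $2$ (Erd\H{o}s configurations require $i \ge 3$).
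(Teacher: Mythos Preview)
Your proposal is correct and follows essentially the same approach as the paper's proof: apply \cref{lem:RandomX} with $p=n^{-\varepsilon'}$ for suitably small $\varepsilon'$, then verify each regularity condition for the resulting treasury, using \cref{prop:Turan} for the quasi-regular lower bound, the reserve guarantee for the $G_2$-degree lower bound, the vertex-count $(i-1)(q-r)$ for $\Delta_1(H^{(i)})$, the Erd\H{o}s property to save a factor of $n$ in $\Delta_t(H^{(s)})$, and the absence of size-$2$ edges in $H$ to dispose of the $2$-codegree and common $2$-degree conditions. The only cosmetic difference is that for the quasi-regular lower bound you explicitly subtract off the $K_q^r$'s meeting $X$, whereas the paper (tersely) applies \cref{prop:Turan} directly; both arguments are valid and yield the same conclusion.
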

\begin{proof}
Let $\varepsilon := \min \left\{\frac{\alpha}{2\binom{q}{r}},~\varepsilon'\right\}$ where $\varepsilon'$ is the value of Lemma~\ref{lem:RandomX} for $q$ and $r$. Similarly, we choose $n_0$ large enough so that $n$ satisfies various inequalities throughout the proof. Let $p:= n^{-\varepsilon}$. By Lemma~\ref{lem:RandomX}, there exists a spanning subhypergraph $X\subseteq G$ such that $\Delta(X)\le 2pn = 2n^{1-\varepsilon}$ and for all $e\in G\setminus X$, there exist at least $\varepsilon\cdot p^{\binom{q}{r}-1}\cdot v(G)^{q-r}$ $K_q^r$'s in $X\cup \{e\}$ containing $e$. 

Let ${\rm Treasury}_q^g(G,G,X) = (G_1,G_2,H)$. Let $D:= \binom{n}{q-r}$. Since every edge of $K_n^r$ is in at most $\binom{n}{q-r}$ copies of $K_q^r$ in $K_n^r$, we find that $\Delta(G_1),\Delta(G_2) \le D$. By choosing $\varepsilon$ small enough with respect to $\alpha$, it also follows from Proposition~\ref{prop:Turan} that $\delta(G_1)\ge (1-\alpha)D$ as desired. 

Meanwhile, every edge $e$ of $G\setminus X$ has degree in $G_2$ at least $\varepsilon \cdot p^{\binom{q}{r}-1} n^{q-r} \ge \varepsilon \cdot D^{1-\varepsilon\cdot \binom{q}{r}} \ge D^{1-\alpha}$ since $D$ is large enough and $\varepsilon \le \frac{\alpha}{2\binom{q}{r}}$. Furthermore, $$\Delta\left(H^{(i)}\right) \le 2^{\binom{(i-1)(q-r)}{2}} \cdot \binom{n}{(i-1)(q-r)} \le 2^{\binom{(i-1)(q-r)}{2}} \cdot D^{i-1} \le \alpha\cdot D^{i-1}\cdot \log D$$ 
since $D$ is large enough as $n$ is large enough.

As for codegrees, let $\beta := \frac{1}{2g(q-r)}$. Thus $G_1\cup G_2$ has codegrees at most $\binom{n}{q-r-1} \le D^{1-\frac{1}{2(q-r)}}$ for $D$ large enough, which is at most $D^{1-\beta}$ as desired. 

Meanwhile, for all $2\le t < s \le g$, 
$$\Delta_{t}\left(H^{(s)}\right) \le O\left(n^{(s-t)(q-r)-1}\right) = O\left(D^{s-t - \frac{1}{q-r}}\right) ,$$
since $t$ edges of $G$ contained in an edge of $H$ of size $s$ span at least $t(q-r)+r+1$ vertices of $G$ by construction. Since $\beta < \frac{1}{q-r}$ and $D$ is large enough, it follows  that  
$$ \Delta_{t}\left(H^{(s)}\right) \le D^{s-t-\beta}$$
as desired. 

On the other hand, since $H$ has no edges of size $2$, it follows that the maximum $2$-codegree of $G$ with $H$ and the maximum common $2$-degree of $H$ are both $0$ (and hence $H$ is $0$-uncommon) which is at most $D^{1-\beta}$ as desired.

Altogether then, we find that ${\rm Treasury}_q^g(G,G,X)$ is $\bigg(\binom{n}{q-r},~\alpha\cdot \binom{n}{q-r},~\frac{1}{2g(q-r)},~\alpha\bigg)$-regular as desired.
\end{proof}

\subsection{Girth-$g$ Projections for Omni-Absorbers}\label{ss:OmniProjection}

Now we turn toward defining the terms in the statement of Theorem~\ref{thm:HighGirthAbsorber}.

First we recall the proof strategy and resulting issues mentioned in the introduction as follows. Given the existence of boosters as in Theorem~\ref{thm:HighRootedGirthBooster}, then to build high girth omni-absorbers, we embed edge-disjoint rooted $K_q^r$-boosters of high rooted girth, one for each $S$ in the decomposition family $\mathcal{H}$ of our refined $K_q^r$-omni-absorber $A$ (where $S$ is the root $K_q^r$ of its booster). This will ensure that each $K_q^r$ possibly used to decompose $L\cup A$ has its girth `spread out' into the booster. 

However even this is not enough. The issue is that small girth configurations may arise both across the different boosters but also between the boosters and the $K_q^r$ decomposition of $K_n^r\setminus (X\cup A)$ that we find using nibble. To overcome the first issue, we ensure the girth boosters are embedded such that collectively they have high girth (as in Definitions~\ref{def:OmniAbsorberCollectiveGirth} and~\ref{def:OmniBooster}). 

To overcome the second issue, we will forbid any partial configurations in $K_n^r\setminus (X\cup A)$ that extend to full small girth configurations using $K_q^r$'s in the high girth boosters. To do both of these simultaneously, we will first randomly sparsify the set of girth boosters that could be used for each clique independently to a very small size (polylog in $n$ say). We will show with high probability under this sparsification that the resulting configuration degree of edges in $K_n^r\setminus (X\cup A)$ has not increased too much with the inclusion of these partial configurations. This is done via concentration inequalities. We also argue that with high probability there is now a choice inside each of these sets of a high girth absorber (that is edge-disjoint choices with overall high girth).

We now formalize the above discussion with following definitions. First we define a common projection so as to project out by all the possible absorbings of an omni-absorber. Note we define this in the setting of a treasury as we need to delete `singletons' (i.e. edges of size $1$) of the projected configuration hypergraph from the underlying graphs (i.e.~delete edges of $G_1$ and $G_2$ that are in nearly complete edges of $H$).

\begin{definition}
Let $T=(G_1,G_2,H)$ be a treasury. Let $\mathcal{M}=\{M_1,\ldots,M_k\}$ be such that for all $i\in [k]$, we have $M_i\subseteq V(H)$ and $M_i\cap (G_1\cup G_2)$ is a matching of $G_1\cup G_2$. The \emph{common projection of $T$ by $\mathcal{M}$}, denoted $T\perp \mathcal{M}$ is the treasury  $T'=(G_1',G_2',H')$ where for all $i\in \{1,2\}, $$V(G_i'):=V(G_i)$, and
$$V(H') := V(H)\setminus \{ v\in V(H): \exists S\in E(H),~i\in[k] \text{ such that } v\in S,~v\not\in M_i \text{ and } S\setminus \{v\} \subseteq M_i \}$$
and for all $i\in \{1,2\}$, $E(G_i') = E(G_i)\cap V(H')$ and
$$E(H'):= \{ T \subseteq V(H'): \exists S\in E(H),~i\in [k] \text{ such that } T\subseteq S,~T\cap M_i=\emptyset \text{ and } S\setminus T \subseteq M_i \}.$$
\end{definition}

Now given the above definition, we turn to defining the projection of an omni-absorber. 

\begin{definition}[Girth $g$ Projection of Omni-Absorber]\label{def:OmniAbsorberProj}
Let $G$ be an $r$-uniform hypergraph, let $X$ be a subgraph of $G$, and let $A\subseteq G$ be a $K_q^r$-omni-absorber of $X$ with decomposition function $\mathcal{Q}_A$. We define the \emph{girth-$g$ projection treasury} of $A$ on to $G$ and $X$ as:
$${\rm Proj}_g(A,G,X):= {\rm Treasury}_q^g(G,~G\setminus A,~X) \perp \mc{M}(A)$$
where $$\mathcal{M}(A) := \{ \mathcal{Q}_A(L) : L\subseteq X \text{ is $K_q^r$-divisible}\}.$$
\end{definition}

The following proposition shows how to use the above definition to find a high girth Steiner system.

\begin{proposition}\label{prop:FindSteiner}
Let $q>r\ge 1$ and $g\ge 2$ be integers. Let $G$ be a $K_q^r$-divisible $r$-uniform hypergraph, let $X$ be a subgraph of $G$ and let $A\subseteq G$ be a $K_q^r$-omni-absorber of $X$ of collective girth at least $g$. If there exists a perfect matching of ${\rm Proj}_g(A,G,X)$, then there exists a $K_q^r$ decomposition of $G$ of girth at least $g$.
\end{proposition}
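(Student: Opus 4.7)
The plan is to extend the given perfect matching $M$ to a full $K_q^r$-decomposition of $G$ via a single application of the omni-absorber, and then rule out any short-girth configuration using the way the projection was built.

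First I would unpack the treasury. Writing ${\rm Treasury}_q^g(G,G\setminus A,X)$ as $(G_1,G_2,H)$ with $H = {\rm Girth}_{K_q^r}^g(G)$, Definition~\ref{def:GirthTreasury} gives $V(G_1)\cap V(G_2) = E(G\setminus(A\cup X))$. Hence a perfect matching $M$ of ${\rm Proj}_g(A,G,X)$ is an edge-disjoint collection of copies of $K_q^r$ in $G\setminus A$, each using at most one edge of $X$, that together cover every edge of $G\setminus(A\cup X)$; moreover, $M$ is $H'$-avoiding for the projected configuration hypergraph $H'$. Letting $X'\subseteq X$ denote the edges not covered by $M$, we see that $M$ is a $K_q^r$-decomposition of $G\setminus(A\cup X')$.

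Second, I would feed $X'$ to the omni-absorber. Since $\mathcal{Q}_A(\emptyset)$ is a $K_q^r$-decomposition of $A$, $A$ itself is $K_q^r$-divisible; combined with $G$ being $K_q^r$-divisible and $M$ decomposing $G\setminus(A\cup X')$, the complement $A\cup X'$ and hence $X'$ are $K_q^r$-divisible. Applying the omni-absorber to $X'$ produces $\mathcal{Q}_A(X')\in \mathcal{M}(A)$ that decomposes $A\cup X'$, whence $D:=M\cup \mathcal{Q}_A(X')$ is a $K_q^r$-decomposition of $G$.

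The main step is to verify that $D$ has girth at least $g$. Each clique of $M$ uses at least one edge of $G\setminus(A\cup X)$, while each clique of $\mathcal{Q}_A(X')$ lies in $A\cup X'\subseteq A\cup X$, so $M$ and $\mathcal{Q}_A(X')$ are disjoint subsets of $V(H)$. Suppose for contradiction $S\in E(H)$ satisfies $S\subseteq D$, and split $S$ as $T\sqcup(S\setminus T)$ with $T:= S\cap M$ and $S\setminus T\subseteq \mathcal{Q}_A(X')$. Taking $M_i:=\mathcal{Q}_A(X')\in \mathcal{M}(A)$ as witness in the projection: if $T=\emptyset$, then $S\subseteq \mathcal{Q}_A(X')$, contradicting the collective girth hypothesis on $A$; if $T=\{v\}$, then $v\in S$, $v\notin M_i$, and $S\setminus\{v\}\subseteq M_i$, so $v$ is removed from $V(H')$, contradicting $v\in M\subseteq V(H')$; and if $|T|\ge 2$, then $T$ is an edge of $H'$ witnessed by $S$ and $M_i$, contradicting that $M$ is $H'$-avoiding. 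I expect the main obstacle to be exactly this case analysis --- in particular verifying that the three cases for $|T|$ (empty, singleton, or at least two) are each killed by a different component of the projection definition: collective girth of $A$, removal of near-complete singletons from $V(H')$, and addition of residual configurations to $E(H')$, respectively.
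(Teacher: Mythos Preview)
Your proof is correct and follows essentially the same approach as the paper's: form the leftover $X'=X\setminus\bigcup M$ (the paper calls it $L$), check it is $K_q^r$-divisible, absorb it via $\mathcal{Q}_A(X')$, and argue that $M\cup\mathcal{Q}_A(X')$ has girth at least $g$. The paper simply asserts that a perfect matching of the projection is equivalent to a packing $M$ with the property that $M\cup\mathcal{Q}_A(L)$ has girth at least $g$ for every divisible $L$, whereas you explicitly unpack that equivalence via the three-case analysis on $|S\cap M|$ against the definition of the common projection; your version is more detailed but not materially different.
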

\begin{proof}
Let $M$ be a perfect matching of ${\rm Proj}_g(A,G,X)$ that is an $H$-avoiding $(G\setminus (X\cup A))$-perfect matching of $G\setminus A$ where $H$ is as in the definition of girth-$g$ projection treasury. Note then that $M$ is equivalent to a $K_q^r$ packing of $G\setminus A$ that covers all edges of $G\setminus (A\cup X)$ with the additional property that $M\cup \mathcal{Q}_A(L)$ has girth at least $g$ for any $L\subseteq X$ such that $L$ is $K_q^r$-divisible. 

Now let $L := X\setminus \bigcup M$. Note that $G$ is $K_q^r$-divisible, $\bigcup M \subseteq G$ is $K_q^r$-divisible, and $A\subseteq G$ is also $K_q^r$-divisible and $A\cap \bigcup M = \emptyset$. Thus we have that $L:= G\setminus (A\cup \bigcup M)$ is $K_q^r$-divisible. But then $\mathcal{Q}_A(L)$ is a $K_q^r$-decomposition of $A\cup L$. Hence $M\cup \mathcal{Q}_A(L)$ is a $K_q^r$-decomposition of $G$ that as mentioned above has girth at least $g$ as desired.
\end{proof}

For our High Girth Omni-Absorber Theorem, it will be convenient for the proof to allow us to take a substructure of the treasury that is itself a treasury. This motivates the following definition.

\begin{definition}
Let $(G_1,G_2,H)$ be a treasury. We say a treasury $(G_1',G_2',H')$ is a \emph{subtreasury} of $(G_1,G_2,H)$ if $G_i'$ is a spanning subgraph of $G_i$ for each $i\in \{1,2\}$ and $H[E(G_1')\cup E(G_2')]$ is a subgraph of $H'$. 
\end{definition}

The next proposition shows that a perfect matching of a subtreasury is still a perfect matching of the treasury (and hence we may still apply Proposition~\ref{prop:FindSteiner} to find a $K_q^r$ decomposition).

\begin{proposition}\label{prop:SubTreasury}
Let $T'$ be a subtreasury of a treasury $T$. If $M$ is a perfect matching of $T'$, then $M$ is also a perfect matching of $T$.    
\end{proposition}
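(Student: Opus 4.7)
The plan is to verify, one at a time, each of the three conditions that make $M$ a perfect matching of $T = (G_1, G_2, H)$: namely, (i) $M$ is a matching of $G_1 \cup G_2$, (ii) $M$ covers every vertex of $V(G_1) \cap V(G_2)$, and (iii) $M$ spans no edge of $H$. The only one of these requiring more than direct inspection is (iii).

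Conditions (i) and (ii) are essentially immediate from the definition of subtreasury. Since $G_i'$ is a spanning subhypergraph of $G_i$ for each $i\in\{1,2\}$, we have $E(G_1' \cup G_2') \subseteq E(G_1 \cup G_2)$, so any matching of $G_1' \cup G_2'$ is also a matching of $G_1 \cup G_2$. Moreover $V(G_i') = V(G_i)$, so $V(G_1') \cap V(G_2') = V(G_1) \cap V(G_2)$, and the fact that $M$ is $(V(G_1') \cap V(G_2'))$-perfect in $T'$ transfers verbatim to $T$.

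For (iii), I would argue by contradiction. Suppose some edge $e \in E(H)$ satisfies $e \subseteq M$. Because $M \subseteq E(G_1' \cup G_2')$, every vertex of $e$ lies in $E(G_1') \cup E(G_2')$, so $e$ is an edge of the induced subhypergraph $H[E(G_1') \cup E(G_2')]$. By the defining property of a subtreasury, this induced subhypergraph is a subhypergraph of $H'$, and hence $e \in E(H')$, contradicting that $M$ is $H'$-avoiding. There is no real obstacle here, since the proposition is a book-keeping lemma intended to let later proofs pass to a convenient subtreasury (for instance one whose configuration hypergraph has been enlarged or whose underlying graphs have been trimmed) without losing the perfect matchings they have produced.
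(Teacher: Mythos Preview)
Your proof is correct and follows essentially the same approach as the paper's own proof: both verify directly that $M$ is a matching of $G_1\cup G_2$, that it covers $V(G_1)\cap V(G_2)=V(G_1')\cap V(G_2')$, and that any edge of $H$ spanned by $M$ would lie in $H[E(G_1')\cup E(G_2')]\subseteq H'$, contradicting $H'$-avoidance.
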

\begin{proof}
Let $T'=(G_1',G_2',H')$ and $T=(G_1,G_2,H)$. By definition $M$ is an $H'$-avoiding $(V(G_1')\cap V(G_2'))$-perfect matching of $G_1'\cup G_2'$. Since $G_i'$ is a spanning subgraph of $G_i$ for each $i\in \{1,2\}$, we have that $V(G_1')\cap V(G_2') = V(G_1)\cap V(G_2)$.  It follows then that $M$ is a $(V(G_1)\cap V(G_2))$-perfect matching of $G_1\cup G_2$. Furthermore, $M$ does not span an edge $e$ of $H$ as otherwise $e\in H[E(G_1')\cup E(G_2')] \subseteq H'$, contradicting that $M$ is $H'$-avoiding. Hence $M$ is $H$-avoiding. Thus $M$ is a perfect matching of $T$ as desired.
\end{proof}

We need the following proposition relating the projection treasuries 
of two sets of matchings where one is a subset of the other. We note the proof just follows directly from the various definitions and hence we omit it.

\begin{proposition}\label{prop:SubProjTreasury}
Let $T=(G_1,G_2,H)$ be a treasury. If $\mathcal{M}'\supseteq \mathcal{M}$, then $T\perp \mathcal{M}'$ is a subtreasury of $T\perp \mathcal{M}$.
\end{proposition}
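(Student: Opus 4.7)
The plan is to unfold the two definitions (common projection of a treasury and subtreasury) and verify the two required containments. Write $T \perp \mathcal{M} = (G_1', G_2', H')$ and $T \perp \mathcal{M}' = (G_1'', G_2'', H'')$. Then I need to check (i) each $G_i''$ is a spanning subgraph of $G_i'$ for $i \in \{1,2\}$, and (ii) $H'[E(G_1'')\cup E(G_2'')]$ is a subgraph of $H''$.

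For (i), the key observation is a monotonicity of the deletion rule for $V(H)$. A vertex $v\in V(H)$ is removed in forming $V(H')$ exactly when there exist $S\in E(H)$ and some $M\in\mathcal{M}$ with $v\in S$, $v\notin M$, and $S\setminus\{v\}\subseteq M$. Since $\mathcal{M}\subseteq\mathcal{M}'$, every such witness for removal from $V(H)$ in forming $V(H')$ is also a witness for removal in forming $V(H'')$. Hence $V(H'')\subseteq V(H')$. Because $V(G_i'')=V(G_i)=V(G_i')$ and $E(G_i'')=E(G_i)\cap V(H'')\subseteq E(G_i)\cap V(H')=E(G_i')$, the graph $G_i''$ is a spanning subgraph of $G_i'$.

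For (ii), take any $T\in E(H')$ with $T\subseteq E(G_1'')\cup E(G_2'')$. By the definition of $E(G_i'')$ we have $T\subseteq V(H'')$, which is the vertex-set condition required for membership in $E(H'')$. By the definition of $E(H')$, there exist $S\in E(H)$ and $M\in\mathcal{M}$ such that $T\subseteq S$, $T\cap M=\emptyset$, and $S\setminus T\subseteq M$. Since $\mathcal{M}\subseteq\mathcal{M}'$, we have $M\in\mathcal{M}'$, so the same triple $(S,M)$ certifies $T\in E(H'')$ via the defining condition of $E(H'')$.

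I don't anticipate any real obstacle; the entire statement is a monotonicity fact and both parts are immediate bookkeeping once the $\mathcal{M}\subseteq\mathcal{M}'$ containment is propagated through the deletion rule for vertices of $V(H)$ and through the existential quantifier defining edges of the projected configuration hypergraph. The only mild care needed is in recording that $E(G_i'')\subseteq V(H'')$, which is built into the definition of the common projection and is what makes the vertex-set condition in clause (ii) automatic.
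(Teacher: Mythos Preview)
Your proposal is correct and matches the paper's approach exactly: the paper states that the proof ``just follows directly from the various definitions and hence we omit it,'' and your argument is precisely the definitional unwinding one would supply, verifying the monotonicity of the vertex-deletion rule and of the existential quantifier defining the projected edge set.
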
 

We are now prepared to state our high girth omni-absorber theorem as follows.

\begin{thm}[High Girth Omni-Absorber Theorem]\label{thm:HighGirthAbsorber}
For all integers $q>r\ge 1$ and $g\ge 3$ and real $\alpha \in \left(0,~\frac{1}{2(q-r)}\right)$, there exist integers $a,n_0\ge 1$ such that the following holds for all $n\ge n_0$: 

Suppose that Theorem~\ref{thm:HighCogirthBooster} holds for $r':=r-1$. If $X$ is a spanning subgraph of $K_n^r$ with $\Delta(X) \le \frac{n}{\log^{ga} n}$ such that ${\rm Treasury}^g_q(K_n^r,K_n^r,X)$ is $\bigg(\binom{n}{q-r},~\alpha\cdot \binom{n}{q-r},~\frac{1}{2g(q-r)},~\alpha\bigg)$-regular and we let $\Delta:= \max\left\{ \Delta(X),~n^{1-\frac{1}{r}}\cdot \log n\right\}$, then there exists a $K_q^r$-omni-absorber $A$ for $X$ with $\Delta(A)\le a\Delta\cdot \log^{a} \Delta$ and collective girth at least $g$, together with a subtreasury $T$ of ${\rm Proj}_g(A, K_n^r, X)$ such that $T$ is 
$$\Bigg(\binom{n}{q-r},~3\alpha \cdot \binom{n}{q-r},~\frac{1}{4g(q-r)},~2\alpha\Bigg){\rm - regular}.$$
\end{thm}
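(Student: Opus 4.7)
The plan is to proceed along the lines sketched in Section~1.5 of the introduction, assembling the three black boxes that have already been set up: the Refined Omni-Absorber Theorem (Theorem~\ref{thm:Omni}), the High Rooted Girth Booster Theorem (Theorem~\ref{thm:HighRootedGirthBooster}, available under the inductive hypothesis on Theorem~\ref{thm:HighCogirthBooster}), and the yet-to-be-stated quantum/extrinsic machinery (Lemmas~\ref{lem:RandomQuantumIntrinsic} and~\ref{lem:RandomQuantumExtrinsic}). First I would apply Theorem~\ref{thm:Omni} to $X$ to obtain a $C$-refined $K_q^r$-omni-absorber $A_0\subseteq K_n^r$ for $X$ with decomposition family $\mathcal{H}_0$ and $\Delta(A_0)\le C\Delta$, where each edge is in at most $C$ elements of $\mathcal{H}_0$. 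Then I would fix a reference rooted $K_q^r$-booster $B^*$ of rooted girth at least $g$ (from Theorem~\ref{thm:HighRootedGirthBooster}) and, for each $H\in\mathcal{H}_0$, attach a private copy $B_H$ of $B^*$ rooted at $H$ to obtain a candidate omni-booster $B=\bigcup_H B_H$. Setting $A:=A_0\cup B$, Proposition~\ref{prop:CanonicalBoost} equips it with a canonical decomposition function: given a $K_q^r$-divisible $L\subseteq X$, start from the decomposition of $A_0\cup L$ in $\mathcal{H}_0$ and replace each $H$ that is (resp.\ is not) used there by $(\mathcal{B}_{H})_{\rm on}$ (resp.\ $(\mathcal{B}_{H})_{\rm off}$).

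The delicate step is the choice of the embeddings $\phi_H$. A single uniformly random embedding per $H$ will not give sharp enough control on either the collective girth condition or on the projection treasury. Following the sparsification strategy already announced in Section~1.5, I would independently draw, for each $H$, a random \emph{quantum} of $\operatorname{polylog}(n)$ many candidate embeddings into $K_n^r$ that avoid $X\cup A_0$ and forbidden degree-overloaded vertex subsets. This ``full quantum omni-booster'' of total max degree $\tilde O(\Delta)$ is the object to which the intrinsic/extrinsic lemmas apply. Lemma~\ref{lem:RandomQuantumIntrinsic} will then yield, with high probability, a selection of one embedding per $H$ inside the quanta whose combined booster $B$ is edge-disjoint, has collective girth at least $g$, and satisfies $\Delta(B)\le a\Delta\log^{a}\Delta$; combined with $\Delta(A_0)\le C\Delta$ this gives the degree bound claimed for $A$. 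Collective girth at least $g$ of $A$ as an omni-absorber then follows directly from Definition~\ref{def:OmniBooster} and Definition~\ref{def:OmniAbsorberCollectiveGirth}, because any $\mathcal{Q}_A(L)$ is an element of $\mathcal{M}(\mathcal{B})$ up to edges of $X$, which at this level of girth only help.

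For the subtreasury, I would start from ${\rm Treasury}_q^g(K_n^r, K_n^r, X)$ which is $(D,\alpha D,\tfrac{1}{2g(q-r)},\alpha)$-regular with $D=\binom{n}{q-r}$, and compare it to ${\rm Proj}_g(A,K_n^r,X)={\rm Treasury}_q^g(K_n^r,K_n^r\setminus A,X)\perp\mathcal{M}(A)$. The deletion of edges in $A$ from $G_1$ drops each vertex degree by at most the number of $K_q^r$'s through an edge of $A$, which is at most $O(\Delta(A)\cdot n^{q-r-1})=o(\alpha D)$ by the degree bound on $A$; this is comfortably absorbed into the $3\alpha D$ and $2\alpha$ parameters. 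The hard part is the common projection by $\mathcal{M}(A)$: an $i$-configuration on $K_n^r\setminus(A\cup X)$ with $i<g$ could, together with $g-i$ cliques drawn from the various $(\mathcal{B}_H)_{\rm on}$ or $(\mathcal{B}_H)_{\rm off}$, complete an Erd\H{o}s $(g(q-r)+r,g)$-configuration that becomes an edge of the projected configuration hypergraph. Lemma~\ref{lem:RandomQuantumExtrinsic} is precisely the statement that, after passing to an appropriate subtreasury, the number of such extensions at each vertex/codegree is concentrated within the enlarged parameters $(3\alpha D,\tfrac{1}{4g(q-r)},2\alpha)$. Combining it with Proposition~\ref{prop:SubProjTreasury}, which ensures subtreasury-compatibility as $\mathcal{M}(A)$ is enlarged, gives the conclusion.

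The main obstacle, and the reason both auxiliary lemmas are needed rather than a direct Local Lemma or greedy argument, is precisely this extrinsic control: one must bound the number of shrunken low-girth configurations cut out by the projection, uniformly over all possible decomposition choices across the $|\mathcal{H}_0|$ boosters, without inflating $\Delta(A)$ beyond the polylog window $a\Delta\log^{a}\Delta$. The sparsification size is chosen so that the relevant counts become sums of products of indicators whose expectations are small enough to leave slack in the treasury parameters, yet whose variance and higher moments are controllable by the Kim--Vu inequality (as packaged in Corollary~\ref{cor:KimVu}). Once Lemmas~\ref{lem:RandomQuantumIntrinsic} and~\ref{lem:RandomQuantumExtrinsic} are granted, the theorem is obtained by a union bound over a constant number of high-probability events and selection of a realization of the quantum achieving all of them.
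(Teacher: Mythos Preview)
Your proposal is essentially correct and follows the same approach as the paper. The paper packages the booster construction into an intermediate black box, Theorem~\ref{thm:HighGirthOmniBooster} (the High Girth Omni-Booster Theorem), so that the proof of Theorem~\ref{thm:HighGirthAbsorber} itself is just three lines: apply Theorem~\ref{thm:Omni} to get a $C$-refined $A_0$, apply Theorem~\ref{thm:HighGirthOmniBooster} to get the omni-booster $B$, set $A:=A_0\cup B$, and invoke Propositions~\ref{prop:CanonicalBoost} and~\ref{prop:BoosterSubTreasury} to pass from ${\rm Proj}_g(B,A_0,K_n^r,X)$ to ${\rm Proj}_g(A,K_n^r,X)$. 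You have instead unpacked Theorem~\ref{thm:HighGirthOmniBooster} (and implicitly Theorem~\ref{thm:HighGirthQuantumOmniBooster}) down to Lemmas~\ref{lem:RandomQuantumIntrinsic} and~\ref{lem:RandomQuantumExtrinsic} directly, which is faithful to the underlying argument.

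One small clarification on the treasury bookkeeping: the paper does not project by $\mathcal{M}(A)$ and then bound the damage. Rather, Lemma~\ref{lem:RandomQuantumExtrinsic} already controls the projection by the \emph{quantum} matching set $\mathcal{M}(\mathcal{B}_p)$ (which is larger than $\mathcal{M}(A)$), giving parameters $(D,2\alpha D,\tfrac{1}{4g(q-r)},2\alpha)$; the passage from $2\alpha$ to $3\alpha$ in the regularity occurs only upon actualizing from the quantum booster to a specific $B'$, because one must additionally delete the cliques through edges of $B'$ from $G_1$. This is handled in the proof of Theorem~\ref{thm:HighGirthOmniBooster}, and then Proposition~\ref{prop:BoosterSubTreasury} (not Proposition~\ref{prop:SubProjTreasury}, though that is used inside it) transports the subtreasury from the omni-booster projection to the omni-absorber projection.
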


\subsection{Proof of the High Girth Existence Conjecture}

Here then is our proof of the High Girth Existence Conjecture.

\begin{lateproof}{thm:HighGirthSteiner}
Let $r':= \binom{q}{r}$. Let $\beta' := \frac{1}{8g(q-r)}$ and let $\alpha'$ be as in Theorem~\ref{thm:ForbiddenSubmatchingReserves} for $r'$, $g$, and $\beta'$. Let $\alpha$ be chosen small enough to satisfy certain inequalities throughout the proof, in particular $\alpha\le  \frac{\alpha'}{2^{g+1}}$ and $\alpha < \frac{1}{2(q-r)}$. 

Let $\varepsilon$ be as in Lemma~\ref{lem:RandomXTreasury} for $q,r,g$ and $\alpha$. By Lemma~\ref{lem:RandomXTreasury}, there exists a spanning subhypergraph  $X\subseteq K_n^r$ such that $\Delta(X)\le 2n^{1-\varepsilon}$ and ${\rm Treasury}^g_q(K_n^r,K_n^r,X)$ is $\bigg(\binom{n}{q-r},~\alpha\cdot \binom{n}{q-r},~\frac{1}{2g(q-r)},~\alpha\bigg)$-regular. In addition since $\Delta(X) \le 2n^{1-\varepsilon}$, we note that every edge of $X$ has degree in ${\rm Reserve}_{K_q^r}(K_n^r,K_n^r\setminus X, X)$ at most $\binom{n}{q-r-1}\cdot (2n^{1-\varepsilon}) \le \frac{1}{2}\cdot \binom{n}{q-r}$ where we used that $n$ is large enough.

Let $a$ is as in Theorem~\ref{thm:HighGirthAbsorber} for $q$, $r$, $g$ and $\alpha$. Note $\alpha < \frac{1}{2(q-r)}$. Since $n$ is large enough, we have that 
$$\Delta(X) \le 2n^{1-\varepsilon} \le \frac{n}{\log^{ga} n}.$$ 

Hence by Theorem~\ref{thm:HighGirthAbsorber}, there exists a $K_q^r$-omni-absorber $A$ for $X$ with $\Delta(A)\le a\Delta\cdot \log^{a} \Delta$ and collective girth at least $g$, together with a subtreasury $T=(G_1,G_2,H)$ of ${\rm Proj}_g(A, K_n^r, X)$ that is 
$$\Bigg(\binom{n}{q-r},~3\alpha \cdot \binom{n}{q-r},~\frac{1}{4g(q-r)},~2\alpha\Bigg){\rm - regular}.$$

Let $d:=\frac{1}{(q-r)!}$, $\xi:=\frac{1}{2(q-r)!}$, and $\varepsilon' :=4\alpha$. Let $J:=K_n^r\setminus (X\cup A)$ and $\mathcal{J}:=G_1$. Note that $\mathcal{J}$ is $(d\pm \varepsilon')\cdot n^{q-r}$-regular. Moreover, since $\alpha$ is small enough, it follows from Proposition~\ref{prop:Turan} that 
$$\left|\left\{ S\subseteq \binom{V(J)\setminus V(e)}{q}: \binom{S\cup V(e)}{q}\subseteq \mathcal{J} \right\}\right| \ge \xi\cdot n^q,$$
for all $e\in E(J)$. Thus Lemma~\ref{lem:RegBoost}(a) and (b) hold for $J$ and $\mathcal{J}$ with parameters: $q$, $r$, $d$, $\xi$ and $\varepsilon'$. Moreover, $d\ge \xi \ge 2(2\sqrt{e})^r\cdot \varepsilon'$ where the last inequality follows since $\alpha$ is small enough. Hence by Lemma~\ref{lem:RegBoost} since $n$ is large enough, there exists a subhypergraph $G_1'$ of $\mathcal{J}$ such that $d_{G_1'}(v) = \left(1 \pm n^{-(q-r)/3}\right) \cdot \frac{d}{2} \cdot n^{q-r}$ for all $v\in V(G_1')$.

Let $T'=(G_1',G_2, H[G_1'\cup G_2])$. Since $G_1'$ is a spanning subhypergraph of $G_1$, we have by definition that $T'$ is a subtreasury of $T$. Let
$$D':=\left(1+n^{-(q-r)/3}\right)\frac{d}{2}\cdot n^{q-r},~~\sigma' := 2\cdot n^{-(q-r)/3} \cdot \frac{d}{2} \cdot n^{q-r}.$$

\begin{claim}
$T'$ is $(D',\sigma',\beta',\alpha')$-regular. 
\end{claim}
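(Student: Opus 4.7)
The plan is to verify each of the three bullet points in the definition of $(D', \sigma', \beta', \alpha')$-regularity for $T'=(G_1',G_2,H[G_1'\cup G_2])$, leveraging the fact that $T=(G_1,G_2,H)$ is $\bigl(D,\,3\alpha D,\,\tfrac{1}{4g(q-r)},\,2\alpha\bigr)$-regular with $D=\binom{n}{q-r}$, and that we chose $\alpha$ so small that $\alpha' \ge 2^{g+1}\alpha$ and $\beta' = \tfrac{1}{8g(q-r)} < \tfrac{1}{4g(q-r)}$. Throughout we use that $D'=(1+n^{-(q-r)/3})\cdot\tfrac{d}{2}\cdot n^{q-r}$ is, up to lower order terms, exactly $D/2$, and that $n$ can be taken large.

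First I would handle the \emph{quasi-regular} condition. The upper bound $\Delta(G_1')\le D'$ and the lower bound $d_{G_1'}(v)\ge D'-\sigma'$ for every $v\in V(G_1')$ (hence in particular for $v\in A':=V(G_1')\cap V(G_2)$) both follow immediately from the Boost Lemma conclusion $d_{G_1'}(v)=\bigl(1\pm n^{-(q-r)/3}\bigr)\tfrac{d}{2}\,n^{q-r}$ with our choice of $\sigma'=2n^{-(q-r)/3}\cdot\tfrac{d}{2}\,n^{q-r}$.

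Next I would verify the \emph{reserve/weighted-degree} bullet. For the upper bound $\Delta(G_2)\le D'$, the key is that \emph{this does not} follow from the regularity of $T$ (where the guarantee is only $\Delta(G_2)\le D=\binom{n}{q-r}$, and $D'<D$); instead I will invoke the separately established bound from earlier in the proof, namely that since $\Delta(X)\le 2n^{1-\varepsilon}$, every edge of $X$ has degree at most $\binom{n}{q-r-1}\cdot 2n^{1-\varepsilon}\le \tfrac{1}{2}\binom{n}{q-r}\le D'$ in ${\rm Reserve}_{K_q^r}(K_n^r,K_n^r\setminus X,X)$, and the projection only decreases degrees. The lower bound $d_{G_2}(v)\ge (D')^{1-\alpha'}$ for $v\in A'$ follows from $d_{G_2}(v)\ge D^{1-2\alpha}$ (from $T$), since $D'\ge D/3$ and $\alpha'\ge 4\alpha$ give $(D')^{1-\alpha'}\le D^{1-2\alpha}$ for $n$ large. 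The bound $\Delta_1\bigl(H[G_1'\cup G_2]^{(i)}\bigr)\le \alpha'(D')^{i-1}\log D'$ for $2\le i\le g$ follows from $\Delta_1(H^{(i)})\le 2\alpha D^{i-1}\log D$ (from $T$, and passing to the induced subhypergraph only decreases degrees), together with the arithmetic $\alpha'(D')^{i-1}\log D'\ge \alpha'\,2^{-(i-1)}D^{i-1}(\log D-O(1))\ge 2\alpha D^{i-1}\log D$, which holds because $\alpha'\ge 2^{g+1}\alpha$.

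Finally, I would handle the \emph{codegree} bullet. All three required bounds --- $\Delta_2(G_1'\cup G_2)\le (D')^{1-\beta'}$, $\Delta_t(H[G_1'\cup G_2]^{(s)})\le (D')^{s-t-\beta'}$ for $2\le t<s\le g$, and the two $2$-degree-type bounds --- follow from the corresponding bounds for $T$ with exponents $1-\beta$ and $s-t-\beta$ (which transfer to $T'$ since passing to $G_1'$ and $H[G_1'\cup G_2]$ only decreases codegrees), combined with the gap $\beta-\beta'=\tfrac{1}{8g(q-r)}>0$: we have $D^{1-\beta}\le (D')^{1-\beta'}$ for $n$ large because $D^{\beta'-\beta}=o(1)$ dominates the constant factor $(D'/D)^{1-\beta'}\ge \tfrac{1}{3}$, and the same comparison handles the exponent $s-t-\beta$ versus $s-t-\beta'$. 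I expect the only mildly delicate step is the $\Delta(G_2)\le D'$ check, precisely because it cannot be read off from the $T$-regularity and instead requires re-invoking the initial sparsity of $X$; the remaining verifications are uniform in flavor, each reducing to the single inequality that $\alpha'/\alpha$ is sufficiently large and $\beta-\beta'>0$.
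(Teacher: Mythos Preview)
Your proof sketch is correct and follows essentially the same approach as the paper's own proof: both verify the three bullets of regularity in turn, both identify that the only point not obtainable directly from the regularity of $T$ is the upper bound $\Delta(G_2)\le D'$ (which must instead be pulled from the earlier observation that edges of $X$ have reserve-degree at most $\tfrac{1}{2}\binom{n}{q-r}$), and both reduce the remaining checks to the elementary inequalities $D/2\le D'\le D$, $\alpha'\ge 2^{g+1}\alpha$, and $\beta-\beta'>0$. The constants you use ($D'\ge D/3$, $\alpha'\ge 4\alpha$) differ cosmetically from the paper's ($D'\ge D/2$, $\alpha'\ge 3\alpha$) but are equally valid.
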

\begin{proofclaim}
First we check the degrees of $G_1'$ as follows. First, we have from above that every vertex of $G_1$ has degree at most $D'$ in $G_1'$ and degree at least $D'-\sigma'$ in $G_1'$. 

Next we check the degrees of $G_2$. Let $D:= \binom{n}{q-r}$. Note that $D' \ge \frac{1}{2} \binom{n}{q-r}=\frac{D}{2}$. From before, we have that every vertex of ${\rm Reserve}_{K_q^r}(G,G\setminus X, X)$ has degree at most $\frac{1}{2} \binom{n}{q-r} \le D'$. Hence since $G_2\subseteq {\rm Reserve}_{K_q^r}(G,G\setminus X, X)$, we find that every vertex of $G_2$ has degree at most $D'$ in $G_2$.  

Since $T$ is regular with parameters as above, we find the following. Every vertex of $G_2$ has degree at least $D^{1-2\alpha}$ in $G_2$. Since $D\ge D'\ge \frac{D}{2}$, we find that
 $$D^{1-2\alpha} \ge \frac{D}{D^{2\alpha}} \ge \frac{D'}{(2D')^{2\alpha}} \ge (D')^{1-2\alpha}\cdot \frac{1}{2^{2\alpha}} \ge (D')^{1-3\alpha} \ge (D')^{1-\alpha'}$$ 
as desired since $D$ is large enough and $\alpha' \ge 2^{g+1} \cdot \alpha \ge 3\alpha$ (as $g\ge 3$).  

Let $H':= H[G_1'\cup G_2]$. Next we check $\Delta_i(H')$ as follows. Note that $H'\subseteq H$. Similarly by the definition of regular, we have that $\Delta_i(H) \le (2\alpha)\cdot D^{i-1}\cdot \log D$ for all $2\le i \le g$. Hence for all $2\le i\le g$, we find since $D' \ge \frac{D}{2}$ that
$$\Delta\left(H'^{(i)}\right) \le \Delta\left(H^{(i)}\right) \le 2\alpha \cdot (2D')^{i-1} \log (2D') \le 2^{g+1}\cdot \alpha \cdot (D')^{i-1} \log D' \le \alpha' \cdot (D')^{i-1} \cdot \log D'$$
as desired since $\alpha' \ge 2^{g+1}\cdot \alpha$.

Finally we check the various codegrees as follows. Note that $\frac{1}{4g(q-r)} = 2\beta'$. By definition of regular we have that: $G_1\cup G_2$ has codegrees at most $D^{1-2\beta'}$, $\Delta_{t}(H^{(s)}) \le D^{s-t-2\beta'}$ for all $2\le t< s\le g$, and the maximum $2$-codegree of $G_1\cup G_2$ with $H$ and the maximum common $2$-degree of $H$ are both at most $D^{1-2\beta'}$. 

Since $G_1'\subseteq G_1$ and $H'\subseteq H$, we have that the same holds for $G_1'$, $G_2$, and $H'$. Moreover since $D\le 2D'$, we have for $m\le g$, that 
$$D^{m-2\beta'} \le (2D')^{m-2\beta'} \le 2^g \cdot (D')^{m-2\beta'} \le (D')^{m-\beta'}$$
since $D$ is large enough. Hence $D^{1-2\beta'}\le (D')^{1-\beta'}$ and for all $2\le t< s\le g$, $D^{s-t-2\beta'}\le (D')^{s-t-\beta'}$. 

Thus we find that: $G_1'\cup G_2$ has codegrees at most $(D')^{1-\beta'}$, $\Delta_{t}(H'^{(s)}) \le (D')^{s-t-\beta'}$ for all $2\le t< s\le g$, and the maximum $2$-codegree of $G_1'\cup G_2$ with $H'$ and the maximum common $2$-degree of $H'$ are both at most $(D')^{1-\beta'}$.
\end{proofclaim}
 
Since $\beta' < \frac{1}{3}$ and $n$ is large enough, it follows that $\sigma' \le (D')^{1-\beta'}$. Furthermore, $D'\ge D_{\beta'}$ where $D_{\beta'}$ is as in Theorem~\ref{thm:ForbiddenSubmatchingReserves} for $r',g,$ and $\beta'$.

Thus by Theorem~\ref{thm:ForbiddenSubmatchingReserves}, there exists a perfect matching $M$ of $T'$. By Proposition~\ref{prop:SubTreasury}, $M$ is also a perfect matching of $T$. Similarly then by Proposition~\ref{prop:SubTreasury}, $M$ is also a perfect matching of ${\rm Proj}_g(A, K_n^r, X)$. Thus by Proposition~\ref{prop:FindSteiner}, we have that there exists a $K_q^r$-decomposition of $K_n^r$ of girth at least $g$ as desired.
\end{lateproof}

\section{High Girth Boosters}\label{s:GirthBoosters}

In this section, we prove  Theorem~\ref{thm:HighRootedGirthBooster} that there exist $K_r$-boosters of arbitrarily high rooted girth. 

\subsection{The Necessity of High Cogirth Boosters}

To prove Theorem~\ref{thm:HighGirthAbsorber} and hence Theorem~\ref{thm:HighGirthSteiner}, we will prove Theorem~\ref{thm:HighRootedGirthBooster} that there exist rooted $K_q^r$-boosters with rooted girth at least $g$. To do this, we proceed by induction on uniformity and use the existence of $K_{q-1}^{r-1}$-boosters to build $K_q^r$-boosters. However to ensure the resulting $K_q^r$-boosters have high rooted girth, we will need the existence of high girth boosters of lower uniformity with an even stronger property. This motivates the following definition.

\begin{definition}[Cogirth]\label{def:Cogirth}
Let $q > r\ge 1$ be integers. Let $\mathcal{B}_1$ and $\mathcal{B}_2$ be two $K_q^r$-packings of a graph $G$. We define the \emph{cogirth} of $\mathcal{B}_1$ and $\mathcal{B}_2$ as the smallest integer $g\ge 2$ for which $\mathcal{B}_1\cup \mathcal{B}_2$ contains an $(g(q-r)+r-1,g)$-configuration.

Similarly, if $B$ is a $K_q^r$-booster with $K_q^r$ decomposition $\mathcal{B}_1,\mathcal{B}_2$, we define the \emph{cogirth} of $B$ as the minimum of the girth of $\mathcal{B}_1$, the girth of $\mathcal{B}_2$, and the cogirth of $\mathcal{B}_1$ and $\mathcal{B}_2$.
\end{definition}

Having high cogirth is a stronger property than having high rooted girth since having high rooted girth follows from high cogirth by only considering subsets of $S\cup \mathcal{B}_{{\rm on}}$, and then only those that contain $S$.


Thus to inductively prove the existence of high rooted girth boosters (Theorem~\ref{thm:HighRootedGirthBooster}), we desire to prove the following theorem.

\begin{thm}[High Cogirth Booster]\label{thm:HighCogirthBooster}
Let $q > r\ge 1$ and $g\ge 2$ be integers. There exists a $K_q^r$-booster $B$ with cogirth at least $g$.
\end{thm}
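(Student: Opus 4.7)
The plan is to derive Theorem~\ref{thm:HighCogirthBooster} as an immediate corollary of Theorem~\ref{thm:HighCogirthSteiner}, the stronger form of Theorem~\ref{thm:HighGirthSteiner} announced in the introduction and proved in Section~\ref{s:CogirthPair}, which asserts that for $r\ge 2$ and all sufficiently large $n$ satisfying the divisibility conditions $\binom{q-i}{r-i}\mid \binom{n-i}{r-i}$ there exist two edge-disjoint $(n,q,r)$-Steiner systems $\mathcal{S}_1,\mathcal{S}_2$ on a common vertex set $[n]$ such that each $\mathcal{S}_j$ has girth at least $g$ and the pair $\mathcal{S}_1\cup\mathcal{S}_2$ has cogirth at least $g$ in the sense of Definition~\ref{def:Cogirth}.

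Given such a pair, the booster is manufactured with no additional work. First I would take $n$ large enough that Theorem~\ref{thm:HighCogirthSteiner} applies and the divisibility conditions are met, fix the guaranteed $\mathcal{S}_1,\mathcal{S}_2$, and set $B:=K_n^r$ with $\mathcal{B}_1:=\mathcal{S}_1$ and $\mathcal{B}_2:=\mathcal{S}_2$. Then $B$ is a $K_q^r$-booster since both $\mathcal{B}_j$ are $K_q^r$-decompositions of $B$ and $\mathcal{B}_1\cap\mathcal{B}_2=\emptyset$, and the three quantities whose minimum is the cogirth of $B$ in Definition~\ref{def:Cogirth}---the girth of $\mathcal{B}_1$, the girth of $\mathcal{B}_2$, and the cogirth of $\mathcal{B}_1\cup\mathcal{B}_2$---are each at least $g$ by the choice of $\mathcal{S}_1,\mathcal{S}_2$.

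The one case not addressed by this argument is the base uniformity $r=1$ (precisely the uniformity at which the induction in Theorem~\ref{thm:HighRootedGirthBooster} invokes Theorem~\ref{thm:HighCogirthBooster} when $r=2$), for which the nibble-based proof of Theorem~\ref{thm:HighCogirthSteiner} does not apply. Here the problem is elementary: a $K_q^1$-booster is just two edge-disjoint partitions $\pi_1,\pi_2$ of an $n$-set into $q$-blocks, the girth conditions are vacuous because the blocks of each $\pi_j$ are already vertex-disjoint, and the cogirth condition amounts to asking that no $g'<g$ blocks drawn from $\pi_1\cup\pi_2$ span at most $g'(q-1)$ vertices. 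For $n$ much larger than $q$ and $g$, a straightforward probabilistic argument (pick $\pi_1$ arbitrarily, pick $\pi_2$ uniformly among partitions disjoint from $\pi_1$, and union bound over the polynomially many potential small-span configurations) produces such a pair.

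Thus the genuine obstacle in this circle of ideas is not the proof of Theorem~\ref{thm:HighCogirthBooster} itself but the upstream Theorem~\ref{thm:HighCogirthSteiner}, whose proof adapts the nibble-with-reserves machinery of Theorem~\ref{thm:ForbiddenSubmatchingReserves} to a design hypergraph containing two labeled copies of each potential $K_q^r$ and a configuration hypergraph that forbids the Erd\H{o}s configurations in each of the two decompositions together with the $(i(q-r)+r-1,i)$ configurations that can arise across them; this modification is the work of Section~\ref{s:CogirthPair}. Once Theorem~\ref{thm:HighCogirthSteiner} is in hand, Theorem~\ref{thm:HighCogirthBooster} is essentially a one-line translation.
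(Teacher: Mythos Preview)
Your approach is essentially the paper's: for $r\ge 2$ the paper likewise takes $B=K_n^r$ and invokes Theorem~\ref{thm:HighCogirthSteiner} to obtain two $(n,q,r)$-Steiner systems $\mathcal{B}_1,\mathcal{B}_2$ of girth and cogirth at least $g$, observing that this is exactly a $K_q^r$-booster of cogirth at least $g$. Two small points. First, ``edge-disjoint'' is not the right descriptor---both $\mathcal{S}_1$ and $\mathcal{S}_2$ cover every edge of $K_n^r$; what you need (and what the paper uses) is $\mathcal{S}_1\cap\mathcal{S}_2=\emptyset$ as sets of $q$-subsets, which for $g\ge 3$ is automatic from cogirth $\ge g$ (a shared block would give a $(q,2)$-configuration with $q\le 2(q-r)+r-1$). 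Second, for $r=1$ the paper gives a cleaner argument than your probabilistic sketch: it observes that a $K_q^1$-booster of cogirth at least $g$ is equivalent (via $V(G)=\mathcal{B}_1\cup\mathcal{B}_2$, $E(G)=V(B)$) to a $q$-regular bipartite graph of girth at least $g$, and such graphs are classically known to exist for all $q,g\ge 2$. Your random-partition union bound can be made to work, but the bipartite-graph reformulation is both shorter and points to an off-the-shelf fact.
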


Note that every $K_q^1$-packing has infinite girth. Thus for the $r=1$ case of Theorem~\ref{thm:HighCogirthBooster}, only the cogirth condition is restrictive. Still, even then it is quite interesting. Namely, the $r=1$ case of Theorem~\ref{thm:HighCogirthBooster} is equivalent to the existence of a $q$-regular bipartite graph of girth at least $g$ (by letting $E(G)=V(B)$ and $V(G)=\mathcal{B}_1\cup \mathcal{B}_2$ and vice versa). It is a well-known fact that $q$-regular bipartite graphs of girth at least $g$ exist for all integers $q,g\ge 2$.

For $r\ge 2$ though, the only way we were able to construct $B$ as desired in Theorem~\ref{thm:HighCogirthBooster} is by letting $B=K_n^r$ for $n$ admissible and large enough. That is, we will prove a stronger form of Theorem~\ref{thm:HighGirthSteiner} as follows.

\begin{thm}[Existence of High Cogirth Designs]\label{thm:HighCogirthSteiner}
For all integers $q > r \geq 1$ and every integer $g\ge 2$, there exists $n_0 \ge 1$ such that for all admissible $n\ge n_0$ the following holds: 

There exist two $(n,q,r)$-Steiner systems both with girth at least $g$ and cogirth at least $g$. 
\end{thm}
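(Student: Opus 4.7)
The plan is to follow the proof of \cref{thm:HighGirthSteiner} essentially verbatim, but performed on a two-colored copy $2K_n^r$ of the complete $r$-graph (two edge-disjoint copies labeled $1$ and $2$), so that a $K_q^r$-decomposition of $2K_n^r$ respecting the coloring is precisely an ordered pair of $(n,q,r)$-Steiner systems. First, an analog of \cref{lem:RandomX} produces a random spanning $X \subseteq 2K_n^r$ of small max degree, split as $X = X_1 \cup X_2$ according to color. Next, define a pair-version of the girth-$g$ $K_q^r$-treasury whose configuration hypergraph $H$ has three kinds of edges: girth-$g$ Erd\H{o}s configurations lying entirely in color $1$, girth-$g$ Erd\H{o}s configurations lying entirely in color $2$, and cogirth-$g$ configurations using cliques of \emph{both} colors (that is, $j$ cliques from $\mathcal{B}_1 \cup \mathcal{B}_2$ with at least one of each color, spanning at most $j(q-r)+r-1$ vertices for some $2\le j\le g-1$). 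The pair-version of \cref{lem:RandomXTreasury} then shows this initial treasury is regular with the same parameters; for the new cross-color edges the codegree bounds follow from the fact that $t$ prescribed edges of $2K_n^r$ inside a cogirth-$g$ configuration of size $s$ still force at least $(s-t)(q-r)-1$ free vertices, and the single-vertex loss relative to the girth-only case is absorbed into a slightly smaller $\beta$.

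The second modification is a pair-version of the High Girth Omni-Absorber Theorem (\cref{thm:HighGirthAbsorber}): build a pair $(A_1,A_2)$ of $K_q^r$-omni-absorbers, $A_i$ for $X_i$ inside its color copy, both of collective girth at least $g$, and such that for every choice of matchings $M_i \in \mathcal{M}(\mathcal{B}_i)$ the combined packing $M_1 \cup M_2$ additionally has cogirth at least $g$. This proceeds through the same refined-absorption plus omni-booster sparsification as in \cref{thm:HighGirthOmniBooster}, with two differences: the quantum omni-booster's configuration hypergraph now also lists the cross-color cogirth configurations, and the underlying rooted $K_q^r$-boosters are required to have high cogirth, which is exactly the content of \cref{thm:HighCogirthBooster} already invoked in the proof of \cref{thm:HighRootedGirthBooster}. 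The Kim-Vu-based intrinsic and extrinsic concentration lemmas of Sections~\ref{s:Intrinsic} and~\ref{s:Extrinsic} lift directly, since the new cross-color bad events have the same polynomial codegree profile, just with slightly shifted exponents.

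Finally, apply \cref{lem:RegBoost} to each color class separately to obtain a regular enough subtreasury, then invoke \cref{thm:ForbiddenSubmatchingReserves} to find an $H$-avoiding perfect matching of the pair-treasury that covers all color-respecting edges outside the omni-absorbers. Lifting this matching via $\mathcal{Q}_{A_1}$ and $\mathcal{Q}_{A_2}$ on the two leftovers yields the desired pair $(\mathcal{S}_1,\mathcal{S}_2)$, each of girth at least $g$ and with mutual cogirth at least $g$. The hard part will be the two-color bookkeeping inside the omni-absorber step: namely ensuring that the random sparsification simultaneously produces a high-girth booster choice inside each color and a high-cogirth choice across the two colors, and that the resulting projection treasury remains $\bigl(\binom{n}{q-r},\,3\alpha\binom{n}{q-r},\,\tfrac{1}{4g(q-r)},\,2\alpha\bigr)$-regular. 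All required bounds are of the same polynomial order as in the single-color setting, but the space of bad events roughly doubles and every Kim-Vu union bound must now cover the cross-color configurations as well.
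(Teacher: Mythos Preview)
Your proposal is correct and follows the paper's approach closely: two parallel copies, a cogirth configuration hypergraph added to the treasury, pair-versions of the omni-absorber and omni-booster theorems, and a final application of \cref{thm:ForbiddenSubmatchingReserves}. One clarification worth making: the rooted $K_q^r$-boosters used in the omni-booster step do \emph{not} need high cogirth themselves---requiring that would be circular, since \cref{thm:HighCogirthBooster} at uniformity $r$ is essentially what you are proving, and what is invoked in \cref{thm:HighRootedGirthBooster} is only \cref{thm:HighCogirthBooster} at uniformity $r-1$. The paper reuses the same high-rooted-girth booster $B_0$ as in the single-color proof; the cross-color cogirth among the chosen boosters is enforced entirely by the sparsification step, via a cogirth analogue of \cref{lem:GirthConfigBoosterDegrees}. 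In fact the codegree bounds there (Proposition~\ref{prop:CogirthDegrees}) carry a uniform extra factor $\tfrac{1}{n}$ rather than the weaker $\tfrac{\Delta}{n}$ that appears in the girth case, so there is no ``single-vertex loss'' to absorb; the paper's slightly looser final parameters ($4\alpha$, $\tfrac{1}{8g(q-r)}$, $3\alpha$) are only there to accommodate one more layer of projection, not a genuinely worse codegree profile.
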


Indeed, the necessity of proving that a pair of high girth designs with high cogirth exists is perhaps to be expected. In the proof of the Existence Conjecture by Glock, K\"{u}hn, Lo, and Osthus~\cite{GKLO16}, in order to prove the existence of absorbers, it is necessary to prove the existence of a $K_q^r$-booster (note decompositions have by definition girth at least 3) such that no two $K_q^r$'s intersect in more that $r$ vertices (that is, no $(2(q-r)+r-1,2)$-configuration exists or equivalently the two decompositions have cogirth at least $3$). The way Glock, K\"{u}hn, Lo, and Osthus~\cite{GKLO16} found to do this for $r\ge 2$ was by proving the existence of two $(n,q-1,r-1)$-Steiner systems with cogirth at least $3$. Similarly, in order to prove the existence of high girth absorbers with a purely combinatorial approach and lacking other constructions, it is natural that we need to prove the existence of a high cogirth pair of high girth designs as in Theorem~\ref{thm:HighCogirthSteiner}.

\subsection{The First Construction}

To construct our high rooted girth boosters, we first show that we can almost construct them except for one special clique $S'$ of $\mathcal{B}_2$ as follows.

\begin{lem}\label{lem:HighGirthBoost}
Let $q\ge r\ge 2$ and $g\ge 3$ be integers. If Theorem~\ref{thm:HighCogirthBooster} holds for $r:'=r-1$, then there exists a $K_q^r$-booster $B$ with $K_q^r$ decompositions $\mathcal{B}_1,\mathcal{B}_2$ with $\mathcal{B}_1\cap \mathcal{B}_2 = \emptyset$ and $S\in \mathcal{B}_1$, $S'\in \mathcal{B}_2$ with $|S\cap S'|=q-1$ such that 
\begin{enumerate}
    \item[(1)] $\mathcal{B}_1$ and $\mathcal{B}_2$ have girth at least $g$, and
    \item[(2)] $\mathcal{B}_2\setminus \{S'\}$ has rooted girth at $V(S)\cup V(S')$ at least $g$.
\end{enumerate}
\end{lem}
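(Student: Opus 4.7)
The plan is to construct $B$ from a high-cogirth $K_{q-1}^{r-1}$-booster $B^\ast$ by ``coning'' onto two new vertices $v_1, v_2$ that swap roles between the two decompositions: cliques from $\mathcal{B}_1^\ast$ get attached to $v_1$ in $\mathcal{B}_1$ and to $v_2$ in $\mathcal{B}_2$, and vice-versa for $\mathcal{B}_2^\ast$.

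Explicitly, apply \cref{thm:HighCogirthBooster} inductively with uniformity $r-1$, clique size $q-1$, and cogirth parameter $g+1$ to obtain a $K_{q-1}^{r-1}$-booster $B^\ast$ on a vertex set $U$ with edge-disjoint decompositions $\mathcal{B}^\ast_1, \mathcal{B}^\ast_2$ of cogirth at least $g+1$. Write $U_C := V(C)$ for each $C \in \mathcal{B}^\ast_1 \cup \mathcal{B}^\ast_2$, let $v_1, v_2$ be two new vertices, and define $B$ on $V(B) := U \cup \{v_1,v_2\}$ as the edge-disjoint (multi-)union of the ``outer'' edges $\{e \cup \{v_1\}, e \cup \{v_2\} : e \in E(B^\ast)\}$ together with one copy of $\binom{U_C}{r}$ ``inner'' edges labelled by $C$, for each $C \in \mathcal{B}^\ast_1 \cup \mathcal{B}^\ast_2$. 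Set
\[
\mathcal{B}_1 := \{U_C \cup \{v_1\} : C \in \mathcal{B}^\ast_1\} \cup \{U_C \cup \{v_2\} : C \in \mathcal{B}^\ast_2\},
\]
let $\mathcal{B}_2$ be obtained by swapping $v_1 \leftrightarrow v_2$, fix any $C^\ast \in \mathcal{B}^\ast_1$, and set $S := U_{C^\ast} \cup \{v_1\} \in \mathcal{B}_1$ and $S' := U_{C^\ast} \cup \{v_2\} \in \mathcal{B}_2$, so $|S \cap S'| = q-1$. One checks routinely that each $\mathcal{B}_i$ is a $K_q^r$-decomposition of $B$ (each outer edge $e \cup \{v_j\}$ is covered by the clique in $\mathcal{B}_i$ whose underlying $(r-1)$-clique in $B^\ast$ contains $e$, and each inner $C$-copy by the clique indexed by $C$), and that $\mathcal{B}_1 \cap \mathcal{B}_2 = \emptyset$, since $\mathcal{B}^\ast_1 \cap \mathcal{B}^\ast_2 = \emptyset$ and $v_1 \neq v_2$.

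For (1), any $k$-configuration in $\mathcal{B}_1$ with $k < g$ splits as $k_1$ cliques coned to $v_1$ (from $\mathcal{B}_1^\ast$) plus $k_2$ coned to $v_2$ (from $\mathcal{B}_2^\ast$). If $k_1 = 0$ or $k_2 = 0$, the girth of $\mathcal{B}^\ast_j$ forces the $k$ underlying cliques to span $\geq k(q-r) + r$ vertices, and the single auxiliary vertex yields $\geq k(q-r) + r + 1$. If $k_1, k_2 \geq 1$, the cogirth of $B^\ast$ forces the $k$ underlying cliques from $\mathcal{B}^\ast_1 \cup \mathcal{B}^\ast_2$ to span $\geq k(q-r) + r - 1$ vertices, and both $v_1, v_2$ are included, also giving $\geq k(q-r) + r + 1$. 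Thus no $(k(q-r)+r, k)$-configuration exists in $\mathcal{B}_1$ (or symmetrically $\mathcal{B}_2$), so each has girth $\geq g$. For (2), given $\mathcal{B}' \subseteq \mathcal{B}_2 \setminus \{S'\}$ of size $k \leq g-1$, the underlying cliques in $B^\ast$ form $k$ distinct elements of $(\mathcal{B}^\ast_1 \setminus \{C^\ast\}) \cup \mathcal{B}^\ast_2$; adjoining $C^\ast$ yields $k+1$ distinct cliques in $\mathcal{B}^\ast_1 \cup \mathcal{B}^\ast_2$, which by cogirth $\geq g+1$ span at least $(k+1)(q-r) + r - 1$ vertices. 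Subtracting $|U_{C^\ast}| = q-1$, and noting that $v_1, v_2 \in V(S) \cup V(S')$ are already removed, gives $|V(\bigcup \mathcal{B}') \setminus (V(S) \cup V(S'))| \geq k(q-r)$, as required.

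The main obstacle is simply arithmetic bookkeeping. Cogirth is defined with the ``tighter'' slack $+\,r-2$ at uniformity $r-1$ (one less than the ``$+\,r-1$'' for girth at that uniformity), and this slack is precisely absorbed by the two new vertices $v_1, v_2$ present in any mixed configuration, producing the ``$+\,r$'' needed for girth at uniformity $r$. Choosing cogirth parameter $g+1$ rather than $g$ is exactly what is needed to handle the $+1$ coming from augmenting $\mathcal{B}'$ by $C^\ast$ in the rooted-girth analysis, where the root $V(S) \cup V(S')$ of size $q+1$ absorbs $U_{C^\ast}$ together with $v_1, v_2$ in exactly the right way.
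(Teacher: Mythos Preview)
Your construction is exactly the paper's: cone a high-cogirth $K_{q-1}^{r-1}$-booster (with cogirth parameter $g+1$) over two new vertices $v_1,v_2$, swap their roles between the two decompositions, and pick $S,S'$ as the two cones over a fixed $C^\ast\in\mathcal B_1^\ast$. Your verification of (1) matches the paper's case split (one apex versus both apices), and for (2) you argue directly via the cogirth bound on the $k+1$ underlying cliques, whereas the paper first records that $\mathcal B_2$ has girth $\ge g+1$ and then deduces (2) from that; both routes are valid and give the same arithmetic. Two cosmetic points: ``edge-disjoint decompositions'' should read ``disjoint'' (they decompose the same edge set), and your ``(multi-)union'' is in fact a simple union since cogirth $\ge 3$ forces distinct cliques in $\mathcal B_1^\ast\cup\mathcal B_2^\ast$ to share at most $r-1$ vertices, so no $r$-set is repeated.
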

\begin{nonlateproof}{lem:HighGirthBoost}
Since Theorem~\ref{thm:HighCogirthBooster} holds for $r':=r-1$ by assumption, we have that there exists a $K_{q-1}^{r-1}$-booster $B'$ with disjoint $K_q^r$-decompositions $\mathcal{B}_1'$, $\mathcal{B}_2'$ of $B'$ both of girth at least $g+1$ and cogirth at least $g+1$. Let $B$ be the $r$-uniform hypergraph with
$$V(B):= V(B')\cup \{v_1,v_2\},$$
and
$$E(B):= \bigg\{ \{v_i\}\cup e: e\in E(B'),~i\in [2]\bigg\}~\cup \bigcup_{Q\in \mathcal{B}_1'\cup\mathcal{B}_2'} \binom{Q}{r}.$$
Note that since $\mathcal{B}_1'$ and $\mathcal{B}_2'$ have cogirth at least $g$, we have that $\bigcup_{Q\in\mathcal{B}_1'} \binom{Q}{r}$ is disjoint from $\bigcup_{Q\in \mathcal{B}_2'} \binom{Q}{r}$.
For $i\in [2]$ and $Q\in \mathcal{B}_1'\cup \mathcal{B}_2'$, let
$$\phi_i(Q):= \bigg\{ \{v_i\}\cup e: e\in Q\bigg\} \cup \binom {Q}{r}.$$
Then let
$$\mathcal{B}_1:= \{ \phi_1(Q): \mathcal{Q}\in \mathcal{B}_1'\} \cup \{ \phi_2(Q): \mathcal{Q}\in \mathcal{B}_2'\},$$
and
$$\mathcal{B}_2:= \{ \phi_1(Q): \mathcal{Q}\in \mathcal{B}_2'\} \cup \{ \phi_2(Q): \mathcal{Q}\in \mathcal{B}_1'\}.$$

Note that $\mathcal{B}_1$ and $\mathcal{B}_2$ are $K_q^r$ decompositions of $B$.

\begin{claim}\label{claim:girthg}
$\mathcal{B}_1$ and $\mathcal{B}_2$ have girth at least $g+1$. 
\end{claim}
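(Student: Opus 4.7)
The plan is to reduce to $\mathcal{B}_1$ by the symmetry of the construction (which just swaps the roles of $\mathcal{B}_1'$ and $\mathcal{B}_2'$) and then to split by ``type''. By construction, every clique of $\mathcal{B}_1$ is either $\phi_1(Q)$ for some $Q\in\mathcal{B}_1'$ or $\phi_2(Q)$ for some $Q\in\mathcal{B}_2'$, and has vertex set $V(Q)\cup\{v_j\}\subseteq V(B')\cup\{v_1,v_2\}$. I would therefore fix an arbitrary collection of $i$ cliques of $\mathcal{B}_1$ with $2\le i\le g$, write $i_j$ (for $j\in\{1,2\}$) for the number of chosen cliques of the form $\phi_j(Q)$, let $\mathcal{Q}_j\subseteq\mathcal{B}_j'$ be the corresponding subfamily in $B'$, and set $V_j:=\bigcup_{Q\in\mathcal{Q}_j}V(Q)$. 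The total number of vertices spanned is then $|V_1\cup V_2|$ plus one for each $j\in\{1,2\}$ with $i_j\ge 1$, and the goal is to show this is at least $(q-r)i+r+1$, which rules out any $((q-r)i+r,i)$-configuration and so gives girth at least $g+1$.

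There are two cases to check. If $i_j=0$ for some $j$, then all $i$ cliques come from the single $K_{q-1}^{r-1}$-decomposition $\mathcal{B}_{3-j}'$; since this decomposition has girth at least $g+1$ and $2\le i\le g$, the corresponding vertex set $V_{3-j}$ has size at least $((q-1)-(r-1))i+(r-1)+1=(q-r)i+r$, and the single extra vertex $v_{3-j}$ closes the gap. If instead $i_1,i_2\ge 1$, then $\mathcal{Q}_1\cup\mathcal{Q}_2$ is a family of $i$ cliques in $\mathcal{B}_1'\cup\mathcal{B}_2'$, so the cogirth-at-least-$(g+1)$ hypothesis on the pair $(\mathcal{B}_1',\mathcal{B}_2')$ yields $|V_1\cup V_2|\ge (q-r)i+r-1$; the two extra vertices $v_1$ and $v_2$ then bring the total to $\ge(q-r)i+r+1$. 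The only conceptual point worth flagging is that this second (mixed) case is precisely where individual girth bounds on $\mathcal{B}_1'$ and $\mathcal{B}_2'$ would fail to control $|V_1\cap V_2|$, and it is exactly this mixed case that forces the inductive hypothesis to be phrased in terms of cogirth rather than merely girth of $B'$. Apart from that the argument is pure bookkeeping once $\phi_1,\phi_2$ are unpacked, and I do not foresee any further obstacles.
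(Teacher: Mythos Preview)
Your proof is correct and follows essentially the same approach as the paper's: reduce to $\mathcal{B}_1$ by symmetry, then split according to whether one or both of $v_1,v_2$ appear among the chosen cliques, invoking the girth bound on $\mathcal{B}_j'$ in the single-type case and the cogirth bound on $\mathcal{B}_1'\cup\mathcal{B}_2'$ in the mixed case. Your parametrization via $i_1,i_2$ is equivalent to the paper's casing on $|V(T)\cap\{v_1,v_2\}|$, and your closing remark on why cogirth (rather than mere girth) is forced is exactly the point.
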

\begin{proofclaim}
By symmetry, it suffices to prove the claim for $\mathcal{B}_1$. To that end, let $\mathcal{T} \subseteq \mathcal{B}_1$ with $|\mathcal{T}|=i$ for some $2\le i \le g$. Let $T:= \bigcup \mathcal{T}$. It suffices to prove that $|V(T)|\ge (q-r)\cdot i + r + 1$.

First suppose $|V(T)\cap \{v_1,v_2\}|=1$. We assume without loss of generality that $v_1\in V(T)$. Hence by construction, we have for every $T\in \mathcal{T}$ that $\phi_1^{-1}(T) \in \mathcal{B}_1'$. Let $\mathcal{T}' := \{\phi_1^{-1}(T):~T\in \mathcal{T}\}$ and let $T' := \bigcup \mathcal{T}'$. Since $\mathcal{B}_1'$ has girth at least $g+1$, we have that
$$|V(T')| \ge ((q-1)-(r-1))\cdot i + (r-1)+1.$$
Since $|V(T)|=|V(T')|+1$, it follows that $|V(T)|\ge (q-r)\cdot i + r+1$ as desired.

So we assume that $|V(T)\cap \{v_1,v_2\}|=2$. Let $\mathcal{T}':= \bigcup_{i\in[2]} \{\phi_i^{-1}(T):~T\in \mathcal{T},~v_i\in V(T)\}$ and let $T':= \bigcup \mathcal{T}'$. Since $\mathcal{B}_1'$ and $\mathcal{B}_2'$ have cogirth at least $g+1$, we have that
$$|V(T')|\ge ((q-1)-(r-1))\cdot i + (r-1).$$
Since $|V(T)|=|V(T')|+2$, it follows that $|V(T)|\ge (q-r)\cdot i + r+1$ as desired.
\end{proofclaim}

Let $S\in \mathcal{B}_1$. We may assume without loss of generality that $S=\phi_1(Q)$ for some $Q\in\mathcal{B}_1'$. Let $S':=\phi_2(Q)$. Note by definition that $|S\cap S'|=|Q|=q-1$.

\begin{claim}
$\mathcal{B}_2\setminus \{S'\}$ has rooted girth at $V(S)\cup V(S')$ at least $g$.    
\end{claim}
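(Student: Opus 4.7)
The plan is to verify the rooted girth inequality directly by invoking the cogirth of $\mathcal{B}_1', \mathcal{B}_2'$. Set $U := V(S) \cup V(S') = Q \cup \{v_1, v_2\}$ and fix any $\mathcal{T} \subseteq \mathcal{B}_2 \setminus \{S'\}$ with $|\mathcal{T}| = i$ for some $1 \le i \le g-1$; the goal is to show $|V(\bigcup \mathcal{T}) \setminus U| \ge (q-r)\cdot i$. For each $T \in \mathcal{T}$, reading off the definition of $\mathcal{B}_2$, there is a unique $j \in \{1,2\}$ and a unique clique $Q' \in \mathcal{B}_1' \cup \mathcal{B}_2'$ with $T = \phi_j(Q')$, where $Q' \in \mathcal{B}_2'$ whenever $j=1$ and $Q' \in \mathcal{B}_1'$ whenever $j=2$. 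I collect these preimages into a set $\mathcal{T}' \subseteq \mathcal{B}_1' \cup \mathcal{B}_2'$; the facts that $\mathcal{B}_1' \cap \mathcal{B}_2' = \emptyset$ and that $\phi_1(Q')$ and $\phi_2(Q')$ are distinguished by which of $v_1, v_2$ they contain together imply $|\mathcal{T}'| = i$.

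The key observation is that $Q \notin \mathcal{T}'$: if $Q$ were the preimage of some $T = \phi_j(Q) \in \mathcal{T}$, then since $Q \in \mathcal{B}_1'$ the correspondence above forces $j = 2$, yielding $T = \phi_2(Q) = S'$, which contradicts $\mathcal{T} \subseteq \mathcal{B}_2 \setminus \{S'\}$. Hence $\{Q\} \cup \mathcal{T}'$ is a family of exactly $i+1$ distinct cliques drawn from $\mathcal{B}_1' \cup \mathcal{B}_2'$.

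Now I apply the hypothesis that the cogirth of $\mathcal{B}_1', \mathcal{B}_2'$ is at least $g+1$: since $i+1 \le g$, the family $\{Q\} \cup \mathcal{T}'$ cannot constitute an $((i+1)(q-r) + r - 2,\, i+1)$-configuration of $K_{q-1}^{r-1}$'s, and therefore
\[
\left|\bigcup \left(\{Q\} \cup \mathcal{T}'\right)\right| \ge (i+1)(q-r) + r - 1.
\]
Since $|Q| = q-1$, subtracting yields $\left|\bigcup \mathcal{T}' \setminus Q\right| \ge i(q-r)$. Finally, from the construction of the $\phi_j$ we have $V(\bigcup \mathcal{T}) \subseteq \{v_1, v_2\} \cup \bigcup \mathcal{T}'$ with $\{v_1, v_2\} \subseteq U$, so $V(\bigcup \mathcal{T}) \setminus U = \bigcup \mathcal{T}' \setminus Q$, giving the desired bound.

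The argument is essentially a clean computation, and no single step is hard. The one small conceptual point — which is what I expect could trip up a naive approach — is to use cogirth \emph{uniformly}, since it simultaneously bounds the span of any mixed family drawn from $\mathcal{B}_1' \cup \mathcal{B}_2'$; this lets one avoid a three-way case split on which of $v_1, v_2$ appear in $V(\bigcup \mathcal{T})$ (which would otherwise require invoking the girth of $\mathcal{B}_1'$ in one branch, the girth of $\mathcal{B}_2'$ in another, and the cogirth only in the mixed branch), and it is also why the hypothesis of high \emph{cogirth} (rather than merely high girth) of the lower-uniformity booster is indispensable.
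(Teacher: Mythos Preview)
Your proof is correct, but it takes a genuinely different route from the paper. The paper argues entirely at the $r$-uniform level: it adjoins $S'$ (not $S$) to $\mathcal{T}$, invokes the already-proved Claim that $\mathcal{B}_2$ has girth at least $g+1$ to get $|V(\mathcal{T}\cup\{S'\})|\ge (q-r)(i+1)+r+1$, and then subtracts $|V(S)\cup V(S')|=q+1$. You instead descend to the $(r-1)$-uniform level, adjoin $Q$ to the preimage family $\mathcal{T}'$, and appeal directly to the cogirth hypothesis on $\mathcal{B}_1'\cup\mathcal{B}_2'$. The paper's route is a touch more economical since it recycles the previous claim and avoids tracking the $\phi_j$ preimages; your route is more self-contained (it does not depend on the earlier claim) and makes the dependence on high cogirth of the lower-uniformity booster completely explicit, which aligns nicely with the point you make in your closing paragraph. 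Both arguments ultimately hinge on the same $+1$ gained by enlarging the family by one clique and then subtracting the root.
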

\begin{proofclaim}
Let $\mathcal{T} \subseteq \mathcal{B}_2\setminus \{S'\}$ with $|\mathcal{T}|=i$ for some $2\le i \le g-1$. Let $T:= \bigcup \mathcal{T}$. It suffices to prove that $|V(T)\setminus (V(S)\cup V(S'))|\ge (q-r)\cdot i$.

Let $\mathcal{T}':= \mathcal{T}\cup \{S'\}$ and let $T':= \bigcup \mathcal{T}'$. By Claim~\ref{claim:girthg}, we have that $\mathcal{B}_2$ has girth at least $g+1$. Since $|\mathcal{T}'|=i+1\le g$, it follows that 
$$|V(T')|\ge (q-r)\cdot (i+1) + r + 1 = (q-r)\cdot i + q + 1.$$
Thus we find that
\begin{align*}|V(T)\setminus (V(S)\cup V(S'))| &\ge |V(T)| - |V(S)\cup V(S')| \\
&\ge (q-r)\cdot i + q + 1 - (q+1) = (q-r)\cdot i,
\end{align*}
as desired
\end{proofclaim}

\noindent Hence (1) and (2) hold as desired.
\end{nonlateproof}

\subsection{Iterating the Construction}

We are now prepared to prove our high rooted girth booster theorem as follows.

\begin{lateproof}{thm:HighRootedGirthBooster}
Let $B$ be a $K_q^r$-booster with $K_q^r$ decompositions $\mathcal{B}_1,\mathcal{B}_2$ and $S\in \mathcal{B}_1$, $S'\in \mathcal{B}_2$ with $|S\cap S'|=i$ for some $i\in [r-1]_0$ such that 
\begin{enumerate}
    \item[(1)] $\mathcal{B}_1$ and $\mathcal{B}_2$ have girth at least $g$, and
    \item[(2)] $\mathcal{B}_2\setminus \{S'\}$ has rooted girth at $V(S)\cup V(S')$ at least $g$,
\end{enumerate}
and subject to those conditions $i$ is minimized. Note such a choice of booster exists by Lemma~\ref{lem:HighGirthBoost} (where $i=r-1$).

First suppose that $i=0$. By (1), $\mathcal{B}_1$ and $\mathcal{B}_2$ have girth at least $g$. We claim that $\mathcal{B}_2$ has rooted girth at $V(S)$ at least $g$. To see this let, $\mathcal{T}\subseteq \mathcal{B}_2$ with $\mathcal{T}=j$ for some $2\le j \le g-1$. Let $T:= \bigcup \mathcal{T}'$. It suffices to prove that $|V(T)\setminus V(S)|\ge (q-r)\cdot i$. If $S'\not\in \mathcal{T}$, then this follows from (2). So we assume $S'\in \mathcal{T}$. Since $i=0$, we have that $|V(S')\setminus V(S)|=q$. By (2), we have that $|V(T)\setminus (V(S)\cup V(S'))|\ge (q-r)\cdot (i-1)$. Hence it follows that $|V(T)\setminus V(S)|\ge q + (q-r)\cdot (i-1) \ge (q-r)\cdot i$ as desired.

So we assume $i > 0$. By Lemma~\ref{lem:HighGirthBoost}. There exists a $K_q^r$ booster $B'$ with $K_q^r$ decompositions $\mathcal{B}_1', \mathcal{B}_2'$ where $S'\in \mathcal{B}_1'$, $S''\in \mathcal{B}_2$ with $|S'\cap S''|=r-1$ such that Lemma~\ref{lem:HighGirthBoost}(1)-(2) hold. Crucially we assume here that $V(B')\cap V(B) = V(S')$, $E(B)\cap E(B') = E(S')$ and that $V(S')\setminus V(S'') \subseteq V(S)\cap V(S')$ (which is possible since $V(S)\cap V(S')\ne \emptyset$). It follows that $|S\cap S''| < |S\cap S'| = i$. 

Let $B'' := B\cup B'$ (where $E(S')$ appears only once). Let $\mathcal{B}_1'' := \mathcal{B}_1\cup (\mathcal{B}_1'\setminus \{S'\})$ and let $\mathcal{B}_2'' := (\mathcal{B}_2\setminus \{S'\})\cup\mathcal{B}_2'$. Note that $\mathcal{B}_1''$ and $\mathcal{B}_2''$ are $K_q^r$ decompositions of $B''$. Moreover, $S\in \mathcal{B}_1''$ and $S''\in \mathcal{B}_2''$.

We claim that (1) and (2) hold for $B''$, $\mathcal{B}_1''$, $\mathcal{B}_2''$, $S$ and $S''$, contradicting the choice of $B$, $\mathcal{B}_1$, $\mathcal{B}_2$, $S$ and $S'$ since $|S\cap S''| < i$. 

\begin{claim}\label{claim:girthg2}
$\mathcal{B}_1''$ and $\mathcal{B}_2''$ have girth at least $g$.
\end{claim}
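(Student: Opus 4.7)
The plan is to prove that both $\mathcal{B}_1''$ and $\mathcal{B}_2''$ have girth at least $g$ by splitting any small configuration according to whether its cliques come from the $B$-side or the $B'$-side and exploiting the fact that the two sides intersect only in $V(S')$. By symmetry I will only discuss $\mathcal{B}_1''$ in detail; the argument for $\mathcal{B}_2''$ is identical after swapping the roles of the two boosters and using $\mathcal{B}_2 \setminus \{S'\} \cup \{S'\} = \mathcal{B}_2$.

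Fix $\mathcal{T} \subseteq \mathcal{B}_1''$ with $2 \le |\mathcal{T}| = j \le g-1$ and let $\mathcal{T}_1 := \mathcal{T} \cap \mathcal{B}_1$, $\mathcal{T}_2 := \mathcal{T} \cap (\mathcal{B}_1' \setminus \{S'\})$, with sizes $j_1, j_2$. The goal is to show $|V(\bigcup \mathcal{T})| \ge (q-r) j + r + 1$. Since $V(B) \cap V(B') = V(S')$, the vertex sets $V(\bigcup \mathcal{T}_1)$ and $V(\bigcup \mathcal{T}_2)$ intersect only in $V(S')$. If $\mathcal{T}_1 = \emptyset$ or $\mathcal{T}_2 = \emptyset$, then $\mathcal{T}$ is entirely contained in $\mathcal{B}_1'$ or in $\mathcal{B}_1$, both of which have girth at least $g$ (by the lemma and by hypothesis), so the bound is immediate.

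Now suppose $j_1, j_2 \ge 1$. Since $j_2 + 1 \le j \le g - 1$, the set $\mathcal{T}_2 \cup \{S'\} \subseteq \mathcal{B}_1'$ has size at most $g-1$, so the girth of $\mathcal{B}_1'$ yields
\[
|V(\bigcup \mathcal{T}_2) \cup V(S')| \ge (q-r)(j_2+1) + r + 1,
\]
which after subtracting $q = |V(S')|$ gives $|V(\bigcup \mathcal{T}_2) \setminus V(S')| \ge (q-r) j_2 + 1$. Since $V(\bigcup \mathcal{T}_2) \setminus V(S')$ is disjoint from $V(\bigcup \mathcal{T}_1)$, I obtain
\[
|V(\bigcup \mathcal{T})| \ge |V(\bigcup \mathcal{T}_1)| + (q-r) j_2 + 1.
\]
When $j_1 \ge 2$, the girth of $\mathcal{B}_1$ gives $|V(\bigcup \mathcal{T}_1)| \ge (q-r) j_1 + r + 1$, so the total is at least $(q-r) j + r + 2$. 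When $j_1 = 1$, $|V(\bigcup \mathcal{T}_1)| = q = (q-r) + r$, so the total is at least $(q-r)(j_2+1) + r + 1 = (q-r) j + r + 1$. Either way, the required bound holds.

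The argument for $\mathcal{B}_2''$ is the mirror image: split $\mathcal{T} \subseteq \mathcal{B}_2''$ as $\mathcal{T}_1 \subseteq \mathcal{B}_2 \setminus \{S'\}$ and $\mathcal{T}_2 \subseteq \mathcal{B}_2'$, then apply the girth of $\mathcal{B}_2$ to $\mathcal{T}_1 \cup \{S'\}$ to gain the crucial ``$+1$'' outside $V(S')$, and finish by the same case analysis on $j_2$. The only conceptual subtlety — what I expect to be the main (minor) obstacle — is seeing why the $j_1 = 1$ case is tight rather than slack: the extra vertex gained from appending $S'$ to $\mathcal{T}_2$ exactly compensates for the missing ``$+1$'' that the single clique $\mathcal{T}_1$ fails to supply, and this is precisely what makes the construction work without needing any property stronger than girth (in particular, the rooted girth property (2) is not needed for this claim, though it will be needed for (2) of $B''$).
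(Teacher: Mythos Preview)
Your proof is correct and follows essentially the same approach as the paper's: split the configuration across the two boosters, use the girth of one decomposition together with $S'$ to extract the extra ``$+1$'' outside $V(S')$, and combine with the trivial bound on the other side. The only cosmetic difference is that the paper uses the uniform bound $|V(\bigcup\mathcal{T}_1)|\ge (q-r)j_1+r$ (valid also when $j_1=1$) in place of your case split $j_1=1$ versus $j_1\ge 2$, and your observation that property~(2) is not needed here is exactly right.
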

\begin{proofclaim}
First we prove the claim for $\mathcal{B}_1''$. To that end, let $\mathcal{T}_1'' \subseteq \mathcal{B}_1''$ with $2\le |\mathcal{T}''| \le g-1$. Let $T_1'':= \bigcup \mathcal{T}_1''$. It suffices to prove that $|V(T_1'')|\ge (q-r)\cdot |\mathcal{T}_1''| + r + 1$.

Let $\mathcal{T}_1 := \mathcal{T}_1''\cap \mathcal{B}_1$ and let $V(T_1) = \bigcup \mathcal{T}_1$. Similarly let $\mathcal{T}_1' := \mathcal{T}_1''\cap (\mathcal{B}_1'\setminus \{S'\})$ and let $T_1' := \bigcup \mathcal{T}_1'$. Since $\mathcal{B}_1''$ is the disjoint union of $\mathcal{B}_1$ and $\mathcal{B}_1'\setminus \{S'\}$, we have that $\mathcal{T}''$ is the disjoint union of $\mathcal{T}$ and $\mathcal{T}'$.

Since $\mathcal{B}_1$ and $\mathcal{B}_1'$ have girth at least $g$ by (1) and Lemma~\ref{lem:HighGirthBoost}(1) respectively, we may assume that $\mathcal{T}_1,\mathcal{T}_1'\ne\emptyset$. Since $\mathcal{B}_1$ has girth at least $g$, it follows that $$|V(T_1)|\ge (q-r)\cdot |\mathcal{T}_1| + r$$ 
(note there is no $+1$ term here since $|\mathcal{T}_1|$ may equal $1$). 

Further note that $|\mathcal{T}_1'| \le g-2$. Thus since $\mathcal{B}_1'$ has girth at least $g$, we find that 
$$|V(T_1'\cup S')| \ge (q-r)\cdot (|\mathcal{T}_1'|+1)+r+1=(q-r)\cdot |\mathcal{T}_1'| + q + 1.$$
Thus 
$$|V(T_1')\setminus V(B)| =|V(T_1'\cup S')|-q\ge (q-r)\cdot |\mathcal{T}_1'|+1.$$
Combining we find that
$$|V(T_1'')| = |V(T_1)|+|V(T_1')\setminus V(B')| \ge (q-r)\cdot |\mathcal{T}_1| + r + (q-r)\cdot |\mathcal{T}_1'| + 1 = (q-r)\cdot |\mathcal{T}_1''| + r + 1,$$
as desired.

Finally we prove the claim for $\mathcal{B}_2''$. To that end, let $\mathcal{T}_2'' \subseteq \mathcal{B}_2''$ with $2\le |\mathcal{T}_2''| \le g-1$. Let $T_2'':= \bigcup \mathcal{T}_2''$. It suffices to prove that $|V(T_2)|\ge (q-r)\cdot |\mathcal{T}_2''| + r + 1$.

Let $\mathcal{T}_2 := \mathcal{T}_2''\cap (\mathcal{B}_2\setminus \{S'\})$ and let $V(T_2) = \bigcup \mathcal{T}_2$. Similarly let $\mathcal{T}_2' := \mathcal{T}_2''\cap \mathcal{B}_2'$ and let $T_2' := \bigcup \mathcal{T}_2'$. Since $\mathcal{B}_2''$ is the disjoint union of $\mathcal{B}_2\setminus \{S'\}$ and $\mathcal{B}_2'$, we have that $\mathcal{T}_2''$ is the disjoint union of $\mathcal{T}_2$ and $\mathcal{T}_2'$.

Since $\mathcal{B}_2$ and $\mathcal{B}_2'$ have girth at least $g$ by (1) and Lemma~\ref{lem:HighGirthBoost}(1) respectively, we may assume that $\mathcal{T}_2,\mathcal{T}_2'\ne\emptyset$. Since $\mathcal{B}_2'$ has girth at least $g$, it follows that $$|V(T_2')|\ge (q-r)\cdot |\mathcal{T}_2'| + r$$ 
(note there is no $+1$ term here since $|\mathcal{T}_2'|$ may equal $1$). 

Further note that $|\mathcal{T}_2| \le g-2$. Thus since $\mathcal{B}_2$ has girth at least $g$, we find that 
$$|V(T_2\cup S')| \ge (q-r)\cdot (|\mathcal{T}_2|+1)+r+1=(q-r)\cdot |\mathcal{T}_2| + q + 1.$$
Thus 
$$|V(T_2)\setminus V(B')| =|V(T_2\cup S')|-q\ge (q-r)\cdot |\mathcal{T}_2|+1.$$
Combining we find that
$$|V(T_2'')| = |V(T_2')|+|V(T_2)\setminus V(B')| \ge (q-r)\cdot |\mathcal{T}_2'| + r + (q-r)\cdot |\mathcal{T}_2| + 1 = (q-r)\cdot |\mathcal{T}_2''| + r + 1,$$
as desired.
\end{proofclaim}

\begin{claim}
$\mathcal{B}_2''\setminus \{S''\}$ has rooted girth at $V(S)\cup V(S'')$ at least $g$.
\end{claim}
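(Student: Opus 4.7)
The plan is to verify the definition of rooted girth directly: for any $\mathcal{T} \subseteq \mathcal{B}_2'' \setminus \{S''\}$ with $1 \le |\mathcal{T}| \le g-1$ and $T := \bigcup \mathcal{T}$, I would show $|V(T) \setminus (V(S) \cup V(S''))| \ge (q-r)\cdot |\mathcal{T}|$. Mirroring the structure of the proof of Claim~\ref{claim:girthg2}, the idea is to split $\mathcal{T}$ along the decomposition $\mathcal{B}_2'' = (\mathcal{B}_2 \setminus \{S'\}) \cup \mathcal{B}_2'$ into $\mathcal{T}_2 := \mathcal{T} \cap (\mathcal{B}_2 \setminus \{S'\})$ and $\mathcal{T}_2' := \mathcal{T} \cap (\mathcal{B}_2' \setminus \{S''\})$ (using $S'' \in \mathcal{B}_2'$) with unions $T_2$ and $T_2'$; then invoke hypothesis (2) of the outer theorem on $\mathcal{T}_2$ and Lemma~\ref{lem:HighGirthBoost}(2) on $\mathcal{T}_2'$.

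The subtlety is that the two hypotheses give bounds against different root sets: $V(S) \cup V(S')$ for $\mathcal{T}_2$ and $V(S') \cup V(S'')$ for $\mathcal{T}_2'$, whereas what is needed is a bound against $V(S) \cup V(S'')$. The key observation that reconciles this is the inclusion $V(S') \subseteq V(S) \cup V(S'')$, which is immediate from the construction's requirement that $V(S') \setminus V(S'') \subseteq V(S) \cap V(S')$. Combined with $V(T_2) \subseteq V(B)$, $V(T_2') \subseteq V(B')$, and $V(B) \cap V(B') = V(S')$, this observation has two consequences I would exploit: (i) $V(T_2) \cap V(S'') \subseteq V(S')$ and $V(T_2') \cap V(S) \subseteq V(S')$, so that replacing the ``wrong'' root set by $V(S) \cup V(S'')$ removes (weakly) no more vertices from $V(T_2)$ or $V(T_2')$ than the original rooted sets do; and (ii) $V(T_2) \cap V(T_2') \subseteq V(S') \subseteq V(S) \cup V(S'')$, so that the sets $V(T_2) \setminus (V(S) \cup V(S''))$ and $V(T_2') \setminus (V(S) \cup V(S''))$ are disjoint.

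With (i) in hand, hypothesis (2) and Lemma~\ref{lem:HighGirthBoost}(2) yield $|V(T_2) \setminus (V(S) \cup V(S''))| \ge (q-r)|\mathcal{T}_2|$ and $|V(T_2') \setminus (V(S) \cup V(S''))| \ge (q-r)|\mathcal{T}_2'|$ respectively (with a trivial contribution when either of $\mathcal{T}_2, \mathcal{T}_2'$ is empty); the disjointness from (ii) then allows these to be added to obtain $|V(T) \setminus (V(S) \cup V(S''))| \ge (q-r)\cdot |\mathcal{T}|$, as required.

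The main obstacle is really just noticing the identity $V(S') \subseteq V(S) \cup V(S'')$, which is precisely why the construction of $B'$ was arranged with $V(S') \setminus V(S'') \subseteq V(S) \cap V(S')$; once that is used to translate between the two differently rooted hypotheses, the remainder is parallel in spirit to Claim~\ref{claim:girthg2} and amounts to careful bookkeeping.
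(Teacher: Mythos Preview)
Your proposal is correct and follows essentially the same approach as the paper's proof: split $\mathcal{T}$ along $(\mathcal{B}_2\setminus\{S'\})\cup\mathcal{B}_2'$, apply hypothesis~(2) and Lemma~\ref{lem:HighGirthBoost}(2) respectively, and use $V(B)\cap V(B')=V(S')$ together with $V(S')\subseteq V(S)\cup V(S'')$ to combine the two bounds. If anything, you are slightly more explicit than the paper in isolating the inclusion $V(S')\subseteq V(S)\cup V(S'')$ as the key reason the two differently-rooted estimates can be merged.
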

\begin{proofclaim}
Let $\mathcal{T}'' \subseteq \mathcal{B}_2''\setminus \{S''\}$ with $|\mathcal{T}''|$ where $1\le |\mathcal{T}''| \le g-1$. Let $T'':= \bigcup \mathcal{T}''$. It suffices to prove that $|V(T'')\setminus (V(S)\cup V(S''))|\ge (q-r)\cdot |\mathcal{T}''|+r+1$.

Let $\mathcal{T} := \mathcal{T}''\cap (\mathcal{B}_2\setminus \{S'\})$ and let $V(T) = \bigcup \mathcal{T}$. Similarly let $\mathcal{T}' := \mathcal{T}''\cap \mathcal{B}_2'$ and let $T' := \bigcup \mathcal{T}'$. Since $\mathcal{B}_2''$ is the disjoint union of $\mathcal{B}_2\setminus \{S'\}$ and $\mathcal{B}_2'$, we have that $\mathcal{T}''$ is the disjoint union of $\mathcal{T}$ and $\mathcal{T}'$.

Note that $|\mathcal{T}|, |\mathcal{T}'| \le |\mathcal{T}''| \le g-1$. By (2), we have that $\mathcal{B}_2\setminus \{S'\}$ has rooted girth at $V(S)\cup V(S')$ at least $g$. Hence 
$$|V(T)\setminus (V(S)\cup V(S')| \ge (q-r)\cdot |\mathcal{T}|.$$
By Lemma~\ref{lem:HighGirthBoost}, we have that $\mathcal{B}_2'$ has rooted girth at $V(S')\cup V(S'')$ at least $g$. 
Hence 
$$|V(T')\setminus (V(S')\cup V(S'')| \ge (q-r)\cdot |\mathcal{T}'|.$$
Note that $V(T)\cap (V(S'')\setminus V(S')) = \emptyset$ since $V(T)\subseteq V(B)$. Similarly $V(T')\cap (V(S)\setminus V(S')) = \emptyset$ since $V(T')\subseteq V(B')$. Combining we find that
\begin{align*}
|V(T'')\setminus (V(S)\cup V(S''))| &= |V(T)\setminus (V(S)\cup V(S'))| + |V(T')\setminus (V(S')\cup V(S''))| \\
&\ge (q-r)\cdot |\mathcal{T}| + (q-r)\cdot |\mathcal{T}'| = (q-r)\cdot |\mathcal{T}''|,
\end{align*}
as desired.
\end{proofclaim}

\end{lateproof}

\section{High Girth Omni-Boosters}\label{s:GirthOmniBoosters}

In this section, we state our high girth omni-booster theorem (Theorem~\ref{thm:HighGirthOmniBooster}) and show how it implies Theorem~\ref{thm:HighGirthAbsorber}. As mentioned in the introduction to prove our high girth omni-booster theorem, we will sparsify a set of possible omni-boosters and show that inside of these sparsified sets of omni-boosters lies a choice of high girth omni-booster as desired. To that end, we define a \emph{quantum omni-booster} (see Definition~\ref{def:QuantumOmniBooster}) which is like an omni-booster except that every element of $\mc{H}$ is permitted a set of boosters. We then state our high girth quantum omni-booster theorem (Theorem~\ref{thm:HighGirthQuantumOmniBooster}) and show it implies Theorem~\ref{thm:HighGirthOmniBooster}. Finally, we will show how Theorem~\ref{thm:HighGirthQuantumOmniBooster} follows by taking a \emph{random $p$-sparsification} of a \emph{full} omni-booster with the appropriate choice of $p$; this is done via two lemmas, Lemma~\ref{lem:RandomQuantumIntrinsic} which shows that certain intrinsic properties of the quantum booster (small maximum degree, large collective girth) hold with high probability and Lemma~\ref{lem:RandomQuantumExtrinsic} which shows that certain extrinsic properties of the quantum booster (namely its projection treasury is regular enough) hold with high probability. Lemma~\ref{lem:RandomQuantumIntrinsic} is proved in Section~\ref{s:Intrinsic} and Lemma~\ref{lem:RandomQuantumExtrinsic} is proved in Section~\ref{s:Extrinsic}.

For the purposes of brevity in the following theorem and lemma statements, it is useful to codify the setup via the following definition.

\begin{definition}
We say a triple $\mc{S}=(G,A,X)$ is a \emph{$K_q^r$-sponge} if $G$ is an $r$-uniform hypergraph, $X\subseteq G$ and $A\subseteq G\setminus X$ is a $K_q^r$-omni-absorber for $X$. We will also write $\mc{S}=(G,A,X,\mc{H})$ where $\mc{H}$ is the decomposition family of $A$ if we desire to specify the decomposition family.

We say $\mc{S}$ is 
\begin{itemize}
    \item \emph{$(\alpha,g)$-regular} if ${\rm Treasury}_q^g(G,~G\setminus A,~X)$ is $\bigg(\binom{v(G)}{q-r},~\alpha\cdot \binom{v(G)}{q-r},~\frac{1}{2g(q-r)},~\alpha\bigg)$-regular, 
    \item \emph{$a$-bounded} if $\Delta(X)\le \frac{v(G)}{\log^a v(G)}$, 
    \item \emph{$C$-refined} if $A$ is $C$-refined and $\Delta(A)\le C\cdot \max\left\{ \Delta(X),~v(G)^{1-\frac{1}{r}}\cdot \log v(G)\right\}$.
\end{itemize}
\end{definition}

\subsection{Girth-$g$ Projections of Omni-Boosters}

Here is the canonical omni-absorber decomposition function that an omni-booster yields.

\begin{proposition}\label{prop:CanonicalBoost}
Let $q > r\ge 1$ be integers. Let $(G,A,X,\mc{H})$ be a $K_q^r$-sponge and let $\mc{Q}_A$ denote the decomposition function of $A$. If $B$ is a $K_q^r$-omni-booster for $A$ and $X$ in $G$ with booster family $\mathcal{B}=(B_H: H\in\mathcal{H})$, then $A\cup B$ is a $K_q^r$-omni-absorber for $X$ with decomposition family $\bigcup_{H\in \mathcal{H}} (\mathcal{B}_H)_{{\rm on}}\cup (\mathcal{B}_H)_{{\rm off}}$ and decomposition function
$$\mathcal{Q}_{A\cup B}(L):= \bigcup_{H\in \mathcal{Q}_A(L)} (\mathcal{B}_H)_{{\rm on}}~\cup \bigcup_{H \in \mathcal{H}\setminus \mathcal{Q}_A(L)} (\mathcal{B}_H)_{{\rm off}}.$$    
We refer to $A\cup B$ with decomposition function $\mathcal{Q}_{A\cup B}$ defined above as the \emph{canonical omni-absorber} for $B$ and $\mathcal{B}$.
\end{proposition}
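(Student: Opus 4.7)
The plan is to verify, one by one, the four defining properties of an omni-absorber for $A \cup B$ with respect to $X$, equipped with the stated decomposition family and function. The set-up conditions are immediate: $V(A \cup B) = V(X)$ follows since $V(A) = V(X)$ by the omni-absorber property of $A$ and $B$ is embedded in $G$, while $(A \cup B) \cap X = \emptyset$ since $A$ and $X$ are edge-disjoint and $B$ is edge-disjoint from $A \cup X$ by the definition of omni-booster. For the clique-intersection condition, I would check that every $H'$ in the new decomposition family satisfies $|H' \cap X| \le 1$: if $H' \in (\mathcal{B}_H)_{\rm off}$, then $E(H') \subseteq E(B_H) \subseteq E(B)$ is disjoint from $X$, so $H' \cap X = \emptyset$; if $H' \in (\mathcal{B}_H)_{\rm on}$, then $E(H') \subseteq E(B_H) \cup E(H)$, so $|H' \cap X| \le |H \cap X| \le 1$ since $H$ lies in the decomposition family of $A$.

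The main step is to verify, given any $K_q^r$-divisible $L \subseteq X$, that $\mathcal{Q}_{A \cup B}(L)$ is pairwise edge-disjoint with union equal to $L \cup A \cup B$. For the union, I would use the partition $\mathcal{H} = \mathcal{Q}_A(L) \cup (\mathcal{H} \setminus \mathcal{Q}_A(L))$: each $(\mathcal{B}_H)_{\rm on}$ with $H \in \mathcal{Q}_A(L)$ decomposes $B_H \cup H$ by the rooted booster definition, while each $(\mathcal{B}_H)_{\rm off}$ with $H \in \mathcal{H} \setminus \mathcal{Q}_A(L)$ decomposes $B_H$. Summing over $H \in \mathcal{H}$ and using that the $B_H$'s form an edge-disjoint partition of $B$ yields $B \cup \bigcup \mathcal{Q}_A(L) = B \cup L \cup A$, where the last equality uses the omni-absorber property of $A$. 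Pairwise edge-disjointness then follows by combining four ingredients: the cliques within each $(\mathcal{B}_H)_{\rm on}$ or $(\mathcal{B}_H)_{\rm off}$ are edge-disjoint (both are $K_q^r$-packings), distinct $B_H$'s are edge-disjoint (from the omni-booster definition), $B$ is edge-disjoint from $A$, and $\mathcal{Q}_A(L)$ is itself pairwise edge-disjoint.

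There is no real obstacle: the proposition is a bookkeeping consolidation of the rooted booster, omni-booster, and omni-absorber definitions. The only subtle point is organizing the case split $H \in \mathcal{Q}_A(L)$ versus $H \notin \mathcal{Q}_A(L)$ so that each $B_H$ is covered exactly once and each $H$ is included on the ``inside'' of a decomposition precisely when it belongs to $\mathcal{Q}_A(L)$; once this is laid out carefully, the verification is entirely routine.
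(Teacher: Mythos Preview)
Your proof is correct. The paper states this proposition without proof, treating it as a routine bookkeeping consequence of the definitions of rooted booster, omni-booster, and omni-absorber; your verification is exactly what is needed to fill in the omitted details, and your case analysis is organized correctly.
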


For the purposes of our proof, instead of showing that $\mathcal{Q}_{A\cup B}$ has high collective girth, we will show that all $K_q^r$ packings that are the union of `on'/`off' decompositions are high girth. This will be easier to analyze instead of considering what the girth of $\mathcal{Q}_{A\cup B}(L)$ could be as this would be dependent on the structure of $A$ and $L$. 
To that end, we make the following definition.

\begin{definition}[Girth $g$ Projection of Omni-Booster]\label{def:OmniBoosterProj}
Let $(G,A,X,\mc{H})$ be a $K_q^r$-sponge. Let $B$ a $K_q^r$-omni-booster for $A$ with booster family $\mathcal{B}=(B_H:H\in\mathcal{H})$. 

We define the \emph{girth-$g$ projection treasury} of $B$ on to $G$ and $X$ as:
$${\rm Proj}_g(B,A,G,X):= {\rm Treasury}_q^g(G,~G\setminus (A\cup B),~X) \perp \mathcal{M}(\mc{B}).$$
\end{definition}

The following proposition relates the treasury of an omni-booster and its canonical omni-absorber.

\begin{proposition}\label{prop:BoosterSubTreasury}
${\rm Proj}_g(B,A,G,X)$ is a subtreasury of ${\rm Proj}_g(A\cup B,G,X)$.
\end{proposition}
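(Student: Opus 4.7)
The plan is a straightforward unwinding of the relevant definitions, built around a single observation: $\mathcal{M}(A\cup B)\subseteq \mathcal{M}(\mathcal{B})$. Once this containment is established, Proposition~\ref{prop:SubProjTreasury} delivers the conclusion immediately. So I would first check that the two projection treasuries start from the \emph{same} underlying treasury and then verify the containment of the two projection sets.

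For the base treasury, Proposition~\ref{prop:CanonicalBoost} tells us that $A\cup B$ is a $K_q^r$-omni-absorber for $X$, so the definition of ${\rm Proj}_g(A\cup B,G,X)$ (Definition~\ref{def:OmniAbsorberProj}) uses ${\rm Treasury}_q^g(G,~G\setminus(A\cup B),~X)$ as its base; this is the same base treasury that appears in Definition~\ref{def:OmniBoosterProj} for ${\rm Proj}_g(B,A,G,X)$. Thus I can call this common treasury $T$ and it remains to compare $\mathcal{M}(A\cup B)$ with $\mathcal{M}(\mathcal{B})$. For the containment, take any $K_q^r$-divisible $L\subseteq X$. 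By the explicit formula in Proposition~\ref{prop:CanonicalBoost},
\[
\mathcal{Q}_{A\cup B}(L) \;=\; \bigcup_{H\in \mathcal{Q}_A(L)}(\mathcal{B}_H)_{\text{on}} \;\cup\; \bigcup_{H\in \mathcal{H}\setminus \mathcal{Q}_A(L)}(\mathcal{B}_H)_{\text{off}},
\]
which picks exactly one of $(\mathcal{B}_H)_{\text{on}}$ or $(\mathcal{B}_H)_{\text{off}}$ for each $H\in\mathcal{H}$, so it is an element of the product $\prod_{H\in\mathcal{H}}\{(\mathcal{B}_H)_{\text{on}},(\mathcal{B}_H)_{\text{off}}\}$. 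Since $\mathcal{Q}_{A\cup B}(L)$ is moreover a $K_q^r$-decomposition of $A\cup B\cup L$ (again by Proposition~\ref{prop:CanonicalBoost}), its constituent $K_q^r$'s are pairwise edge-disjoint, so it is a matching of ${\rm Design}_{K_q^r}(X\cup A\cup B)$. Therefore $\mathcal{Q}_{A\cup B}(L)\in \mathcal{M}(\mathcal{B})$, proving $\mathcal{M}(A\cup B)\subseteq \mathcal{M}(\mathcal{B})$.

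With both ingredients in hand, I would apply Proposition~\ref{prop:SubProjTreasury} with $\mathcal{M}'=\mathcal{M}(\mathcal{B})\supseteq \mathcal{M}(A\cup B)=\mathcal{M}$ to conclude that $T\perp\mathcal{M}(\mathcal{B})$ is a subtreasury of $T\perp\mathcal{M}(A\cup B)$, i.e.\ ${\rm Proj}_g(B,A,G,X)$ is a subtreasury of ${\rm Proj}_g(A\cup B,G,X)$. There is no genuine obstacle here: the proposition is a bookkeeping statement about how the booster-level projection refines the absorber-level projection, and the only content is recognizing the canonical decompositions $\mathcal{Q}_{A\cup B}(L)$ as particular members of the much larger matching set $\mathcal{M}(\mathcal{B})$.
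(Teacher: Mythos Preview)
Your proposal is correct and follows exactly the paper's approach: observe that the two projections share the same base treasury ${\rm Treasury}_q^g(G,\,G\setminus(A\cup B),\,X)$, verify $\mathcal{M}(A\cup B)\subseteq \mathcal{M}(\mathcal{B})$ via the canonical decomposition formula, and invoke Proposition~\ref{prop:SubProjTreasury}. Your write-up is in fact more explicit than the paper's (which simply asserts the containment and concludes), and you correctly reference Proposition~\ref{prop:SubProjTreasury} where the paper's proof appears to have a typo citing Proposition~\ref{prop:SubTreasury}.
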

\begin{proof}
Let $\mathcal{M}(\mc{B})$ be as in Definition~\ref{def:OmniBoosterProj} for ${\rm Proj}_g(B,A,G,X)$ and let $\mathcal{M}(A\cup B)$ be as in Definition~\ref{def:OmniAbsorberProj} for the canonical omni-absorber $A\cup B$ as in Proposition~\ref{prop:CanonicalBoost}. We note that $\mathcal{M}(A\cup B)\subseteq \mathcal{M}(\mc{B})$. Hence by Proposition~\ref{prop:SubTreasury}, we find that ${\rm Proj}_g(B,A,G,X)$ is a subtreasury of ${\rm Proj}_g(A\cup B,G,X)$ as desired. 
\end{proof}

We are now prepared to state our high girth omni-booster theorem as follows.

\begin{thm}[High Girth Omni-Booster Theorem]\label{thm:HighGirthOmniBooster}
For all integers $q>r\ge 1$, $C\ge 1$ and $g\ge 3$ and real $\alpha \in \left(0,~\frac{1}{2(q-r)}\right)$, there exist integers $a,n_0\ge 1$ such that the following holds for all $n\ge n_0$: 

Suppose that Theorem~\ref{thm:HighCogirthBooster} holds for $r':=r-1$. If $(K_n^r, A, X)$ is an $(\alpha,g)$-regular $ga$-bounded $C$-refined $K_q^r$-sponge, then there exists a $K_q^r$-omni-booster $B$ for $A$ and $X$ with $\Delta(B)\le a\Delta\cdot \log^{a} \Delta$ and collective girth at least $g$, and a subtreasury $T$ of ${\rm Proj}_g(B,A,K_n^r,X)$ such that $T$ is 
$$\Bigg(\binom{n}{q-r},~3\alpha \cdot \binom{n}{q-r},~\frac{1}{4g(q-r)},~2\alpha\Bigg)-{\rm regular}.$$
\end{thm}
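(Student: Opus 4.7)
The plan is to build $B$ by first constructing a large \emph{quantum} family of candidate rooted boosters for every clique in $\mathcal{H}$ and then randomly sparsifying to extract the actual booster family. Concretely, by Theorem~\ref{thm:HighRootedGirthBooster} (applicable since Theorem~\ref{thm:HighCogirthBooster} is assumed to hold for $r'=r-1$), there exists a fixed rooted $K_q^r$-booster $B_0$ of rooted girth at least $g$ on a bounded number of vertices. For every $H\in\mathcal{H}$, we embed $B_0$ in many ways into $K_n^r$ so that $H$ is the root, taking the union over all such embeddings (or over a full enough collection) as the ``full'' quantum omni-booster. This gives each clique $H$ a large candidate set $\mathcal{B}(H)$ of private boosters.

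Next, independently for each $H\in\mathcal{H}$, I would keep each candidate in $\mathcal{B}(H)$ with probability $p$, where $p$ is chosen polylogarithmically small, small enough that the associated random variables concentrate via Kim--Vu, yet large enough that the typical number of surviving candidates per clique is comfortably greater than the number of constraints any single candidate must avoid. This is precisely the sparsification strategy invoked in the introduction; its outcome is a random \emph{quantum omni-booster} $\mathcal{Q}$ whose fine structure will be analyzed by Lemmas~\ref{lem:RandomQuantumIntrinsic} and~\ref{lem:RandomQuantumExtrinsic}.

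The verification then splits into two pieces. \emph{Intrinsic:} Lemma~\ref{lem:RandomQuantumIntrinsic} asserts that with high probability the total hypergraph underlying $\mathcal{Q}$ has $\Delta \le a\Delta(A\cup X)\cdot\log^{a}\Delta(A\cup X)$, and moreover, inside $\mathcal{Q}$ one may select, edge-disjointly, one rooted booster per $H\in\mathcal{H}$ such that the resulting omni-booster $B$ has collective girth at least $g$. This selection step uses that rooted girth at least $g$ of each $B_H$ together with edge-disjointness prevents low-girth configurations from forming across different $B_H$'s; it is analogous to the ``avoid the bad sets'' arguments of earlier papers in the series, applied after the sparsification. \emph{Extrinsic:} Lemma~\ref{lem:RandomQuantumExtrinsic} asserts that the projection treasury ${\rm Proj}_g(B,A,K_n^r,X)$ contains a subtreasury satisfying the claimed $\bigl(\binom{n}{q-r},\,3\alpha\cdot\binom{n}{q-r},\,\tfrac{1}{4g(q-r)},\,2\alpha\bigr)$-regularity. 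Assuming both lemmas, the theorem follows immediately by taking $B$ and the subtreasury they produce.

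The main obstacle is the extrinsic analysis. Every edge of the configuration hypergraph on $K_n^r\setminus(A\cup B\cup X)$ is obtained by projecting a girth-$g$ $K_q^r$-configuration through some matching $M\in\mathcal{M}(\mathcal{B})$, i.e.\ through some choice of ``on/off'' per clique in $\mathcal{H}$; the presence of $\mathcal{Q}$ therefore multiplies the number of potential extended bad configurations to control. Calibrating $p$ so that the expected codegrees of the projected configuration hypergraph, the reserve degrees in ${\rm Reserve}_{K_q^r}(K_n^r, K_n^r\setminus(A\cup B), X)$, and all higher codegrees $\Delta_t(H^{(s)})$ remain bounded by the required powers of $D=\binom{n}{q-r}$ with only a constant-factor blow-up from $\alpha$ to $2\alpha$ and $\beta=\tfrac{1}{2g(q-r)}$ to $\tfrac{1}{4g(q-r)}$, while ensuring that enough candidates survive to allow the intrinsic selection of Lemma~\ref{lem:RandomQuantumIntrinsic}, is exactly the tension that the Kim--Vu concentration inequality (applied polynomially many times across parameters) resolves, and which occupies Sections~\ref{s:Intrinsic} and~\ref{s:Extrinsic}.
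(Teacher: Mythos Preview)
Your plan is essentially the paper's: apply Theorem~\ref{thm:HighGirthQuantumOmniBooster} (which packages Lemmas~\ref{lem:RandomQuantumIntrinsic} and~\ref{lem:RandomQuantumExtrinsic}) to produce a sparsified quantum omni-booster, then actualize by selecting one $B_H\in{\rm Disjoint}(\mathcal{B}_H)\cap{\rm HighGirth}_g(\mathcal{B}_H)$ per $H\in\mathcal{H}$. Two corrections worth noting. First, your sentence ``rooted girth at least $g$ of each $B_H$ together with edge-disjointness prevents low-girth configurations from forming across different $B_H$'s'' is false as stated: edge-disjointness alone does not stop a small Erd\H{o}s configuration from using cliques in several boosters, which is precisely why the paper defines the set ${\rm HighGirth}_g(\mathcal{B}_H)$ and proves (via Kim--Vu counting in Lemma~\ref{lem:RandomQuantumIntrinsic}) that it is large enough to intersect ${\rm Disjoint}(\mathcal{B}_H)$. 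Second, Lemma~\ref{lem:RandomQuantumExtrinsic} delivers only $2\alpha$-regularity, and for the \emph{quantum} projection treasury built over $K_n^r\setminus A$ rather than $K_n^r\setminus(A\cup B)$; the passage to the stated $3\alpha$ is a genuine (if short) additional step in the proof of Theorem~\ref{thm:HighGirthOmniBooster}, where one restricts $G_1,G_2$ to cliques avoiding the actualized $B$ and bounds the loss using $\Delta(B)\le\Delta\log^a\Delta$ together with the $ga$-boundedness of the sponge.
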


Assuming Theorem~\ref{thm:HighGirthOmniBooster}, we are now prepared to prove Theorem~\ref{thm:HighGirthAbsorber} as follows.

\begin{proof}[Proof of Theorem~\ref{thm:HighGirthAbsorber}]
Let $C_0$ be as in Theorem~\ref{thm:Omni} for $q$ and $r$ and let $C'$ be as in Theorem~\ref{thm:HighGirthOmniBooster} for $q$, $r$, $C_0$, $g$ and $\alpha$. Let $C:= \max\{C_0,C'\}$. By Theorem~\ref{thm:Omni}, there exists a $C_0$-refined $K_q^r$-omni-absorber $A_0$ for $X$ with $\Delta(A_0)\le C_0 \cdot \Delta$. By Theorem~\ref{thm:HighGirthOmniBooster}, there exists a $K_q^r$-omni-booster $B$ for $A_0$ and $X$ as in Theorem~\ref{thm:HighGirthOmniBooster}. Let $A:= A_0\cup B$. By Proposition~\ref{prop:CanonicalBoost}, we have that $A$ is a $K_q^r$-omni-absorber for $X$. Moreover, there exists a subtreasury $T$ of ${\rm Proj}_g(B,A_0,K_n^r,X)$ that is $\bigg(\binom{n}{q-r},~2\alpha \cdot \binom{n}{q-r},~\frac{1}{4g(q-r)},~2\alpha\bigg)-{\rm regular}$. By Proposition~\ref{prop:BoosterSubTreasury}, ${\rm Proj}_g(B,A_0,K_n^r,X)$ is a subtreasury of ${\rm Proj}_g(A,K_n^r,X)$. Thus it follows from the definition of subtreasury that $T$ is also a subtreasury of ${\rm Proj}_g(A,K_n^r,X)$. Hence $A$ is as desired.
\end{proof}

\subsection{Quantum Boosters}

As mentioned in the introduction, it behooves us for probabilistic purposes to choose each possible booster $B_H$ for each $H\in\mathcal{H}$ in the decomposition family of $A$ independently at random with some small probability; that is, instead of choosing one booster for each $H$, we will choose $p$ so that the set of chosen boosters for an $H$ is of size ${\rm polylog} n$. To that end, we define a more general object than an omni-booster, a \emph{quantum omni-booster}, where each booster is instead a set of boosters (so the booster is in a state of flux, a superposition of possible boosters; hence the name quantum).

\begin{definition}[Quantum Omni-Booster]\label{def:QuantumOmniBooster}
Let $q > r\ge 1$ be integers. Let $(G,A,X,\mc{H})$ be a $K_q^r$-sponge. A \emph{quantum $K_q^r$-omni-booster} for $A$ and $X$ in $G$ is an $r$-uniform hypergraph $B$ together with a family $\mc{B}=(\mathcal{B}_H: H\in \mathcal{H})$ (called the \emph{quantum booster collection}) where for each $H\in \mathcal{H}$, $\mathcal{B}_H$ is a family of (not necessarily-edge-disjoint) $K_q^r$-boosters rooted at $H$ in $G\setminus (X\cup A)$. 

The \emph{matching set} of $\mc{B}$ is 
$$\mc{M}(\mc{B}):= \bigg\{ M \in \prod_{H\in \mathcal{H}} \bigcup_{B_H\in \mathcal{B}_H} \big\{ (B_{H})_{{\rm on}},~ (B_{H})_{{\rm off}} \big\} : M \text{ is a matching of } {\rm Design}_{K_q^r}(G)\bigg\}.$$

For each $H\in \mathcal{H}$, we define 
$${\rm Disjoint}(\mathcal{B}_{H}) := \left\{B_H \in \mathcal{B}_{H}:~ B_H \text{ is (edge-)disjoint from all of }\bigcup_{H' \in \mathcal{H}\setminus \{H\}} \bigcup \mathcal{B}_{H'}\right\}.$$
For an integer $g\ge 3$, we define
\begin{align*}{\rm HighGirth}_g(\mathcal{B}_{H}) := \bigg\{B_H \in \mathcal{B}_{H}: \nexists &R\in {\rm Girth}_{K_q^r}^g(G) \text{ where } R\subseteq M \text{ for some } M \in \mc{M}(\mc{B}) \\
&\text{ and } R\cap \big( (B_H)_{\rm on}\cup (B_H)_{\rm off}\big)\ne \emptyset. \bigg\}.
\end{align*}
\end{definition}

Now we define a projection treasury for a quantum omni-booster where we project out the matching set as follows. Note that here we take the treasury ${\rm Treasury}_q^g(G,~G\setminus A,~X)$ instead of ${\rm Treasury}_q^g(G,~G\setminus (A\cup B),~X)$ since we do not yet know which boosters  of the quantum booster will be taken to form our actual omni-booster. This means though that when we do actualize $B$, we also have to delete $B$ from ${\rm Design}_{K_q^r}(G\setminus (X\cup A))$ which will be acceptable regularity-wise provided $B$ has small maximum degree.

\begin{definition}[Girth $g$ Projection of Quantum Omni-Booster]\label{def:QuantumOmniBoosterProj}
Let $(G,A,X,\mc{H})$ be a $K_q^r$-sponge. Let $B$ a quantum $K_q$-omni-booster for $A$ with quantum booster collection $\mathcal{B} = (\mathcal{B}_H: H\in\mathcal{H})$.

We define the \emph{girth-$g$ projection treasury} of $B$ on to $G$ and $X$ as: 
$${\rm Proj}_g(B,A,G,X):= {\rm Treasury}_q^g(G,~G\setminus A,~X) \perp \mathcal{M}(\mc{B}).$$
\end{definition}

The following proposition shows that choosing a booster $B_H$ for each $H\in \mathcal{H}$ from a quantum booster $B$ yields a booster $B'$ (provided the boosters are edge-disjoint) and furthermore, the quantum booster projection treasury for $B$ is a subtreasury of the projection treasury of $B'$. We omit the proof since it follows from the definitions.

\begin{proposition}\label{prop:QuantumSubTreasury}
Let $(G,A,X,\mc{H})$ be a $K_q^r$-sponge. Let $B$ a quantum $K_q$-omni-booster for $A$ with quantum booster collection $\mathcal{B} = (\mathcal{B}_H: H\in\mathcal{H})$. Suppose that $\mathcal{B}'=(B_H:H\in \mathcal{H})$ where $B_H\in \mathcal{B}_H$ for each $H\in \mathcal{H}$ and the $B_H$ are pairwise edge-disjoint. Then $B' := \bigcup_{H\in \mathcal{H}} B_H$ is a $K_q^r$-omni-booster for $A$ with booster family $\mathcal{B}'$ and ${\rm Proj}(B',A,G,X)$ is a subtreasury of ${\rm Proj}(B,A,G,X)$.    
\end{proposition}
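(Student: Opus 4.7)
The plan is to verify both conclusions by directly unpacking the relevant definitions, in two parts.

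First, to see that $B' := \bigcup_{H\in \mathcal{H}} B_H$ is a $K_q^r$-omni-booster for $A$ and $X$ with booster family $\mathcal{B}' = (B_H : H\in \mathcal{H})$, I check each clause of Definition~\ref{def:OmniBooster}. Since each $B_H$ is a rooted $K_q^r$-booster in $G\setminus (X\cup A)$ (this is part of Definition~\ref{def:QuantumOmniBooster}), it is edge-disjoint from $A\cup X$, so $B'$ is also edge-disjoint from $A\cup X$. The pairwise edge-disjointness hypothesis is exactly the statement that $B' = \bigsqcup_{H\in\mathcal{H}} B_H$ as an edge-disjoint union, and each $B_H$ is rooted at $H$ by hypothesis, satisfying the last requirement.

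For the subtreasury claim, I unpack both projections using Definitions~\ref{def:OmniBoosterProj} and~\ref{def:QuantumOmniBoosterProj}:
\begin{align*}
{\rm Proj}_g(B', A, G, X) &= {\rm Treasury}_q^g(G, G\setminus (A\cup B'), X) \perp \mathcal{M}(\mathcal{B}'), \\
{\rm Proj}_g(B, A, G, X) &= {\rm Treasury}_q^g(G, G\setminus A, X) \perp \mathcal{M}(\mathcal{B}).
\end{align*}
The key inclusion is $\mathcal{M}(\mathcal{B}') \subseteq \mathcal{M}(\mathcal{B})$. Indeed, every $M \in \mathcal{M}(\mathcal{B}')$ is, by the matching-set definition for omni-boosters, a selection from $\{(B_H)_{\rm on}, (B_H)_{\rm off}\}$ for each $H\in \mathcal{H}$, which (since $B_H\in \mathcal{B}_H$) is also a valid quantum-booster selection, and every matching of ${\rm Design}_{K_q^r}(X\cup A\cup B')$ is a matching of ${\rm Design}_{K_q^r}(G)$ because $X\cup A\cup B'\subseteq G$. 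Moreover, since $B'\subseteq G\setminus (A\cup X)$, we have $G\setminus (A\cup B')\setminus X \subseteq G\setminus A\setminus X$, so the base design and reserve hypergraphs of ${\rm Proj}_g(B',A,G,X)$ are subhypergraphs of the corresponding base hypergraphs of ${\rm Proj}_g(B,A,G,X)$.

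Combining these two inclusions with the logic behind Proposition~\ref{prop:SubProjTreasury} (projecting by a larger matching set yields a subtreasury) together with the base-level inclusion yields the desired subtreasury relation: the $G_1', G_2'$ components of ${\rm Proj}_g(B',A,G,X)$ sit inside those of ${\rm Proj}_g(B,A,G,X)$, and the surviving configuration hypergraph satisfies $H[E(G_1')\cup E(G_2')]\subseteq H'$, exactly the requirements of the subtreasury definition. The main subtlety, where I would spend the most care, is reconciling the two different vertex sets $E(G\setminus A\setminus X)$ and $E(G\setminus (A\cup B')\setminus X)$ arising from the two base treasuries: this is handled by tracking how edges of $B'$ are accounted for by the projection through matchings in $\mathcal{M}(\mathcal{B})$, consistent with the authors' remark that the statement follows directly from the definitions.
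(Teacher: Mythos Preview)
Your approach is correct and matches the paper's own treatment: the paper omits the proof entirely, stating that it follows from the definitions, and your proposal does exactly that---unpacking Definitions~\ref{def:OmniBooster}, \ref{def:OmniBoosterProj}, \ref{def:QuantumOmniBooster}, and \ref{def:QuantumOmniBoosterProj}, and using the inclusion $\mathcal{M}(\mathcal{B}')\subseteq \mathcal{M}(\mathcal{B})$ together with the logic of Proposition~\ref{prop:SubProjTreasury}. The subtlety you flag about the differing base treasuries (one on $G\setminus A$, the other on $G\setminus(A\cup B')$) is genuine and is the only place where care is needed; your handling of it is consistent with how the paper deploys the proposition downstream in the proof of Theorem~\ref{thm:HighGirthOmniBooster}.
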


Thus to prove Theorem~\ref{thm:HighGirthOmniBooster}, we will prove the following theorem about quantum omni-boosters.

\begin{thm}[High Girth Quantum Omni-Booster Theorem]\label{thm:HighGirthQuantumOmniBooster}
For all integers $q>r\ge 1$, $C\ge 1$ and $g\ge 3$ and real $\alpha \in \left(0,~\frac{1}{2(q-r)}\right)$, there exist integers $a, n_0\ge 1$ such that the following holds for all $n\ge n_0$: 

Suppose that Theorem~\ref{thm:HighCogirthBooster} holds for $r':=r-1$. If $(K_n^r, A, X,\mc{H})$ is an $(\alpha,g)$-regular $ga$-bounded $C$-refined $K_q^r$-sponge, then there exists a quantum $K_q^r$-omni-booster $B$ for $A$ and $X$ with quantum booster collection $(\mathcal{B}_H: H\in \mathcal{H})$ such that 
\begin{enumerate}
    \item[(1)] $\Delta(B)\le \Delta\cdot \log^{a} \Delta$ and for each $H\in \mathcal{H}$, ${\rm Disjoint}(\mc{B}_H) \cap {\rm HighGirth}_g(\mc{B}_H) \ne \emptyset$, and
    \item[(2)] ${\rm Proj}_g(B,A,K_n^r,X)$ is $\Bigg(\binom{n}{q-r},~2\alpha \cdot \binom{n}{q-r},~\frac{1}{4g(q-r)},~2\alpha\Bigg)-{\rm regular}$.
\end{enumerate} 
\end{thm}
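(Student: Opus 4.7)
The plan is to take a random sparsification of a ``full'' quantum omni-booster built from a single template booster, and verify the two conclusions of the theorem via the two technical lemmas, Lemma~\ref{lem:RandomQuantumIntrinsic} and Lemma~\ref{lem:RandomQuantumExtrinsic}, whose proofs occupy Sections~\ref{s:Intrinsic} and~\ref{s:Extrinsic}.

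Since Theorem~\ref{thm:HighCogirthBooster} is assumed for $r':=r-1$, Theorem~\ref{thm:HighRootedGirthBooster} yields a \emph{template} rooted $K_q^r$-booster $B^*$ rooted at $S^*\cong K_q^r$ with rooted girth at least $g$; let $v^*:=v(B^*)$. For each $H\in\mathcal{H}$ I define the full collection $\mathcal{B}_H^0$ as the set of all injective embeddings $\varphi$ of $B^*$ into $K_n^r$ that restrict to a clique isomorphism $S^*\to H$, map $V(B^*)\setminus V(S^*)$ into $V(K_n^r)\setminus V(H)$, and satisfy $\varphi(E(B^*))\cap E(X\cup A)=\emptyset$. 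Each image $\varphi(B^*)$ is then a rooted $K_q^r$-booster rooted at $H$ in $K_n^r\setminus(X\cup A)$ of rooted girth at least $g$. Next, I fix a sparsification parameter $p:=\log^{c} n / n^{v^*-q}$ for a constant $c=c(q,r,g,\alpha,C)$ calibrated below, and for each $H$ independently include each $\varphi\in\mathcal{B}_H^0$ into $\mathcal{B}_H$ with probability $p$. Setting $B:=\bigcup_{H\in\mathcal{H}}\bigcup\mathcal{B}_H$ and $\mathcal{B}:=(\mathcal{B}_H:H\in\mathcal{H})$ gives a random quantum $K_q^r$-omni-booster, and the task is to show that with positive probability $(B,\mathcal{B})$ satisfies both (1) and (2).

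Conclusion (1) is the content of Lemma~\ref{lem:RandomQuantumIntrinsic}. The degree bound $\Delta(B)\le \Delta\cdot\log^a\Delta$ is a first-moment estimate, sharpened by Kim-Vu concentration: the choice of $p$ is calibrated so that the expected degree of any vertex in $B$ is at most $\Delta\cdot\log^{c'}\Delta$ for some $c'<a$. For each $H$, the expected size $\mathbb{E}|\mathcal{B}_H|$ is polylogarithmic in $n$, while the expected number of $\varphi\in\mathcal{B}_H$ lying outside ${\rm Disjoint}(\mathcal{B}_H)\cap {\rm HighGirth}_g(\mathcal{B}_H)$ is much smaller, because the rooted girth of $B^*$ forces any short Erd\H{o}s configuration or edge-sharing incident to $\varphi(B^*)$ to involve many vertices outside $\varphi(V(B^*))$ and hence to occur only with very small probability; another Kim-Vu application gives the desired separation simultaneously over all $H\in\mathcal{H}$. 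Conclusion (2) is the content of Lemma~\ref{lem:RandomQuantumExtrinsic}: the base treasury ${\rm Treasury}_q^g(K_n^r,K_n^r\setminus A,X)$ is $(\alpha,g)$-regular up to a perturbation driven by $\Delta(A)\le C\Delta$, which is easily absorbed into the doubled tolerance $2\alpha$ after applying the projection $\perp\mathcal{M}(\mathcal{B})$; the projection itself adds configuration-hypergraph edges (and possibly deletes edges of $G_1\cup G_2$), but since $p$ is small each such change is concentrated around a small mean by Kim-Vu. A union bound over the two lemmas then yields the desired deterministic $(B,\mathcal{B})$.

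The main obstacle is Lemma~\ref{lem:RandomQuantumExtrinsic}. The matching set $\mathcal{M}(\mathcal{B})$ is combinatorially enormous (essentially a product over $H\in\mathcal{H}$), so to control every post-projection regularity parameter---namely $\Delta_t(H'^{(s)})$ for all $2\le t<s\le g$ together with the $2$-codegree of the treasury with $H'$ and the common $2$-degree of $H'$---one must attribute each newly induced configuration-hypergraph edge to a bounded subset of chosen template embeddings, show via a carefully degree-stratified Kim-Vu argument that the resulting count is concentrated around its mean, and do this simultaneously for all parameters at the \emph{single} choice of $p$ that also supports the intrinsic lemma. Threading this one value of $p$ through the many Kim-Vu calls is the technical core of the argument; the intrinsic lemma is by comparison relatively mild, because the high rooted girth of $B^*$ already protects most sparsified boosters from bad interactions essentially for free.
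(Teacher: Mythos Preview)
Your proposal is correct and follows essentially the same route as the paper: fix a rooted $K_q^r$-booster $B_0$ of rooted girth at least $g$ via Theorem~\ref{thm:HighRootedGirthBooster}, take a full $B_0$-type quantum omni-booster, randomly $p$-sparsify with $p=\log^a n/\binom{n}{v(B_0)-q}$, invoke Lemma~\ref{lem:RandomQuantumIntrinsic} for (1) and Lemma~\ref{lem:RandomQuantumExtrinsic} for (2), and take a union bound. The only cosmetic discrepancy is that the paper's ``full'' collection takes exactly one copy of $B_0$ per vertex set $S\in\mathcal{S}(H)$ rather than all injective embeddings, but the paper explicitly remarks that either choice works and yours is fine.
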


We are now prepared to prove Theorem~\ref{thm:HighGirthOmniBooster}.

\begin{proof}[Proof of Theorem~\ref{thm:HighGirthOmniBooster}]
By Theorem~\ref{thm:HighGirthQuantumOmniBooster}, there exists quantum $K_q^r$-omni-booster $B$ for $A$ and $X$ with quantum booster collection $(\mathcal{B}_H: H\in \mathcal{H})$ such that  Theorem~\ref{thm:HighGirthQuantumOmniBooster}(1) and (2) hold. for each $H\in \mc{H}$, choose one $B_H\in {\rm Disjoint}(\mc{B}_H) \cap {\rm HighGirth}(\mc{B}_H)$. Then we let $B':=\bigcup_{H\in \mc{H}} B_H$ and $\mc{B}':=(B_H: H\in \mc{H})$. Since $B_H\in {\rm Disjoint}(\mc{B}_H)$ for all $H\in \mc{H}$, we find by Proposition~\ref{prop:QuantumSubTreasury} that $B'$ is  a $K_q^r$-omni-booster for $A$ and $X$ with booster family $\mc{B}'$. Furthermore since $B_H\in {\rm HighGirth}(\mc{B}_H)$ for all $H\in \mc{H}$, we find that $B'$ has collective girth at least $g$.

Let ${\rm Proj}_g(B,A,K_n^r,X)=(G_1,G_2,H)$ and let ${\rm Proj}_g(B',A,K_n^r,X)=(G_1',G_2',H')$. We let $G_1'':= G_1\cap {\rm Design}_{K_q^r}(K_n^r\setminus (X\cup A\cup B))$ and we let $G_2'':= G_2[K_n\setminus (X\cup A\cup B)]$, or equivalently $G_2''$ is obtained from deleting the edges of $B$ (which are also vertices of $G_2$). We let $T:=(G_1'',G_2'',H)$. Note that for $i\in \{1,2\}$, $G_i''$ is a spanning subgraph of $G_i'$. Similarly since $\mc{M}(\mc{B}') \subseteq \mc{M}(\mc{B})$, we find that $H'[E(G_1'')\cup E(G_2'')]\subseteq H$. Hence $T$ is a subtreasury of ${\rm Proj}_g(B',A,K_n^r,X)$.  

Since Theorem~\ref{thm:HighGirthQuantumOmniBooster}(2) holds, we have that 
${\rm Proj}_g(B,A,K_n^r,X)$ is $\Bigg(\binom{n}{q-r},~2\alpha \cdot \binom{n}{q-r},~\frac{1}{4g(q-r)},~2\alpha\Bigg)-{\rm regular}$. Given the definitions in order to show that $T$ is $\Bigg(\binom{n}{q-r},~3\alpha \cdot \binom{n}{q-r},~\frac{1}{4g(q-r)},~2\alpha\Bigg)-{\rm regular}$, it suffices to show that $d_{G_1}(v)-d_{G_1''}(v) \le \alpha\cdot \binom{n}{q-r}$ for all $v\in V(G_1'')$. However, this follows since 
$$\Delta(B)\le \Delta\cdot \log^a \Delta \le \frac{n}{\log^{ga} n}\cdot \log^a \Delta \le \frac{n}{\log^{(g-1)a}n}$$ 
where we used Theorem~\ref{thm:HighGirthQuantumOmniBooster}(1), $\Delta \le n$ and that the $K_q^r$-sponge $(K_n^r,A,X)$ is $ga$-bounded, and hence for all edges $e$ in $K_n^r\setminus B$, the number of $q$-cliques containing $e$ and edge of $B$ is at most 
$$q!\cdot \Delta(B)\cdot n^{q-r} \le \alpha\cdot \binom{n}{q-r}$$
as desired since $a\ge 1$, $g\ge 2$ and $n$ is large enough.
\end{proof}

\subsection{Random Sparsification of a Full $B_0$-type Quantum Booster}

As mentioned, in order to prove Theorem~\ref{thm:HighGirthQuantumOmniBooster}, we will fix a rooted $K_q^r$ booster $B_0$ of rooted girth at least $g$ and then randomly sparsify the set of possible copies of $B_0$ by choosing each independently with probability $p$. To that end, we formalize the definition of ``possible copies of $B_0$" as follows.

\begin{definition}[Full $B_0$-Type Omni-Booster]
Let $q > r\ge 1$ be integers. Let $B_0$ be a rooted $K_q^r$-booster. Let $(G,A,X,\mc{H})$ be a $K_q^r$-sponge. We say a quantum $K_q^r$-omni-booster $B$ for $A$ and $X$ in $G$ with quantum booster collection $(\mathcal{B}_H: H\in \mathcal{H})$ is \emph{$B_0$-type} if every $\mathcal{B}_H$ is a copy of $B_0$ with root $H$.

For each $H\in \mc{H}$, we let $\mc{S}(H)$ denote the set of subsets $S\subseteq K_{v(G)}^r\setminus (X\cup A)$ with $|S|=v(B_0)-q$ such that there exists a copy of $B_0$ in $G\setminus (X\cup A)$ rooted at $V(H)$ with $V(B_0)\subseteq V(H)\cup S$.

We say $B$ is \emph{full} if for every $H\in\mc{H}$, $\mc{B}_H$ consists of exactly one such copy of $B_0$ for $S\in \mc{H}$. We denote the set of full $B_0$-type $K_q^r$-omni-boosters for $A$ and $X$ in $G$ as ${\rm Full}_{B_0}(G,A,X)$.
\end{definition}

We note that we could have allowed full to mean all copies of $B_0$ in $G\setminus (X\cup A)$ instead of just one for each $S\in \mc{H}$, but it seemed simpler for the purposes of counting to only use one per $S$. Next we define the notion of a random $p$-sparsification of a quantum omni-booster as follows.

\begin{definition}
Let $p\in [0,1]$. The \emph{random $p$-sparsification} of a quantum booster $\mathcal{B} = (\mc{B}_H: H\in \mc{H})$ is the quantum booster $\mathcal{B}_p := ( (\mc{B}_H)_p: H\in \mc{H})$ where $(\mc{B}_H)_p$ denotes the subset of $\mc{B}_H$ obtained by choosing each element independently with probability $p$.   
\end{definition}

We thus prove Theorem~\ref{thm:HighGirthQuantumOmniBooster} by fixing a rooted $K_q^r$ booster $B_0$ of rooted girth at least $g$ and considering the random $p$-sparsification $\mc{B}_p$ of an arbitrary element $\mc{B}\in {\rm Full}_{B_0}(G,A,X)$. Namely, we will show with high probability that $\mc{B}_p$ satisfies outcomes (1) and (2) of Theorem~\ref{thm:HighGirthQuantumOmniBooster}.

Since the proof that (1) and (2) hold with high probability are long and very technical, we will split the proof into the following two lemmas, the first proves that (1) holds with high probability (the intrinsic properties) while the second proves that (2) holds with high probability (the extrinsic properties).

\begin{lem}\label{lem:RandomQuantumIntrinsic}
For all integers $q>r\ge 1$, $C\ge 1$ and $g\ge 3$, real $\alpha > 0$, and $K_q^r$ booster $B_0$ of rooted girth at least $g$, there exists an integer $a_1\ge 1$ such that for all $a\ge a_1$, there exists an integer $n_1\ge 1$ such that the following holds for all $n\ge n_1$: 

Let $(K_n^r, A, X,\mc{H})$ be an $(\alpha,g)$-regular $ga$-bounded $C$-refined $K_q^r$-sponge. If $\mc{B}\in {\rm Full}_{B_0}(K_n^r,A,X)$ and $p :=\frac{\log^{a} n}{\binom{n}{v(B_0)-q}}$, then with probability at least $0.9$, $\mc{B}_p$ satisfies that $\Delta(\bigcup \mc{B}_p)\le a\Delta\cdot \log^{a} \Delta$ and for each $H\in \mathcal{H}$, ${\rm Disjoint}(\mc{B}_H) \cap {\rm HighGirth}_g(\mc{B}_H) \ne \emptyset$. (i.e.~outcome Theorem~\ref{thm:HighGirthQuantumOmniBooster}(1) holds).
\end{lem}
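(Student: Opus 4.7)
The approach is to fix a rooted $K_q^r$-booster $B_0$ of rooted girth at least $g$ (which exists by Theorem~\ref{thm:HighRootedGirthBooster}) and an arbitrary full $B_0$-type quantum omni-booster $\mc{B}\in {\rm Full}_{B_0}(K_n^r,A,X)$, then prove the conclusion for $\mc{B}_p$ via many applications of the Kim-Vu concentration inequality (Corollary~\ref{cor:KimVu}) followed by a union bound over $H\in \mc{H}$. The argument has three parts: a size lower bound on each $(\mc{B}_H)_p$, a max-degree bound on $\bigcup \mc{B}_p$, and a nonemptyness bound for ${\rm Disjoint}((\mc{B}_H)_p)\cap {\rm HighGirth}_g((\mc{B}_H)_p)$.

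First, since $\Delta(X)=o(n)$ and $\Delta(A)=o(n)$, one checks that $|\mc{S}(H)|=(1-o(1))\binom{n}{v(B_0)-q}$ for every $H\in \mc{H}$, so $\mathbb{E}[|(\mc{B}_H)_p|]=(1-o(1))\log^a n$. Kim-Vu applied to this linear indicator sum yields $|(\mc{B}_H)_p|\ge \tfrac{1}{2}\log^a n$ with super-polynomial tail, which survives a union bound over the $|\mc{H}|=O(n\Delta)$ choices of $H$ (the bound $|\mc{H}|=O(n\Delta)$ follows since $A$ is $C$-refined with $\Delta(A)\le C\Delta$, so $|\mc{H}|\binom{q}{r}\le C\cdot e(X\cup A)=O(n\Delta)$). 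Second, for each vertex $v$, the number of booster copies $B_{H,S}\in \mc{B}$ with $v\in V(H)\cup V(S)$ is $O(\Delta\cdot n^{v(B_0)-q})$: the case $v\in V(H)$ uses $d_{\mc{H}}(v)=O(\Delta)$ and $|\mc{S}(H)|=O(n^{v(B_0)-q})$, while the case $v\in V(S),\,v\notin V(H)$ uses $|\mc{H}|=O(n\Delta)$ and $O(n^{v(B_0)-q-1})$ choices of $S$ containing $v$. Since each such booster contributes $O(1)$ to the degree of $v$, we get $\mathbb{E}[d_{\bigcup \mc{B}_p}(v)]=O(\Delta\log^a n)$, and Kim-Vu plus a union bound over $v$ yields $\Delta(\bigcup \mc{B}_p)\le a\Delta\log^a \Delta$.

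The main work is the third part. For each $H$, I would bound two kinds of ``bad'' boosters in $(\mc{B}_H)_p$: (a) those sharing an edge with some selected $B_{H',S'}\in (\mc{B}_{H'})_p$ with $H'\ne H$, and (b) those participating, together with other selected boosters and edges realized by some $M\in \mc{M}(\mc{B}_p)$, in an Erd\H{o}s configuration of girth below $g$. For (a), writing $N_e$ for the total number of boosters in $\mc{B}$ containing edge $e$, one has $\max_e N_e=O(\Delta\cdot n^{v(B_0)-q-r+1})$ and $\sum_e|\{S\in \mc{S}(H): e\in B_{H,S}\}|=O(n^{v(B_0)-q})$, so per $H$ the number of cross-conflicting pairs in $\mc{B}$ is $O(\Delta\cdot n^{2(v(B_0)-q)-r+1})$; multiplying by $p^2$ gives expected type-(a) bad count $O(\Delta\log^{2a} n / n^{r-1})=o(\log^a n)$ (using $\Delta\le n/\log^{ga}n$ when $r=2$ together with $g\ge 3$, and $\Delta\le n$ when $r\ge 3$). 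For (b), the rooted-girth-$g$ property of $B_0$ guarantees that no configuration of girth $<g$ lives inside a single booster, so any such configuration must involve $k\ge 2$ distinct selected boosters; enumerating over the bounded number of configuration types (which depend only on $q,r,g,v(B_0)$) and using the treasury regularity to count each type's copies in $K_n^r$, each type contributes $O(p^k\cdot n^{O(1)})$ in expectation with $k\ge 2$, summing to $o(\log^a n)$. Kim-Vu applied to each bounded-degree multilinear polynomial gives concentration, and a union bound over $H$ yields $|(\mc{B}_H)_p|-|\text{bad}|\ge \tfrac{1}{2}\log^a n-o(\log^a n)>0$ for every $H$ simultaneously, with probability at least $0.9$.

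The hard part will be part (b): one must rigorously enumerate the Erd\H{o}s configurations of girth $<g$ extending a selected booster through other selected boosters and through edges of $K_n^r\setminus(X\cup A)$ in $\mc{M}(\mc{B}_p)$, and for each type verify that the $p^k$ savings (with $k\ge 2$) beat the polynomial count of configurations of that type. The choice of $a$ will be driven by the need to accommodate the polynomial factor and the number of types; once $a$ is large enough, Kim-Vu's concentration is strong enough to survive the union bound over $H\in \mc{H}$, and combining with the size lower bound and the max-degree bound produces the quantum booster promised by the lemma.
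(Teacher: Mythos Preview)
Your overall architecture matches the paper's: a union bound over four events (size of each $(\mc{B}_H)_p$, max degree of $\bigcup \mc{B}_p$, few boosters outside ${\rm Disjoint}$, few outside ${\rm HighGirth}_g$), with Chernoff for the first two and Kim--Vu for the latter two. One minor arithmetic slip in part (a): the correct bound is $N_e = |\mc{B}(e)| = O(\Delta\cdot n^{b-1})$ with $b=v(B_0)-q$, not $O(\Delta\cdot n^{b-r+1})$; the $C$-refined hypothesis only yields a factor $\Delta$ when locating the root $H$ through a subset of $V(e)$, not $\Delta/n^{r-2}$. This does not affect your conclusion, since $p^2\cdot O(\Delta n^{2b-1}) = O(\Delta \log^{2a}n/n)=o(\log^a n)$ still holds via $\Delta\le n/\log^{ga}n$.

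The genuine gap is part (b). Your assertion that ``each type contributes $O(p^k\cdot n^{O(1)})$ \dots\ summing to $o(\log^a n)$'' hides exactly the step that carries the argument. Take the case where the fixed booster $B_H$ contributes a \emph{single} clique $Q$ to an Erd\H{o}s configuration $F$ of size $s'$. Na\"ively: there are $O(n^{(q-r)(s'-1)})$ choices of $F$ through $Q$; partitioning $F\setminus\{Q\}$ into $k-1$ nonempty pieces and choosing a booster through each piece costs $\prod_i|\mc{B}(F_i)| = O(n^{b(k-1)-(q-r)(s'-1)})$ by Lemma~\ref{lem:BoostersContainingSetOfCliques}; the product is $O\big(\binom{n}{b}^{k-1}\big)$ with no further saving. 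Multiplying by $|\mc{B}_H|$ and by $p^k$ yields order $\log^{ak}n$, which for $k\ge 2$ is \emph{not} $o(\log^a n)$. The treasury regularity you invoke controls clique-level codegrees, but does not by itself produce the missing factor.

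The paper recovers an extra $\Delta/n$ via a small/large dichotomy on each replacing booster $B_{H'}(S')$ according to whether $|V(H')\cap V(F_i)|\le r-1$ (Lemma~\ref{lem:BoostersContainingSetOfCliques}). In the small case the count of roots $H'$ through at most $r-1$ prescribed vertices already carries a factor $\Delta(A)/n$. In the large case $V(F)$ must contain an edge of $X\cup A$ outside $Q$, so only configurations in ${\rm Large}_{X\cup A}(Q)$ occur, and Proposition~\ref{prop:LargeIntersect} gives the factor $\Delta(X\cup A)/n$ on the count of $F$. Either way one gains $\Delta/n\le \log^{-ga}n$, converting $\log^{ak}n$ into $o(\log^a n)$ for $k\le g-1$ (this is Lemma~\ref{lem:GirthConfigPrelim} in the single-clique case, packaged as Lemma~\ref{lem:GirthConfigBoosterDegrees}). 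The same codegree bounds are precisely what you need to check the hypotheses $(*_i)$ of Corollary~\ref{cor:KimVu} for the $k$-uniform piece of the booster-configuration hypergraph; without them you cannot apply Kim--Vu, so ``Kim--Vu gives concentration'' is not yet justified in your sketch.
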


\begin{lem}\label{lem:RandomQuantumExtrinsic}
For all integers $q>r\ge 1$, $C\ge 1$ and $g\ge 3$, real $\alpha \in \left(0,~\frac{1}{2(q-r)}\right)$, and $K_q^r$ booster $B_0$ of rooted girth at least $g$, there exists an integer $a_2\ge 1$ such that for all $a\ge a_2$, there exists an integer $n_2\ge 1$ such that the following holds for all $n\ge n_2$: 

Let $(K_n^r, A, X,\mc{H})$ be an $(\alpha,g)$-regular $ga$-bounded $C$-refined $K_q^r$-sponge. If $\mc{B}\in {\rm Full}_{B_0}(K_n^r,A,X)$ and $p :=\frac{\log^{a} n}{\binom{n}{v(B_0)-q}}$, then with probability at least $0.9$, $\mc{B}_p$ satisfies that ${\rm Proj}_g(B,A,K_n^r,X)$ is $\Bigg(\binom{n}{q-r},~2\alpha \cdot \binom{n}{q-r},~\frac{1}{4g(q-r)},~2\alpha\Bigg)-{\rm regular}$. (i.e.~outcome Theorem~\ref{thm:HighGirthQuantumOmniBooster}(2) holds).
\end{lem}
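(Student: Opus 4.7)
The strategy is to verify each clause of $(D, 2\alpha D, \beta/2, 2\alpha)$-regularity (with $D = \binom{n}{q-r}$ and $\beta = \frac{1}{2g(q-r)}$) directly for ${\rm Proj}_g(B, A, K_n^r, X) = (G_1', G_2', H')$, using Kim--Vu concentration (via Corollary~\ref{cor:KimVu}) applied to each relevant random variable. Most of the conditions pass through essentially for free: since the projection only deletes vertices of $H$ and restricts $G_1, G_2$ accordingly while keeping their vertex sets, the maximum degrees in $G_1', G_2'$ and the codegrees of $G_1'\cup G_2'$ are all bounded by their unprojected counterparts, which already satisfy the $(\alpha, g)$-regularity bounds with plenty of slack relative to the relaxed $(2\alpha, \beta/2)$ tolerance. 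The genuinely new work is needed for three kinds of parameter: (i) the minimum degrees on $A' := V(G_1')\cap V(G_2')$ in $G_1'$ and $G_2'$ after cliques get deleted; (ii) the configuration degrees $\Delta_1(H'^{(i)})$ for $2\le i \le g$, which can acquire new size-$i$ edges from larger Erd\H{o}s configurations in $H$ whose extra cliques happen to lie in some $M\in \mc{M}(\mc{B}_p)$; and (iii) the codegrees $\Delta_t(H'^{(s)})$, the $2$-codegree of $G_1'\cup G_2'$ with $H'$, and the common $2$-degree of $H'$, which could similarly grow from projection.

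For each such parameter and each potential witness, I would write the projection-induced excess as a multilinear polynomial of degree at most $g-1$ in the independent Bernoulli variables $Z_{B'} = \mathbf{1}[B'\in(\mc{B}_H)_p]$. For example, at a vertex $v\in V(H')$ and each $i\in \{2,\dots,g\}$, the count of newly created size-$i$ edges of $H'$ at $v$ is bounded (with overcounting) by
\[
\sum_{s=i+1}^{g}\,\sum_{\substack{S\in H^{(s)}\\ v\in S}}\,\sum_{\substack{T\subseteq S\\ v\in T,\, |T|=i}} \prod_{K\in S\setminus T}\Big(\sum_{B'\in \bigcup_{H'\in \mc{H}}\mc{B}_{H'}\,:\, K\in B'} Z_{B'}\Big),
\]
and analogous polynomials govern the deleted-clique counts for (i) and the codegree excesses for (iii). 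Using $p = \log^a n/\binom{n}{v(B_0)-q}$ and the observation that each inner sum $\sum_{B'\ni K} Z_{B'}$ has expectation $p\cdot D_K$ where $D_K$ is the number of boosters containing $K$, I would compute each expectation. A casework on the possible intersection sizes $|V(K)\cap V(H')|$ for $H'\in \mc{H}$, made tractable by the $C$-refinement of $A$ and the codegree bounds on $X\cup A$, controls $D_K$ and shows that each expectation is at most $\alpha$ times the corresponding target bound (in fact much less, because each added clique contributes a factor roughly $p\cdot D_K = O(1/D)$, so longer configurations are subdominant to the $s=i$ term already present in $H$).

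The Kim--Vu inequality then converts each such expectation bound into a high-probability bound of the same order, up to polylogarithmic correction factors absorbed by taking $a$ sufficiently large. A union bound over the at most $n^{O(1)}$ witnessing vertices/tuples and the $O(1)$ distinct parameters finishes the proof, giving outcome (2) of Theorem~\ref{thm:HighGirthQuantumOmniBooster} with probability at least $0.9$ (and in fact $1 - o(1)$).

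The main obstacle is the expectation computation for $\Delta_1(H'^{(i)})$ and $\Delta_t(H'^{(s)})$. One must simultaneously (a) bound $D_K$ sharply via casework on how the vertices of $K$ meet the roots in $\mc{H}$, exploiting both the $C$-refinement of $A$ and the codegree structure of $X\cup A$; (b) verify that each extra clique in $S\setminus T$ contributes a factor of order $1/D$ so that the $s > i$ terms are all negligible compared to the baseline $\alpha D^{i-1}\log D$; and (c) choose $a$ large enough to swallow the polylogarithmic slack from Kim--Vu while keeping everything within the $2\alpha$ tolerance. Once this calculation is carried out for the archetypal quantity $\Delta_1(H'^{(i)})$, the remaining parameters follow by essentially the same template with shifted indices.
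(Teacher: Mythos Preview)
Your high-level strategy matches the paper's proof: verify each of the seven regularity clauses separately, with the ``free'' ones (upper degrees, $G_1\cup G_2$ codegrees) inherited from the unprojected treasury, and the rest handled by writing the projection-induced excess as a polynomial in the booster indicators $Z_{B'}$ and applying Kim--Vu. The paper packages the codegree calculations you sketch into a single auxiliary object ${\rm Extend}_g(\mc{B})$ (Definition~\ref{def:ExtendConfig}) whose edges are \emph{partitions} of the removed cliques into boosters, and Lemma~\ref{lem:ExtendConfigBoosterDegrees} records exactly the $\Delta_{(t_1,t_2)}$ bounds you would need; your product $\prod_{K\in S\setminus T}\big(\sum_{B'\ni K} Z_{B'}\big)$ is an overcount of this that, once collapsed to a multilinear form, stratifies by the number $k$ of distinct boosters appearing---and that stratification is precisely the paper's index $s_2=k$.

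There is, however, one genuine gap. The maximum common $2$-degree of $H'$ does \emph{not} follow ``by essentially the same template with shifted indices.'' Here one must bound, for a fixed pair of vertex-disjoint cliques $e_1,e_2$, the number of $f$ for which both $\{e_1,f\}$ and $\{e_2,f\}$ lie in $H'$. This is a sum over \emph{pairs} of extended configurations sharing the pivot $f$, and the two associated booster sets $Z_1,Z_2$ can overlap arbitrarily. When $Z_1=Z_2$ (the diagonal term $\mc{N}_{k,k,k}$ in the paper's Claim~\ref{cl:11Holds}), the naive count loses the extra $1/n$ factor you need, because one configuration already determines the other. The paper recovers that factor via a separate lemma (Lemma~\ref{lem:UncommonDegree}) which exploits that $|V(e_1)\cap V(e_2)|\le r-1$: it forces some vertex $v\in V(e_2)\setminus V(e_1)$ to appear either in $V(F)$, or in a booster's non-root part, or in a root $V(H_i)$, and then performs a four-way case split ($\mc{F}_1,\dots,\mc{F}_4$) each of which independently gains a $1/n$. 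Your proposal does not anticipate this difficulty, and without it the common $2$-degree bound would come out only as $O(D\cdot \Delta/n)$ rather than the required $O(D/n)$.
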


We note that the above lemmas do not require the condition ``Suppose that Theorem~\ref{thm:HighCogirthBooster} holds for $r':=r-1$" since the intended use is assumed with the existence of $B_0$. 

Assuming Lemmas~\ref{lem:RandomQuantumIntrinsic} and~\ref{lem:RandomQuantumExtrinsic}, we are now prepared to prove Theorem~\ref{thm:HighGirthQuantumOmniBooster} as follows.

\begin{proof}[Proof of Theorem~\ref{thm:HighGirthQuantumOmniBooster}]
We let $a := \max \{a_1,a_2\}$ where $a_1$ is as in Lemma~\ref{lem:RandomQuantumIntrinsic} and $a_2$ is as in Lemma~\ref{lem:RandomQuantumExtrinsic}. We let $n_1$ be as in Lemma~\ref{lem:RandomQuantumIntrinsic} for $a$ and $n_2$ be as in Lemma~\ref{lem:RandomQuantumExtrinsic} for  $a$. We set $n_0:= \max\{n_1,n_2\}$. We let $\mc{B}\in {\rm Full}_{B_0}(K_n^r,A,X)$ and we define $p :=\frac{\log^{a} n}{\binom{n}{v(B_0)-q}}$. By Lemma~\ref{lem:RandomQuantumIntrinsic}, $\mc{B}_p$ satisfies Theorem~\ref{thm:HighGirthQuantumOmniBooster}(1) with probability at least $.9$. Similarly by Lemma~\ref{lem:RandomQuantumExtrinsic}, $\mc{B}_p$ satisfies Theorem~\ref{thm:HighGirthQuantumOmniBooster}(2) with probability at least $.9$. Hence by the union bound, there exists some $\mc{B}_p$ satisfying both Theorem~\ref{thm:HighGirthQuantumOmniBooster}(1) and (2) as desired.
\end{proof}

\section{Intrinsic Properties of a Random Sparsification of a Full Quantum Omni-Booster}\label{s:Intrinsic}

In this section, we prove Lemma~\ref{lem:RandomQuantumIntrinsic}. First we recall a corollary of the Kim-Vu polynomial concentration inequality. We will use said corollary extensively in the proofs of Lemma~\ref{lem:RandomQuantumIntrinsic} and~\ref{lem:RandomQuantumExtrinsic}. Second we establish properties of a full quantum booster, namely upper bounds on the number of boosters containing a fixed set of cliques. Third we use these upper bounds to provide upper bounds on the degrees and codegrees of an auxiliary hypergraph that encodes low girth configurations among boosters. Finally we use those bounds and the corollary of Kim-Vu to prove Lemma~\ref{lem:RandomQuantumIntrinsic}.

\subsection{Probabilistic Tools: Kim-Vu and a Corollary}

Here we need a hypergraph corollary of the polynomial concentration theorem of Kim-Vu~\cite{KV00} to concentrate degrees and codegrees for our proof of Theorem~\ref{thm:HighGirthOmniBooster}. To that end, we need the following definition.

\begin{definition}[$p$-random subhypergraph]
Let $H$ be a hypergraph and $p\in [0,1]$ be real. The \emph{$p$-vertex-random subhypergraph of $H$}, which for this paper we will denote as $H_p$, is the subgraph of $H$ induced on the set $X$ where $X$ is obtained by choosing each vertex of $H$ independently with probability $p$.    
\end{definition}

Note since in this paper we will not consider random hypergraphs where the randomness is in choosing edges (such as is done in our other paper with Kelly~\cite{DKPIII}), there should be no confusion as to the use of $H_p$.

Here is our needed hypergraph corollary which we stated in a more general form in our paper with Kelly~\cite{DKPIII} (where we included a proof deriving it from Kim-Vu); that said, the below is a fairly standard application so we omit the proof.

\begin{cor}\label{cor:KimVu}
For all integers $k\ge 1$, there exists $n_0 \ge 1$ such that the following holds for all $p\in (0,1]$ and $n\ge n_0$: Let $H$ be a $k$-uniform multi-hypergraph on at most $n$ vertices. If $K>0$ satisfies
\begin{equation}
    p^{i}\ge \frac{\Delta_i(H)}{K}\cdot \log^{4k+2} n\tag{$*_i$}
\end{equation}
for all $i \in [k]$ and $e(H)\le K$, then with probability at least $1-n^{-\log n}$,
$$e(H_p)\le p^k \cdot 2K.$$
\end{cor}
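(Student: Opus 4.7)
The plan is to apply the Kim-Vu polynomial concentration inequality directly to the random variable $e(H_p)$. Introduce independent Bernoulli$(p)$ indicators $t_v$ for $v\in V(H)$, and write
$$Y := e(H_p) = \sum_{e \in E(H)} \prod_{v \in e} t_v,$$
which is a positive polynomial of degree $k$ in the $(t_v)$. The strategy is to control $\mathbb{E}[Y]$ and each expected partial derivative using the hypothesis $(*_i)$, then invoke Kim-Vu with a polylogarithmic deviation parameter to concentrate $Y$ at scale $p^k K$.

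For each $S\subseteq V(H)$ with $|S|=i\ge 1$, the formal partial derivative $\partial_S Y=\sum_{e\in H,\, S\subseteq e}\prod_{v\in e\setminus S} t_v$ has expectation $p^{k-i}|H(S)|$, so by $(*_i)$
$$E_i(Y):=\max_{|S|=i}\mathbb{E}[\partial_S Y] \;\le\; p^{k-i}\Delta_i(H) \;\le\; \frac{p^k K}{\log^{4k+2} n}$$
for $1\le i\le k$, while $E_0(Y)=\mathbb{E}[Y]=p^k e(H)\le p^k K$. Setting $E_0':=\max_{0\le i\le k} E_i$ and $E_1':=\max_{1\le i\le k}E_i$, this gives $E_0'\le p^k K$ and $E_1'\le p^k K/\log^{4k+2} n$, hence $(E_0'E_1')^{1/2}\le p^k K/\log^{2k+1} n$.

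Now the Kim-Vu inequality (applied to a polynomial on at most $n$ Bernoulli variables) says that for every $\lambda>0$,
$$\Pr\!\left[|Y-\mathbb{E}Y|>a_k(E_0'E_1')^{1/2}\lambda^k\right] \;\le\; d_k\cdot e^{-\lambda+(k-1)\log n},$$
for constants $a_k,d_k$ depending only on $k$. Choosing $\lambda=2\log^2 n$, the deviation bound becomes
$$a_k(E_0'E_1')^{1/2}\lambda^k \;\le\; a_k\cdot\frac{p^k K}{\log^{2k+1} n}\cdot(2\log^2 n)^k \;=\; \frac{2^k a_k\cdot p^k K}{\log n} \;\le\; p^k K$$
for $n$ large, while the failure probability is at most $d_k e^{-2\log^2 n+(k-1)\log n}\le n^{-\log n}$ for $n$ large. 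Therefore $Y\le \mathbb{E}[Y]+p^k K\le 2p^k K$ with probability at least $1-n^{-\log n}$, as required.

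The derivation is essentially routine once the partial derivative bounds are in place; the only calibration is that the hypothesis leaves $\log^{4k+2} n$ slack in the $\Delta_i(H)$ bound, which is precisely what is needed so that both $(E_0'E_1')^{1/2}\lambda^k\le p^k K$ and $e^{-\lambda+(k-1)\log n}\le n^{-\log n}$ can be arranged simultaneously with $\lambda$ polylogarithmic in $n$. There is no real obstacle here beyond bookkeeping, which is why the authors note that the proof is a fairly standard application of Kim-Vu and choose to omit it.
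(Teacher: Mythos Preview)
Your proof is correct and is precisely the standard derivation from the Kim--Vu inequality that the paper alludes to (and omits): express $e(H_p)$ as a degree-$k$ polynomial in the vertex indicators, bound $E_i'$ via $(*_i)$, and take $\lambda$ of order $\log^2 n$ so that both the deviation and the failure probability fit. The only thing to add is that the constants $a_k,d_k$ and the threshold $n_0$ are absorbed uniformly since they depend only on $k$, which you implicitly use when saying ``for $n$ large.''
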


\subsection{Properties of a Full Quantum Booster}

To prove Lemma~\ref{lem:RandomQuantumIntrinsic}, it is useful to upper bound the number of boosters containing an edge, a clique, or a set of cliques. To that end, we make the following definitions.

\begin{definition}
Let $(G,A,X,\mc{H})$ be a $K_q^r$-sponge. Let $\mc{B} \in {\rm Full}_{B_0}(G,A,X)$ for some rooted $K_q^r$-booster $B_0$. For $e\in G$, we let
$$\mc{B}(e) := \{B\in \mc{B}: e\in B\}.$$
For a ${\rm Girth}_{K_q^r}^g(G)$-avoiding matching $Z$ of ${\rm Design}_{K_q^r}(G)$ with $|Z|\le g-2$, we  let
$$\mc{B}(Z):= \{B\in \mc{B}: Z\subseteq B_{\rm off} \text{ or } Z\subseteq B_{\rm on}\},$$
$$\mc{B}_{\rm small}(Z):= \{B_H \in \mc{B}(Z): |V(H)\cap V(Z)|\le r-1\},~~\mc{B}_{\rm large}(Z):=\mc{B}(Z)\setminus \mc{B}_{\rm small}(Z).$$
For a partition of such a $Z$ into $m$ nonempty sets $Z_1,\ldots, Z_m$, we let
$$\mc{B}(Z_1,\ldots, Z_m):= \prod_{i\in [m]} \mc{B}(Z_i),$$
$$\mc{B}_{\rm small}(Z_1,\ldots,Z_m):= \prod_{i\in [m]} \mc{B}_{\rm small}(Z_i),~~\mc{B}_{\rm large}(Z_1,\ldots, Z_m):=\mc{B}(Z_1,\ldots,Z_m)\setminus \mc{B}_{\rm small}(Z_1,\ldots,Z_m).$$
\end{definition}

Here is the key lemma for upper bounding the number of boosters containing a fixed edge or a fixed set of cliques.

\begin{lem}\label{lem:BoostersContainingSetOfCliques}
For all integers $q>r\ge 1$, $C\ge 1$ and $g\ge 3$, and $K_q^r$ booster $B_0$ of rooted girth at least $g$, there exists an integer $C'\ge 1$ such that the following holds: Let $(G,A,X,\mc{H})$ be a $C$-refined $K_q^r$-sponge and let $\Delta := \max\left\{ \Delta(X),~v(G)^{1-\frac{1}{r}}\cdot \log v(G)\right\}$ and $b:=v(B_0)-q$. Let $\mc{B} \in {\rm Full}_{B_0}(G,A,X)$.
If $e\in G$, then
$$|\mc{B}(e)| \le C'\cdot \Delta\cdot n^{b-1}.$$
Furthermore, if $Z$ is a ${\rm Girth}_{K_q^r}^g(G)$-avoiding matching of ${\rm Design}_{K_q^r}(G)$ with $|Z|\le g-2$, then both of the following hold:
\begin{enumerate}
    \item[(1)] $|\mc{B}_{\rm small}(Z)| \le C'\cdot \frac{\Delta}{n}\cdot \binom{n}{b} \cdot n^{-(q-r)\cdot |Z| - {\bf 1}_{|Z|\ge 2}}.$
    \item[(2)] $|\mc{B}_{\rm large}(Z)|\le C'\cdot \binom{n}{b} \cdot n^{-(q-r)\cdot |Z|}.$    
\end{enumerate}
\end{lem}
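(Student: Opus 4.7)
The plan is to count each quantity by enumerating over the root $H \in \mathcal{H}$, the partial embedding data specifying how $Z$ (or $e$) sits inside the booster, and the extension vertex set $S$ of size $b$. Two uniform bounds will drive the bookkeeping: first, by the $C$-refinement of $A$ and standard double counting, $|\mathcal{H}(U)| \le O(\Delta\cdot n^{r-|U|-1})$ for $|U|\le r-1$ (using $\Delta_{|U|}(X\cup A) \le O(\Delta\cdot n^{r-|U|-1})$), while $|\mathcal{H}(U)| \le C$ for $|U|\ge r$ since then $U$ contains an edge of $X\cup A$ lying in every such $H$; second, the rooted girth of $B_0$ yields $|V(\bigcup \mathcal{B}') \setminus V(H_{B_0})| \ge (q-r)|\mathcal{B}'|$ for every $\mathcal{B}'$ of size $<g$ lying in either decomposition of $B_0$ (the $\mathcal{B}_{\rm off}$-case being deduced by applying the girth of $\mathcal{B}_{\rm off} \cup \{H_{B_0}\}$ to $\mathcal{B}' \cup \{H_{B_0}\}$).

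For the edge bound, if $e \in E(B)$ then $V(e) \subseteq V(H) \cup S$, and setting $t := |V(H) \cap V(e)|$ one has $t \le r-1$ because $E(B)$ is edge-disjoint from $E(H)$. Fixing $t$ and a $t$-subset $U \subseteq V(e)$, the number of $H \supseteq U$ is at most $O(\Delta\cdot n^{r-t-1})$ and the number of admissible $S$ is $O(n^{b-(r-t)})$, so summing over $t$ and $U$ yields $O(\Delta\cdot n^{b-1})$.

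For the matching bounds, every booster in $\mathcal{B}(Z)$ arises from (i) choosing $m$ cliques $Q_{0,1},\ldots,Q_{0,m}$ in one decomposition of $B_0$ together with a vertex-correspondence to $Q_1,\ldots,Q_m$ ($O(1)$ choices, depending only on $B_0$), (ii) choosing $H \in \mathcal{H}$ whose vertex set contains the forced set $U^* := V(H) \cap V(Z)$ of size $t^*$, and (iii) choosing the remaining free vertices of $S$, contributing a factor $O(n^{b-\ell})$ where $\ell := |V(Z) \setminus V(H)|$. In the large case ($t^* \ge r$) the rooted-girth bound gives $\ell \ge (q-r)m$ and $|\mathcal{H}(U^*)| \le C$, producing $O(n^{b-(q-r)m})$ as required. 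In the small case ($t^* \le r-1$), I would first establish by a short induction, using that $Z$ is a ${\rm Girth}_{K_q^r}^g$-avoiding matching of size $m \le g-2$, that $|V(Z)| \ge m(q-r)+r+1$ for every $m \ge 2$; then $\ell \ge m(q-r)+2$, and combining with $|\mathcal{H}(U^*)| \le O(\Delta\cdot n^{r-t^*-1})$ yields $O(\Delta\cdot n^{b-m(q-r)-2})$ (and $O(\Delta\cdot n^{b-(q-r)-1})$ when $m=1$, using instead $|V(Z)|=q$ and $\ell \ge q-r+1$).

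The main technical obstacle will be extracting the extra factor $n^{-\mathbf{1}_{|Z|\ge 2}}$ in the small case: it arises precisely from the inductive improvement $|V(Z)| \ge m(q-r)+r+1$ beyond the naive Erd\H{o}s-configuration bound $|V(Z)| \ge m(q-r)+r$, and so depends essentially on the girth-avoiding hypothesis on $Z$. A secondary subtlety is that the two decomposition types $\mathcal{B}_{\rm on}$ and $\mathcal{B}_{\rm off}$ in Definition~\ref{def:rootedbooster} carry different rooted-girth guarantees, so the $\mathcal{B}_{\rm off}$-case must be routed through the auxiliary girth of $\mathcal{B}_{\rm off} \cup \{H_{B_0}\}$ to recover the clean bound $\ell \ge (q-r)m$.
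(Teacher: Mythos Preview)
Your proposal is correct and follows essentially the same enumeration as the paper: fix the intersection $F=V(H)\cap V(Z)$ (your $U^*$), bound the number of $H\in\mc{H}$ with $F\subseteq V(H)$ by $O(\Delta\cdot n^{r-|F|-1})$ when $|F|\le r-1$ and by $C$ when $|F|\ge r$, and then bound the number of completions $S$ by $n^{b-(v(Z)-|F|)}$. The one substantive difference is how you obtain the extra factor $n^{-{\bf 1}_{|Z|\ge 2}}$ in part~(1). You derive $|V(Z)|\ge (q-r)|Z|+r+1$ from the hypothesis that $Z$ is a ${\rm Girth}_{K_q^r}^g(G)$-avoiding matching (via the minimal-subconfiguration argument you sketch). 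The paper instead observes that any $B\in\mc{B}(Z)$ has $Z\subseteq B_{\rm on}$ or $Z\subseteq B_{\rm off}$, and since $B_{\rm on}$ and $B_{\rm off}\cup\{H\}$ each have girth at least $g$ (this being part of what ``rooted girth $\ge g$'' means), the same vertex bound follows directly from the booster's own girth. Both routes are valid; the paper's shows that the Girth-avoiding hypothesis on $Z$ is in fact not needed for this lemma, so your closing remark that the extra saving ``depends essentially'' on that hypothesis overstates the case.
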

\begin{proof}
We choose $C'$ large enough as needed throughout the proof.

First let $e\in G$. We assume $e\in G \setminus (X\cup A)$ as otherwise $\mc{B}(e)=\emptyset$ and there is nothing to show. Now let $F\subseteq V(e)$ with $|F|=i$ where $i\in [r-1]$. We note that since $A$ is $C$-refined there are at most $n^{r-1-i}\cdot \Delta(A) \cdot C$ ($\le C^2 \cdot \Delta \cdot n^{r-1-i}$) elements $H$ of $\mathcal{H}$ such that $F\subseteq V(H)$, and for each such $H$ there are at most $n^{b-(r-i)}$ subsets $S\in \mathcal{S}(H)$ such that $e\in B_H(S)$. Hence 
$$|\mc{B}(e)|\le \sum_{i=1}^{r-1} C^2\cdot \Delta \cdot n^{r-1-i} \cdot n^{b-(r-i)} \le r\cdot C^2 \cdot \Delta\cdot n^{b-1} \le C'\cdot \Delta\cdot n^{b-1},$$
as desired where we used that $C'$ is large enough.

Now we prove that (1) and (2) hold for a fixed $Z$. To that end, fix $F\subseteq V(Z)$ with $|F|\in \{0,1\ldots,q\}$. (Here $F$ will be the possible intersection of $V(Z)$ with $V(H)$ for $H\in \mc{H}$.) 

Consider $H\in \mc{H}$ with $F\subseteq V(H)$. If $|F| \le r-1$, then
$$|\{H\in \mc{H}: F\subseteq V(H)\}| \le n^{r-1-|F|}\cdot \Delta(A)\cdot C \le C^2\cdot \Delta \cdot n^{r-1-|F|}.$$
For each such $H$, there are at most $n^{b - (v(Z)-|F|)}$ elements $B$ of $\mc{B}_H$ with $V(Z) \subseteq V(B)$.
Hence
\begin{align*}|\mathcal{B}_{\rm small}(Z)| &\le \sum_{i=0}^{r-1} \binom{v(Z)}{i}\cdot  C^2\cdot \Delta \cdot n^{r-1-i} \cdot n^{b-(v(Z)-i)} \le v(B_0)^r\cdot r\cdot C^2 \cdot \Delta\cdot n^{b-v(Z)+r-1}\\
&\le C'\cdot \frac{\Delta}{n}\cdot \binom{n}{b} \cdot n^{-v(Z)+r},
\end{align*}
where the last inequality follows since $C'$ is large enough.
Since $Z\subseteq B_{\rm on}$ or $Z\subseteq B_{\rm off}$ and $B_0$ has rooted girth at least $g$, this implies that 
$$v(Z) \ge (q-r)|Z|+r+{\bf 1}_{|Z|\ge 2}$$ 
where we used that $B_{\rm on}$ and $B_{\rm off}\cup \{H\}$ both have girth at least $g$ which follows since $B_0$ has rooted girth at least $g$. Hence 
$$|\mathcal{B}_{\rm small}(Z)|\le C'\cdot \frac{\Delta}{n}\cdot \binom{n}{b} \cdot n^{-(q-r)\cdot |Z| - {\bf 1}_{|Z|\ge 2}},$$
and (1) holds as desired.

So suppose $|F| \ge r$. Then
$$|\{H\in \mc{H}: F\subseteq V(H)\}| \le C.$$
For each such $H$, there are at most $n^{b - (v(Z)-|F|)}$ elements $B$ of $\mc{B}_H$ with $V(Z) \subseteq V(B)$. Moreover, for those $B$ for which $Z\subseteq B_{\rm on}$ or $Z\subseteq B_{\rm off}$, it follows that since $B_0$ has rooted girth at least $g$, that 
$$v(Z)-|F| \ge (q-r)\cdot |Z|.$$
where we used that $B_{\rm on}$ has rooted girth at least $g$ at $V(H)$ and $B_{\rm off}\cup \{H\}$ has girth at least $g$ which follow since $B_0$ has rooted girth at least $g$. Hence
\begin{align*}
|\mathcal{B}_{\rm large}(Z)| &\le \sum_{i=r}^{q} \binom{v(Z)}{i}\cdot  C \cdot n^{b-(v(Z)-i)} \le v(B_0)^q\cdot q\cdot C \cdot n^{b-(q-r)\cdot |Z|}\\
&\le C'\cdot \binom{n}{b}\cdot n^{-(q-r)\cdot |Z|},
\end{align*}
and (2) holds as desired where the last inequality follows since $C'$ is large enough.
\end{proof}

\subsection{An Auxiliary Hypergraph for Intrinsic Girth}

To prove Lemma~\ref{lem:RandomQuantumIntrinsic}, it is useful to construct an auxiliary hypergraph that encodes the low girth configurations among boosters. First we define a configuration among boosters as follows.

\begin{definition}\label{def:GirthBoosterConfig}
Let $q>r\ge 2$ and $g\ge 3$ be integers. Let $(G,A,X,\mc{H})$ be a $K_q^r$-sponge. Let $\mc{B} \in {\rm Full}_{B_0}(G,A,X)$ for some rooted $K_q^r$-booster $B_0$.

An \emph{$g$-booster-configuration} of $\mc{B}$ is a subset $U =\{U_1,\ldots,U_m\}\subseteq \mc{B}$ with $m\ge 2$ such that $|U\cap \mc{B}_H|\le 1$ for all $H\in \mc{H}$ and there exists $F\in {\rm Girth}_{K_q^r}^g(G)$ and a partition of $F$ into $m$ nonempty sets, $F_1,\ldots, F_m$, such that for each $i\in [m]$, we have $F_i\subseteq (U_i)_{\rm on}$ or $F_i\subseteq (U_i)_{\rm off}$.
\end{definition}

We note that we could further require the boosters in a configuration above to be edge-disjoint (since any realization of a quantum booster into edge-disjoint boosters has this property); however such a requirement can only decrease the degrees and codegrees of the associated hypergraph below. Since we are able to upper bound as is, we opted for the above definition for simplicity. 

Here is the required auxiliary hypergraph.

\begin{definition}\label{def:GirthConfig}
Let $q>r\ge 2$ and $g\ge 3$ be integers. Let $(G,A,X,\mc{H})$ be a $K_q^r$-sponge. Let $\mc{B} \in {\rm Full}_{B_0}(G,A,X)$ for some rooted $K_q^r$-booster $B_0$.
Define the \emph{girth-$g$ configuration hypergraph} of $\mc{B}$, denoted ${\rm Girth}_g(\mc{B})$, as
$$V({\rm Girth}_g(\mc{B})) := \bigcup_{H\in \mc{H}} \mc{B}_H,$$
$$E({\rm Girth}_g(\mc{B})):= \{ \mc{Z}\subseteq V({\rm Girth}_g(\mc{B})): \mc{Z} \text{ is a $g$-booster-configuration of } \mc{B}\}.$$
\end{definition}

We need the following useful definition and proposition.

\begin{definition}
Let $q > r \ge 2$ and $g\ge 3$ be integers. Let $Q\in V({\rm Girth}_{K_q^r}^g(K_n^r))$ and $J\subseteq K_n^r$. We say $F\in {\rm Girth}_{K_q^r}^g(K_n^r)$ with $Q\subseteq F$ is \emph{$(Q,J)$-large} if $\left(\binom{V(F)}{r}\setminus \binom{Q}{r}\right)\cap J\ne \emptyset$. We let ${\rm Large}_J(Q)$ denote the set of $(Q,J)$-large $F$.
\end{definition}
\begin{proposition}\label{prop:LargeIntersect}
Let $q > r \ge 2$ and $g\ge 3$ be integers. If $Q\in V({\rm Girth}_{K_q^r}^g(K_n^r))$ and $J\subseteq K_n^r$, then     
$$\left|{\rm Large}_J(Q)^{(s)} \right| \le (sq)\cdot \binom{sq}{r-1}\cdot \binom{sq}{q}^s\cdot n^{(q-r)(s-1)}\cdot \frac{\Delta(J)}{n}.$$
\end{proposition}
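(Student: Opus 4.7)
The plan is an overcount: bound $|{\rm Large}_J(Q)^{(s)}|$ by enumerating triples $(F, v_0, T)$ where $v_0, T$ record a chosen ``witness'' of the large condition. For each $F \in {\rm Large}_J(Q)^{(s)}$, the definition of $(Q,J)$-large supplies an edge $e \in J$ with $e \subseteq V(F)$ and $e \not\subseteq V(Q)$; fix any such $e$, pick any $v_0 \in e \setminus V(Q)$, and set $T := e \setminus \{v_0\}$. Since each $F$ produces at least one triple, $|{\rm Large}_J(Q)^{(s)}|$ is bounded by the number of triples, which I count in two stages: first a combinatorial ``shape,'' then an embedding into $[n]$.

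For the shape, I choose an abstract Erd\H{o}s $(s(q-r)+r, s)$-configuration together with (a) a designation of which of its $s$ cliques plays the role of $Q$ (at most $\binom{sq}{q}^s$ choices for the decomposition), (b) an abstract vertex $v_0^{\mathrm{abstr}}$ outside the $Q$-clique (at most $sq$ positions), and (c) an abstract $(r-1)$-set $T^{\mathrm{abstr}}$ avoiding $v_0^{\mathrm{abstr}}$ (at most $\binom{sq}{r-1}$ positions). These three factors account precisely for the combinatorial product $(sq)\binom{sq}{r-1}\binom{sq}{q}^s$ in the target bound.

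For the embedding $\phi \colon V(F)^{\mathrm{abstr}} \to [n]$ extending the fixed identification $V(Q)^{\mathrm{abstr}} \to V(Q)$, let $i := |T^{\mathrm{abstr}} \cap V(Q)^{\mathrm{abstr}}|$, so that the image of $T^{\mathrm{abstr}} \cap V(Q)^{\mathrm{abstr}}$ is a fixed $i$-subset $S \subseteq V(Q)$. Specifying the rest of $\phi$ amounts to (i) choosing a $J$-edge containing $S$ with its other $r-i$ vertices in $[n]\setminus V(Q)$, (ii) identifying those $r-i$ outside vertices with $\{v_0^{\mathrm{abstr}}\} \cup (T^{\mathrm{abstr}} \setminus V(Q)^{\mathrm{abstr}})$ in some order, and (iii) placing the remaining $(s-1)(q-r)-(r-i)$ abstract vertices of $V(F)^{\mathrm{abstr}} \setminus V(Q)^{\mathrm{abstr}}$ into $[n]\setminus V(Q)$. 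The key codegree bound
$$\Delta_i(J) \le \frac{1}{r-i}\binom{n-i}{r-1-i}\Delta(J) \le \frac{n^{r-1-i}}{(r-i)!}\Delta(J),$$
obtained by double-counting pairs $(e', T')$ with $S \subseteq T' \subseteq e' \in J$ and $|T'|=r-1$ and using $\Delta(J)=\Delta_{r-1}(J)$, bounds (i) by $\frac{n^{r-1-i}}{(r-i)!}\Delta(J)$; stages (ii) and (iii) contribute $(r-i)!$ and at most $n^{(s-1)(q-r)-(r-i)}$, respectively. The three factors multiply to $\Delta(J)\cdot n^{(s-1)(q-r)-1} = \frac{\Delta(J)}{n}\cdot n^{(q-r)(s-1)}$, and, crucially, this product is independent of $i$; combining with the shape count yields the claimed bound.

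The main (and essentially only) obstacle is the codegree bound on $\Delta_i(J)$: it is what lets the $n^{r-1-i}$ from the edge count precisely cancel the loss $n^{-(r-i)}$ from having fewer free vertices to place, so that the final answer carries a clean ``$\Delta(J)/n$'' savings independent of how many of $T$'s vertices land inside $V(Q)$. Everything else is routine enumeration, with the shape count absorbing all combinatorial over-counting factors.
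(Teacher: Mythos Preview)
Your argument is correct and rests on the same overcounting idea as the paper: associate to each $F$ a witnessing edge $e\in J$ and a distinguished vertex of $e\setminus V(Q)$, then bound the number of decorated objects. The paper organizes the count more economically, though. It orders $V(F)\setminus V(Q)$ as $v_1,\dots,v_m$ with the witness vertex placed last; then $T=e\setminus\{v_m\}$ is an $(r-1)$-subset of vertices already chosen (from $V(Q)\cup\{v_1,\dots,v_{m-1}\}$), so the final vertex $v_m$ costs exactly $\Delta_{r-1}(J)=\Delta(J)$. Your shape/embedding split forces you to handle the case where some of $T$ lies in the already-fixed $V(Q)$ and some is yet to be embedded, which is why you need the auxiliary codegree inequality $\Delta_i(J)\le \frac{n^{r-1-i}}{(r-i)!}\Delta(J)$; this detour is sound but unnecessary under the paper's ordering. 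One minor sloppiness: in stage~(iii) you write ``$(s-1)(q-r)-(r-i)$ remaining vertices,'' implicitly assuming $|V(F)\setminus V(Q)|=(q-r)(s-1)$, whereas an Erd\H{o}s configuration of size $s$ only satisfies $v(F)\le (q-r)s+r$; since fewer vertices only tightens the embedding count, your bound survives as an inequality.
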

\begin{proof}
For every $F\in {\rm Large}_J(Q)^{(s)}$, it follows from the definition of ${\rm Large}_J$ that there exists an ordering $v_1,\ldots, v_{v(F)-q}$ of $V(F)\setminus V(Q)$ such that there exists $e\in J$ where $v_{v(F)-q} \in V(e) \subseteq V(F)$. Also note that since $F\in {\rm Girth}_{K_q^r}^g(K_n^r)$ and $|F|=s$, we have that $v(F)\le (q-r)s+r$ and hence $v(F)-q\le (q-r)(s-1)$. 

Thus $|F|$ is determined by choosing $m:= v(F)-q$ (at most $sq$ such choices), then $v_1,\ldots v_{m-1}$ (at most $n^{m-1}$ such choices), then $e\cap (V(Q)\cup \{v_1,\ldots,v_{m-1}\}$ (at most $\binom{sq}{r-1}$ such choices), then $v_m$ (at most $\Delta(J)$ such choices), and finally $s$ of the $q$-subsets of $V(Q)\cup \{v_1,\ldots,v_m\}$ (at most $\binom{sq}{q}^s$ such choices). 
\end{proof}

We also need the following useful definition and lemma.

\begin{definition}
Let $(G,A,X,\mc{H})$ be a $K_q^r$-sponge. Let $\mc{B} \in {\rm Full}_{B_0}(G,A,X)$ for some rooted $K_q^r$-booster $B_0$. If $Z$ is a ${\rm Girth}_{K_q^r}^g(G)$-avoiding matching of ${\rm Design}_{K_q^r}(G)$ and $s, s', m$ are integers such that $s \ge |Z|\ge 1$ and $1\le m\le s'-s$, then we define
$$|\mc{B}^{*}(Z,s',s,m)|:= \sum_{F\in \mc{G}(Z)^{(s')}}~\sum_{F': Z\subseteq F' \subseteq F, |F'|=s}~\sum_{F_1,\ldots, F_{m}\in \mc{P}_{m}(F\setminus F')} |\mc{B}(F_1,\ldots, F_{m})|,$$
where $\mc{G}:= {\rm Girth}_{K_q^r}^g(G)$ and $\mc{P}_m(F\setminus F')$ denotes the set of partitions of $F\setminus F'$ into $m$ nonempty sets.
\end{definition}

The above is quite useful in the next section as it counts the number of ways to extend a matching $Z$ to an Erd\H{o}s configuration of size $s'$ and then partition and replace $s'-s$ of the new cliques into boosters containing them. Such an object could create a new configuration of size $s$ containing $Z$ if all of the boosters in it are picked by the quantum booster. That said, we use it in this section with $s=|Z|$ for counting the degrees and codegrees of the booster configuration hypergraph (since we may view $Z$ as some matching inside a fixed set of boosters $U$).

We now upper bound this number with the following lemma; note that in most cases we obtain an `extra' $\frac{1}{n}$ except for some special cases where we only obtain an `extra' $\frac{\Delta}{n}$ factor.

\begin{lem}\label{lem:GirthConfigPrelim}
For all integers $q>r\ge 1$, $C\ge 1$ and $g\ge 3$, and $K_q^r$ booster $B_0$ of rooted girth at least $g$, there exists an integer $C'\ge 1$ such that the following holds: Let $(G,A,X,\mc{H})$ be a $C$-refined $K_q^r$-sponge and let $\Delta := \max\left\{ \Delta(X),~v(G)^{1-\frac{1}{r}}\cdot \log v(G)\right\}$ and let $b:=v(B_0)-q$. If $\mc{B} \in {\rm Full}_{B_0}(G,A,X)$ and $Z$ is a ${\rm Girth}_{K_q^r}^g(G)$-avoiding matching of ${\rm Design}_{K_q^r}(G)$ and $s, s', m$ are integers such that $s \ge |Z|\ge 1$ and $1\le m\le s'-s$, then
both of the following hold:
\begin{enumerate}
    \item[(1)] $|\mc{B}^{*}(Z,s',s,m)|\le C'\cdot \binom{n}{q-r}^{s-|Z|}\cdot \binom{n}{b}^{m}\cdot \frac{1}{n}$ if $|Z|\ge 2$, and 
    \item[(2)] $|\mc{B}^{*}(Z,s',s,m)|\le C'\cdot \binom{n}{q-r}^{s-|Z|}\cdot \binom{n}{b}^{m}\cdot \frac{\Delta}{n}$ otherwise. 
\end{enumerate}    
\end{lem}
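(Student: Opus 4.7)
The plan is to decompose the sum into three counting factors: (i) the number $|\mc{G}(Z)^{(s')}|$ of Erd\H{o}s $(s'(q-r)+r,s')$-configurations of $G$ containing $Z$; (ii) the $O(1)$ choices of $F'$ with $Z\subseteq F'\subseteq F$ and $|F'|=s$, together with the $O(1)$ partitions of $F\setminus F'$ into $m$ nonempty parts; and (iii) the booster product $\prod_i|\mc{B}(F_i)|$. For (iii), Lemma~\ref{lem:BoostersContainingSetOfCliques} furnishes the generic bound $|\mc{B}(F_i)|\le O(\binom{n}{b}\,n^{-(q-r)|F_i|})$, which gives $\prod_i|\mc{B}(F_i)|\le O(\binom{n}{b}^m n^{-(q-r)(s'-s)})$; later, in Case (2), I refine this by splitting $|\mc{B}(F_i)|$ into small and large parts.

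For Case (1) (where $|Z|\ge 2$) I first show, by induction on $|Z|$, that every ${\rm Girth}_{K_q^r}^g(G)$-avoiding matching $Z$ of ${\rm Design}_{K_q^r}(G)$ with $2\le|Z|\le g$ satisfies $|V(Z)|\ge (q-r)|Z|+r+1$. The base case $|Z|=2$ follows from the matching property: two edge-disjoint $K_q^r$-cliques share at most $r-1$ vertices, so $|V(Z)|\ge 2q-(r-1)=2(q-r)+r+1$. For $|Z|\ge 3$, if $|V(Z)|=(q-r)|Z|+r$ then $Z$ contains no smaller Erd\H{o}s sub-configuration (matchings automatically avoid $(2(q-r)+r,2)$-configurations, and the inductive hypothesis eliminates Erd\H{o}s sub-configurations of sizes $3,\dots,|Z|-1$), so $Z$ itself would be an Erd\H{o}s $(|Z|(q-r)+r,|Z|)$-configuration, contradicting girth-avoidance. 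The extra vertex gives $|\mc{G}(Z)^{(s')}|\le O(n^{(q-r)(s'-|Z|)-1})$, and combining with the generic booster bound yields the target $O(\binom{n}{q-r}^{s-|Z|}\binom{n}{b}^m/n)$, proving (1).

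For Case (2) (where $|Z|=1$, say $Z=\{Q\}$) the naive bound $|\mc{G}(Z)^{(s')}|\le O(n^{(q-r)(s'-1)})$ misses the target by a factor of $n/\Delta$. I close the gap by expanding $|\mc{B}(F_i)|\le|\mc{B}_{\rm small}(F_i)|+|\mc{B}_{\rm large}(F_i)|$ and summing over subsets $T\subseteq[m]$ recording which indices contribute a small booster. If $T\ne\emptyset$, Lemma~\ref{lem:BoostersContainingSetOfCliques}(1) supplies a factor of $\Delta/n$ from any small booster, and combined with the naive $|\mc{G}(Z)^{(s')}|$ bound this already yields the target. Otherwise $T=\emptyset$: every $B_i\in\mc{B}_{\rm large}(F_i)$, so $|V(H_i)\cap V(F_i)|\ge r$ for every $i$. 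The key observation is that whenever some $|F_i|=1$ (so $F_i=\{S\}$), the matching property gives $|V(S)\cap V(Q)|\le r-1<r\le|V(H_i)\cap V(F_i)|$, forcing $V(H_i)\cap V(F_i)\not\subseteq V(Q)$; hence $H_i\subseteq A$ supplies an edge of $A$ inside $V(F)$ with a vertex outside $V(Q)$, and Proposition~\ref{prop:LargeIntersect} applied with $J=A$ (using $\Delta(A)\le C\Delta$ from the $C$-refinement of $A$) gives $|\mc{G}(Z)^{(s')}\cap{\rm Large}_A(Q)^{(s')}|\le O(n^{(q-r)(s'-1)}\Delta/n)$, yielding the target.

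The main obstacle is the residual sub-sub-case in which $T=\emptyset$ and $|F_i|\ge 2$ with $V(H_i)\cap V(F_i)\subseteq V(Q)$ for every $i$. Each such $H_i$ then shares an edge with $Q$ in $A$, so by $C$-refinement there are at most $C\binom{q}{r}=O(1)$ choices for $H_i$; moreover the matching property applied to the $\ge 2$ cliques of $F_i$ forces $|V(F_i)\setminus V(H_i)|\ge q-r+1$, so each admissible $S\in\mc{S}(H_i)$ must contain a specified $(q-r+1)$-set and contributes at most $O(n^{b-(q-r+1)})$ choices. This reduces the booster count per index by a factor of $n^{q-r+1}$; since $n^{-(q-r+1)}\le n^{-2}\le\Delta/n$ (using $\Delta\ge n^{1-1/r}\log n$ and $q>r\ge 1$), the saving absorbs the missing $\Delta/n$ and completes the proof of (2).
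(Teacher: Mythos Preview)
Your overall strategy for both cases matches the paper's, and your treatment of Case~(1) is correct --- in fact your explicit induction showing $|V(Z)|\ge (q-r)|Z|+r+1$ for a $\mc{G}$-avoiding matching $Z$ with $2\le |Z|\le g$ makes explicit a step the paper leaves implicit. Case~(2) also starts correctly: the paper splits $\mc{B}(F_1,\ldots,F_m)$ into the all-small part $\mc{B}_{\rm small}$ and its complement $\mc{B}_{\rm large}$, and your finer expansion over subsets $T\subseteq[m]$ is an equivalent refinement. The paper then asserts in one line that $|\mc{B}_{\rm large}(F_1,\ldots,F_m)|\ne 0$ forces $F\in{\rm Large}_{X\cup A}(Q)$ and invokes Proposition~\ref{prop:LargeIntersect}; you are more careful and correctly isolate the residual sub-case where every $B_{H_i}$ is large yet $V(H_i)\cap V(F_i)\subseteq V(Q)$.

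However, your bound in that residual sub-case does not close. You claim a saving of $n^{q-r+1}$ per index from $|V(F_i)\setminus V(H_i)|\ge q-r+1$, but Lemma~\ref{lem:BoostersContainingSetOfCliques}(2) already gives $|\mc{B}_{\rm large}(F_i)|\le C_0\binom{n}{b}\,n^{-(q-r)|F_i|}$, i.e.\ a per-index saving of $n^{(q-r)|F_i|}\ge n^{2(q-r)}\ge n^{q-r+1}$ (since $|F_i|\ge 2$ and $q>r$). Thus your residual estimate is no stronger than the generic large bound, and the missing factor of $\Delta/n$ is never recovered. Concretely, your residual contribution is of order
\[
n^{(q-r)(s'-1)}\cdot n^{m(b-(q-r+1))}=n^{(q-r)(s'-1)+mb-m(q-r+1)},
\]
and since $s'-s=\sum_i|F_i|\ge 2m$ this exceeds the target $\binom{n}{q-r}^{s-1}\binom{n}{b}^m\cdot\frac{\Delta}{n}$ by at least a factor of $n^{m(q-r-1)}\cdot\frac{n}{\Delta}\ge \frac{n}{\Delta}\gg 1$. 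To close the gap you would need to beat the rooted-girth bound $(q-r)|F_i|$ on $|V(F_i)\setminus V(H_i)|$ by at least one (using the extra information that $V(H_i)\cap V(F_i)\subseteq V(Q)$, so each clique of $F_i$ meets $V(H_i)$ in at most $r-1$ vertices), or otherwise find a genuinely new source for the factor $\Delta/n$ in this sub-case.
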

\begin{proof}
We choose $C'$ large enough as needed throughout the proof. For brevity, let $\mc{D}:={\rm Design}_{K_q^r}(G)$ and $\mc{G}:= {\rm Girth}_{K_q^r}^g(G)$. Note that $|\mc{P}_{m}(F\setminus F')|\le |F|^{m}$ and for $(F_1,\ldots, F_m)\in \mc{P}_{m}(F\setminus F')$, we have $\sum_{i\in [m]} |F_i| = |F|-|F'|$. 

Let $t':= |Z|$. Now we consider two cases depending on whether $t'\ge 2$. First suppose $t'\ge 2$. Then since $Z$ is a $\mc{G}$-avoiding matching of $\mc{D}$ and $|Z|\ge 2$, we find that
$$|\mc{G}(Z)^{(s')}| \le \Delta_{s',t'}(\mc{G}) \le (qs')^{qs'}\cdot n^{(q-r)(s'-t') - 1}.$$ 
Let $C_0$ be as in Lemma~\ref{lem:BoostersContainingSetOfCliques} for $q,r,C,g$ and $B_0$. It follows from Lemma~\ref{lem:BoostersContainingSetOfCliques}(1)-(2) that for all $i\in [m]$, we have
$$|\mc{B}(F_i)|\le 2C_0\cdot n^{b-(q-r)|F_i|},$$
and hence using that $\sum_{i\in [m]} |F_i| = s'-s$, we find that
$$|\mc{B}(F_1,\ldots, F_{m})| \le (2C_0)^{m}\cdot n^{b\cdot m-(q-r)(s'-s)}.$$ 
Substituting the above bounds, we calculate that
\begin{align*}
|\mc{B}^{*}(Z,s',s,m)| &= \sum_{F\in \mc{G}(Z)^{(s')}}~\sum_{F': Z\subseteq F' \subseteq F, |F'|=s}~\sum_{(F_1,\ldots, F_{m})\in \mc{P}_{m}(F\setminus F')} |\mc{B}(F_1,\ldots, F_{m})| \\
&\le (qs')^{qs'}\cdot  n^{(q-r)(s'-t') - 1} \cdot (s')^{m}\cdot  (2C_0)^{m}\cdot n^{b\cdot m-(q-r)(s'-s)} \\  
&\le C'\cdot \binom{n}{q-r}^{s-t'}\cdot \binom{n}{b}^{m}\cdot \frac{1}{n},
\end{align*}
where for the last inequality we used that $C'$ is large enough.  

So we assume $t'=1$. For a partition $F_1,\ldots, F_{m}$ of $F\setminus F'$, recall that 
$$|\mc{B}(F_1,\ldots, F_{m})| = |\mc{B}_{\rm small}(F_1,\ldots, F_{m})|+|\mc{B}_{\rm large}(F_1,\ldots, F_{m})|.$$
We now bound the sizes of these two sets separately.  By Lemma~\ref{lem:BoostersContainingSetOfCliques}(1), we have that for all $i\in [m]$
$$|\mc{B}_{\rm small}(F_i)| \le C'\cdot \Delta\cdot n^{b-(q-r)|F_i|-1-{\bf 1}_{|F_i|\ge 2}} \le  C'\cdot \Delta\cdot n^{b-(q-r)|F_i|-1} .$$
Hence 
$$|\mc{B}_{\rm small}(F_1,\ldots, F_{m})| \le \prod_{i\in [m]} C_0\cdot \Delta\cdot n^{b-(q-r)|F_i|-1} = \left(C_0\right)^{m}\cdot \left(\frac{\Delta}{n}\right)^{m} \cdot n^{b\cdot m-(q-r)(s'-s)} .$$
On the other hand,
$$|\mc{B}_{\rm large}(F_1,\ldots, F_{m})| \le |\mc{B}(F_1,\ldots,F_m)| \le (2C_0)^{m} \cdot n^{b\cdot m-(q-r)(s'-s)}.$$
Recall that in this case since $t'=1$, we have that $|Z|=1$. Let $Q$ be such that $Z=\{Q\}$. Then since $Z$ is a $\mc{G}$-avoiding matching of $\mc{D}$, we find that
$$|\mc{G}(Z)^{(s')}| \le \Delta_{s',1}(\mc{G}) \le (qs')^{qs'}\cdot n^{(q-r)(s'-1)}.$$ 
Yet by Proposition~\ref{prop:LargeIntersect}, we have that
\begin{align*}
|{\rm Large}_{X\cup A}(Q)^{(s')}| &\le (s'q)\cdot \binom{s'q}{r-1}\cdot \binom{s'q}{q}^{s'}\cdot n^{(q-r)(s'-1)}\cdot \frac{\Delta(X\cup A)}{n} \\
&\le (s'q)\cdot \binom{s'q}{r-1}\cdot \binom{s'q}{q}^{s'}\cdot n^{(q-r)(s'-1)}\cdot \frac{(C+1)\Delta}{n}.
\end{align*}
Note that for $F\in \mc{G}(Z)^{(s'})$, we have that $|{\rm B}_{\rm large}(F_1,\ldots, F_{m})|\ne 0$ only if $F\in {\rm Large}(X\cup A)(Q)^{(s')}$. Combining these calculations yields that
\begin{align*}
|\mc{B}^{*}(Z,s',s,m)| &= \sum_{F\in \mc{G}(Z)^{(s')}}~\sum_{F': Z\subseteq F' \subseteq F, |F'|=s}~\sum_{(F_1,\ldots, F_{m})\in \mc{P}_{m}(F\setminus F')} |\mc{B}(F_1,\ldots, F_{m})| \\
&= \sum_{F\in \mc{G}(Z)^{(s')}}~\sum_{F': Z\subseteq F' \subseteq F, |F'|=s}~\sum_{(F_1,\ldots, F_{m})\in \mc{P}_{m}(F\setminus Z)} |\mc{B}_{\rm small}(F_1,\ldots, F_{m})| \\
&+ \sum_{F\in {\rm Large}_{X\cup A}(Q)^{(s')}}~\sum_{F': Z\subseteq F' \subseteq F, |F'|=s}~\sum_{(F_1,\ldots, F_{m})\in \mc{P}_{m}(F\setminus Z)} |\mc{B}_{\rm large}(F_1,\ldots, F_{m})|\\
&\le (qs')^{qs'}\cdot  n^{(q-r)(s'-1)} \cdot \binom{s'-1}{s-1}\cdot (s')^{m}\cdot \left(C_0\right)^{m}\cdot \left(\frac{\Delta}{n}\right)^{m} \cdot n^{b\cdot m-(q-r)(s'-s)} \\
&~~~+ (s'q)\cdot \binom{s'q}{r-1}\cdot \binom{s'q}{q}^{s'}\cdot n^{(q-r)(s'-1)}\cdot \frac{(C+1)\Delta}{n} \cdot \binom{s'-1}{s-1}\cdot (s')^{m} \cdot (2C_0)^{m}\cdot n^{b\cdot m-(q-r)(s'-s)} \\  
&\le (qg)^{4qg}\cdot (2C_0)^{m}\cdot n^{(q-r)(s-1)} \cdot n^{b\cdot m}\cdot \frac{\Delta}{n}, \\
&\le C'\cdot \binom{n}{q-r}^{s-1}\cdot \binom{n}{b}^n \cdot \frac{\Delta}{n}
\end{align*}
where we used that $\Delta \le n$ and $C'$ is large enough. Hence (2) holds as desired.
\end{proof}

We now use Lemma~\ref{lem:GirthConfigPrelim} to upper bound the degrees and codegrees of the girth-$g$ configuration hypergraph as follows.

\begin{lem}\label{lem:GirthConfigBoosterDegrees}
For all integers $q>r\ge 1$, $C\ge 1$ and $g\ge 3$, and $K_q^r$ booster $B_0$ of rooted girth at least $g$, there exists an integer $C'\ge 1$ such that the following holds: Let $(G,A,X,\mc{H})$ be a $C$-refined $K_q^r$-sponge and let $\Delta := \max\left\{ \Delta(X),~v(G)^{1-\frac{1}{r}}\cdot \log v(G)\right\}$ and let $b:=v(B_0)-q$. If $\mc{B} \in {\rm Full}_{B_0}(G,A,X)$, then
both of the following hold:
\begin{enumerate}
    \item[(1)] $\Delta_{s,t}({\rm Girth}_g(\mc{B}))\le C'\cdot \binom{n}{b}^{s-t}\cdot \frac{1}{n}$ for all $g\ge s > t \ge 2$. 
    \item[(2)] $\Delta_{s,1}({\rm Girth}_g(\mc{B}))\le C'\cdot \binom{n}{b}^{s-1}\cdot \frac{\Delta}{n}$ for all $g\ge s > 1$. 
\end{enumerate}    
\end{lem}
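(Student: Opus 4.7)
The plan is to reduce the codegree count directly to the quantity $|\mc{B}^*(\cdot,\cdot,\cdot,\cdot)|$ estimated in Lemma~\ref{lem:GirthConfigPrelim}. Fix vertices $U_1,\ldots,U_t$ of ${\rm Girth}_g(\mc{B})$. Every $s$-edge containing $\{U_1,\ldots,U_t\}$ corresponds (up to bounded overcounting coming from permutations of the new boosters) to an Erd\H{o}s configuration $F\in {\rm Girth}_{K_q^r}^g(G)$ of some size $s'\in\{3,\ldots,g\}$, a partition $F=F_1\cup\cdots\cup F_s$ into nonempty parts with $F_i\subseteq (U_i)_{\rm on}$ or $F_i\subseteq (U_i)_{\rm off}$, plus a choice of additional boosters $U_{t+1},\ldots,U_s$.

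First I would enumerate the ``local data'' determined by the already-fixed boosters, namely the tuple $(F_1,\ldots,F_t)$: each $F_i$ is a nonempty subset of one of the two decompositions of the copy $U_i$ of $B_0$, so the number of such tuples is bounded by some constant $D=D(B_0,g)$ independent of $n$. For each valid tuple, set $Z:=F_1\cup\cdots\cup F_t$ (which must be a $\mc{G}$-avoiding matching of $\mc{D}$ since it sits inside an Erd\H{o}s configuration, writing $\mc{D}:={\rm Design}_{K_q^r}(G)$ and $\mc{G}:={\rm Girth}_{K_q^r}^g(G)$) and write $t':=|Z|\ge t$. The key observation is that $|\mc{B}^*(Z,s',t',s-t)|$, read from its definition with internal parameter $|F'|=t'=|Z|$, forces $F'=Z$ and thereby counts exactly the remaining extensions: pairs of an $F\in\mc{G}(Z)^{(s')}$ and a partition of $F\setminus Z$ into $s-t$ nonempty pieces, each assigned a booster containing it. Hence
\[
\Delta_{s,t}({\rm Girth}_g(\mc{B})) \ \le\ D\cdot \sum_{s'=3}^{g}\ \max_{Z}\ |\mc{B}^*(Z,s',t',s-t)|.
\]

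Finally I would plug in Lemma~\ref{lem:GirthConfigPrelim}. For (1), $t\ge 2$ forces $t'\ge 2$, so Lemma~\ref{lem:GirthConfigPrelim}(1) yields $|\mc{B}^*(Z,s',t',s-t)|\le C_0\binom{n}{b}^{s-t}\cdot\frac{1}{n}$ (the $\binom{n}{q-r}^{t'-t'}$ factor is $1$); enlarging $C'$ to absorb $D$ and the bounded sum over $s'$ finishes the case. For (2), $t=1$ and $t'\ge 1$: when $t'\ge 2$, Lemma~\ref{lem:GirthConfigPrelim}(1) gives the strictly stronger bound $C_0\binom{n}{b}^{s-1}\cdot\frac{1}{n}\le C_0\binom{n}{b}^{s-1}\cdot\frac{\Delta}{n}$, while the $t'=1$ branch of Lemma~\ref{lem:GirthConfigPrelim}(2) gives exactly $C_0\binom{n}{b}^{s-1}\cdot\frac{\Delta}{n}$. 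The only subtlety (rather than obstacle) is choosing the parameters of $\mc{B}^*$ so that the $F'$ freedom collapses to $Z$; after that the argument is pure bookkeeping, and the gain of a factor $\frac{1}{n}$ in (1) versus $\frac{\Delta}{n}$ in (2) reflects exactly the gain that Lemma~\ref{lem:GirthConfigPrelim} provides in the $|Z|\ge 2$ case via the underlying $\Delta_{s',t'}(\mc{G})$ codegree bound.
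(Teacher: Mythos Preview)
Your proposal is correct and follows essentially the same route as the paper's proof: fix $t$ boosters $U$, enumerate (with a constant bound) the possible matchings $Z$ of cliques taken from the on/off decompositions of those boosters, and then invoke $|\mc{B}^*(Z,s',t',s-t)|$ with $t'=|Z|$ (so that the internal $F'$ collapses to $Z$) together with Lemma~\ref{lem:GirthConfigPrelim}(1)--(2), splitting on whether $t'\ge 2$ or $t'=1$. The paper's organization differs only cosmetically, writing the enumeration via the sets $\mc{I}_{t'}(U)$ and summing explicitly over $t'$ and $s'$.
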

\begin{proof}
We choose $C'$ large enough as needed throughout the proof. For brevity, let $\mc{D}:={\rm Design}_{K_q^r}(G)$ and $\mc{G}:= {\rm Girth}_{K_q^r}^g(G)$. 

Fix $U\subseteq V({\rm Girth}_g(\mc{B}))$ with $|U|=t$. We define
\begin{align*}
\mc{I}(U) &:= \left\{Z\subseteq \bigcup_{B\in U} (B_{\rm off}\cup B_{\rm on}) \text{ with } Z\cap ( B_{\rm on}\cup B_{\rm off}) \ne \emptyset \text{ for all } B\in U: Z\text{ is a } \mc{G}\text{-avoiding matching of } \mc{D}\right\},\\
\mc{I}_{t'}(U) &:= \{Z\in \mc{I}(U): |Z|=t'\}.    
\end{align*}
We note that $|\mc{I}_{t'}(U)|\le \left(\binom{b+q}{q}\cdot t\right)^{t'}$. Thus using this notation, we find that
$$|{\rm Girth}_g(\mc{B})(U)| \le \sum_{t': g-2\ge t' \ge t} ~\sum_{Z\in \mc{I}_{t'}(U)}~\sum_{s': g-1\ge s' > t'}~|\mc{B}^{*}(Z,s',t',s-t)|.$$

Let $C_1$ be as in Lemma~\ref{lem:GirthConfigPrelim}. We now partition this sum according to whether $t'\ge 2$. First suppose $t'\ge 2$. Then by Lemma~\ref{lem:GirthConfigPrelim}(1), we find that
$$|\mc{B}^{*}(Z,s',t',s-t)|\le C_1\cdot \binom{n}{b}^{s-t}\cdot \frac{1}{n}.$$
Substituting the above bounds, we calculate that
\begin{align*}
\sum_{t'\ge 2:~g-2\ge t' \ge t} &~\sum_{Z\in \mc{I}_{t'}(U)}~\sum_{s': g-1\ge s' > t'}~|\mc{B}^{*}(Z,s',t',s-t)| \\
&\le \sum_{t': g-2\ge t' \ge t} \left(\binom{b+q}{q}\cdot t\right)^{t'} \sum_{s': g-1\ge s' > t'} C_1\cdot \binom{n}{b}^{s-t}\cdot \frac{1}{n} \\  
&\le \left(\binom{b+q}{q}\cdot t\right)^{g} \cdot g^2 \cdot C_1\cdot \binom{n}{b}^{s-t}\cdot \frac{1}{n}\\
&\le \frac{C'}{2}\cdot \binom{n}{b}^{s-t}\cdot \frac{1}{n},
\end{align*}
where for the last inequality we used that $C'$ is large enough. Note that if $t\ge 2$, then $t' \ge 2$ and hence the above implies in that case that $|{\rm Girth}_g(\mc{B})(U)|\le C' \cdot \binom{n}{b}^{s-t}\cdot \frac{1}{n}$ and hence (1) holds as desired.

So we consider the sum when $t'=1$; note this implies that $t=1$. Then by Lemma~\ref{lem:GirthConfigPrelim}(2), we find that
$$|\mc{B}^{*}(Z,s',t',s-1)|\le C_1\cdot \binom{n}{b}^{s-1}\cdot \frac{\Delta}{n}.$$
Hence, we calculate the $t'=1$ term of our sum as follows:
\begin{align*}
\sum_{t'=1:~g-2\ge t' \ge t} ~&\sum_{Z\in \mc{I}_{t'}(U)}~\sum_{s': g-1\ge s' > t'}~~|\mc{B}^{*}(Z,s',t',s-t)| \\
&\le \left(\binom{b+q}{q}\cdot t\right) \sum_{s': g-1\ge s' > t'}  C_1\cdot \binom{n}{b}^{s-1}\cdot \frac{\Delta}{n} \\
&\le \frac{C'}{2}\cdot \binom{n}{b}^{s-1}\cdot \frac{\Delta}{n},
\end{align*}
where for the last inequality we used that $C'$ is large enough. 
Thus when $t=1$, we find that
$$|{\rm Girth}_g(\mc{B})(U)|\le \frac{C'}{2} \cdot \binom{n}{b}^{s-1}\cdot \frac{1}{n} + \frac{C'}{2}\cdot \binom{n}{b}^{s-1}\cdot \frac{\Delta}{n}\le C'\cdot \binom{n}{b}^{s-1}\cdot \frac{\Delta}{n}$$
where we used that $\Delta \ge 1$ and hence (2) holds as desired. 
\end{proof}

\subsection{Proof of Lemma~\ref{lem:RandomQuantumIntrinsic}}

We are now prepared to prove Lemma~\ref{lem:RandomQuantumIntrinsic} as follows.

\begin{lateproof}{lem:RandomQuantumIntrinsic}
We choose $a$ large enough as needed throughout the proof, and then subject to that, choose $n$ large enough as needed.

Let $\mathcal{B}_{{\rm off}}, \mathcal{B}_{{\rm on}}$ be as in Definition~\ref{def:rootedbooster} for $B_0$. We let $(B_H(S))_{\rm on}, (B_H(S))_{\rm off}$ denote the decompositions of $B_H(S)$ corresponding to $\mathcal{B}_{\rm on}$ and $\mathcal{B}_{\rm off}$ respectively. 

Let $b:= v(B_0)-q$ (that is, the number of non-root vertices). 

Since $(K_n^r,A,X,\mc{H})$ is $ga$-bounded and $C$-refined, we have that $$\Delta(X\cup A)\le (C+1)\cdot \Delta \le (C+1)\cdot \frac{n}{\log^{ga} n},$$
since $n$ is large enough. Hence it follows that since $n$ is large enough, we have for each $H\in \mc{H}$ that
$$\frac{1}{2}\cdot \binom{n}{b} \le |\mathcal{S}(H)| \le \binom{n}{b}.$$ Similarly it follows that $$|\mathcal{H}| \le n^{r-1}\cdot \Delta(A) \cdot C \le C^2\cdot \Delta \cdot n^{r-1}.$$

We will argue that all of the following hold with probability at least $0.99$ and hence by the union bound all of them hold together with probability at least $.9$ as desired:

\begin{enumerate}
    \item[(1)] For each $H\in \mathcal{H}$, $|(\mathcal{B}_{H})_p| \ge \frac{1}{4}\cdot \log^{a} n$. 
    \item[(2)] $\Delta(\bigcup \mc{B}_p)\le \Delta\cdot \log^{a} \Delta$. 
    \item[(3)] For each $H\in \mathcal{H}$, $|(\mathcal{B}_{H})_p\setminus {\rm Disjoint}((\mathcal{B}_{H})_p)| \le \frac{1}{12} \cdot \log^{a} n$.
    \item[(4)] For each $H\in \mathcal{H}$, $|(\mathcal{B}_{H})_p\setminus {\rm HighGirth}_g((\mathcal{B}_{H})_p)| \le \frac{1}{12} \cdot \log^{a} n$.
\end{enumerate}

We proceed to show this via a series of claims. For most of the claims, we show that some bad event (or set of bad events) happen with small enough probability. For the third and fourth claims to show such a small probability, we invoke Corollary~\ref{cor:KimVu}; namely, for each bad event $\mathcal{A}$, we construct an auxiliary multi-hypergraph $J_{\mathcal{A}}$ that is $k_{\mathcal{A}}$-uniform; in each case, $V(J_{\mathcal{A}}) = \mathcal{B}_0$ (the set of all boosters). But for each $\mathcal{A}$, $E(J_{\mathcal{A}})$ will be specific to that event. This will be done so that the event $\mathcal{A}$ is a subset of the event ``$e((J_{\mathcal{A}})_p) \ge 2K_{\mathcal{A}}$'' for some constant $K_{\mathcal{A}}$ such that $e(J_{\mathcal{A}}) \le K_{\mc{A}}$ and that also satisfies $(*)_i$ for all $i\in [k_{\mc{A}}]$. To that end, let $n_0 := |\mathcal{B}|$. Note that 
$$n_0 \le \binom{n}{b} \cdot |\mc{H}| \le \binom{n}{b} \cdot C^2 \cdot \Delta \cdot n^{r-1} \le n^{r+b}$$
since $n$ is large enough. Hence
$$\log n_0 \le (r+b) \log n \le \log^2 n$$
since $n$ is large enough. Similarly $n_0 \ge \frac{1}{2} \cdot \binom{n}{b}\cdot |\mc{H}| \ge n$ for $n$ large enough.

We now proceed with the claims.

\begin{claim}\label{cl:1Holds}
(1) holds with probability at least $0.99$.    
\end{claim}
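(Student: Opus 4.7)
The plan is to observe that each $|(\mc{B}_H)_p|$ is a sum of independent Bernoulli random variables, and then apply a Chernoff bound followed by a union bound over $H \in \mc{H}$.

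More concretely, for a fixed $H \in \mc{H}$ the random variable $|(\mc{B}_H)_p|$ is distributed as $\mathrm{Bin}(|\mc{S}(H)|, p)$. Since we already established that $\frac{1}{2}\binom{n}{b} \le |\mc{S}(H)| \le \binom{n}{b}$ and $p = \log^a n/\binom{n}{b}$, the mean $\mu_H := \mathbb{E}[|(\mc{B}_H)_p|]$ satisfies $\frac{1}{2}\log^a n \le \mu_H \le \log^a n$. A standard Chernoff lower-tail bound (say $\mathbb{P}[X \le \mu/2] \le e^{-\mu/8}$ for a binomial $X$ with mean $\mu$) then gives
\[
\mathbb{P}\!\left[|(\mc{B}_H)_p| < \tfrac{1}{4}\log^a n\right] \le \mathbb{P}\!\left[|(\mc{B}_H)_p| \le \tfrac{\mu_H}{2}\right] \le \exp\!\left(-\tfrac{1}{16}\log^a n\right).
\]

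Finally I would union bound this failure probability over all $H \in \mc{H}$. Since $(K_n^r,A,X,\mc{H})$ is $C$-refined, we have $|\mc{H}| \le C^2 \cdot \Delta \cdot n^{r-1} \le n^{r+1}$ once $n$ is large enough. Thus the probability that (1) fails for some $H$ is at most $n^{r+1} \cdot \exp(-\tfrac{1}{16}\log^a n)$, which is smaller than $0.01$ provided we take $a$ large enough (any $a \ge 2$ works) and then $n$ large enough relative to $a$. There is essentially no obstacle here; this is the easiest of the four claims, essentially a concentration sanity check that the sparsified sets $(\mc{B}_H)_p$ are nonempty and of roughly the expected size, so that the later Kim-Vu arguments have something to work on.
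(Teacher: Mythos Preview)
Your proposal is correct and matches the paper's proof essentially line for line: compute $\mathbb{E}[|(\mc{B}_H)_p|] \ge \tfrac{1}{2}\log^a n$ from $|\mc{S}(H)| \ge \tfrac{1}{2}\binom{n}{b}$, apply a Chernoff lower-tail bound to get failure probability at most $e^{-\log^a n/16}$, and union bound over $|\mc{H}|$.
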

\begin{proofclaim}
By Linearity of Expectation, we have that $$\Expect{|(\mc{B}_{H})_p|} = p\cdot |\mathcal{B}_{H}| \ge  \frac{p}{2} \cdot \binom{n}{b} = \frac{\log^{a} n}{2}.$$
Since the events $B_H(S) \in (\mc{B}_{H})_p$ are independent for each $B_H(S) \in \mathcal{B}_{H}$, we have by the Chernoff bound that 
$$\Prob{|(\mc{B}_{H})_p| \le \frac{\log^{a} n}{4}} \le e^{-\frac{\log^{a} n}{16} } < \frac{1}{100 \cdot |\mathcal{H}|},$$
since $n$ is large enough. Hence by the union bound, we have that (1) holds with probability at least $0.99$ as desired.
\end{proofclaim}

\begin{claim}\label{cl:2Holds}
(2) holds with probability at least $0.99$.    
\end{claim}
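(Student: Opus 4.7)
The plan is, for each $(r-1)$-subset $U\subseteq V(K_n^r)$, to bound the random codegree $d_{\bigcup\mc{B}_p}(U)$ via a weighted Chernoff/Bernstein inequality, and then to union-bound over all such $U$. Since by the paper's convention $\Delta(\bigcup\mc{B}_p)=\Delta_{r-1}(\bigcup\mc{B}_p)$, this suffices.

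First, for a fixed $U$, I would introduce
$$N_U := \sum_{B\in\mc{B}}\mathbf{1}[B\in\mc{B}_p]\cdot|\{e\in B: U\subseteq e\}|,$$
which upper-bounds $d_{\bigcup\mc{B}_p}(U)$. Note that each weight $|\{e\in B: U\subseteq e\}|$ is at most the $(r-1)$-codegree of $B_0$, hence a constant depending only on $B_0$. Using Lemma~\ref{lem:BoostersContainingSetOfCliques} to bound $|\mc{B}(e)|\le C_0\Delta n^{b-1}$ for every edge $e$, and summing over the at most $n$ edges $e$ containing $U$, I would obtain $\sum_{e\supseteq U}|\mc{B}(e)|\le C_0\Delta n^{b}$. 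With $p=\log^a n/\binom{n}{b}$, linearity of expectation then yields $\Expect{N_U}=O(\Delta\log^a n)$, where the implicit constant depends only on $q,r,B_0$.

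Second, since $N_U$ is a sum of independent Bernoulli$(p)$ variables weighted by $O(1)$ values, I would apply a standard Bernstein/Chernoff bound to conclude that $\Pr[N_U\ge 2\Expect{N_U}+\log^2 n]\le\exp(-\Omega(\log^2 n))\le n^{-\log n}$ for $n$ large; in particular $N_U\le C_1\Delta\log^a n$ with probability at least $1-n^{-\log n}$, for some constant $C_1=C_1(q,r,B_0)$. A union bound over the at most $n^{r-1}$ choices of $U$ then gives total failure probability at most $n^{r-1-\log n}=o(1)$, comfortably below $0.01$.

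Finally, the bound $\Delta\log^a\Delta$ claimed in~(2) follows by choosing $a$ sufficiently large relative to $C_1$: since $\Delta\ge n^{1-1/r}\log n$ gives $\log\Delta=\Theta(\log n)$, the polylogarithmic factor $\log^a\Delta$ has enough room to absorb $C_1$ (and the statement of Lemma~\ref{lem:RandomQuantumIntrinsic} even allows for the weaker bound $a\Delta\log^a\Delta$, into whose extra factor of $a$ the constant $C_1$ can be absorbed). The main subtlety lies precisely in this last calibration of constants between $\log^a n$ and $\log^a\Delta$, exploiting the fixed ratio $\log n/\log\Delta=O(1)$ guaranteed by the lower bound on $\Delta$; everything else is a routine weighted-Chernoff-plus-union-bound argument relying only on the degree estimate from Lemma~\ref{lem:BoostersContainingSetOfCliques}.
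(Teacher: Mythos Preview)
Your proposal is correct and follows essentially the same approach as the paper: fix an $(r-1)$-set, bound the expected codegree using the estimate $|\mc{B}(e)|\le C'\Delta n^{b-1}$ from Lemma~\ref{lem:BoostersContainingSetOfCliques} summed over the at most $n$ edges containing it, apply a weighted Chernoff bound (the paper likewise notes that each booster contributes at most $b+q$ to the sum), and then union-bound over the $\binom{n}{r-1}$ choices. Your discussion of absorbing the constant into the factor $a$ also matches the paper's use of ``$a$ large enough'' together with $\Delta\ge n^{1-1/r}\log n$.
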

\begin{proofclaim}
For each $R\in \binom{V(K_n^r)}{r-1}$, let $A_{2,R}$ be the event that $d_{B_1}(R) > a\Delta \cdot \log^{a} \Delta$. Let $\mathcal{A}_2 := \bigcup_{R\in \binom{V(K_n^r)}{r-1}} A_{2,R}$. We desire to show that $\Prob{\mc{A}_2} \le 0.01$ and hence by the union bound it suffices to show that for each $R\in \binom{V(K_n^r)}{r-1}$
$$\Prob{A_{2,R}} \le \frac{1}{100\cdot n^{r-1}}.$$

To that end, fix $R\in \binom{V(K_n^r)}{r-1}$. 

Note $\bigcup \mc{B}_p\subseteq K_n\setminus (X\cup A)$ by definition. Also note there exist at most $n$ edges $e$ of $K_n^r\setminus (X\cup A)$ containing $R$. Let $C'$ be as in Lemma~\ref{lem:BoostersContainingSetOfCliques}. By Lemma~\ref{lem:BoostersContainingSetOfCliques}, we have that $|\mc{B}(e)|\le  C' \cdot \Delta\cdot n^{b-1}$. By Linearity of Expectation, we have that 
\begin{align*}
\Expect{d_{B_1}(R)}&\le \sum_{H\in \mathcal{H}} \sum_{B_H(S)\in \mathcal{B}_{H}} \Prob{B_H(S)\in (\mc{B}_{H})_p}~\cdot~|\{e\in B_H(S): R\subseteq e\}| \\
&\le \sum_{e\in K_n^r\setminus (X\cup A): R\subseteq e} p\cdot |\mc{B}(e)| \le p\cdot C' \cdot \Delta \cdot n^b\\
&\le b^b\cdot C'\cdot \Delta \cdot \log^{a} n \le \frac{1}{2}\cdot a\Delta\cdot \log^{a} \Delta,
\end{align*}
where for the last inequalities we used that $p = \frac{\log^{a} n}{\binom{n}{b}}$ and $\binom{n}{b} \ge \left(\frac{n}{b}\right)^b$ and that $a$ is large enough.
Note that
$$|\{ e\in B_H(S): R\subseteq e\}| \le v(B_H(S)) \le b+q.$$

By the Chernoff bounds, we have that 
$$\Prob{d_{B_1}(R) > a\Delta\cdot \log^{a} \Delta} \le e^{-\frac{1}{8}\cdot \frac{a\Delta\cdot \log^{a} \Delta}{2(b+q)}} \le e^{-\frac{1}{16(b+q)}\cdot n^{1-(1/r)}} \le \frac{1}{100\cdot n^{r-1}},$$
as desired where we used that $\Delta \ge n^{1-(1/r)}$ and $n$ is large enough. 
\end{proofclaim}

\begin{claim}\label{cl:3Holds}
(3) holds with probability at least $0.99$.    
\end{claim}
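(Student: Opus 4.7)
The plan is to mimic the strategy laid out for Claims~\ref{cl:1Holds} and~\ref{cl:2Holds}: for each $H\in\mc{H}$, define an auxiliary $2$-uniform multi-hypergraph $J_{3,H}$ whose surviving edges dominate $|(\mc{B}_H)_p\setminus {\rm Disjoint}((\mc{B}_H)_p)|$, and then apply Corollary~\ref{cor:KimVu} and a union bound. Specifically, let $J_{3,H}$ have vertex set $\bigcup_{H'\in\mc{H}} \mc{B}_{H'}$ and an edge $\{B_H(S),B_{H'}(S')\}$ whenever $H'\ne H$ and $B_H(S)$ shares at least one edge of $K_n^r$ with $B_{H'}(S')$. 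If some $B_H(S)\in(\mc{B}_H)_p$ lies outside ${\rm Disjoint}((\mc{B}_H)_p)$, then by definition it is not edge-disjoint from some $B_{H'}(S')\in(\mc{B}_{H'})_p$ with $H'\ne H$, and that yields an edge of $(J_{3,H})_p$. Hence $|(\mc{B}_H)_p\setminus {\rm Disjoint}((\mc{B}_H)_p)| \le e((J_{3,H})_p)$, reducing the claim to showing $e((J_{3,H})_p)\le \tfrac{1}{12}\log^a n$ with probability at least $1-1/(100|\mc{H}|)$.

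The degree and edge-count bounds on $J_{3,H}$ come straight out of Lemma~\ref{lem:BoostersContainingSetOfCliques}. Every booster $B_H(S)$ has at most $v(B_0)^r$ edges, and each such edge of $K_n^r\setminus(X\cup A)$ is contained in at most $O(\Delta\cdot n^{b-1})$ boosters, giving $\Delta_1(J_{3,H})\le O(\Delta\cdot n^{b-1})$; since $J_{3,H}$ is a simple $2$-graph, $\Delta_2(J_{3,H})\le 1$; summing over $S\in\mc{S}(H)$ yields $e(J_{3,H})\le |\mc{S}(H)|\cdot \Delta_1(J_{3,H}) = O\!\left(\binom{n}{b}^2\cdot \frac{\Delta}{n}\right)$.

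I then invoke Corollary~\ref{cor:KimVu} with $k=2$ and $K_{3,H}$ taken to be the maximum of $e(J_{3,H})$, $\Delta_1(J_{3,H})\log^{10} n_0/p$, and $\log^{10} n_0/p^2$ (so that $e(J_{3,H})\le K_{3,H}$ and both $(*_1)$ and $(*_2)$ hold). Plugging in $p=\log^a n/\binom{n}{b}$, $\Delta/n\le 1/\log^{ga} n$ from $ga$-boundedness, $\log n_0\le \log^2 n$, and using $g\ge 3$, each of the three candidate values for $K_{3,H}$ is at most $\binom{n}{b}^2/(24\log^a n)$ once $a$ is large enough. Corollary~\ref{cor:KimVu} then delivers $e((J_{3,H})_p)\le p^2\cdot 2K_{3,H}\le \tfrac{1}{12}\log^a n$ with probability $\ge 1-n^{-\log n}$, which comfortably beats $1/(100|\mc{H}|)$ since $|\mc{H}|\le n^{r+b}$, and a union bound over $H$ finishes the claim.

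The main obstacle is the bookkeeping around the Kim--Vu hypotheses: verifying $(*_1)$ amounts to $\Delta_1(J_{3,H})\cdot p\cdot \log^{10} n_0\lesssim \log^a n$, which after substitution becomes $\log^{(1-g)a+O(1)}n\le \log^a n$ and hence requires exactly the combination $g\ge 3$ and $a$ sufficiently large. This is the only nontrivial point; once $a$ is taken large enough that all three lower bounds on $K_{3,H}$ are dominated by the common upper bound $\binom{n}{b}^2/(24\log^a n)$, the proof closes routinely.
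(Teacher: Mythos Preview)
Your proposal is correct and follows essentially the same approach as the paper: define the $2$-uniform graph $J$ on $\mc{B}$ whose edges are intersecting booster pairs with one endpoint in $\mc{B}_H$, bound $e(J)$, $\Delta_1(J)$, $\Delta_2(J)$ via Lemma~\ref{lem:BoostersContainingSetOfCliques}, and apply Corollary~\ref{cor:KimVu} with $k=2$ followed by a union bound over $H\in\mc{H}$. The only cosmetic difference is that the paper fixes $K=\tfrac{1}{24}\binom{n}{b}^2/\log^a n$ and checks $(*_1)$, $(*_2)$ directly, whereas you take $K_{3,H}$ as a maximum of three quantities and then bound that maximum by the same value; the verifications are equivalent.
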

\begin{proofclaim}
For each $H\in \mathcal{H}$, let $A_{3,H}$ be the event that $|(\mathcal{B}_{H})_p\setminus {\rm Disjoint}((\mathcal{B}_{H})_p)| > \frac{\log^{a} n}{12}$.
Let $\mathcal{A}_3 := \bigcup_{H\in \mc{H}} A_{3,H}$. We desire to show that $\Prob{\mc{A}_3} \le 0.01$ and hence by the union bound it suffices to show that for each $H\in \mc{H}$
$$\Prob{A_{3,H}} \le \frac{1}{100 |\mc{H}|}.$$

To that end, fix $H \in \mathcal{H}$. For each $S\in \mc{S}(H)$, let 
$$N_{\mathcal{B},H}(S) := \{ B_{H'}(S') \in \mc{B}\setminus \mc{B}_{H}: B_{H'}(S')\cap B_{H}(S) \ne \emptyset\}.$$
We define $J$ as the $2$-uniform hypergraph with $$V(J) := \mc{B},~~E(J) := \bigg\{ \{B_H(S), B_{H'}(S')\}: S\in \mc{S}(H),~B_{H'}(S')\in N_{\mathcal{B},H}(S) \bigg\}.$$
Let $n_0:= n^{b+r}$. Note that $v(J) =|\mc{B}| \le n^{b+r}$ which follows since $n$ is large enough. Also $\log^2 n\ge \log n_0 $ since $n$ is large enough.

Note that for $B_H(S)\in \mc{B}_{H}$, we have $B_H(S)\not\in {\rm Disjoint}(\mc{B}_p)$ if and only if $N_{\mathcal{B},H}(S)\cap \mathcal{B}_p\ne \emptyset$. Hence
$$|(\mathcal{B}_{H})_p\setminus {\rm Disjoint}((\mathcal{B}_{H})_p)| \le \sum_{S\in \mc{S}(H)} |N_{\mathcal{B},H}(S)\cap \mathcal{B}_p| = e(J_p).$$
Thus $A_{3,H}$ is a subset of the event that $e(J_p) > \frac{\log^{a} n}{12}$.

We calculate using Lemma~\ref{lem:BoostersContainingSetOfCliques} that
$$ |N_{\mathcal{B}_0,H}(S)| \le \sum_{e\in B_H(S)} |\mc{B}(e)| \le \binom{b+q}{r} \cdot C' \cdot \Delta \cdot n^{b-1} \le \frac{\binom{b+q}{r} \cdot C'}{a}\cdot \frac{n^b}{\log^{a} n},$$
since there at most $\binom{b+q}{r}$ edges $e$ of $B_H(S)$.

Let $K:= \frac{1}{24} \cdot \frac{\binom{n}{b}^2}{\log^{a} n}$. Thus
$$e(J) \le \sum_{S\in \mc{S}(H)} |N_{\mathcal{B}_0,H}(S)| \le |\mc{S}(H)|\cdot \frac{\binom{b+q}{r} \cdot C'}{a}\cdot \frac{n^b}{\log^{a} n} \le K,$$
since $|\mc{S}(H)|\le \binom{n}{b}$ and $a$ is large enough. We note that $J$ is in fact not a multi-hypergraph and hence $\Delta_2(J) = 1$.
For $B\in \mc{B}_H$,,we find that
$$|J(B)| \le |N_{\mc{B},H}(S)| \le \binom{b+q}{r}\cdot (C'\cdot \Delta \cdot n^{b-1}).$$
For $B'\in \mc{B}\setminus \mc{B}_H$, we find that
$$|J(B')| \le |S\in \mc{S}(H): B_H(S)\cap B'\ne \emptyset| \le \sum_{e\in B'} |\mc{B}(e)| \le \binom{b+q}{r}\cdot (C'\cdot \Delta \cdot n^{b-1}).$$
since there are at most $\binom{b+q}{r}$ edges $e$ in $B'$ and for each such $e$, we have by Lemma~\ref{lem:BoostersContainingSetOfCliques} that $|\mc{B}(e)| \le C'\cdot \Delta \cdot n^{b-1}$ from before. Combining we find that
$$\Delta_1(J) \le \binom{b+q}{r}\cdot (C'\cdot \Delta \cdot n^{b-1}).$$

Now we check that $(*_i)$ holds for $i\in \{1,2\}$ as follows. First
$$p^2 \cdot K = \frac{\log^{2a} n}{\binom{n}{b}^2} \cdot \frac{1}{24} \cdot \frac{\binom{n}{b}^2}{\log^{a} n} = \frac{\log^{a} n}{24} \ge (1)\cdot \log^{10} n_0 = \Delta_2(J)\cdot \log^{10} n_0,$$
since $n$ is large enough, $a\ge 21$ and $\log^2 n \ge \log n_0$.
Second
$$p \cdot K = \frac{\log^{a} n}{\binom{n}{b}} \cdot \frac{1}{24} \cdot \frac{\binom{n}{b}^2 }{\log^{a} n } = \frac{\binom{n}{b}}{24} \ge (C'\cdot \Delta \cdot n^{b-1})\cdot \log^6 n_0 = \Delta_1(J)\cdot \log^6 n_0,$$
since $n$ is large enough and $\Delta\le \frac{n}{a\cdot \log^{a} n} \le \frac{n}{\log^{13} n}$ as $a\ge 13$.

Now we apply to Corollary~\ref{cor:KimVu} to $J$ to find that with probability at most $(n_0)^{-\log n_0}$ (which is at most $n^{-\log n}$ since $n\le n_0$), we have that
$$e(J_p) > 2\cdot p^2\cdot K = \frac{\log^{a} n}{12}.$$ 
Since $n$ is large enough then, we have that
$$\Prob{e(J_p) > \frac{\log^a n}{12}} \le n^{-\log n} \le  \frac{1}{100\cdot |\mc{H}|}$$
as desired since $|\mc{H}| \le n^{r+1}$ and $n$ is large enough.
\end{proofclaim}

\begin{claim}\label{cl:4Holds}
(4) holds with probability at least $0.99$.    
\end{claim}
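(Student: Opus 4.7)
The plan is to mirror the structure of the proof of Claim~\ref{cl:3Holds}, with the extra complication that now we must control $g$-booster-configurations of arbitrary size $k\in\{2,\ldots,g\}$, not just intersecting pairs. For a fixed $H\in\mc{H}$ and each $k\in\{2,\ldots,g\}$, I would introduce a $k$-uniform auxiliary hypergraph $J_k$ on vertex set $\mc{B}$ whose edges are the $g$-booster-configurations of size $k$ that meet $\mc{B}_H$. The key point is the reduction
\[
|(\mc{B}_H)_p\setminus {\rm HighGirth}_g((\mc{B}_H)_p)|\,\le\,\sum_{k=2}^g k\cdot e((J_k)_p),
\]
which follows directly from Definitions~\ref{def:GirthBoosterConfig} and~\ref{def:GirthConfig}: a booster $B_H(S)\in(\mc{B}_H)_p$ fails to be high-girth exactly when some $g$-booster-configuration containing $B_H(S)$ lies entirely in $\mc{B}_p$. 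So it will suffice to show that for every $H$ and every $k$, $e((J_k)_p)\le \log^a n/(12gk)$ with probability $1-O(1/(g|\mc{H}|))$.

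To accomplish this I would invoke Corollary~\ref{cor:KimVu} once for each pair $(H,k)$. The required bounds on $e(J_k)$ and $\Delta_i(J_k)$ come directly from Lemma~\ref{lem:GirthConfigBoosterDegrees}: since every edge of $J_k$ touches $\mc{B}_H$ (of size at most $\binom{n}{b}$), part~(2) gives $e(J_k)\le C'\binom{n}{b}^{k}\Delta/n$, part~(1) gives $\Delta_i(J_k)\le C'\binom{n}{b}^{k-i}/n$ for $i\ge 2$, and part~(2) again gives $\Delta_1(J_k)\le C'\binom{n}{b}^{k-1}\Delta/n$. The natural choice is $K_k:=\binom{n}{b}^{k}/(24gk\log^{a(k-1)}n)$, chosen so that $2p^kK_k=\log^a n/(12gk)$; the required bound $e(J_k)\le K_k$ then follows from $\Delta/n\le 1/\log^{ga}n$, which holds by $ga$-boundedness.

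The main obstacle will be verifying the condition $(*_1)$ of Kim--Vu, since $\Delta_1(J_k)$ carries a factor of $\Delta$ rather than $1/n$. After substituting and using $\log n_0\le \log^2 n$, this condition reduces to an inequality of the shape $n\log^a n/\Delta\ge \operatorname{polylog} n$; plugging in $\Delta\le n/\log^{ga}n$ it becomes, up to constants absorbed by taking $n$ large, $a(g+1)\ge a(k-1)+O(k)$, which I would enforce by choosing $a_1$ sufficiently large in terms of $g$ (roughly $a\ge 4g+2$ suffices). The conditions $(*_i)$ for $i\ge 2$ enjoy an extra factor of $1/n$ and are therefore automatic for $n$ large. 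With all hypotheses in place, Corollary~\ref{cor:KimVu} yields a bad event of probability at most $n_0^{-\log n_0}\le n^{-\log n}$ for each pair $(H,k)$, and a union bound over the $(g-1)|\mc{H}|$ such pairs absorbs the total failure probability into $0.01$ for $n$ large. On the complementary event, the telescoping bound $\sum_{k=2}^g k\cdot \log^a n/(12gk)=(g-1)\log^a n/(12g)<\log^a n/12$ holds uniformly in $H$, which is exactly~(4).
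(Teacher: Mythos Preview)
Your approach is essentially the paper's: the paper defines the same auxiliary hypergraph (written there as $J^{(k)} \subseteq {\rm Girth}_g(\mc{B})$), bounds $e(J^{(k)})$ and $\Delta_i(J^{(k)})$ via Lemma~\ref{lem:GirthConfigBoosterDegrees} exactly as you do, and applies Corollary~\ref{cor:KimVu} with threshold $\kappa_k = \tfrac{1}{24g}\binom{n}{b}^k/\log^{a(k-1)} n$, matching your $K_k$ up to the harmless factor $k$. One small imprecision: Lemma~\ref{lem:GirthConfigBoosterDegrees}(1) only applies for $t<s$, so it does not yield $\Delta_k(J_k)\le C'/n$; the paper handles $i=k$ separately by noting $\Delta_k(J^{(k)})=1$ (since $J^{(k)}$ is a simple hypergraph), after which $(*_k)$ follows from $a\ge 8k+5$ rather than from a $1/n$ saving.
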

\begin{proofclaim}
For each $H\in \mathcal{H}$, let $A_{4,H}$ be the event that $|(\mc{B}_{H})_p\setminus {\rm HighGirth}_g( (\mc{B}_{H})_p)| > \frac{\log^{a} n}{12}$.
Let $\mathcal{A}_4 := \bigcup_{H\in \mc{H}} A_{4,H}$. We desire to show that $\Prob{\mc{A}_4} \le 0.01$ and hence by the union bound it suffices to show that for each $H\in \mc{H}$
$$\Prob{A_{4,H}} \le \frac{1}{100 |\mc{H}|}.$$

To that end, fix $H \in \mathcal{H}$. We define a hypergraph 
$$J:=\bigcup_{B_H\in \mc{B}_H} {\rm Girth}_g(\mc{B})(B_H).$$
Note that $J\subseteq {\rm Girth}_g(\mc{B})$ since every edge of ${\rm Girth}_g(\mc{B})$ contains at most one element of $\mc{B}_H$ (and hence $J$ is not a multi-hypergraph). Let $n_0:= n^{g(b+r)}$. Note that $v(J) \le e({\rm Girth}_g(\mc{B})) \le n^{g(b+r)}$ which follows since $n$ is large enough. Also $\log^2 n\ge \log n_0$ since $n$ is large enough.

By the definition of $J$, we have that 
$$|(\mc{B}_{H})_p\setminus {\rm HighGirth}_g( (\mc{B}_{H})_p)| \le e(J_p) \le \sum_{k\in \{2,\ldots, g-1\}} e\left( J^{(k)}_p\right).$$
Thus for $k\in \{2,\ldots, g-1\}$, we let $A_{4,H,k}$ be the event that $e\left(J^{(k)}_p\right) > \frac{\log^{a} n}{12g}$. Hence $A_{4,H}$ is a subset of $\bigcup_{k \in \{2,\ldots, g-1\}} A_{4,H,k}$. Thus it suffices to show for each  $k\in \{2,\ldots, g\}$ that
$$\Prob{A_{4,H,k}} \le \frac{1}{100\cdot g \cdot |\mc{H}|}.$$

Let $C''$ be as in Lemma~\ref{lem:GirthConfigBoosterDegrees}. First we calculate using Lemma~\ref{lem:GirthConfigBoosterDegrees}(2) that
\begin{align*}
e\left(J^{(k)}\right) &\le \sum_{B_H\in \mc{B}_H} \left|{\rm Girth}_g(\mc{B})^{(k)}(B_H)\right|\le |\mc{B}_H|\cdot \Delta_{1}\left({\rm Girth}_g(\mc{B})^{(k)}\right) \le \binom{n}{b} \cdot C'' \cdot \binom{n}{b}^{k-1} \cdot \frac{\Delta}{n} \\
&\le C'' \cdot \binom{n}{b}^{k} \cdot \frac{\Delta}{n}
\end{align*}
Since $J^{(k)}$ is a subgraph of ${\rm Girth}_g(\mc{B})^{(k)}$, we calculate using Lemma~\ref{lem:GirthConfigBoosterDegrees}(1)-(2) that for all $i\in [k-1]$
$$\Delta_i\left(J^{(k)}\right) \le \Delta_i\left({\rm Girth}_g(\mc{B})^{(k)}\right)\le C'' \cdot \binom{n}{b}^{k-i} \cdot \frac{\Delta}{n}.$$
Meanwhile we have $\Delta_k(J^{(k)})=1$ since $J^{(k)}$ is in fact not a multi-hypergraph. 

Let $\kappa_k:= \frac{1}{24g} \cdot \frac{\binom{n}{b}^k}{\log^{a\cdot (k-1)} n}$. Note that $\kappa_k\ge e\left(J^{(k)}\right)$ since $\frac{n}{\Delta} \ge \log^{ga} n$. Now we check that $(*_i)$ holds for $i\in [k]$ as follows. Namely for all $i\in [k-1]$, we have as $p=\frac{\log^a n}{\binom{n}{b}}$ that
$$\frac{\Delta_i\left(J^{(k)}\right)}{\kappa_k}\cdot \log^{4k+2}n_0\le 24g\cdot C''\cdot \frac{1}{\binom{n}{b}^i}\cdot \frac{\Delta}{n} \cdot \log^{a(k-1)+(8k+4)}n \le \frac{\log^{ia} n}{\binom{n}{b}^i} = p^i$$ since $\frac{n}{\Delta} \ge \log^{ga}{n} \ge 24g\cdot C''\cdot \log^{a(k-1)+8k+4} n$ as  $k\le g-1$ and $a$ is large enough (at least $8k+5$) and $n$ is large enough. Now we calculate for $(*_k)$ as follows:
$$\frac{\Delta_k\left(J^{(k)}\right)}{\kappa_k}\cdot \log^{4k+2}n_0\le 24g\cdot \frac{1}{\binom{n}{b}^k}\cdot \log^{a(k-1)+(8k+4)}n \le \frac{\log^{ak} n}{\binom{n}{b}^k} = p^k$$ since $a\ge 8k+5$ and $\log n \ge 24g$ as $a$ and $n$ are large enough.

Thus by Corollary~\ref{cor:KimVu}, we find with probability at most
$(n_0)^{-\log n_0}$ (which is at most $n^{-\log n}$ since $n\le n_0$), we have that
$$e\left(J^{(k)}_p\right) > 2\cdot p^k\cdot \kappa_k = \frac{\log^{a} n}{12g}.$$ 
Since $n$ is large enough then, we have that
$$\Prob{e\left(J^{(k)}_p\right) > \frac{\log^{a} n}{12}} \le n^{-\log n} \le  \frac{1}{100\cdot g\cdot |\mc{H}|}$$
as desired since $|\mc{H}| \le n^{r+1}$ and $n$ is large enough.
\end{proofclaim}

By Claims~\ref{cl:1Holds}-~\ref{cl:4Holds}, we have by the union bound that with probability at least $0.9$ all of (1)-(4) hold. If (1), (3) and (4) hold, then we have that for each $H\in \mathcal{H}$, ${\rm Disjoint}(\mc{B}_H) \cap {\rm HighGirth}_g(\mc{B}_H) \ne \emptyset$. Hence with probability at least $.9$ we have the above and also that $\Delta(\bigcup \mc{B}_p)\le \Delta\cdot \log^{a} \Delta$ by (2).
\end{lateproof}

\section{Extrinsic Properties of a Random Sparsification of a Full Quantum Omni-Booster}\label{s:Extrinsic}

\subsection{An Auxiliary Hypergraph for Extrinsic Girth}

\begin{definition}\label{def:CliqueBoosterConfig}
Let $q>r\ge 2$ and $g\ge 3$ be integers. Let $(G,A,X,\mc{H})$ be a $K_q^r$-sponge. Let $\mc{B} \in {\rm Full}_{B_0}(G,A,X)$ for some rooted $K_q^r$-booster $B_0$.

An \emph{$g$-clique-booster-configuration} of $\mc{B}$ is a subset $U\subseteq {\rm Design}_{K_q^r}(G)\cup \mc{B}$ such that $U'= U\cap {\rm Design}_{K_q^r}(G)\ne \emptyset$, $U\cap \mc{B} =\{U_1,\ldots,U_m\}\subseteq \mc{B}$ with $m\ge 1$, $|U\cap \mc{B}_H|\le 1$ for all $H\in \mc{H}$, and there exists $F\in {\rm Girth}_{K_q^r}^g(G)$ with $U'\subseteq F$ and a partition of $F\setminus U'$ into $m$ nonempty sets, $F_1,\ldots, F_m$, such that for each $i\in [m]$, we have $F_i\subseteq (U_i)_{\rm on}$ or $F_i\subseteq (U_i)_{\rm off}$.
\end{definition}

\begin{definition}\label{def:ExtendConfig}
Let $q>r\ge 2$ and $g\ge 3$ be integers. Let $(G,A,X,\mc{H})$ be a $K_q^r$-sponge. Let $\mc{B} \in {\rm Full}_{B_0}(G,A,X)$ for some rooted $K_q^r$-booster $B_0$.
Define the \emph{girth-$g$ extended configuration hypergraph}, ${\rm Extend}_g(\mc{B})$, as
$$V({\rm Extend}_g(\mc{B})) := V({\rm Girth}^g_{K_q^r}(G))\cup V({\rm Girth}_g(\mc{B})),$$
\begin{align*}
E({\rm Extend}_g(\mc{B})):= &\{ \mc{Z} \subseteq V({\rm Extend}^g(\mc{B})): \mc{Z} \text{ is a $g$-clique-booster-configuration }\}
\end{align*}
For integers $s_1, s_2\ge 1$, we define 
$${\rm Extend}_g(\mc{B})^{(s_1,s_2)}:= \{Z_1\cup Z_2\in {\rm Extend}_g(\mc{B}): Z_1\subseteq V({\rm Girth}^g_{K_q^r}(G)), Z_2\subseteq V({\rm Girth}_g(\mc{B})), |Z_1|=s_1, |Z_2|=s_2\},$$ 
and for integers $t_1,t_2$ with $s_1\ge t_1\ge 0$ and $s_2\ge t_2\ge 0$, we define
$$\Delta_{(t_1,t_2)}\left({\rm Extend}_g(\mc{B})^{(s_1,s_2)}\right) := \max_{U_1\subseteq V({\rm Girth}^g_{K_q^r}(G)), |U_1|=t_1}~~\max_{U_2\subseteq V({\rm Girth}_g(\mc{B})), |U_2|=t_2}~~|{\rm Extend}_g(\mc{B})^{(s_1,s_2)}(U_1\cup U_2)|.$$
\end{definition}

Indeed, there is bi-regularity apparent in the upper bounds for the degrees and codegrees of ${\rm Extend}_g(\mc{B})$. Hence the definitions above. The following lemma characterizes the degrees and codegrees as follows.

\begin{lem}\label{lem:ExtendConfigBoosterDegrees}
For all integers $q>r\ge 1$, $C\ge 1$ and $g\ge 3$, and $K_q^r$ booster $B_0$ of rooted girth at least $g$, there exists an integer $C'\ge 1$ such that the following holds: Let $(G,A,X,\mc{H})$ be a $C$-refined $K_q^r$-sponge and let $\Delta := \max\left\{ \Delta(X),~v(G)^{1-\frac{1}{r}}\cdot \log v(G)\right\}$ and let $b:= v(B_0)-q$. If $\mc{B} \in {\rm Full}_{B_0}(G,A,X)$, then for integers $s_1 \ge t_1 \ge 0$, $s_2\ge t_2\ge 0$ with $s_1+s_2 > t_1+t_2 \ge 1$, the following holds:
\begin{enumerate}
    \item[(1)] if $t_1+t_2 \ge 2$, or if $s_1=1$ and $t_1=0$, then $$\Delta_{(t_1,t_2)}\left({\rm Extend}_g(\mc{B})^{(s_1,s_2)}\right)\le C'\cdot \binom{n}{q-r}^{s_1-t_1}\cdot \binom{n}{b}^{s_2-t_2}\cdot \frac{1}{n},$$
    \item[(2)] and otherwise, we have 
    $$\Delta_{(t_1,t_2)}\left({\rm Extend}_g(\mc{B})^{(s_1,s_2)}\right)\le C'\cdot \binom{n}{q-r}^{s_1-t_1}\cdot \binom{n}{b}^{s_2-t_2}\cdot \frac{\Delta}{n}.$$    
\end{enumerate}    
\end{lem}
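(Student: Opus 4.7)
The plan is to adapt the argument of Lemma~\ref{lem:GirthConfigBoosterDegrees} to handle both fixed cliques $U_1$ (of size $t_1$) and fixed boosters $U_2$ (of size $t_2$). Let $\mc{D} := {\rm Design}_{K_q^r}(G)$ and $\mc{G} := {\rm Girth}_{K_q^r}^g(G)$; we choose $C'$ large enough throughout. Fix $U_1, U_2$. Every element of ${\rm Extend}_g(\mc{B})^{(s_1, s_2)}$ containing $U_1 \cup U_2$ is uniquely specified by the following data: (a) for each $B_i \in U_2$, a nonempty piece $F_i$ contained in $(B_i)_{\rm on}$ or in $(B_i)_{\rm off}$ with the $F_i$ pairwise disjoint and disjoint from $U_1$; (b) an integer $s'$ with $\max(3, s_1+s_2) \le s' \le g$; (c) an Erd\H{o}s configuration $F \in \mc{G}^{(s')}$ containing the matching $Z := U_1 \cup F_1 \cup \cdots \cup F_{t_2}$ of size $t' := t_1 + \sum_i |F_i|$; (d) a subset $V_1 \subseteq F \setminus Z$ with $|V_1| = s_1 - t_1$; (e) a partition of $F \setminus (Z \cup V_1)$ into $s_2 - t_2$ nonempty parts; and (f) for each such new part $F_j$, a new booster from $\mc{B}(F_j)$.

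Step (a) contributes only a constant $M_0 = M_0(q, v(B_0))$ because each of $(B_i)_{\rm on}$ and $(B_i)_{\rm off}$ has at most $v(B_0)^q$ cliques. Step (c) contributes at most $\Delta_{s', t'}(\mc{G}) \le (qg)^{qg} \cdot n^{(q-r)(s'-t') - \mathbf{1}_{t' \ge 2}}$; step (d) contributes $\binom{s'-t'}{s_1-t_1}$; step (e) gives at most $s_2^{s' - s_1 - (t' - t_1)}$ ordered partitions; and step (f), by Lemma~\ref{lem:BoostersContainingSetOfCliques}, is bounded by $O(1) \cdot n^{b(s_2 - t_2) - (q-r)(s' - s_1 - (t' - t_1))}$. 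Multiplying these contributions and summing over $s'$ and the constantly many choices in (a), the exponent of $n$ telescopes to $(q-r)(s_1 - t_1) + b(s_2 - t_2) - \mathbf{1}_{t' \ge 2}$, matching $\binom{n}{q-r}^{s_1-t_1} \binom{n}{b}^{s_2-t_2} \cdot n^{-\mathbf{1}_{t' \ge 2}}$ up to constants. Since $t' \ge t_1 + t_2$, the case $t_1 + t_2 \ge 2$ automatically forces $t' \ge 2$ and yields the $1/n$ of case (1). In the only borderline scenario, $t_1 + t_2 = 1$ with $t' = 1$, we recover a $\Delta/n$ factor exactly as in the proof of Lemma~\ref{lem:GirthConfigPrelim}(2): we split each $|\mc{B}(F_j)|$ into its small and large pieces via Lemma~\ref{lem:BoostersContainingSetOfCliques}(1)--(2), and we use Proposition~\ref{prop:LargeIntersect} to constrain $F$ to ${\rm Large}_{X \cup A}(\cdot)$ in the large case. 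This delivers the case (2) bound.

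The remaining subcase of (1) is when $s_1 = 1$ and $t_1 = 0$, so necessarily $t_2 = 1$ and $t'$ may equal $1$. The key observation here is that $V_1 \in F \setminus Z$ is an additional clique which, together with $Z$, forms a matching in $\mc{D}$ of size $2$: any two cliques of an Erd\H{o}s configuration share fewer than $r$ vertices, for otherwise they would form a forbidden $(2(q-r)+r,2)$-subconfiguration. Re-ordering the counting so that $V_1$ is grouped with $Z$ as part of the witnessed matching, and invoking the codegree bound $|\mc{G}(Z \cup \{V_1\})^{(s')}| \le \Delta_{s', 2}(\mc{G}) \le O(n^{(q-r)(s'-2) - 1})$ in place of the weaker $\Delta_{s', 1}(\mc{G})$, restores the missing $1/n$ factor and yields the claim. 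The main technical obstacle is tracking the exponents of $n$ across these borderline configurations without overcounting; once the choice of $(F_1, \ldots, F_{t_2})$ is factored off as constants and the index $\delta = \mathbf{1}_{t' \ge 2}$ is identified, the remaining computation is a direct extension of the bookkeeping already carried out in Lemmas~\ref{lem:GirthConfigPrelim} and \ref{lem:GirthConfigBoosterDegrees}.
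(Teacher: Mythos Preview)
Your overall framework matches the paper's: both reduce the count to $|\mc{B}^*(U_1\cup Z,\,s',\,s_1+t_2',\,s_2-t_2)|$ summed over the constantly many choices of cliques $Z$ inside the fixed boosters $U_2$, and then invoke Lemma~\ref{lem:GirthConfigPrelim}. The cases $t_1+t_2\ge 2$ and the generic $t_1+t_2=1$ case (yielding the $\Delta/n$ bound) are handled correctly, and your exponent bookkeeping is right.

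The gap is in the special subcase $s_1=1$, $t_1=0$ of part~(1). Your proposed re-ordering --- grouping the free clique $V_1$ with $Z$ and invoking the codegree bound $\Delta_{s',2}(\mc{G})$ --- does not recover the missing factor of $1/n$, because $V_1$ is \emph{not fixed}. By double counting,
\[
\sum_{V_1}\bigl|\mc{G}(Z\cup\{V_1\})^{(s')}\bigr| \;=\; (s'-1)\cdot \bigl|\mc{G}(Z)^{(s')}\bigr|,
\]
so the total is unchanged. If instead you bound the number of admissible $V_1$ directly, there are $\Theta(n^q)$ candidates (any $q$-clique meeting $V(Z)$ in at most $r-1$ vertices), and $n^q\cdot n^{(q-r)(s'-2)-1}=n^{(q-r)(s'-1)+r-1}$ is \emph{worse} than the $\Delta_{s',1}$ bound by a factor of $n^{r-1}$.

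The paper's argument for this subcase is different and does not use any re-ordering. When the single fixed booster contributes only $t_2'=1$ clique and $s_2=t_2$ (so $m=s_2-t_2=0$), the term $\mc{B}^*(Z,s',s_1+t_2',0)$ forces $s'=|F'|=s_1+t_2'=2$; since $\mc{G}$ has no edges of size two by definition, that term vanishes outright. The surviving terms all have $t_2'\ge 2$, hence $t_1+t_2'\ge 2$, and Lemma~\ref{lem:GirthConfigPrelim}(1) already supplies the $1/n$. (The paper states this for general $s_2\ge t_2$, but the vanishing argument as written only forces $s'=2$ when $s_2=t_2$; fortunately that is the only instance of this subcase used downstream, in Claim~\ref{cl:6Holds}.) Replace your re-ordering with this observation.
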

\begin{proof}
We choose $C'$ large enough as needed throughout the proof. For brevity, let $\mc{D}:={\rm Design}_{K_q^r}(G)$ and $\mc{G}:= {\rm Girth}_{K_q^r}^g(G)$. 

Fix $U_1\subseteq V(\mc{D})$ with $|U_1|=t_1$ and $U_2\subseteq V({\rm Girth}_g(\mc{B}))$ with $|U_2|=t_2$. Let $U:= U_1\cup U_2$. We define
\begin{align*}
\mc{I}(U_2) &:= \left\{Z\subseteq \bigcup_{B\in U_2} (B_{\rm off}\cup B_{\rm on}) \text{ with } Z\cap ( B_{\rm on}\cup B_{\rm off}) \ne \emptyset \text{ for all } B\in U_2: Z\text{ is a } \mc{G}\text{-avoiding matching of } \mc{D}\right\},\\
\mc{I}_{t_2'}(U) &:= \{Z\in \mc{I}(U): |Z|=t'\}.    
\end{align*}
We note that $|\mc{I}_{t_2'}(U)|\le \left(\binom{b+q}{q}\cdot t_2\right)^{t_2'}$. Thus using this notation, we find that
$$|{\rm Girth}_g(\mc{B})(U)| \le \sum_{t_2': g-2\ge t_2' \ge t_2} ~\sum_{Z\in \mc{I}_{t_2'}(U_2)}~\sum_{s': g-1\ge s' > t_1+t_2'}~|\mc{B}^{*}(U_1\cup Z,s',s_1+t_2',s_2-t_2)|.$$

Let $C_1$ be as in Lemma~\ref{lem:GirthConfigPrelim}. We now partition this sum according to whether $t_1+t_2'\ge 2$. First suppose $t_1+t_2'\ge 2$. Then by Lemma~\ref{lem:GirthConfigPrelim}(1), we find that
\begin{align*}
|\mc{B}^{*}(U_1\cup Z,s',s_1+t_2',s_2-t_2)| &\le C_1\cdot \binom{n}{q-r}^{(s_1+t_2')-(t_1+t_2')} \cdot \binom{n}{b}^{s_2-t_2}\cdot \frac{1}{n}\\
&= C_1\cdot \binom{n}{q-r}^{s_1-t_1} \cdot \binom{n}{b}^{s_2-t_2}\cdot \frac{1}{n}.
\end{align*}
Substituting the above bounds, we calculate that
\begin{align*}
\sum_{t_2'\ge 2-t_1: g-2\ge t_2' \ge t_2}&~\sum_{Z\in \mc{I}_{t_2'}(U_2)}~\sum_{s': g-1\ge s' > t_1+t_2'}~|\mc{B}^{*}(U_1\cup Z,s',s_1+t_2',s_2-t_2)| \\
&\le \left(\binom{b+q}{q}\cdot t_2\right)^{g} \cdot g^2 \cdot C_1\cdot \binom{n}{q-r}^{s_1-t_1} \cdot \binom{n}{b}^{s_2-t_2}\cdot \frac{1}{n}\\
&\le \frac{C'}{2}\cdot \binom{n}{q-r}^{s_1-t_1} \cdot \binom{n}{b}^{s_2-t_2}\cdot \frac{1}{n},
\end{align*}
where for the last inequality we used that $C'$ is large enough. Note that if $t_1+t_2\ge 2$, then $t_1+t_2' \ge 2$ and hence the above implies in that case that $|{\rm Girth}_g(\mc{B})(U)|\le C' \cdot \binom{n}{q-r}^{s_1-t_1} \cdot \binom{n}{b}^{s_2-t_2}\cdot \frac{1}{n}$. 

So we assume $t_1+t_2=1$. That said, if $s_1=1$ and $t_1=0$, then since $\mc{G}$ has no configurations of size two by definition, we may assume that $t_2'\ge 2$ as otherwise the term is zero when $t_2'=1$. Hence (1) holds as desired.

So we proceed to show (2). We use the calculation above when $t_1+t_2'\ge 2$. So we consider the sum when $t_1+t_2'=1$; as noted this implies that $t_1+t_2=1$. Then by Lemma~\ref{lem:GirthConfigPrelim}(2), we find that
$$|\mc{B}^{*}(U_1\cup Z,s',s_1+t_2',s_2-t_2)| \le C_1\cdot \binom{n}{q-r}^{s_1-t_1} \cdot \binom{n}{b}^{s_2-t_2}\cdot \frac{\Delta}{n}.$$
Hence, we calculate the $t'=1$ term of our sum as follows:
\begin{align*}
\sum_{t_2'= 1-t_1: g-2\ge t_2' \ge t_2}&~\sum_{Z\in \mc{I}_{t_2'}(U_2)}~\sum_{s': g-1\ge s' > t_1+t_2'}~|\mc{B}^{*}(U_1\cup Z,s',s_1+t_2',s_2-t_2)| \\
&\le \left(\binom{b+q}{q}\cdot t_2\right) \cdot g^2 \cdot C_1\cdot \binom{n}{q-r}^{s_1-t_1} \cdot \binom{n}{b}^{s_2-t_2}\cdot \frac{\Delta}{n}\\
&\le \frac{C'}{2}\cdot \binom{n}{q-r}^{s_1-t_1} \cdot \binom{n}{b}^{s_2-t_2}\cdot \frac{\Delta}{n},
\end{align*}
where for the last inequality we used that $C'$ is large enough. 
Thus when $t_1+t_2=1$, we find that
\begin{align*}
|{\rm Girth}_g(\mc{B})(U)| &\le \frac{C'}{2} \cdot \binom{n}{q-r}^{s_1-t_1} \cdot \binom{n}{b}^{s_2-t_2}\cdot \frac{1}{n} + \frac{C'}{2}\cdot \binom{n}{q-r}^{s_1-t_1} \cdot \binom{n}{b}^{s_2-t_2}\cdot \frac{\Delta}{n} \\
&\le C'\cdot \binom{n}{q-r}^{s_1-t_1} \cdot \binom{n}{b}^{s_2-t_2}\cdot \frac{\Delta}{n}
\end{align*}
where we used that $\Delta \ge 1$ and hence (2) holds as desired. 
\end{proof}

\subsection{Property for Maximum Common $2$-Degree Proof}

We also need the following lemma which we will use to prove the projection treasury in the outcome of Lemma~\ref{lem:RandomQuantumExtrinsic} has maximum common $2$-degree at most $D^{1-(\beta/2)}$.

\begin{lem}\label{lem:UncommonDegree}
For all integers $q>r\ge 1$, $C\ge 1$ and $g\ge 3$, and $K_q^r$ booster $B_0$ of rooted girth at least $g$, there exists an integer $C'\ge 1$ such that the following holds: Let $(G,A,X,\mc{H})$ be a $C$-refined $K_q^r$-sponge and let $\Delta := \max\left\{ \Delta(X),~v(G)^{1-\frac{1}{r}}\cdot \log v(G)\right\}$ and let $b:= v(B_0)-q$. If $\mc{B} \in {\rm Full}_{B_0}(G,A,X)$, then for a $q$-clique $Q\in V({\rm Girth}_{K_q^r}^g(G))$ and vertex $v$ of $K_n$ with $v\not\in V(Q)$, we have for any integer $k\ge 1$ that
$$\left|\left\{\mc{Z}\in {\rm Extend}_q(\mc{B})^{(2,k)}(Q): v\in V(\mc{Z})\right\}\right| \le C'\cdot \binom{n}{q-r}\cdot \binom{n}{b}^k \cdot \frac{1}{n}.$$
\end{lem}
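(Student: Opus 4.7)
The plan is to mirror the structure of the proof of Lemma~\ref{lem:ExtendConfigBoosterDegrees}, but to use the additional constraint $v\in V(\mc{Z})$ to improve the $\Delta/n$ factor appearing in Lemma~\ref{lem:ExtendConfigBoosterDegrees}(2) to $1/n$. Each $\mc{Z}=\{Q,Q'\}\cup\{U_1,\ldots,U_k\}\in {\rm Extend}_g(\mc{B})^{(2,k)}(Q)$ with $v\in V(\mc{Z})$ arises from an Erd\H{o}s configuration $F\in {\rm Girth}_{K_q^r}^g(G)^{(s')}$ of some size $3\le s'\le g$ containing $Q$, a choice of $Q'\in F\setminus\{Q\}$, a partition of $F\setminus\{Q,Q'\}$ into $k$ nonempty parts $(F_i)_{i\in[k]}$, and boosters $U_i\in \mc{B}(F_i)$. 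I split on whether (Case A) $v\in V(F)$ or (Case B) $v\in V(U_j)\setminus V(F)$ for some $j\in[k]$.

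In Case A the number of $F\in {\rm Girth}_{K_q^r}^g(G)^{(s')}$ containing $Q$ and the fixed vertex $v$ is at most a constant times $n^{(q-r)(s'-1)-1}$, since fixing one of the at most $(q-r)(s'-1)$ extension vertices of $F$ beyond $V(Q)$ to be $v$ removes one degree of freedom compared with the unconstrained count $\Delta_{s',1}(\mc{G})\le O(n^{(q-r)(s'-1)})$ used in Lemma~\ref{lem:GirthConfigPrelim}. Combining with the bound $|\mc{B}(F_i)|\le O(n^{b-(q-r)|F_i|})$ from Lemma~\ref{lem:BoostersContainingSetOfCliques} for each part, and summing over the $O(1)$ choices of partition, of $Q'$, and of $s'$, yields a contribution of $O\bigl(\binom{n}{q-r}\binom{n}{b}^k\cdot\tfrac{1}{n}\bigr)$.

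In Case B I bound $F$ and $U_i$ for $i\ne j$ by the standard counts, and prove the booster-strengthening
\[
\bigl|\{U\in \mc{B}(F_j):v\in V(U)\}\bigr|\le O\bigl(n^{b-(q-r)|F_j|-1}\bigr).
\]
Write $U=B_H(S)$ with $V(U)=V(H)\cup S$ and $V(H)\cap S=\emptyset$, and split on whether $v\in S$ or $v\in V(H)$. If $v\in S$, fixing one of the $b$ slots of $S$ to be $v$ directly saves a factor of $n$ in the count of $S$ used in Lemma~\ref{lem:BoostersContainingSetOfCliques}. If $v\in V(H)$ and $|V(H)\cap V(F_j)|\le r-1$ (``small''), then $H$ must contain the $(|V(H)\cap V(F_j)|+1)$-set $(V(H)\cap V(F_j))\cup\{v\}$, so $C$-refinement together with the bound $v(F_j)\ge(q-r)|F_j|+r+\mathbf{1}_{|F_j|\ge 2}$ from the proof of Lemma~\ref{lem:BoostersContainingSetOfCliques} reduces the joint count of $(H,S)$ by a factor of $n$ (with one leftover $\Delta$ absorbed by $\Delta\le n/\log^{ga}n$). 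If $v\in V(H)$ and $|V(H)\cap V(F_j)|\ge r$ (``large''), the intersection $V(H)\cap V(F_j)$ is itself an $r$-edge of $A$ (since $V(H)\in \mc{H}$ has all $r$-subsets in $A$), so $F$ is automatically in ${\rm Large}_A(Q)^{(s')}$; one therefore moves the $1/n$ saving onto the count of $F$ via Proposition~\ref{prop:LargeIntersect}, again using $\Delta(A)\le (C+1)\Delta$ and $\Delta\le n/\log^{ga}n$ to absorb the $\Delta$ factor.

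Summing the Case A and Case B contributions over the $O(1)$-many subcases yields the claimed bound with a sufficiently large constant $C'$. The main technical obstacle, mirroring the $|Z|=1$ branch of Lemma~\ref{lem:GirthConfigPrelim}, is the ``large'' sub-case of Case B: the pointwise constraint $v\in V(H)$ does not itself save a factor of $n$ in that regime, and the saving must instead be harvested from the forced $A$-edge inside $F$ via Proposition~\ref{prop:LargeIntersect}.
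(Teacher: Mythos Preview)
Your case split essentially mirrors the paper's decomposition into $\mc{F}_1,\mc{F}_2,\mc{F}_3,\mc{F}_4$: Case~A is $\mc{F}_1$, Case~B with $v\in S$ is $\mc{F}_2$, Case~B--small is $\mc{F}_3$, Case~B--large is $\mc{F}_4$. Cases~A, $v\in S$, and small are essentially correct (for the small case you only need $\Delta\le n$, which always holds for large $n$, not the stronger $a$-bounded hypothesis you invoke---and which is not assumed in this lemma anyway).

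The large case is where your argument breaks. Proposition~\ref{prop:LargeIntersect} yields only
\[
\bigl|{\rm Large}_{X\cup A}(Q)^{(s')}\bigr|\le O\!\left(n^{(q-r)(s'-1)}\cdot \frac{\Delta(X\cup A)}{n}\right),
\]
so after multiplying by the booster counts you obtain a contribution of order $\binom{n}{q-r}\binom{n}{b}^k\cdot \Delta/n$, not $\binom{n}{q-r}\binom{n}{b}^k/n$. You write that $\Delta\le n/\log^{ga}n$ ``absorbs'' the extra $\Delta$, but (i) the lemma only assumes the sponge is $C$-refined, not $a$-bounded, so that inequality is unavailable; and (ii) even granting it, $\Delta/n\le 1/\log^{ga}n$, which is far larger than the required $O(1)/n$. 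So the bound you obtain is off by a factor of~$\Delta$.

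The paper closes this gap by a different mechanism: it does \emph{not} use Proposition~\ref{prop:LargeIntersect} for $\mc{F}_4$. Instead, since $v\in V(H_i)$ and some $r$-edge $e\in X\cup A$ lies in $V(H_i)\cap V(F_i)\subseteq V(F)$ with $e\not\subseteq V(Q)$, when one enumerates the vertices of $V(F)\setminus V(Q)$ the \emph{last} vertex $w$ of $e$ has only $O_C(1)$ choices: the $r$-set $(e\setminus\{w\})\cup\{v\}$ is itself an edge of $X\cup A$ (being an $r$-subset of $V(H_i)$), hence lies in at most $C$ cliques $H\in\mc{H}$ by $C$-refinement, and $w\in V(H)$ then has at most $Cq$ possibilities. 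This gives a genuine $1/n$ saving on the count of $F$ with no $\Delta$ factor---the key use of $C$-refinement together with the fixed external vertex $v$ that your argument is missing.
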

\begin{nonlateproof}{lem:UncommonDegree}
We choose $C'$ large enough as needed throughout the proof. For brevity, let $\mc{D}:={\rm Design}_{K_q^r}(G)$ and $\mc{G}:= {\rm Girth}_{K_q^r}^g(G)$. 

Let 
\begin{align*}
\mc{F} := \Bigg\{ \bigg(F, F_0, (F_1,\dots, F_k), (B_1,\ldots, B_k)\bigg): 
~&F\in \mc{G}(Q),~F_0\in F\setminus \{Q\}, (F_1,\ldots, F_k)\in \mc{P}_{k}(F\setminus \{Q,F_0\}),\\
~&B_i\in \mc{B}(F_i) ~\forall i\in[k],~v\in V(F_0)\cup \bigcup_{i\in [k]} V(B_i)\Bigg\},    
\end{align*}
where recall that $\mc{P}_k(S)$ denotes the set of partitions of $S$ into $k$ nonempty sets. Note that 
$$\left|\left\{\mc{Z}\in {\rm Extend}_q(\mc{B})^{(2,k)}(Z): v\in V(\mc{Z})\right\}\right| \le |\mc{F}|.$$

We will partition $\mc{F}$ into various sets and bound each separately. First let
$$\mc{F}_1 := \bigg\{ \bigg(F, F_0, (F_1,\dots, F_k), (B_1,\ldots, B_k)\bigg)\in \mc{F}: ~v\in V(F) \bigg\}.$$
Next let 
$$\mc{F}_2 := \bigg\{ \bigg(F, F_0, (F_1,\dots, F_k), (B_{H_1}(S_1),\ldots, B_{H_k}(S_k))\bigg)\in \mc{F}\setminus \mc{F}_1: ~\exists i\in [k] \text{ such that } v\in V\left(B_{H_i}(S_i)\right)\setminus V(H_i) \bigg\}.$$
Next let 
\begin{align*}
\mc{F}_3 := \bigg\{ \bigg(F, F_0, (F_1,\dots, F_k), (B_{H_1}(S_1),\ldots, B_{H_k}(S_k))\bigg)\in \mc{F}\setminus (\mc{F}_1\cup \mc{F}_2): ~\exists &i\in [k] \text{ such that } v\in V(H_i) \text { and }\\
&B_{H_i}(S_i)\in \mc{B}_{\rm small}(F_i) \bigg\}.
\end{align*}
Finally we set
$$\mc{F}_4 := \mc{F}\setminus (\mc{F}_1\cup \mc{F}_2\cup \mc{F}_3)$$
and we note that for all $(F, F_0, (F_1,\dots, F_k), (B_{H_1}(S_1),\ldots, B_{H_k}(S_k)))\in \mc{F}_4$, we have that $\exists i\in [k] \text{ such that } v\in V(H_i) \text { and } B_{H_i}(S_i)\in \mc{B}_{\rm large}(F_i).$
Thus
$$|\mc{F}| = \sum_{i\in [4]} |\mc{F}_i|.$$

\begin{claim}
$$|\mc{F}_1| \le \frac{C'}{4} \cdot \binom{n}{q-r}\cdot \binom{n}{b}^k \cdot \frac{1}{n}.$$    
\end{claim}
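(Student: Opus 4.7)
The plan is to iterate the count over the size $s' = |F|$ of the underlying Erd\H{o}s configuration $F \in \mc{G}(Q)$ witnessing each tuple in $\mc{F}_1$. The defining feature of membership in $\mc{F}_1$ is that $v \in V(F)$; combined with the hypothesis $v \notin V(Q)$, this forces $v$ to lie in $V(F) \setminus V(Q)$, a set of size exactly $(s'-1)(q-r)$ by the Erd\H{o}s configuration property. This restriction is what ultimately produces the desired $\tfrac{1}{n}$ saving compared to the unconstrained count of $F \in \mc{G}(Q)^{(s')}$.

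For each $s' \in \{\max(3, k+2), \ldots, g\}$ (smaller $s'$ contribute nothing, since we need $k$ nonempty parts in the partition of $F \setminus \{Q,F_0\}$ and the smallest Erd\H{o}s configuration has size $3$), I would first bound the number of such $F$: specify the remaining $(s'-1)(q-r)-1$ new vertices of $V(F) \setminus (V(Q) \cup \{v\})$ and then choose which $s'-1$ of the $\binom{s'(q-r)+r}{q}$ possible $q$-subsets (other than $Q$) make up $F$, yielding $O_{q,r,g}\!\left(n^{(s'-1)(q-r)-1}\right)$. Next, pick $F_0 \in F \setminus \{Q\}$ (at most $s'-1$ choices), partition $F \setminus \{Q,F_0\}$ into $k$ ordered nonempty parts (at most $k^{s'-2}$ choices, an $O_{g,k}(1)$ constant), and for each $i$ apply Lemma~\ref{lem:BoostersContainingSetOfCliques} (combining the ``small'' and ``large'' bounds and using $\Delta/n \le 1$) to bound $|\mc{B}(F_i)| \le O\!\left(n^{b - (q-r)|F_i|}\right)$. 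Since $\sum_{i=1}^{k} |F_i| = s'-2$, the product $\prod_i |\mc{B}(F_i)|$ is at most $O\!\left(n^{bk - (q-r)(s'-2)}\right)$.

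Multiplying the factors for a fixed $s'$ produces a term of order $n^{(s'-1)(q-r)-1 + bk - (q-r)(s'-2)} = n^{(q-r) + bk - 1}$, with constants depending only on $q,r,g,k,C,B_0$; the $s'$-dependence cancels. Summing over the at-most-$g$ values of $s'$ contributes only an $O_g(1)$ factor, so
$$|\mc{F}_1| \;=\; O_{q,r,g,k,C,B_0}\!\left(n^{(q-r) + bk - 1}\right) \;=\; O_{q,r,g,k,C,B_0}\!\left(\binom{n}{q-r}\cdot \binom{n}{b}^{k}\cdot \tfrac{1}{n}\right),$$
and choosing $C'$ large enough absorbs all these constants and the factor of $\tfrac{1}{4}$. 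The only real work is the bookkeeping: verifying the Erd\H{o}s property gives the tight count $|V(F) \setminus V(Q)| = (s'-1)(q-r)$, and correctly using $\sum_i |F_i| = s'-2$ when summing the booster bounds; the estimate is comparatively ``easy'' in the sense that the $\Delta$-free portion of Lemma~\ref{lem:BoostersContainingSetOfCliques} already suffices here and the $\tfrac{1}{n}$ savings comes purely from the vertex constraint $v \in V(F) \setminus V(Q)$.
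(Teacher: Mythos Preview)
Your proposal is correct and follows essentially the same approach as the paper: count $F\in\mc{G}(Q)^{(s')}$ with $v\in V(F)\setminus V(Q)$ to save a factor of $n$, then choose $F_0$, the partition, and the boosters via Lemma~\ref{lem:BoostersContainingSetOfCliques}, noting $\sum_i|F_i|=s'-2$ so the exponent is independent of $s'$. One small slip: an Erd\H{o}s configuration of size $s'$ satisfies $|V(F)\setminus V(Q)|\le (s'-1)(q-r)$ rather than equality, but only the upper bound is needed and your count goes through unchanged.
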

\begin{proofclaim}
Let $s\ge k+2$ be an integer. Then we calculate that
$$|\{F\in \mc{G}^{(s)}(Q): v\in V(F)\}| \le (qs)^{qs} \cdot n^{(q-r)(s-1)-1}.$$
Thus for $\bigg(F, F_0, (F_1,\dots, F_k), (B_1,\ldots, B_k)\bigg)\in \mc{F}_1$ where $|F|=s$ there are most $(qs)^{qs} \cdot n^{(q-r)(s-1)-1}$ choices of $F$, then at most $s$ choices of $F_0$, then at most $s^k$ choices of $(F_1,\dots, F_k)$ and finally at most $|\mc{B}(F_1,\ldots, F_k)|$ choices of $(B_1,\ldots, B_k)$. Let $C_0$ be as in Lemma~\ref{lem:BoostersContainingSetOfCliques}. By Lemma~\ref{lem:BoostersContainingSetOfCliques}, we find that
$$|\mc{B}(F_1,\ldots, F_{k})| \le (2C_0)^{k}\cdot n^{b\cdot k-(q-r)\cdot \sum_{i\in [k]} |F_i|}.$$
Note that for $|F|=s$, we have $\sum_{i\in [k]} |F_i|=s-2$. Combining we find that
\begin{align*}
|\mc{F}_1| &\le (qs)^{qs} \cdot n^{(q-r)(s-1)-1}\cdot s \cdot s^k \cdot (2C_0)^{k}\cdot n^{b\cdot k-(q-r)(s-2)} \\
&\le \frac{C'}{4} \cdot \binom{n}{q-r}\cdot \binom{n}{b}^k \cdot \frac{1}{n}
\end{align*}
as desired where we used that $C'$ is large enough.
\end{proofclaim}

\begin{claim}
$$|\mc{F}_2| \le \frac{C'}{4} \cdot \binom{n}{q-r}\cdot \binom{n}{b}^k \cdot \frac{1}{n}.$$    
\end{claim}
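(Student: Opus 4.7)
The plan is to mimic the counting used for $\mc{F}_1$ but shift the source of the $\tfrac{1}{n}$ saving: since $v\notin V(F)$ for any tuple in $\mc{F}_2$, we no longer gain a factor of $n$ from restricting $F$, but instead we gain it from restricting which boosters $B_{H_i}(S_i)$ are eligible at the index $i\in[k]$ for which $v\in S_i$.

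First I would union-bound over the choice of such an index $i\in[k]$, losing only a factor of $k$. Fixing $i$, I would enumerate in the same order as for $\mc{F}_1$: for each $s\in\{k+2,\dots,(q-r)(k+1)+r\}$, at most $(qs)^{qs}\cdot n^{(q-r)(s-1)}$ choices for $F\in\mc{G}^{(s)}(Q)$ (this time with \emph{no} restriction on $v$, hence one extra factor of $n$ relative to the $\mc{F}_1$ count); then $s^{k+1}$ choices for $F_0$ and the partition $(F_1,\dots,F_k)$; then $\prod_{j\neq i}|\mc{B}(F_j)|\le\prod_{j\neq i}2C_0\cdot n^{b-(q-r)|F_j|}$ choices for the boosters indexed by $j\neq i$ via Lemma~\ref{lem:BoostersContainingSetOfCliques}; and finally the constrained booster at index $i$.

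The key new ingredient is the bound
\[
|\{B_H(S)\in\mc{B}(F_i):\,v\in S\}|\le C_0'\cdot n^{b-(q-r)|F_i|-1},
\]
valid under the standing assumption $v\notin V(F_i)$. This can be read off the proof of Lemma~\ref{lem:BoostersContainingSetOfCliques} by appending the condition $v\in S$: after fixing $H$ (with the usual small/large split on $|V(H)\cap V(F_i)|$) and forcing $V(F_i)\setminus V(H)\subseteq S$, the requirement $v\in S$ pins one additional slot among the remaining free vertices of $S$, giving a genuine factor-of-$n$ reduction in both regimes. In the small sub-case the original bound carries an extra $\Delta$ factor; using $\Delta\le n$ absorbs this into the constant without harming the exponent (one could also chase the $\Delta/n$ down more carefully, but it is not needed here).

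Multiplying the counts and using $\sum_{j\in[k]}|F_j|=s-2$ yields
\[
|\mc{F}_2|\;\le\;k\cdot\sum_{s}(qs)^{qs}\cdot s^{k+1}\cdot(2C_0)^{k-1}\cdot C_0'\cdot n^{(q-r)(s-1)+bk-(q-r)(s-2)-1}\;=\;O\!\left(\binom{n}{q-r}\binom{n}{b}^{k}\cdot\frac{1}{n}\right),
\]
which is at most $\tfrac{C'}{4}\binom{n}{q-r}\binom{n}{b}^{k}\cdot\tfrac{1}{n}$ once $C'$ is taken large enough. The only subtlety is the small/large case split in the constrained booster count, but since the $v\in S$ saving is genuinely one free slot of $S$ in both regimes, the bookkeeping is straightforward and parallel to the proof of the preceding claim about $\mc{F}_1$.
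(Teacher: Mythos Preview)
Your proposal is correct and follows essentially the same route as the paper's own proof: both arguments count $F$ without the $v$-restriction (costing a factor of $n$ relative to $\mc{F}_1$), union-bound over the distinguished index $i\in[k]$, use Lemma~\ref{lem:BoostersContainingSetOfCliques} for the boosters at indices $j\ne i$, and then recover the factor $1/n$ at index $i$ from the constraint $v\in S_i$ pinning one further free vertex of the booster. Your handling of the small/large split (absorbing the $\Delta$ factor via $\Delta\le n$) is exactly what the paper does implicitly when it asserts the bound $|\{B_{H_i}(S_i)\in\mc{B}(F_i):v\in V(B_{H_i}(S_i))\setminus V(H_i)\}|\le 2C_1\cdot n^{b-(q-r)|F_i|-1}$ ``via reasoning similar to'' that lemma.
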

\begin{proofclaim}
Let $s\ge k+2$ be an integer. Then we calculate that
$$|\mc{G}^{(s)}(Q)| \le (qs)^{qs} \cdot n^{(q-r)(s-1)-1}.$$
Thus for $\bigg(F, F_0, (F_1,\dots, F_k), (B_{H_1}(S_1),\ldots, B_{H_k}(S_k))\bigg)\in \mc{F}_2$ where $|F|=s$ there are most $(qs)^{qs} \cdot n^{(q-r)(s-1)}$ choices of $F$, then at most $s$ choices of $F_0$, then at most $s^k$ choices of $(F_1,\dots, F_k)$, and then at most $k$ choices of $i\in [k]$ such that $v\in V\left(B_{H_i}(S_i)\right)\setminus V(H_i)$. 

Let $C_0$ be as in Lemma~\ref{lem:BoostersContainingSetOfCliques}. For $j\in [k]\setminus \{i\}$, we have by Lemma~\ref{lem:BoostersContainingSetOfCliques} that
$$|\mc{B}(F_j)| \le 2C_0\cdot n^{b-(q-r)\cdot |F_i|}.$$
Meanwhile for $i$, we find via reasoning similar to that in the proof of Lemma~\ref{lem:BoostersContainingSetOfCliques} that
$$|\{B_{H_i}(S_i)\in \mc{B}(F_i): v\in V\left(B_{H_i}(S_i)\right)\setminus V(H_i)\}| \le 2C_1\cdot n^{b-(q-r)\cdot |F_i|-1},$$
for some fixed constant $C_1$ depending on $q,r$, $C$ and $B_0$. 

Note that for $|F|=s$, we have $\sum_{i\in [k]} |F_i|=s-2$. Combining we find that
\begin{align*}
|\mc{F}_1| &\le (qs)^{qs} \cdot n^{(q-r)(s-1)}\cdot s \cdot s^k \cdot k \cdot (2C_0)^{k-1}\cdot (2C_1)\cdot n^{b\cdot k-(q-r)(s-2)-1} \\
&\le \frac{C'}{4} \cdot \binom{n}{q-r}\cdot \binom{n}{b}^k \cdot \frac{1}{n}
\end{align*}
as desired where we used that $C'$ is large enough.    
\end{proofclaim}

\begin{claim}
$$|\mc{F}_3| \le \frac{C'}{4} \cdot \binom{n}{q-r}\cdot \binom{n}{b}^k \cdot \frac{1}{n}.$$    
\end{claim}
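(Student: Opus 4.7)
The plan is to enumerate tuples in $\mathcal{F}_3$ following the template of the proofs for $\mathcal{F}_1$ and $\mathcal{F}_2$, but exploiting a useful structural feature: since $\mathcal{F}_3\subseteq \mathcal{F}\setminus(\mathcal{F}_1\cup \mathcal{F}_2)$, every tuple has $v\notin V(F)$ and hence $v\notin V(F_i)\subseteq V(F)$. Combined with the defining condition $v\in V(H_i)$ this forces $v\in V(H_i)\setminus V(F_i)$, i.e.\ $v$ is a root vertex of $H_i$ lying outside $V(F_i)$. That ``extra'' root vertex is precisely what will supply the $1/n$ savings in the count at index $i$.

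For the enumeration, fix $s=|F|\in\{k+2,\ldots,g-1\}$ and the distinguished index $i\in[k]$ (at most $k$ choices of $i$, at most $g$ choices of $s$). I would bound $F\in \mathcal{G}^{(s)}(Q)$ in at most $(qs)^{qs}\cdot n^{(q-r)(s-1)}$ ways, $F_0$ in at most $s$ ways, the partition $(F_1,\dots,F_k)$ in at most $s^k$ ways, and each generic booster $B_{H_j}(S_j)$ with $j\neq i$ by $|\mathcal{B}(F_j)|\le 2C_0\cdot n^{b-(q-r)|F_j|}$ via Lemma~\ref{lem:BoostersContainingSetOfCliques}. What remains is to bound the count $|\{B_{H_i}(S_i)\in \mathcal{B}_{\rm small}(F_i): v\in V(H_i)\setminus V(F_i)\}|$.

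For this count I would retrace the argument in the proof of Lemma~\ref{lem:BoostersContainingSetOfCliques}(1) with the added constraint $v\in V(H_i)$. Writing $F':=V(H_i)\cap V(F_i)$ with $|F'|=j\in\{0,\ldots,r-1\}$, the enforced inclusion $F'\cup\{v\}\subseteq V(H_i)$ has size $j+1$. For $j\le r-2$, the bound $|\{H\in \mathcal{H}: F'\cup\{v\}\subseteq V(H)\}|\le n^{r-2-j}\cdot \Delta(A)\cdot C\le C^2\Delta\cdot n^{r-2-j}$ is one factor of $n$ better than without $v$; combining with $\binom{|V(F_i)|}{j}\le (q|F_i|)^j$ choices of $F'$ and $n^{b-|V(F_i)|+j}$ choices of $S_i$, and using $|V(F_i)|\ge (q-r)|F_i|+r$, this contributes $O(\Delta\cdot n^{b-2-(q-r)|F_i|})$. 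For $j=r-1$, the set $F'\cup\{v\}$ is already a single $r$-edge, so $C$-refinement of $A$ gives $|\{H\in \mathcal{H}: F'\cup\{v\}\subseteq V(H)\}|\le C$, contributing $O(n^{b-1-(q-r)|F_i|})$. Using $\Delta\le n$ the latter subcase dominates, so the count at index $i$ is $O(n^{b-1-(q-r)|F_i|})$.

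Multiplying all factors, the $n$-exponent telescopes to $(q-r)+bk-1$, so the total number of tuples in $\mathcal{F}_3$ is at most $O\bigl(\binom{n}{q-r}\cdot \binom{n}{b}^k/n\bigr)$, and choosing $C'$ large enough absorbs all combinatorial constants $(qs)^{qs}$, $s^k$, $(2C_0)^{k-1}$, $k$, $g$, etc. The main subtlety I anticipate is that the $j=r-1$ subcase does not carry the $\Delta$ factor present in the $j\le r-2$ subcases, but it is precisely this case which binds and which already supplies the required $1/n$ saving on its own; no finer refinement of the small/large decomposition is needed.
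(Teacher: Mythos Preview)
Your proposal is correct and follows essentially the same route as the paper's proof. You enumerate $F$, $F_0$, the partition, and the generic boosters just as the paper does, and for the distinguished index $i$ you retrace the argument of Lemma~\ref{lem:BoostersContainingSetOfCliques}(1) with the extra constraint $v\in V(H_i)\setminus V(F_i)$; the paper does precisely this (``via reasoning similar to that in the proof of Lemma~\ref{lem:BoostersContainingSetOfCliques}\ldots the number of choices for $H$ \ldots is a factor of $1/n$ smaller''), arriving at the same bound $|\{B_{H_i}(S_i)\in\mathcal{B}_{\rm small}(F_i): v\in V(H_i)\}|\le 2C_2\cdot n^{b-(q-r)|F_i|-1}$. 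Your explicit split into the subcases $j\le r-2$ and $j=r-1$ (the latter using $C$-refinement on the $r$-set $F'\cup\{v\}$) spells out exactly what the paper's parenthetical remark is claiming.
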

\begin{proofclaim}
Let $s\ge k+2$ be an integer. Then we calculate that
$$|\mc{G}^{(s)}(Q)| \le (qs)^{qs} \cdot n^{(q-r)(s-1)}.$$
Thus for $\bigg(F, F_0, (F_1,\dots, F_k), (B_{H_1}(S_1),\ldots, B_{H_k}(S_k))\bigg)\in \mc{F}_3$ where $|F|=s$ there are most $(qs)^{qs} \cdot n^{(q-r)(s-1)}$ choices of $F$, then at most $s$ choices of $F_0$, then at most $s^k$ choices of $(F_1,\dots, F_k)$, and then at most $k$ choices of $\exists i\in [k] \text{ such that } v\in V(H_i) \text { and } B_{H_i}(S_i)\in \mc{B}_{\rm large}(F_i)$. 

Let $C_0$ be as in Lemma~\ref{lem:BoostersContainingSetOfCliques}. For $j\in [k]$, we have by Lemma~\ref{lem:BoostersContainingSetOfCliques} that
$$|\mc{B}(F_j)| \le 2C_0\cdot n^{b-(q-r)\cdot |F_i|}.$$
Meanwhile for $i$, we find via reasoning similar to that in the proof of Lemma~\ref{lem:BoostersContainingSetOfCliques} that (except that the number of choices for $H$ containing the intersection of $F_i$ with the roots of the booster but also the vertex $v$ is a factor of $1/n$ smaller instead of $\Delta(A)/n$ smaller)
$$|\{B_{H_i}(S_i)\in \mc{B}(F_i): v\in V(H_i) \text { and }B_{H_i}(S_i)\in \mc{B}_{\rm small}(F_i)\}| \le 2C_2\cdot n^{b-(q-r)\cdot |F_i|-1},$$
for some fixed constant $C_2$ depending on $q,r$, $C$ and $B_0$. 

Note that for $|F|=s$, we have $\sum_{i\in [k]} |F_i|=s-2$. Combining we find that
\begin{align*}
|\mc{F}_1| &\le (qs)^{qs} \cdot n^{(q-r)(s-1)}\cdot s \cdot s^k\cdot k \cdot (2C_0)^{k-1}\cdot (2C_2)\cdot n^{b\cdot k-(q-r)(s-2)-1} \\
&\le \frac{C'}{4} \cdot \binom{n}{q-r}\cdot \binom{n}{b}^k \cdot \frac{1}{n}
\end{align*}
as desired where we used that $C'$ is large enough.        
\end{proofclaim}

\begin{claim}
$$|\mc{F}_4| \le \frac{C'}{4} \cdot \binom{n}{q-r}\cdot \binom{n}{b}^k \cdot \frac{1}{n}.$$    
\end{claim}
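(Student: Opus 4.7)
The plan is to mirror the structure of Claims 2 and 3, parameterizing each tuple $(F, F_0, (F_1,\ldots, F_k), (B_1,\ldots, B_k)) \in \mc{F}_4$ via the Erd\H{o}s configuration $F \in \mc{G}^{(s)}(Q)$ of size $s \ge k+2$ (with $v \notin V(F)$ since $\mc{F}_4 \subseteq \mc{F} \setminus \mc{F}_1$), the clique $F_0$, the partition of $F \setminus \{Q, F_0\}$ into $(F_1, \ldots, F_k)$, an index $i$ for which $v \in V(H_i)$ and $B_i \in \mc{B}_{\rm large}(F_i)$, the booster $B_i$, and the remaining boosters $B_j$ for $j \ne i$. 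The Erd\H{o}s count will be $|\mc{G}^{(s)}(Q)| \le (qs)^{qs} \cdot n^{(q-r)(s-1)}$ and the count of $B_j$ for $j\ne i$ will use $|\mc{B}(F_j)| \le 2C_0 \cdot n^{b - (q-r)|F_j|}$ from Lemma~\ref{lem:BoostersContainingSetOfCliques}.

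The key step will be to refine Lemma~\ref{lem:BoostersContainingSetOfCliques}(2) by a factor of $1/n$ under the additional constraint $v \in V(H_i)$, establishing
$$\bigl|\{B_{H_i}(S_i) \in \mc{B}_{\rm large}(F_i) : v \in V(H_i)\}\bigr| \le 2 C_3 \cdot n^{b - (q-r)|F_i| - 1}$$
for some constant $C_3 = C_3(q, r, C, B_0)$. The intuition is that, since $v \notin V(F_i)$, we have $v \in V(H_i) \setminus V(F_i)$, pinning one of the at most $q-r$ ``free'' vertices of the root clique beyond $V(F_i)$ to be the specific vertex $v$; this should produce the $1/n$ improvement by restricting the choice of vertex set of $H_i$. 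Multiplying this refined bound by the Erd\H{o}s count, the $O(1)$ combinatorial factors from the choices of $F_0$, the partition, and $i$, and the product of the standard $B_j$ counts yields an exponent of $(q-r)(s-1) + bk - (q-r)(s-2) - 1 = (q-r) + bk - 1$, giving $|\mc{F}_4| \le (C'/4) \cdot \binom{n}{q-r} \cdot \binom{n}{b}^k \cdot 1/n$ after absorbing the polynomial factors in $s$ into $C'$.

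The main obstacle will be justifying the $1/n$ improvement rigorously in the large case. In Claim~3 the improvement comes ``for free'' because adding $v$ to $F$ shifts the small-case exponent $n^{r-1-|F|}$ down by one; in the large case, however, both $|\{H \in \mc{H} : F \subseteq V(H)\}|$ and $|\{H \in \mc{H} : F \cup \{v\} \subseteq V(H)\}|$ are at most $C$ by $C$-refinedness, so no such shift occurs directly. I would handle this by splitting according to whether the $r$-intersection $V(H_i) \cap V(F_i)$ lies entirely inside $V(Q)$ or not: in the latter ``generic'' case, pinning one extra vertex of $V(F)$ outside $V(Q)$ already reduces the Erd\H{o}s configuration count $|\mc{G}^{(s)}(Q)|$ by $1/n$ (the count of size-$s$ Erd\H{o}s configurations containing $Q$ and a fixed extra vertex is $\le O(n^{(q-r)(s-1)-1})$), while in the former case the number of admissible $H_i$ is already $O(1)$ by $C$-refinedness applied to an $r$-subset of $V(Q)$, and one uses the degree bound $\Delta(A) \le C\Delta$ together with the $ga$-bounded hypothesis to absorb the remaining $\Delta$ into a $1/n$ gain, so that both subcases yield a uniform $1/n$ improvement.
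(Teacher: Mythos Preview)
Your ``generic case'' is exactly the mechanism the paper uses, though the paper does not split into cases at all. From $B_{H_i}(S_i)\in\mc{B}_{\rm large}(F_i)$ and $v\in V(H_i)$ the paper extracts an edge $e\in X\cup A$ with $e\subseteq V(F)$, $e\not\subseteq V(Q)$, and $e\cup\{v\}\subseteq V(H_i)$, and then pushes the $1/n$ saving entirely into the count of $F$ rather than into the booster count. Concretely, one vertex $w\in e\cap(V(F)\setminus V(Q))$ is constrained: once the other vertices of $V(F)\setminus V(Q)$ are chosen and $e\setminus\{w\}$ (an $(r{-}1)$-subset of already-chosen vertices) is fixed, the $r$-set $(e\setminus\{w\})\cup\{v\}$ lies in at most $C$ members of $\mc{H}$ by refinedness, and $w$ must be one of the $\le q$ vertices of such an $H$. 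This gives at most $(qs)^{qs}\cdot Cq\cdot n^{(q-r)(s-1)-1}$ admissible $F\in\mc{G}^{(s)}(Q)$; the remaining choices of $F_0$, the partition, and all $B_j$ are then counted exactly as you propose via Lemma~\ref{lem:BoostersContainingSetOfCliques}, with no refinement of the individual booster bound needed.

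Your ``former case'' argument, however, does not go through as written. First, the hypothesis you invoke is unavailable: Lemma~\ref{lem:UncommonDegree} assumes only that the sponge is $C$-refined, not $ga$-bounded. Second, even granting $ga$-boundedness, there is no ``remaining $\Delta$'' to absorb: the large-case bound of Lemma~\ref{lem:BoostersContainingSetOfCliques}(2) is $C'\binom{n}{b}\,n^{-(q-r)|Z|}$ with no $\Delta$ factor, and the number of $H\in\mc{H}$ containing a fixed $r$-set is already $\le C$ without the extra condition $v\in V(H)$, so imposing $v\in V(H_i)$ alone does not reduce the count. If you want to keep your case split, the right salvage is to reverse the order of enumeration in this subcase: first choose $H_i$ (only $O_{q,r,C}(1)$ choices, since some $r$-subset of $V(Q)$ must be an edge of $H_i$), then $S_i$, then $F_i\subseteq (B_{H_i}(S_i))_{\rm on/off}$ (all $O(1)$), and only then extend $\{Q\}\cup F_i$ to $F\in\mc{G}^{(s)}$ using the codegree bound $\Delta_{1+|F_i|}\bigl(\mc{G}^{(s)}\bigr)=O\bigl(n^{(q-r)(s-1-|F_i|)-1}\bigr)$; the $-1$ in that exponent is where the $1/n$ actually lives.
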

\begin{proofclaim}
Let $s\ge k+2$ be an integer. For $\bigg(F, F_0, (F_1,\dots, F_k), (B_{H_1}(S_1),\ldots, B_{H_k}(S_k))\bigg)\in \mc{F}_4$, since there exists $i\in [k] \text{ such that } v\in V(H_i) \text { and }B_{H_i}(S_i)\in \mc{B}_{\rm small}(F_i)$, it follows that there exists $e\in X\cup A$ with $e\not\in Q$ and $H\in \mc{H}$ such that $e\subseteq V(F)$ and $e\cup \{v\} \subseteq V(H)$. 

Then we calculate that
\begin{align*}
|\{F\in \mc{G}^{(s)}(Q)&: \exists e\in X\cup A \text{ with } e\not\in Q \text{ and } \exists H\in \mc{H} \text{ such that } e\subseteq V(F)\text{ and }e\cup \{v\} \subseteq V(H)\}| \\
&\le (qs)^{qs} \cdot C\cdot q\cdot n^{(q-r)(s-1)-1},    
\end{align*}
since there exist only $C\cdot q$ choices for the last vertex $w$ of $e$ since $w$ is contained in one of that at most $C$ choices of $H$ containing $(e\setminus \{w\})\cup \{v\}$ as $\mc{H}$ is $C$-refined (and for each such $H$, we have $v(H)\le q$). 

Thus for $\bigg(F, F_0, (F_1,\dots, F_k), (B_1,\ldots, B_k)\bigg)\in \mc{F}_4$ where $|F|=s$ there are most $(qs)^{qs} \cdot n^{(q-r)(s-1)-1}$ choices of $F$, then at most $s$ choices of $F_0$, then at most $s^k$ choices of $(F_1,\dots, F_k)$ and finally at most $|\mc{B}(F_1,\ldots, F_k)|$ choices of $(B_1,\ldots, B_k)$. Let $C_0$ be as in Lemma~\ref{lem:BoostersContainingSetOfCliques}. By Lemma~\ref{lem:BoostersContainingSetOfCliques}, we find that
$$|\mc{B}(F_1,\ldots, F_{k})| \le (2C_0)^{k}\cdot n^{b\cdot k-(q-r)\cdot \sum_{i\in [k]} |F_i|}.$$
Note that for $|F|=s$, we have $\sum_{i\in [k]} |F_i|=s-2$. Combining we find that
\begin{align*}
|\mc{F}_1| &\le (qs)^{qs} \cdot n^{(q-r)(s-1)-1}\cdot s \cdot s^k \cdot (2C_0)^{k}\cdot n^{b\cdot k-(q-r)(s-2)} \\
&\le \frac{C'}{4} \cdot \binom{n}{q-r}\cdot \binom{n}{b}^k \cdot \frac{1}{n}
\end{align*}
as desired where we used that $C'$ is large enough.
\end{proofclaim}

\noindent Hence the lemma follows from the various claims above.
\end{nonlateproof}

\subsection{Proof of Lemma~\ref{lem:RandomQuantumExtrinsic}}

\begin{lateproof}{lem:RandomQuantumExtrinsic}
We choose $a$ large enough as needed throughout the proof, and then subject to that, choose $n$ large enough as needed. Throughout the proof, let $C'$ be as in Lemma~\ref{lem:ExtendConfigBoosterDegrees}.

Let $\mathcal{B}_{{\rm off}}, \mathcal{B}_{{\rm on}}$ be as in Definition~\ref{def:rootedbooster} for $B_0$. We let $(B_H(S))_{\rm on}, (B_H(S))_{\rm off}$ denote the decompositions of $B_H(S)$ corresponding to $\mathcal{B}_{\rm on}$ and $\mathcal{B}_{\rm off}$ respectively. 

Let $b:= v(B_0)-q$ (that is, the number of non-root vertices). Let $\mc{D}:= {\rm Design}_{K_q^r}(K_n^r\setminus (X\cup A))$, $\mc{R}:= {\rm Reserve}_{K_q^r}(K_n^r, K_n^r\setminus (X\cup A), X)$, and $\mc{G}:= {\rm Girth}_{K_q^r}^g(K_n^r)$. 

Since $(K_n^r,A,X,\mc{H})$ is $ga$-bounded and $C$-refined, we have that $$\Delta(X\cup A)\le (C+1)\cdot \Delta \le (C+1)\cdot \frac{n}{\log^{ga} n},$$
since $n$ is large enough. Hence it follows that since $n$ is large enough, we have for each $H\in \mc{H}$ that
$$\frac{1}{2}\cdot \binom{n}{b} \le |\mathcal{S}(H)| \le \binom{n}{b}.$$ 
Similarly it follows that $$|\mathcal{H}| \le n^{r-1}\cdot \Delta(A) \cdot C \le C^2\cdot \Delta \cdot n^{r-1}.$$

Let $D:= \binom{n}{q-r}$ and $\beta:= \frac{1}{2g(q-r)}$. Since we desire to show that with probability at least $0.9$, $\mc{B}_p$ satisfies that $\mc{T}:= {\rm Proj}_g(B,A,G,X) = (G_1,G_2,H)$ is $\Bigg(\binom{n}{q-r},~2\alpha \cdot \binom{n}{q-r},~\frac{1}{4g(q-r)},~2\alpha\Bigg)-{\rm regular}$, it suffices by the union bound to show that each of the following hold with probability at least $0.99$:
\begin{enumerate}
    \item[(1)] Every vertex $v\in V(G_1)$ has degree at least $(1-2\alpha)\cdot D$ in $G_1$.
    \item[(2)] Every vertex of $v\in V(G_2)\cap V(G_1)$ has degree at least $D^{1-2\alpha}$ in $G_2$.
    \item[(3)] $\Delta\left(H^{(i)}\right)\le 2\alpha\cdot D^{i-1}\cdot \log D$ for all $2\le i \le g$.
    \item[(4)] $G_1\cup G_2$ has codegrees at most $D^{1-(\beta/2)}$.
    \item[(5)] $\Delta_{t}\left(H^{(s)}\right)\le D^{s-t-(\beta/2)}$ for all $2\le t < s \le g$.
    \item[(6)] The maximum $2$-codegree of $G_1\cup G_2$ with $H$ is at most $D^{1-(\beta/2)}$.
    \item[(7)] The maximum common $2$-degree of $H$ with respect to $G_1\cup G_2$ is at most $D^{1-(\beta/2)}$.
\end{enumerate}

\begin{claim}\label{cl:5Holds}
(1) holds with probability at least $0.99$.    
\end{claim}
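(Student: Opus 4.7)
\begin{proofclaim}
The plan is to fix $v \in V(G_1)$ and bound the probability that its post-projection degree in $G_1$ drops below $(1-2\alpha)D$, then apply a union bound over $v$. The projection $\perp \mathcal{M}(\mc{B}_p)$ leaves $V(G_1)$ unchanged but may delete edges of $G_1$, corresponding to cliques removed from $V(H)$. By the $(\alpha,g)$-regularity of $(K_n^r, A, X)$, one has $d_{G_1}(v) \ge (1-\alpha)D$ before projection, so it suffices to show that the number $N_v$ of cliques $S \ni v$ removed by the projection satisfies $N_v \le \alpha D$ with probability at least $1 - 0.01/|V(G_1)|$.

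Unpacking the projection, a clique $S \ni v$ is removed iff some Erd\H{o}s configuration $F \in {\rm Girth}_{K_q^r}^g(K_n^r)$ with $S \in F$ has $F \setminus \{S\}$ covered by some $M \in \mc{M}(\mc{B}_p)$. Such an $M$ exhibits a partition of $F \setminus \{S\}$ into $m \in \{1,\ldots,g-1\}$ nonempty sets, each lying in the on- or off-side of a booster $B_j \in \mc{B}_p$, so that $\{S\} \cup \{B_1,\ldots,B_m\}$ is an edge of ${\rm Extend}_g(\mc{B}_p)^{(1,m)}$ in the sense of Definition~\ref{def:CliqueBoosterConfig}. Hence
$$N_v \le \sum_{m=1}^{g-1} \left|\left\{ \mathcal{Z} \in {\rm Extend}_g(\mc{B}_p)^{(1,m)} : v \text{ lies in the unique clique of } \mathcal{Z} \right\}\right|.$$

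For each $m$, I would introduce an $m$-uniform multi-hypergraph $J_m$ with $V(J_m) = \bigcup_{H \in \mc{H}} \mc{B}_H$ and whose edges, counted with multiplicity, are the tuples $\{B_1,\ldots,B_m\}$ appearing in such configurations with $v \in S$; then the $m$-th summand above is at most $e((J_m)_p)$. Setting $K_m := C' \cdot D \cdot \binom{n}{b}^m \cdot \Delta/n$ with $C'$ from Lemma~\ref{lem:ExtendConfigBoosterDegrees}, I would use case (2) of that lemma (the ``otherwise'' regime, since $s_1 = 1, t_1 = 1, s_2 = m, t_2 = 0$, so $t_1+t_2 = 1$ and we are not in the special $t_1 = 0$ subclause) to bound $|{\rm Extend}_g(\mc{B})^{(1,m)}(\{S\})| \le C' \binom{n}{b}^m \cdot \Delta/n$ for each $S \ni v$; summing over the at most $D$ such cliques yields $e(J_m) \le K_m$. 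For each $i \in [m]$, case (1) of the same lemma applies (as $s_1 = 1$ and $t_1 = 0$) and gives $\Delta_i(J_m) \le C' \cdot D \cdot \binom{n}{b}^{m-i}/n$.

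The last step is to apply Corollary~\ref{cor:KimVu} to $J_m$ with threshold $K_m$. The conditions $(*_i)$ reduce to $\Delta \cdot \log^{ai} n \ge \log^{4m+2} n_0$, where $n_0 = O(n^{r+b})$ bounds the number of boosters, and this holds for $a$ large enough since $\Delta \ge 1$ and $\log n_0 = O(\log n)$. Kim-Vu then yields $e((J_m)_p) \le 2 p^m K_m = O(D \cdot \Delta \cdot \log^{am} n / n)$ with probability at least $1 - n^{-\log n}$; the $ga$-boundedness hypothesis $\Delta \le n/\log^{ga} n$ together with $m \le g-1$ forces this to be at most $\alpha D /(2g)$ for $a$ sufficiently large. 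Summing over $m \in \{1,\ldots,g-1\}$ gives $N_v \le \alpha D$ with the requisite probability, and a union bound over the at most $n^r$ choices of $v$ completes the claim. The main obstacle is locating the right regime of Lemma~\ref{lem:ExtendConfigBoosterDegrees}: the edge count $e(J_m)$ only receives the weaker $\Delta/n$ factor, and this weakness must be absorbed by the $ga$-boundedness to beat the Kim-Vu expectation, while the stronger $1/n$ bound on codegrees is what makes the $(*_i)$ conditions hold comfortably.
\end{proofclaim}
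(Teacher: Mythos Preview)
Your proposal is correct and follows essentially the same approach as the paper: fix $v$, bound the number of deleted cliques through $v$ by the count of edges in an auxiliary $m$-uniform hypergraph built from ${\rm Extend}_g(\mc{B})^{(1,m)}$, use Lemma~\ref{lem:ExtendConfigBoosterDegrees}(2) for the edge count and (1) for the codegrees, apply Corollary~\ref{cor:KimVu}, and finish with a union bound. The only cosmetic differences are that the paper takes a rescaled threshold $\kappa_k = \frac{\alpha D}{2g}\cdot \binom{n}{b}^k/\log^{ka} n$ (so that $2p^k\kappa_k = \alpha D/g$ directly) rather than your raw edge-count $K_m$, and the paper records the weaker $\Delta/n$ codegree bound uniformly rather than your sharper $1/n$; both choices lead to the same verification of the $(*_i)$ conditions via $\Delta \le n/\log^{ga} n$.
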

\begin{proofclaim}
For each $v\in V(G_1)$, let $A_{1,v}$ be the event that $d_{G_1}(v) < (1-2\alpha)\cdot D$.
Let $\mathcal{A}_1 := \bigcup_{v\in V(G_1)} A_{1,v}$. We desire to show that $\Prob{\mc{A}_1} \le 0.01$ and hence by the union bound it suffices to show that for each $v\in V(G_1)$
$$\Prob{A_{1,v}} \le \frac{1}{100\cdot n^r} \le \frac{1}{100 \cdot v(G_1)},$$
since $v(G_1)\le n^r$.

To that end, fix $v\in V(G_1)$. Since $(G,A,X,\mc{H})$ is $(\alpha,g)$-regular, we have that  $d_{\mc{D}}(v) \ge (1-\alpha)\cdot \binom{n}{q-r}$. Hence $A_{1,v}$ is a subset of the event that $d_{\mc{D}}(v) - d_{G_1}(v) \ge 2\alpha\cdot \binom{n}{q-r}$. 

We define a multi-hypergraph $J$ 
$$J:= \bigcup_{Q\in \mc{D}: v\in Q}~\bigcup_{k\in [g-2]} \left\{ Z\setminus \{Q\}: Z\in{\rm Extend}_g(\mc{B})^{(1,k)}(Q)\right\}.$$
Let $n_0:= n^{g(b+r+q)}$. Note that $v(J) \le e({\rm Extend}_g(\mc{B})) \le n^{g(b+r+q)}$ which follows since $n$ is large enough. Also $\log^2 n\ge \log n_0$ since $n$ is large enough.

Note that for $Q\in \mc{D}$, we have $Q\not\in G_1$ if and only if there exists $Z\in{\rm Extend}_g(\mc{B})^{(1,k)}(Q)$ for some $k\in [g-2]$ such that $Z\setminus \{Q\}\subseteq \mc{B}_p$. Thus $A_{1,v}$ is a subset of the event $e(J_p) \ge 2\alpha D$.

Note that $e(J_p) = \sum_{k\in [g-2]} e\left(J^{(k)}_p\right)$. For $k\in [g-2]$, let $A_{1,v,k}$ be the event that $e\left(J^{(k)}_p\right) > \frac{\alpha\cdot D}{g}$. Hence $A_{1,v}$ is a subset of $\bigcup_{k \in [g-2]} A_{1,v,k}$. Thus it suffices to show for each  $k\in [g-2]$ that
$$\Prob{A_{1,v,k}} \le \frac{1}{100\cdot g \cdot n^r}.$$

First we calculate using Lemma~\ref{lem:ExtendConfigBoosterDegrees}(2) that
\begin{align*}
e\left(J^{(k)}\right) &\le \sum_{Q\in \mc{D}: v\in Q} \left|{\rm Extend}_g(\mc{B})^{(1,k)}(Q)\right|\le d_{\mc{D}}(v) \cdot \Delta_{(1,0)}\left({\rm Extend}_g(\mc{B})^{(1,k)}\right) \\
&\le D\cdot C'\cdot \binom{n}{b}^k\cdot \frac{\Delta}{n}.
\end{align*}
Now fix $i\in [k]$ and $U\subseteq V(J^{(k)})$ with $|U|=i$. Then
$$|J^{(k)}(U)| \le |{\rm Extend}_g(\mc{B})^{(1,k)}(U)| \le 
\Delta_{(0,i)}\left({\rm Extend}_g(\mc{B})^{(1,k)}\right).$$
Hence by Lemma~\ref{lem:ExtendConfigBoosterDegrees}(1)-(2), we have for all $i\in [k]$ that 
$$\Delta_i\left(J^{(k)}\right) \le D\cdot C'\cdot \binom{n}{b}^{k-i}\cdot \frac{\Delta}{n}.$$

Let $\kappa_k:= \frac{\alpha\cdot D}{2g} \cdot \frac{\binom{n}{b}^k}{\log^{ka} n}$. Note that $\kappa_k\ge e\left(J^{(k)}\right)$ since $\frac{n}{\Delta} \ge \log^{ga} n \ge C' \cdot \frac{2g}{\alpha} \cdot \log^{ka} n$ as $k\le g-1$ and $n$ and $a$ are large enough. Now we check that $(*_i)$ holds for $i\in [k]$ as follows. Namely for all $i\in [k]$, we have as $p=\frac{\log^a n}{\binom{n}{b}}$ that
$$\frac{\Delta_i\left(J^{(k)}\right)}{\kappa_k}\cdot \log^{4k+2}n_0\le \frac{2g}{\alpha}\cdot C'\cdot \frac{1}{\binom{n}{b}^i}\cdot \frac{\Delta}{n} \cdot \log^{ka+(8k+4)}n \le \frac{\log^{ia} n}{\binom{n}{b}^i} = p^i$$ since $\frac{n}{\Delta} \ge \log^{ga}{n} \ge \frac{2g}{\alpha}\cdot C'\cdot \log^{ka+8k+4} n$ as  $k\le g-1$ and $a$ is large enough (at least $8k+5$) and $n$ is large enough. 

Thus by Corollary~\ref{cor:KimVu}, we find with probability at most
$(n_0)^{-\log n_0}$ (which is at most $n^{-\log n}$ since $n\le n_0$), we have that
$$e\left(J^{(k)}_p\right) > 2\cdot p^k\cdot \kappa_k = \frac{\alpha \cdot D}{g}.$$ 
Since $n$ is large enough then, we have that
$$\Prob{e\left(J^{(k)}_p\right) > \frac{\alpha\cdot D}{g}} \le n^{-\log n} \le  \frac{1}{100\cdot g\cdot n^r}$$
as desired since $n$ is large enough.
\end{proofclaim}

\begin{claim}\label{cl:6Holds}
(2) holds with probability at least $0.99$.    
\end{claim}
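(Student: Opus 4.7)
The plan is to mirror the structure of Claim~\ref{cl:5Holds}, but with the Kim--Vu threshold tied to the unprojected reserve degree $d_{\mc{R}}(v)$ rather than to a uniform fraction of $D$. This scaling is essential: we start from the sponge lower bound $d_{\mc{R}}(v)\ge D^{1-\alpha}$ and target $d_{G_2}(v)\ge D^{1-2\alpha}$, so the polynomial gap $D^{\alpha}$ must be bridged using only polylogarithmic slack coming from $ga$-boundedness; a uniform threshold of order $D^{1-\alpha}/g$ would require $D^{\alpha}$ to be dominated by a polylog and fail. First I would fix $v\in V(G_1)\cap V(G_2)$ and let $A_{2,v}$ be the event $d_{G_2}(v)<D^{1-2\alpha}$; since $d_{\mc{R}}(v)\ge D^{1-\alpha}\ge 2D^{1-2\alpha}$ for $D$ large, $A_{2,v}$ is contained in the event that at least $d_{\mc{R}}(v)/2$ of the cliques $Q\in \mc{R}$ with $v\in Q$ fail to survive the projection by $\mathcal{M}(\mc{B}_p)$.

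Second, I would introduce the auxiliary multi-hypergraph
$$J:=\bigcup_{Q\in \mc{R}:\,v\in Q}\,\bigcup_{k\in[g-2]}\{Z\setminus\{Q\}:Z\in{\rm Extend}_g(\mc{B})^{(1,k)}(Q)\}$$
on vertex set $\bigcup\mc{B}$. Any $Q$ killed by the projection supplies an edge of $J$ whose vertices all lie in $\mc{B}_p$, so the number of killed $Q$ is at most $e(J_p)=\sum_{k\in[g-2]}e(J^{(k)}_p)$; thus it suffices, by a further union bound over $k$, to show that $\Prob{e(J^{(k)}_p)>d_{\mc{R}}(v)/(2g)}\le\frac{1}{100g\,n^r}$ for each $k\in[g-2]$. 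I would apply Corollary~\ref{cor:KimVu} with $\kappa_k:=\frac{d_{\mc{R}}(v)}{4g}\cdot\frac{\binom{n}{b}^k}{\log^{ka}n}$, which makes $2p^k\kappa_k=d_{\mc{R}}(v)/(2g)$. Lemma~\ref{lem:ExtendConfigBoosterDegrees}(2) applied with $(s_1,s_2,t_1,t_2)=(1,k,1,0)$ gives $e(J^{(k)})\le d_{\mc{R}}(v)\cdot C'\binom{n}{b}^k\Delta/n$, and Lemma~\ref{lem:ExtendConfigBoosterDegrees}(1) applied with $(s_1,s_2,t_1,t_2)=(1,k,0,i)$, which is valid because $s_1=1$ and $t_1=0$, gives $\Delta_i(J^{(k)})\le C'\binom{n}{q-r}\binom{n}{b}^{k-i}/n$ for all $i\in[k]$.

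Verifying $\kappa_k\ge e(J^{(k)})$ reduces to $n/\Delta\ge 4gC'\log^{ka}n$, which is immediate from $ga$-boundedness once $a$ is large enough. Verifying the Kim--Vu hypothesis $(*_i)$ reduces, after the common $\binom{n}{b}^i$ factors cancel, to $d_{\mc{R}}(v)\cdot n\ge 4gC'\cdot D\cdot\log^{(k-i)a+4k+2}n$; substituting $d_{\mc{R}}(v)\ge D^{1-\alpha}$ and $D\le n^{q-r}$, this becomes $n^{1-\alpha(q-r)}\ge O(\log^{(k-i)a+4k+2}n)$, which holds because $\alpha<\frac{1}{2(q-r)}$ forces the exponent on the left to exceed $1/2$, giving polynomial slack against any polylog factor. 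I expect this interplay of the polynomial gap $D^{\alpha}$ with the merely sub-polynomial $\Delta/n$ savings to be the main obstacle; the hypothesis $\alpha<\frac{1}{2(q-r)}$ is precisely what makes the estimate succeed. Once Kim--Vu yields $e(J^{(k)}_p)\le d_{\mc{R}}(v)/(2g)$ with probability at least $1-n^{-\log n}$, summing over $k$ gives $d_{G_2}(v)\ge d_{\mc{R}}(v)/2\ge D^{1-\alpha}/2\ge D^{1-2\alpha}$, and a final union bound over $v$ closes the claim with probability at least $0.99$.
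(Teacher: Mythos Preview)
Your proposal is correct and follows essentially the same strategy as the paper: tie $\kappa_k$ to $d_{\mc{R}}(v)$, bound $e(J^{(k)})$ via Lemma~\ref{lem:ExtendConfigBoosterDegrees}(2), bound the codegrees via Lemma~\ref{lem:ExtendConfigBoosterDegrees}(1), and close with Kim--Vu. The one noteworthy difference is in the codegree step: the paper splits into $i\in[k-1]$ (summing over the $d_{\mc{R}}(v)$ choices of $Q$ and applying the $(1,i)$-bound) and $i=k$ (applying the $(0,k)$-bound directly), whereas you uniformly apply the $(0,i)$-bound for all $i\in[k]$, exploiting that the ``$s_1=1,\,t_1=0$'' clause of Lemma~\ref{lem:ExtendConfigBoosterDegrees}(1) already covers every $i$. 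Your route is slightly cleaner; both reduce the critical $(*_i)$ check to $n^{1-\alpha(q-r)}$ dominating a polylog, which is exactly where the hypothesis $\alpha<\frac{1}{2(q-r)}$ enters. One cosmetic point: the Kim--Vu corollary is applied with the vertex count of $J$ (what the paper calls $n_0$), so the polylog factor should be $\log^{4k+2}n_0\le\log^{8k+4}n$ rather than $\log^{4k+2}n$; this is harmless since your polynomial slack absorbs it, but worth writing precisely.
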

\begin{proofclaim}
For each $v\in V(G_1)\cap V(G_2)$, let $A_{2,v}$ be the event that $d_{G_2}(v) < D^{1-2\alpha}$.
Let $\mathcal{A}_2 := \bigcup_{v\in V(G_1)\cap V(G_2)} A_{2,v}$. We desire to show that $\Prob{\mc{A}_2} \le 0.01$ and hence by the union bound it suffices to show that for each $v\in V(G_1)\cap V(G_2)$
$$\Prob{A_{2,v}} \le \frac{1}{100\cdot n^r} \le \frac{1}{100\cdot |V(G_1)\cap V(G_2)|},$$
since $v(G_1)\le n^r$.

To that end, fix $v\in V(G_1)\cap V(G_2)$. Since $(G,A,X,\mc{H})$ is $(\alpha,g)$-regular, we have that $d_{\mc{R}}(v) \ge D^{1-\alpha}$.
Thus the event $A_{2,v}$ is a subset of the event that $d_{\mc{R}}(v) - d_{G_2}(v) > d_{\mc{R}}(v)-D^{1-2\alpha}$; note the latter quantity is at least $\frac{1}{2}\cdot d_{\mc{R}}(v)$ since $n$ (and hence $D$) is large enough.

We define a hypergraph $J$ 
$$J:= \bigcup_{Q\in \mc{R}: v\in Q}~\bigcup_{k\in [g-2]} \left\{ Z\setminus \{Q\}: Z\in{\rm Extend}_g(\mc{B})^{(1,k)}(Q)\right\}.$$
Let $n_0:= n^{g(b+r+q)}$. Note that $v(J) \le e({\rm Extend}_g(\mc{B})) \le n^{g(b+r+q)}$ which follows since $n$ is large enough. Also $\log^2 n\ge \log n_0$ since $n$ is large enough.

Note that for $Q\in \mc{R}$, we have $Q\not\in G_1$ if and only if there exists $Z\in{\rm Extend}_g(\mc{B})^{(1,k)}(Q)$ for some $k\in [g-2]$ such that $Z\setminus \{Q\}\subseteq \mc{B}_p$. Thus $A_{2,v}$ is a subset of the event $e(J_p) > \frac{1}{2}\cdot d_{\mc{R}}(v)$.

Note that $e(J_p) = \sum_{k\in [g-2]} e\left(J^{(k)}_p\right)$. For $k\in [g-2]$, let $A_{2,v,k}$ be the event that $e\left(J^{(k)}_p\right) > \frac{d_{\mc{R}}(v)}{2g}$. Hence $A_{2,v}$ is a subset of $\bigcup_{k \in [g-2]} A_{2,v,k}$. Thus it suffices to show for each  $k\in [g-2]$ that
$$\Prob{A_{2,v,k}} \le \frac{1}{100\cdot g \cdot n^r}.$$

First we calculate using Lemma~\ref{lem:ExtendConfigBoosterDegrees}(2) that
\begin{align*}
e\left(J^{(k)}\right) &\le \sum_{Q\in \mc{R}:~v\in Q} \left|{\rm Extend}_g(\mc{B})^{(1,k)}(Q)\right|\le d_{\mc{R}}(v) \cdot \Delta_{(1,0)}\left({\rm Extend}_g(\mc{B})^{(1,k)}\right) \\
&\le d_{\mc{R}}(v)\cdot C'\cdot \binom{n}{b}^k\cdot \frac{\Delta}{n}.
\end{align*}
Now fix $i\in [k]$ and $U\subseteq V(J^{(k)})$ with $|U|=i$. Then if $i\in [k-1]$, we have
$$|J^{(k)}(U)| \le \sum_{Q\in \mc{R}: v\in Q} |{\rm Extend}_g(\mc{B})^{(1,k)}(\{Q\}\cup U)| \le d_{\mc{R}}(v)\cdot 
\Delta_{(1,i)}({\rm Extend}_g(\mc{B})^{(1,k)}).$$
Hence by Lemma~\ref{lem:ExtendConfigBoosterDegrees}(1) for all $i\in [k-1]$ we have
$$\Delta_i\left(J^{(k)}\right) \le d_{\mc{R}}(v)\cdot C'\cdot \binom{n}{b}^k\cdot \frac{1}{n}.$$
If $i=k$, we have
$$|J^{(k)}(U)| \le |{\rm Extend}_g(\mc{B})^{(1,k)}(U)| \le 
\Delta_{(0,k)}({\rm Extend}_g(\mc{B})^{(1,k)}).$$
Hence by Lemma~\ref{lem:ExtendConfigBoosterDegrees}(1) we have
$$\Delta_k\left(J^{(k)}\right) \le C'\cdot D\cdot \frac{1}{n}.$$
Since $\alpha < \frac{1}{2(q-r)}$ and $D=\binom{n}{q-r}$, we find that
$$\frac{D}{n} \le D^{1-\frac{1}{q-r}} \le D^{1-2\alpha} \le d_{\mc{R}}(v)\cdot \frac{1}{4g\cdot C'\cdot \log^{8k+4} n},$$
where the last inequality follows since $d_{\mc{R}}(v) \ge D^{1-\alpha}$ and $n$ (and hence $D$) is large enough.

Let $\kappa_k:= \frac{d_{\mc{R}}(v)}{4g} \cdot \frac{\binom{n}{b}^k}{\log^{ka} n}$. Note that $\kappa_k\ge e\left(J^{(k)}\right)$ since $\frac{n}{\Delta} \ge \log^{ga} n \ge 4g\cdot \log^{ka} n$ since $k\le g-1$ and $n$ and $a$ are large enough. Now we check that $(*_i)$ holds for $i\in [k]$ as follows. Namely for all $i\in [k-1]$, we have as $p=\frac{\log^a n}{\binom{n}{b}}$ that
$$\frac{\Delta_i\left(J^{(k)}\right)}{\kappa_k}\cdot \log^{4k+2}n_0\le 4g\cdot C'\cdot \frac{1}{\binom{n}{b}^i}\cdot \frac{\Delta}{n} \cdot \log^{ka+(8k+4)}n \le \frac{\log^{ia} n}{\binom{n}{b}^i} = p^i$$ since $\frac{n}{\Delta} \ge \log^{ga}{n} \ge 4g\cdot C'\cdot \log^{ka+8k+5} n$ as  $k\le g-1$ and $a$ is large enough (at least $8k+5$) and $n$ is large enough. 
For $i=k$, we have
$$\frac{\Delta_k\left(J^{(k)}\right)}{\kappa_k}\cdot \log^{4k+2}n_0\le 4g\cdot C'\cdot \frac{D/n}{d_{\mc{R}}(v)}\cdot \frac{1}{\binom{n}{b}^k} \cdot \log^{ka+(8k+4)}n \le \frac{\log^{ka} n}{\binom{n}{b}^k} = p^i$$ since $\frac{D}{n}\cdot 4g\cdot C'\cdot \log^{8k+4} n \le d_{\mc{R}}(v)$ as noted above. 

Thus by Corollary~\ref{cor:KimVu}, we find with probability at most
$(n_0)^{-\log n_0}$ (which is at most $n^{-\log n}$ since $n\le n_0$), we have that
$$e\left(J^{(k)}_p\right) > 2\cdot p^k\cdot \kappa_k = \frac{d_{\mc{R}}(v)}{2g}.$$ 
Since $n$ is large enough then, we have that
$$\Prob{e\left(J^{(k)}_p\right) > \frac{d_{\mc{R}}(v)}{2g}} \le n^{-\log n} \le  \frac{1}{100\cdot g\cdot n^r}$$
as desired since $n$ is large enough.
\end{proofclaim}

\begin{claim}\label{cl:7Holds}
(3) holds with probability at least $0.99$.    
\end{claim}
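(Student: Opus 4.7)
\begin{proofclaim}
The plan is to follow the template of Claims~\ref{cl:5Holds} and~\ref{cl:6Holds}: for each vertex $v\in V(G_1)\cup V(G_2)$ (which we identify with the $q$-clique $Q\in \mc{D}\cup \mc{R}$ it represents) and each $i\in\{2,\ldots,g\}$, let $A_{3,v,i}$ be the event that $d_{H^{(i)}}(v)>2\alpha D^{i-1}\log D$, and aim for $\Prob{A_{3,v,i}}\le \frac{1}{100\,g\,n^r}$; since $|V(G_1)\cup V(G_2)|\le n^r$, a union bound over $v$ and $i$ then gives $\Prob{\mc{A}_3}\le 0.01$ as required. Every $T\in H^{(i)}$ with $v\in T$ either (a) is already an edge of $\mc{G}^{(i)}$ containing $Q$, of which there are at most $\alpha D^{i-1}\log D$ by the $(\alpha,g)$-regularity of ${\rm Treasury}^g_q(K_n^r,K_n^r,X)$, or (b) arises by projection from some $S\in \mc{G}^{(i+m)}$ with $T\subsetneq S$, a partition $(F_1,\ldots,F_m)$ of $S\setminus T$, and boosters $(B_1,\ldots,B_m)$ with distinct roots, each $B_j\in\mc{B}_p$, with $F_j\subseteq (B_j)_{\rm on}$ or $F_j\subseteq (B_j)_{\rm off}$. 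By Definition~\ref{def:CliqueBoosterConfig}, such data is precisely an $(i,m)$-extended configuration of $\mc{B}$ containing $Q$ in its clique part. Thus $A_{3,v,i}$ is contained in the union over $m\in[g-i]$ of the events $\{e(J_{m,p})>\alpha D^{i-1}\log D/g\}$, where $J_m$ is the $m$-uniform multi-hypergraph with $V(J_m):=\mc{B}$ whose edges enumerate, with multiplicity, these extended configurations.

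Corollary~\ref{cor:KimVu} would then be applied to each $J_m$, with the degree inputs supplied by Lemma~\ref{lem:ExtendConfigBoosterDegrees}. Taking $(s_1,t_1,s_2,t_2)=(i,1,m,0)$, part~(2) of that lemma yields
\[
 e(J_m)\le \Delta_{(1,0)}\bigl({\rm Extend}_g(\mc{B})^{(i,m)}\bigr)\le C'\,D^{i-1}\,\tbinom{n}{b}^m\cdot\tfrac{\Delta}{n},
\]
while for $j\in[m-1]$ (so $t_1+t_2=1+j\ge 2$) part~(1) gives $\Delta_j(J_m)\le C'\,D^{i-1}\,\binom{n}{b}^{m-j}/n$, and similarly $\Delta_m(J_m)\le C'\,D^{i-1}/n$. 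Setting
\[
 \kappa_m:=\frac{\alpha D^{i-1}\log D}{2g}\cdot\frac{\binom{n}{b}^m}{\log^{ma}n},
\]
routine arithmetic, using $\Delta/n\le \log^{-ga}n$ from $ga$-boundedness, $\log n_0\le \log^2 n$ for $n_0:=n^{g(b+r+q)}$, and choosing $a$ large relative to $g$, verifies both $e(J_m)\le \kappa_m$ and the Kim-Vu hypotheses $(*_j)$ for every $j\in[m]$. Corollary~\ref{cor:KimVu} then delivers $e(J_{m,p})\le 2p^m\kappa_m=\alpha D^{i-1}\log D/g$ with probability at least $1-n_0^{-\log n_0}\ge 1-n^{-\log n}$, beating the required $\frac{1}{100\,g^2\,n^r}$ per $(v,i,m)$ by a huge margin.

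The only genuinely delicate bookkeeping is the contrast between the $\Delta/n$ savings for the total edge count $e(J_m)$ (part~(2) of Lemma~\ref{lem:ExtendConfigBoosterDegrees}, since the pair $(t_1,t_2)=(1,0)$ falls outside the special case in part~(1)) and the stronger $1/n$ savings available for proper codegrees $\Delta_j(J_m)$ with $j\ge 1$: the weaker $\Delta/n$ factor must be absorbed directly by the $\log^{-ga}n$ budget coming from $ga$-boundedness, whereas the codegree bounds combine with the $\log^a n$-scale of $p$ to handle the $\log^{4m+2}n_0$ slack in $(*_j)$. Aside from this balancing --- entirely parallel to the degree/codegree bookkeeping in Claims~\ref{cl:5Holds} and~\ref{cl:6Holds} --- no new idea is needed.
\end{proofclaim}
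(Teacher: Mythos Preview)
Your approach is essentially identical to the paper's: fix a $q$-clique $Q$, split $d_{H^{(i)}}(Q)$ into the contribution from original edges of $\mc{G}^{(i)}$ (bounded by $\alpha D^{i-1}\log D$ via $(\alpha,g)$-regularity) plus the contribution from projected edges, encode the latter by the multi-hypergraphs $J_m$ built from ${\rm Extend}_g(\mc{B})^{(i,m)}(Q)$, feed Lemma~\ref{lem:ExtendConfigBoosterDegrees} into Corollary~\ref{cor:KimVu} with $\kappa_m=\frac{\alpha D^{i-1}\log D}{2g}\cdot\binom{n}{b}^m/\log^{ma}n$, and union-bound. The paper uses the cruder range $k\in[g-2]$ in place of your $m\in[g-i]$, but this is immaterial.

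One notational slip: you iterate over $v\in V(G_1)\cup V(G_2)$ and bound that set by $n^r$, but the vertices of $H$ whose degrees you must control are $q$-cliques (elements of $V(\mc{G})$), of which there are up to $n^q$. This does not break anything, since the Kim--Vu failure probability $n^{-\log n}$ still beats $1/(100\,g\,n^q)$ by a huge margin; you just need to correct the stated union-bound target from $\frac{1}{100\,g\,n^r}$ to $\frac{1}{100\,g\,n^q}$.
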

\begin{proofclaim}
For each $e\in V(\mc{D})\cup V(\mc{R})$ and $j\in \{2,\ldots, g-1\}$, let $A_{3,e,j}$ be the event that $d_{H^{(j)}}(e) > 2\alpha\cdot D^{j-1}\cdot \log D$.
Let $\mathcal{A}_3 := \bigcup_{e\in \mc{D}\cup \mc{R}} \bigcup_{j\in \{2,\ldots,g-1\}} A_{3,e,j}$. We desire to show that $\Prob{\mc{A}_3} \le 0.01$ and hence by the union bound it suffices to show that for each $e\in \mc{D}\cup \mc{R}$ and $j\in \{2,\ldots, g-1\}$ that
$$\Prob{A_{3,e,j}} \le \frac{1}{100\cdot g\cdot n^q} \le \frac{1}{100 \cdot g\cdot (v(\mc{D})+v(\mc{R}))},$$
since $v(\mc{D})+v(\mc{R})\le n^q$.

To that end, fix $e\in V(\mc{D})\cup V(\mc{R})$ and $j\in \{2,\ldots, g-1\}$. Since $(G,A,X,\mc{H})$ is $(\alpha,g)$-regular, we have that  $d_{\mc{G}^{(j)}}(e) \le \alpha\cdot D^{j-1}\cdot \log D$. Hence $A_{3,e,j}$ is a subset of the event that $d_{H^{(j)}}(v) - d_{\mc{G}^{(j)}}(v) > \alpha\cdot D^{j-1}\cdot \log D$. 

We define a multi-hypergraph $J$ 
$$J:= \bigcup_{k\in [g-2]} \left\{ Z\setminus V(\mc{G}): Z\in{\rm Extend}_g(\mc{B})^{(j,k)}(e)\right\}.$$
Let $n_0:= n^{g(b+r+q)}$. Note that $v(J) \le e({\rm Extend}_g(\mc{B})) \le n^{g(b+r+q)}$ which follows since $n$ is large enough. Also $\log^2 n\ge \log n_0$ since $n$ is large enough.

Note that for $F\subseteq V(H)$ with $|F|=j$ and $e\in F$, we have $F\in H$ if and only if there exists $Z\in{\rm Extend}_g(\mc{B})^{(j,k)}(F)$ for some $k\in [g-2]$ such that $Z\setminus F\subseteq \mc{B}_p$. Thus $A_{3,e,j}$ is a subset of the event $e(J_p) > \alpha \cdot D^{j-1}\cdot \log D$.
Note that $e(J_p) = \sum_{k\in [g-2]} e\left(J^{(k)}_p\right)$. For $k\in [g-2]$, let $A_{3,e,j,k}$ be the event that $e\left(J^{(k)}_p\right) > \frac{\alpha\cdot D^{i-1}\cdot \log D}{g}$. Hence $A_{3,e,j}$ is a subset of $\bigcup_{k \in [g-2]} A_{3,e,j,k}$. Thus it suffices to show for each  $k\in [g-2]$ that
$$\Prob{A_{3,e,j,k}} \le \frac{1}{100\cdot g^2 \cdot n^q}.$$

First we calculate using Lemma~\ref{lem:ExtendConfigBoosterDegrees}(2) that
\begin{align*}
e\left(J^{(k)}\right) &\le \left|{\rm Extend}_g(\mc{B})^{(j,k)}(e)\right|\le \Delta_{(1,0)}\left({\rm Extend}_g(\mc{B})^{(j,k)}\right) \le C'\cdot D^{j-1}\cdot \binom{n}{b}^{k}\cdot \frac{\Delta}{n}.
\end{align*}
Now fix $i\in [k]$ and $U\subseteq V(J^{(k)})$ with $|U|=i$. Then
$$|J^{(k)}(U)| \le |{\rm Extend}_g(\mc{B})^{(j,k)}(\{e\}\cup U)| \le \Delta_{(1,i)}\left({\rm Extend}_g(\mc{B})^{(j,k)}\right).$$
Hence by Lemma~\ref{lem:ExtendConfigBoosterDegrees}(1) since $j>1$, we find that for all $i\in [k]$
$$\Delta_i\left(J^{(k)}\right) \le C'\cdot D^{j-1}\cdot \binom{n}{b}^{k-i}\cdot \frac{1}{n}.$$

Let $\kappa_k:= \frac{\alpha\cdot D^{j-1}\cdot \log D}{2g} \cdot \frac{\binom{n}{b}^k}{\log^{ka} n}$. Note that $\kappa_k\ge e\left(J^{(k)}\right)$ since $\frac{n}{\Delta} \ge \log^{ga} n \ge \frac{2g}{\alpha}\cdot C'\cdot \log^{ka} n$ since $k\le g-1$ and $n$ and $a$ are large enough. Now we check that $(*_i)$ holds for $i\in [k]$ as follows. Namely for all $i\in [k]$, we have as $p=\frac{\log^a n}{\binom{n}{b}}$ that
$$\frac{\Delta_i\left(J^{(k)}\right)}{\kappa_k}\cdot \log^{4k+2}n_0\le \frac{2g}{\alpha}\cdot C'\cdot \frac{1}{\binom{n}{b}^i}\cdot \frac{1}{n} \cdot \log^{ka+(8k+4)}n \le \frac{\log^{ia} n}{\binom{n}{b}^i} = p^i$$ since $n \ge \frac{2g}{\alpha}\cdot C'\cdot \log^{ak+8k+4} n$ as $n$ is large enough. 

Thus by Corollary~\ref{cor:KimVu}, we find with probability at most
$(n_0)^{-\log n_0}$ (which is at most $n^{-\log n}$ since $n\le n_0$), we have that
$$e\left(J^{(k)}_p\right) > 2\cdot p^k\cdot \kappa_k = \frac{\alpha \cdot D^{j-1}\cdot \log D}{g}.$$ 
Since $n$ is large enough then, we have that
$$\Prob{e\left(J^{(k)}_p\right) > \frac{\alpha\cdot D^{j-1}\cdot \log D}{g}} \le n^{-\log n} \le  \frac{1}{100\cdot g^2\cdot n^q}$$
as desired since $n$ is large enough.
\end{proofclaim}

\begin{claim}\label{cl:8Holds}
(4) always holds.    
\end{claim}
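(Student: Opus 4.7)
My plan is to show that Claim (4) requires no randomness at all: the bound on codegrees is inherited directly from the $(\alpha,g)$-regularity of the ambient treasury ${\rm Treasury}_q^g(K_n^r,K_n^r,X)$, which was assumed in the hypothesis of the lemma.

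First, I would unpack the two treasuries. On one hand, since $(K_n^r,A,X,\mc{H})$ is $(\alpha,g)$-regular, the treasury ${\rm Treasury}_q^g(K_n^r,K_n^r,X)=(G_1^\star,G_2^\star,H^\star)$ satisfies $\Delta_2(G_1^\star\cup G_2^\star)\le D^{1-\beta}$, where $G_1^\star={\rm Design}_{K_q^r}(K_n^r\setminus X)$ and $G_2^\star={\rm Reserve}_{K_q^r}(K_n^r,K_n^r\setminus X,X)$. On the other hand, by Definition~\ref{def:QuantumOmniBoosterProj}, ${\rm Proj}_g(B,A,K_n^r,X)$ is obtained from ${\rm Treasury}_q^g(K_n^r,K_n^r\setminus A,X)$ by the common projection operator $\perp \mc{M}(\mc{B}_p)$. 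Inspecting the definitions, the pre-projection $G_1^0:={\rm Design}_{K_q^r}(K_n^r\setminus A\setminus X)$ is a subhypergraph of $G_1^\star$ (dropping $K_q^r$'s that use an edge of $A$), and similarly $G_2^0:={\rm Reserve}_{K_q^r}(K_n^r,K_n^r\setminus A\setminus X,X)$ is a subhypergraph of $G_2^\star$.

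Next, from the projection formula, $G_1$ and $G_2$ are obtained from $G_1^0$ and $G_2^0$ only by deleting vertices (namely, those edges $e$ that lie in some $S\in E(H)$ with $S\setminus\{e\}\subseteq M$ for some $M\in\mc{M}(\mc{B}_p)$) and consequently the edges of $G_i^0$ containing any such deleted vertex. In particular, projection can only shrink the underlying hypergraphs, so $G_1\cup G_2\subseteq G_1^0\cup G_2^0\subseteq G_1^\star\cup G_2^\star$ as hypergraphs. Therefore, for every pair of distinct vertices $u,v\in V(G_1\cup G_2)$,
$$|(G_1\cup G_2)(\{u,v\})|\le |(G_1^\star\cup G_2^\star)(\{u,v\})|\le D^{1-\beta}\le D^{1-\beta/2},$$
where the last inequality holds since $D\ge 1$.

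This establishes (4) deterministically, so there is no probabilistic content and nothing to concentrate. The only conceptual point to be careful about is the direction of containment under the projection and the passage from $K_n^r\setminus A$ to $K_n^r$ as the reserve host-graph, both of which are monotone in the desired direction. Consequently, (4) holds with probability $1$, which is of course at least $0.99$ as needed for the union bound in the main proof of Lemma~\ref{lem:RandomQuantumExtrinsic}.
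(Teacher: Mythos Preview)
Your proof is correct and follows essentially the same route as the paper: both argue that $G_1\cup G_2$ is a subhypergraph of the pre-projection design/reserve hypergraphs, so the codegree bound $D^{1-\beta}\le D^{1-\beta/2}$ is inherited deterministically. One small slip: the $(\alpha,g)$-regularity hypothesis of the sponge directly asserts regularity of ${\rm Treasury}_q^g(K_n^r,K_n^r\setminus A,X)$ rather than ${\rm Treasury}_q^g(K_n^r,K_n^r,X)$, so your intermediate passage to $(G_1^\star,G_2^\star)$ is unnecessary (and not literally what is assumed)---but since your $(G_1^0,G_2^0)$ already coincides with the hypothesized treasury's components, the containment $G_1\cup G_2\subseteq G_1^0\cup G_2^0$ suffices and the argument goes through unchanged.
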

\begin{proofclaim}
Since ${\rm Treasury}(K_n^r,X)$ is $\bigg(\binom{n}{q-r},~(1-\alpha)\cdot \binom{n}{q-r},~\frac{1}{2g(q-r)},~\alpha\bigg)$-regular by assumption, we have that ${\rm Design}_{K_q^r}(G\setminus X)\cup {\rm Reserve}_{K_q^r}(G, G\setminus X, X)$ has codegrees at most $\binom{n}{q-r}^{1- \frac{1}{2g(q-r)}} \le D^{1-\beta}$
since $\beta = \frac{1}{2g(q-r)}$ and $D = \binom{n}{q-r}$. Since $G_1 \subseteq {\rm Design}_{K_q^r}(G\setminus (B\cup A\cup X)) \subseteq {\rm Design}_{K_q^r}(G\setminus X)$ and $G_2\subseteq {\rm Reserve}_{K_q^r}(G,G\setminus (B\cup A\cup X),X) \subseteq {\rm Reserve}_{K_q^r}(G, G\setminus X, X)$, it follows that $G_1\cup G_2$ has codegrees at most $D^{1-\beta} \le D^{1-(\beta/2)}$ as desired.
\end{proofclaim}

\begin{claim}\label{cl:9Holds}
(5) holds with probability at least $0.99$.    
\end{claim}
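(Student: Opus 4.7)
The plan is to mirror the proof of Claim~\ref{cl:7Holds}, now exploiting the extra factor $\tfrac{1}{n}$ available in Lemma~\ref{lem:ExtendConfigBoosterDegrees}(1) because $t\ge 2$; this is precisely what upgrades the baseline regularity $\Delta_t(\mc{G}^{(s)})\le D^{s-t-\beta}$ coming from the $(\alpha,g)$-regularity of ${\rm Treasury}^g_q(K_n^r,K_n^r,X)$ into the desired $D^{s-t-\beta/2}$, after absorbing all polylogarithmic losses.

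First I would set up the union bound: for each $s,t$ with $2\le t<s\le g$ and each $U\subseteq V(G_1)\cup V(G_2)$ with $|U|=t$, let $A_{5,U,s,t}$ be the event that $d_{H^{(s)}}(U)>D^{s-t-\beta/2}$. There are at most $g^2\cdot n^{rg}$ such triples, so it suffices to bound $\Prob{A_{5,U,s,t}}\le n^{-\log n}$ for each fixed triple. The $(\alpha,g)$-regularity gives $\Delta_t(\mc{G}^{(s)})\le D^{s-t-\beta}\le \tfrac12 D^{s-t-\beta/2}$ (the second inequality using that $D$ is large), so $A_{5,U,s,t}$ is contained in the event that the number of $s$-edges of $H$ through $U$ \emph{created by projection} (as opposed to being inherited from $\mc{G}^{(s)}$) exceeds $\tfrac12 D^{s-t-\beta/2}$. (When $s=g$ there is no room to extend and the baseline bound alone suffices.)

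Second, I would encode these new edges as in Claim~\ref{cl:7Holds} via the auxiliary multi-hypergraph
$$J:=\bigcup_{k\in[g-s]}\bigl\{Z\setminus V(\mc{G}):\,Z\in{\rm Extend}_g(\mc{B})^{(s,k)}(U)\bigr\},$$
so that each projection-created $s$-edge of $H$ through $U$ corresponds, for some $k$, to an edge of $J^{(k)}$ whose $k$ booster-vertices all lie in $\mc{B}_p$. It then suffices to show for each $k\in[g-s]$ that $\Prob{e(J^{(k)}_p)>\tfrac{D^{s-t-\beta/2}}{g}}\le \tfrac{n^{-\log n}}{g}$. The core step is an application of Corollary~\ref{cor:KimVu}: since $t\ge 2$, Lemma~\ref{lem:ExtendConfigBoosterDegrees}(1) gives the uniform bound
$$\Delta_i\bigl(J^{(k)}\bigr)\le \Delta_{(t,i)}\bigl({\rm Extend}_g(\mc{B})^{(s,k)}\bigr)\le C'\cdot D^{s-t}\cdot \binom{n}{b}^{k-i}\cdot \frac{1}{n}$$
for all $0\le i\le k$ (the $i=0$ case bounding $e(J^{(k)})$). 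Setting $\kappa_k:=\tfrac{D^{s-t-\beta/2}}{2g}\cdot\tfrac{\binom{n}{b}^k}{\log^{ka}n}$, the inequalities $\kappa_k\ge e(J^{(k)})$ and $(*_i)$ reduce, after substituting $p=\tfrac{\log^a n}{\binom{n}{b}}$, to conditions of the form $n\ge \mathrm{poly}(g,C')\cdot D^{\beta/2}\cdot \log^{O(ga)}n$; these hold for $n$ large enough because $D^{\beta/2}\le n^{(q-r)\beta/2}=n^{1/(4g)}$ is polynomially smaller than $n$ thanks to the choice $\beta=\tfrac{1}{2g(q-r)}$, with $a$ large enough to dominate the logarithmic factors. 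Corollary~\ref{cor:KimVu} then yields $\Prob{e(J^{(k)}_p)>2p^k\kappa_k}\le n^{-\log n}$, and $2p^k\kappa_k=\tfrac{D^{s-t-\beta/2}}{g}$, which closes the argument.

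The only real subtlety, as compared with Claims~\ref{cl:5Holds}--\ref{cl:7Holds}, is bookkeeping: one must verify that the $D^{\beta/2}$ loss incurred in passing from the regularity baseline $D^{s-t-\beta}$ to the weaker target $D^{s-t-\beta/2}$ can be absorbed into the Kim-Vu hypotheses $(*_i)$ simultaneously for every $i\in[k]$. This is exactly what the additional $\tfrac{1}{n}$ savings in Lemma~\ref{lem:ExtendConfigBoosterDegrees}(1) (valid precisely because $t\ge 2$) buys us, making the present case strictly easier than the $t=1$ instances already handled in Claims~\ref{cl:5Holds}--\ref{cl:7Holds}; I expect no further obstacle.
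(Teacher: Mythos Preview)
Your proposal is correct and follows essentially the same route as the paper: fix $U$ with $|U|=t\ge 2$, encode projection-created $s$-edges through $U$ via ${\rm Extend}_g(\mc{B})^{(s,k)}(U)$, bound $e(J^{(k)})$ and $\Delta_i(J^{(k)})$ using Lemma~\ref{lem:ExtendConfigBoosterDegrees}(1) (the $\tfrac{1}{n}$ case, available because $t\ge 2$), and apply Corollary~\ref{cor:KimVu}. The only cosmetic differences are your choice of $\kappa_k$ (the paper takes $\kappa_k=\tfrac{D^{s-t-\beta}}{2}\cdot\tfrac{\binom{n}{b}^k}{\log^{ka}n}$, aiming for $e(J^{(k)}_p)\le D^{s-t-\beta}$ and then summing $g$ such terms), your range $k\in[g-s]$ versus the paper's $k\in[g-2]$ (which agree on all nonempty terms), and your explicit remark that $s=g$ is handled by the baseline bound alone.
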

\begin{proofclaim}
Let $\mc{Q}_t$ be the set of $Q\subseteq V(\mc{D})\cup V(\mc{R})$ with $|Q|=t$ such that $Q$ is a $\mc{G}$-avoiding matching of $\mc{D}\cup \mc{R}$.
For each $2\le t < g-1$, $Q\in \mc{Q}_t$, and $s\in \{|Q|+1,\ldots, g-1\}$, let $A_{5,Q,s}$ be the event that $|H^{(s)}(Q)| > D^{s-t-(\beta/2)}$.
Let $\mathcal{A}_5 := \bigcup_{2\le t< g-1} \bigcup_{s: t< s\le g-1} \bigcup_{Q\in \mc{Q}_t} A_{5,Q,s}$. We desire to show that $\Prob{\mc{A}_5} \le 0.01$ and hence by the union bound it suffices to show that for each $t$ with $2\le t < g-1$, $s$ with $t< s \le g-1$ and $Q\in \mc{Q}_t$ that
$$\Prob{A_{5,Q,s}} \le \frac{1}{100\cdot g^2\cdot n^{qg}} \le \frac{1}{100 \cdot g\cdot \sum_{t: 2\le t < g-1} |\mc{Q}_t|},$$
since $\sum_{t: 2\le t < g-1} |\mc{Q}_t|\le g\cdot n^{qg}$.

To that end, fix $t \in \{2,\ldots, g-2\}$, $s\in \{t+1,\ldots, g-1\}$ and $Q\in \mc{Q}_t$. Since $(G,A,X,\mc{H})$ is $(\alpha,g)$-regular, we have that  $|\mc{G}^{(s)}(Q)| \le D^{s-t-\beta}$. Hence $A_{5,Q,s}$ is a subset of the event that $|H^{s}(Q)| - |\mc{G}^{s}(Q)| > g\cdot D^{s-t-\beta}$ since $D^{s-t-(\beta/2)} > (g+1)\cdot D^{s-t-\beta}$ since $D$ is large enough. 

We define a multi-hypergraph $J$ 
$$J:= \bigcup_{k\in [g-2]} \left\{ Z\setminus V(\mc{G}): Z\in{\rm Extend}_g(\mc{B})^{(s,k)}(Q)\right\}.$$
Let $n_0:= n^{g(b+r+q)}$. Note that $v(J) \le e({\rm Extend}_g(\mc{B})) \le n^{g(b+r+q)}$ which follows since $n$ is large enough. Also $\log^2 n\ge \log n_0$ since $n$ is large enough.

Note that for $F\subseteq V(H)$ with $|F|=s$ and $Q\subseteq F$, we have $F\in H$ if and only if there exists $Z\in{\rm Extend}_g(\mc{B})^{(s,k)}(F)$ for some $k\in [g-2]$ such that $Z\setminus F\subseteq \mc{B}_p$. Thus $A_{5,Q,s}$ is a subset of the event $e(J_p) > g\cdot D^{s-t-\beta}$.
Note that $e(J_p) = \sum_{k\in [g-2]} e\left(J^{(k)}_p\right)$. For $k\in [g-2]$, let $A_{5,Q,s,k}$ be the event that $e\left(J^{(k)}_p\right) > D^{s-t-\beta}$. Hence $A_{5,Q,s}$ is a subset of $\bigcup_{k \in [g-2]} A_{5,Q,s,k}$. Thus it suffices to show for each  $k\in [g-2]$ that
$$\Prob{A_{5,Q,s,k}} \le \frac{1}{100\cdot g^3 \cdot n^{qg}}.$$

First we calculate using Lemma~\ref{lem:ExtendConfigBoosterDegrees}(1) that
\begin{align*}
e\left(J^{(k)}\right) &\le \left|{\rm Extend}_g(\mc{B})^{(s,k)}(Q)\right|\le \Delta_{(t,0)}\left({\rm Extend}_g(\mc{B})^{(s,k)}\right) \le C'\cdot D^{s-t}\cdot \binom{n}{b}^{k}\cdot \frac{1}{n}.
\end{align*}
Now fix $i\in [k]$ and $U\subseteq V(J^{(k)})$ with $|U|=i$. Then
$$|J^{(k)}(U)| \le |{\rm Extend}_g(\mc{B})^{(s,k)}(Q\cup U)| \le \Delta_{(t,i)}\left({\rm Extend}_g(\mc{B})^{(s,k)}\right).$$
Hence by Lemma~\ref{lem:ExtendConfigBoosterDegrees}(1) since $s > t\ge 2$, we find that for all $i\in [k]$
$$\Delta_i\left(J^{(k)}\right) \le C'\cdot D^{s-t}\cdot \binom{n}{b}^{k-i}\cdot \frac{1}{n}.$$

Let $\kappa_k:= \frac{D^{s-t-\beta}}{2} \cdot \frac{\binom{n}{b}^k}{\log^{ka} n}$. Note that $\kappa_k\ge e\left(J^{(k)}\right)$ since $n \ge D^{2\beta} \ge 2C'\cdot D^{\beta} \cdot \log^{ka} n$ since $\beta \le \frac{1}{2(q-r)}$ and $n$ is large enough. Now we check that $(*_i)$ holds for $i\in [k]$ as follows. Namely for all $i\in [k]$, we have as $p=\frac{\log^a n}{\binom{n}{b}}$ that
$$\frac{\Delta_i\left(J^{(k)}\right)}{\kappa_k}\cdot \log^{4k+2}n_0\le 2C'\cdot \frac{1}{\binom{n}{b}^i}\cdot \frac{D^{\beta}}{n} \cdot \log^{ka+(8k+4)}n \le \frac{\log^{ia} n}{\binom{n}{b}^i} = p^i$$ since $n \ge 2C'\cdot D^{\beta} \cdot \log^{ak+8k+4} n$ as $n\ge D^{2\beta}$ and $n$ is large enough. 

Thus by Corollary~\ref{cor:KimVu}, we find with probability at most
$(n_0)^{-\log n_0}$ (which is at most $n^{-\log n}$ since $n\le n_0$), we have that
$$e\left(J^{(k)}_p\right) > 2\cdot p^k\cdot \kappa_k = D^{s-t-\beta}.$$ 
Since $n$ is large enough then, we have that
$$\Prob{e\left(J^{(k)}_p\right) > D^{s-t-\beta} }\le n^{-\log n} \le  \frac{1}{100\cdot g^3\cdot n^{qg}}$$
as desired since $n$ is large enough.
\end{proofclaim}

\begin{claim}\label{cl:10Holds}
(6) holds with probability at least $0.99$.    
\end{claim}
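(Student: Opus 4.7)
\begin{proofclaim}
The plan is to follow the Kim-Vu template of Claims~\ref{cl:5Holds}--\ref{cl:9Holds}. For each vertex $v\in V(G_1)\cup V(G_2)$ (an $r$-edge of $K_n^r$) and each edge $e\in E(G_1)\cup E(G_2)\subseteq V(H)$ (a $q$-clique) with $v\not\subseteq V(e)$, I would let $A_{6,v,e}$ denote the event that more than $D^{1-(\beta/2)}$ distinct $q$-cliques $f$ satisfying $v\subseteq V(f)$ combine with $e$ to form a size-$2$ edge of the projected $H$. Since there are at most $n^r$ choices for $v$ and at most $2n^q$ choices for $e$, a union bound reduces the task to showing $\Prob{A_{6,v,e}}\le \frac{1}{100\cdot g\cdot n^{q+r+1}}$ for each fixed pair.

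For such a fixed pair, I would pick any $w\in v\setminus V(e)$ (which exists since $v\not\subseteq V(e)$) and introduce the multi-hypergraph
$$J := \bigcup_{f:~v\subseteq V(f),~f\ne e}~\bigcup_{k\in [g-2]} \left\{ Z\setminus V({\rm Girth}_{K_q^r}^g(K_n^r)):~ Z \in {\rm Extend}_g(\mc{B})^{(2,k)}(\{e,f\})\right\}.$$
By the definition of the projection, every $f$ contributing to the $2$-codegree of $(v,e)$ in the projected $H$ yields at least one edge of $J_p$; hence $A_{6,v,e}$ is contained in the event $e(J_p)>D^{1-(\beta/2)}$. Splitting by uniformity, it suffices to bound $\Prob{e(J_p^{(k)})>D^{1-(\beta/2)}/g}$ for each $k\in [g-2]$ via Corollary~\ref{cor:KimVu}.

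The key inputs are the bounds on $e(J^{(k)})$ and on $\Delta_i(J^{(k)})$. Applying Lemma~\ref{lem:UncommonDegree} with $Q = e$ and the single vertex $w$ yields
$$e(J^{(k)})\;\le\; \left|\left\{\mc{Z}\in {\rm Extend}_g(\mc{B})^{(2,k)}(e):~ w\in V(\mc{Z})\right\}\right| \;\le\; C'\cdot \binom{n}{q-r}\cdot \binom{n}{b}^k\cdot \tfrac{1}{n},$$
while summing Lemma~\ref{lem:ExtendConfigBoosterDegrees}(1) over the at most $n^{q-r}\le 2D$ choices of $f$ with $v\subseteq V(f)$ gives $\Delta_i(J^{(k)})\le 2C'\cdot D\cdot \binom{n}{b}^{k-i}/n$ for all $i\in [k]$. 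Setting $\kappa_k := \frac{D^{1-(\beta/2)}}{2g}\cdot \binom{n}{b}^k/\log^{ka} n$, both $\kappa_k\ge e(J^{(k)})$ and each Kim-Vu hypothesis $(*_i)$ reduce to $n/D^{\beta/2}\gg \log^{O(a)} n$, which holds comfortably because $(q-r)\beta/2 = 1/(4g) < 1$ and $n,a$ are large. Corollary~\ref{cor:KimVu} then yields $\Prob{e(J_p^{(k)})>2p^k\kappa_k = D^{1-(\beta/2)}/g}\le n^{-\log n}$, and summing over $k$ concludes the proof. The main obstacle is really only parameter bookkeeping: the asymmetric setup where the codegree is indexed by an $r$-edge $v$ (rather than two $q$-cliques as in the previous claim) is precisely what Lemma~\ref{lem:UncommonDegree} was tailored to handle, delivering the $1/n$ savings needed to close the Kim-Vu budget.
\end{proofclaim}
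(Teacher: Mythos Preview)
Your proposal follows essentially the same Kim--Vu template as the paper, applied to the same auxiliary multi-hypergraph $J$. Two remarks are worth making.

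First, your invocation of Lemma~\ref{lem:UncommonDegree} to bound $e(J^{(k)})$ is correct but unnecessary. The paper simply applies Lemma~\ref{lem:ExtendConfigBoosterDegrees}(1) with $(t_1,t_2)=(2,0)$ to each pair $\{e,Q\}$ and sums over the at most $D$ cliques $Q$ containing the $r$-edge $v$, obtaining the same bound $C'\cdot D\cdot\binom{n}{b}^k/n$ without the extra vertex $w$. Lemma~\ref{lem:UncommonDegree} was tailored for the genuinely harder common-$2$-degree bound in Claim~\ref{cl:11Holds}, where two fixed cliques must be handled simultaneously; here one of the two cliques is already pinned, so the simpler lemma suffices.

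Second, and more substantively, your codegree argument ``summing Lemma~\ref{lem:ExtendConfigBoosterDegrees}(1) over the choices of $f$'' has a gap at $i=k$. With $(s_1,s_2)=(2,k)$ and $(t_1,t_2)=(2,k)$ the hypothesis $s_1+s_2>t_1+t_2$ of Lemma~\ref{lem:ExtendConfigBoosterDegrees} fails, so you cannot extract a $1/n$ factor from $\Delta_{(2,k)}$; the trivial bound $\Delta_{(2,k)}\le 1$ summed over $\le 2D$ cliques $f$ only gives $\Delta_k(J^{(k)})\le 2D$, and then $(*_k)$ would require $1\gtrsim D^{\beta/2}\cdot\mathrm{polylog}\,n$, which is false. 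The paper avoids this by not summing over $f$: since any $Z$ contributing to $J^{(k)}(U)$ automatically contains $\{e\}\cup U$, one has directly
\[
|J^{(k)}(U)|\le \big|{\rm Extend}_g(\mc{B})^{(2,k)}(\{e\}\cup U)\big|\le \Delta_{(1,i)}\big({\rm Extend}_g(\mc{B})^{(2,k)}\big)\le C'\cdot D\cdot \binom{n}{b}^{k-i}\cdot\frac{1}{n},
\]
valid for every $i\in[k]$ since $2+k>1+i$. With this fix your claimed bound $\Delta_k(J^{(k)})\le 2C'D/n$ is correct and the rest of your argument goes through.
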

\begin{proofclaim}
For each $v\in V(\mc{D})\cup V(\mc{R})$ and $e\in V(\mc{G})$ with $v\not\in e$, let $N_{H}(v,e):= \{ F\in E(H^{(2)}): F=\{e,f\}, v\in f\}$ and let $A_{6,v,e}$ be the event that $|N_H(v,e)| > D^{1-(\beta/2)}$.
Let $\mathcal{A}_6 := \bigcup_{v\in V(\mc{D})\cup V(\mc{R})} \bigcup_{e\in V(\mc{G}): v\not\in e} A_{6,v,e}$. We desire to show that $\Prob{\mc{A}_6} \le 0.01$ and hence by the union bound it suffices to show that for each $v\in V(\mc{D})\cup V(\mc{R})$ and $e\in V(\mc{G})$ with $v\not\in e$  that
$$\Prob{A_{6,v,e}} \le \frac{1}{100\cdot n^{q+r}} \le \frac{1}{100 \cdot (v(\mc{D})+v(\mc{R}))\cdot v(\mc{G})},$$
since $v(\mc{D})+v(\mc{R})\le n^r$ and $v(\mc{G})\le n^q$.

To that end, fix $v\in V(\mc{D})\cup V(\mc{R})$ and $e\in V(\mc{G})$ with $v\not\in e$. Since $(G,A,X,\mc{H})$ is $(\alpha,g)$-regular, we have that  $|N_{\mc{G}}(v,e)| \le D^{1-\beta}$. Hence $A_{5,Q,s}$ is a subset of the event that $|N_{H}(v,e)| - |N_{\mc{G}}(v,e)| > g\cdot D^{1-\beta}$ since $D^{1-(\beta/2)} > (g+1)\cdot D^{1-\beta}$ since $D$ is large enough. 

We define a multi-hypergraph $J$ 
$$J:= \bigcup_{Q\in V(\mc{G}): v\in Q}~\bigcup_{k\in [g-2]} \left\{ Z\setminus V(\mc{G}): Z\in{\rm Extend}_g(\mc{B})^{(2,k)}(Q\cup \{e\})\right\}.$$
Let $n_0:= n^{g(b+r+q)}$. Note that $v(J) \le e({\rm Extend}_g(\mc{B})) \le n^{g(b+r+q)}$ which follows since $n$ is large enough. Also $\log^2 n\ge \log n_0$ since $n$ is large enough.

Note that for $F=\{e,f\}$ where $f\in V(\mc{G})$ with $v\in f$, we have $F\in H$ if and only if there exists $Z\in{\rm Extend}_g(\mc{B})^{(2,k)}(F)$ for some $k\in [g-2]$ such that $Z\setminus F\subseteq \mc{B}_p$. Thus $A_{6,v,e}$ is a subset of the event $e(J_p) > g\cdot D^{1-\beta}$.
Note that $e(J_p) = \sum_{k\in [g-2]} e\left(J^{(k)}_p\right)$. For $k\in [g-2]$, let $A_{6,v,e,k}$ be the event that $e\left(J^{(k)}_p\right) > D^{1-\beta}$. Hence $A_{6,v,e}$ is a subset of $\bigcup_{k \in [g-2]} A_{6,v,e,k}$. Thus it suffices to show for each  $k\in [g-2]$ that
$$\Prob{A_{6,v,e,k}} \le \frac{1}{100\cdot g \cdot n^{q+r}}.$$

First we calculate using Lemma~\ref{lem:ExtendConfigBoosterDegrees}(1) since $k\ge 1$ that
\begin{align*}
e\left(J^{(k)}\right) &\le \sum_{Q\in V(\mc{G}):v\in Q} \left|{\rm Extend}_g(\mc{B})^{(2,k)}(Q\cup \{e\})\right|\le \sum_{Q\in V(\mc{G}):v\in Q} \Delta_{(2,0)}\left({\rm Extend}_g(\mc{B})^{(2,k)}\right) \\
&\le \sum_{Q\in V(\mc{G}):v\in Q} C'\cdot \binom{n}{b}^{k}\cdot \frac{1}{n} \le C'\cdot D\cdot \binom{n}{b}^{k}\cdot \frac{1}{n}.
\end{align*}
Now fix $i\in [k]$ and $U\subseteq V(J^{(k)})$ with $|U|=i$. Then
$$|J^{(k)}(U)| \le |{\rm Extend}_g(\mc{B})^{(2,k)}(\{e\}\cup U)| \le \Delta_{(1,i)}\left({\rm Extend}_g(\mc{B})^{(2,k)}\right).$$
Hence by Lemma~\ref{lem:ExtendConfigBoosterDegrees}(1), we find for all $i\in [k]$ since $i\ge 1$ that
$$\Delta_i\left(J^{(k)}\right) \le C'\cdot D\cdot \binom{n}{b}^{k-i}\cdot \frac{1}{n}.$$

Let $\kappa_k:= \frac{D^{1-\beta}}{2} \cdot \frac{\binom{n}{b}^k}{\log^{ka} n}$. Note that $\kappa_k\ge e\left(J^{(k)}\right)$ since $n \ge D^{2\beta} \ge 2C'\cdot D^{\beta} \cdot \log^{ka} n$ since $\beta \le \frac{1}{2(q-r)}$ and $n$ is large enough. Now we check that $(*_i)$ holds for $i\in [k]$ as follows. Namely for all $i\in [k]$, we have as $p=\frac{\log^a n}{\binom{n}{b}}$ that
$$\frac{\Delta_i\left(J^{(k)}\right)}{\kappa_k}\cdot \log^{4k+2}n_0\le 2C'\cdot \frac{1}{\binom{n}{b}^i}\cdot \frac{D^{\beta}}{n} \cdot \log^{ka+(8k+4)}n \le \frac{\log^{ia} n}{\binom{n}{b}^i} = p^i$$ since $n \ge 2C'\cdot D^{\beta} \cdot \log^{ak+8k+4} n$ as $n\ge D^{2\beta}$ and $n$ is large enough. 

Thus by Corollary~\ref{cor:KimVu}, we find with probability at most
$(n_0)^{-\log n_0}$ (which is at most $n^{-\log n}$ since $n\le n_0$), we have that
$$e\left(J^{(k)}_p\right) > 2\cdot p^k\cdot \kappa_k = D^{1-\beta}.$$ 
Since $n$ is large enough then, we have that
$$\Prob{e\left(J^{(k)}_p\right) > D^{1-\beta} }\le n^{-\log n} \le  \frac{1}{100\cdot g\cdot n^{q+r}}$$
as desired since $n$ is large enough.
\end{proofclaim}

\begin{claim}\label{cl:11Holds}
(7) holds with probability at least $0.99$.    
\end{claim}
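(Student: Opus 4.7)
\begin{proofclaim}
The plan is to follow the pattern of Claims~\ref{cl:5Holds}--\ref{cl:10Holds}: for each pair $u, v \in V(\mc{D}) \cup V(\mc{R})$ that is vertex-disjoint in $G_1 \cup G_2$, let $A_{7, u, v}$ be the event that the common $2$-degree of $u$ and $v$ in $H^{(2)}$ exceeds $D^{1-\beta/2}$. Since $|V(\mc{D}) \cup V(\mc{R})| \le 2n^q$, a union bound reduces the problem to showing $\Prob{A_{7, u, v}} \le 1/(400 n^{2q})$ for each fixed pair $u, v$.

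A common neighbor $w$ of $u$ and $v$ in $H^{(2)}$ requires, for some $k_u, k_v \in [g-2]$, the existence of configurations $\mc{Z}_u \in {\rm Extend}_g(\mc{B})^{(2, k_u)}(\{u, w\})$ and $\mc{Z}_v \in {\rm Extend}_g(\mc{B})^{(2, k_v)}(\{v, w\})$ whose booster parts lie in $\mc{B}_p$. For each pair $(k_u, k_v)$, I will define a $(k_u + k_v)$-uniform multi-hypergraph $J_{k_u, k_v}$ with vertex set $\mc{B}$, whose edges are (ranging over all candidate $w$) the combined booster parts of such pairs $(\mc{Z}_u, \mc{Z}_v)$. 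Since each common neighbor contributes a surviving edge, the common $2$-degree is at most $\sum_{k_u, k_v \in [g-2]} e((J_{k_u, k_v})_p)$, so it suffices to establish concentration for each $J_{k_u, k_v}$ separately.

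To bound $e(J_{k_u, k_v})$, I will first sum over $\mc{Z}_u$ containing $u$ (at most $C' D \binom{n}{b}^{k_u} \Delta/n$ choices by Lemma~\ref{lem:ExtendConfigBoosterDegrees}(2), the case $t_1 + t_2 = 1$), then bound the number of $\mc{Z}_v$ containing both $v$ and the second clique $w$ of $\mc{Z}_u$ by $C' \binom{n}{b}^{k_v}/n$ via Lemma~\ref{lem:ExtendConfigBoosterDegrees}(1), the case $t_1 + t_2 = 2$. This yields $e(J_{k_u, k_v}) \le (C')^2 D \binom{n}{b}^{k_u + k_v} \Delta/n^2$. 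For the $i$-codegree, I will partition a fixed $i$-element booster set $U$ as $U_u \sqcup U_v$ according to which of $\mc{Z}_u, \mc{Z}_v$ each booster is assigned to, and apply Lemma~\ref{lem:ExtendConfigBoosterDegrees} to each side: every partition yields a bound $O(1) \cdot D \binom{n}{b}^{k_u + k_v - i}/n^2$ or $O(1) \cdot D \binom{n}{b}^{k_u + k_v - i} \Delta/n^2$ (the latter arises only when $|U_u| = 0$, in which case the $\mc{Z}_u$-side degree invokes case (2) of Lemma~\ref{lem:ExtendConfigBoosterDegrees}).

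Finally, setting $\kappa_{k_u + k_v} := D^{1-\beta/2} \binom{n}{b}^{k_u + k_v} / (2 (g-2)^2 \log^{(k_u + k_v) a} n)$, the Kim-Vu hypotheses $(\ast_i)$ reduce to checking $\Delta/n^2 \le D^{-\beta/2}/\text{polylog}(n)$; this holds because $\Delta/n^2 \le 1/n$ while $D^{-\beta/2} \sim n^{-1/(4g)}$ (using $\beta = 1/(2g(q-r))$) and $n$ is large enough. Applying Corollary~\ref{cor:KimVu} with $n_0 := n^{g(b + r + q)}$ yields $\Prob{e((J_{k_u, k_v})_p) > D^{1-\beta/2}/(g-2)^2} \le n^{-\log n}$; summing over the at most $(g-2)^2$ choices of $(k_u, k_v)$ gives the required tail bound. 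The main obstacle will be the careful bookkeeping of the $2^{O(g)}$ partitions of $U$ in the $i$-codegree bounds, but this follows the template of Claim~\ref{cl:10Holds} closely and no new ideas are required.
\end{proofclaim}
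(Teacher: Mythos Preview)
There is a genuine gap: your construction implicitly assumes that the booster parts of $\mc{Z}_u$ and $\mc{Z}_v$ are disjoint. You declare $J_{k_u,k_v}$ to be $(k_u+k_v)$-uniform and later write ``partition a fixed $i$-element booster set $U$ as $U_u \sqcup U_v$'', but nothing prevents the same booster $B$ from lying in both $\mc{Z}_u$ and $\mc{Z}_v$. When this happens the edge $Z_u\cup Z_v$ has size strictly less than $k_u+k_v$, so it is simply not counted by any of your $J_{k_u,k_v}$, and your inequality $\text{(common $2$-degree)}\le \sum_{k_u,k_v} e\big((J_{k_u,k_v})_p\big)$ fails.

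The extreme case $Z_u=Z_v=Z$ (so $k_u=k_v=k$, edge size $k$) is exactly the hard one. The naive bound from Lemma~\ref{lem:ExtendConfigBoosterDegrees}(2) gives only $e(J^{(k)})\le C' D\binom{n}{b}^k\cdot \frac{\Delta}{n}$ for this contribution, and the required Kim--Vu hypothesis $e(J^{(k)})\le \kappa_k$ with $\kappa_k\asymp D^{1-\beta}\binom{n}{b}^k/\log^{ka}n$ then demands $\Delta/n \lesssim D^{-\beta}/\mathrm{polylog}(n)\asymp n^{-1/(2g)}$, which is false (a polylog versus polynomial comparison). So this case cannot be absorbed into your bookkeeping; it needs a genuinely better bound. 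The paper obtains that better bound via Lemma~\ref{lem:UncommonDegree}, which uses the hypothesis that $u$ and $v$ are \emph{vertex-disjoint} as $q$-cliques to force any $(w,Z)$ with $\{u,w\}\cup Z,\{v,w\}\cup Z\in {\rm Extend}_g(\mc{B})$ to have a vertex of $v\setminus u$ inside $V(\{w\}\cup Z)$, recovering the missing factor of $1/n$. Contrary to your final sentence, this is precisely where a new idea beyond the template of Claim~\ref{cl:10Holds} is required.
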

\begin{proofclaim}
For $e\in E(\mc{D})=V(\mc{G})$, let $\mc{N}(e):= \{e'\in V(\mc{G}):~|V_{K_n^r}(e)\cap V_{K_n^r}(e')| \ge r\}$ where to clarify, $V_{K_n^r}(e), V_{K_n^r}(e')$ denote the set of vertices in $K_n^r$ in the $q$-cliques $e$ and $e'$ respectively. Note that if $e'\in \mc{N}(e)$, then $e$ and $e'$ do not form a matching of $\mc{D}$, and hence there does not exist $S\in E(\mc{G})$ such that $e,e'\in S$.

For each $e_1\ne e_2\in V(\mc{G})$ with $e_2\not\in \mc{N}(e_1)$, let $N_{H}(e_1,e_2):= \{ f\in V(H): e_1f, e_2f\in E(H)\}$ and let $A_{7,e_1,e_2}$ be the event that $|N_H(e_1,e_2)| > D^{1-(\beta/2)}$.
Let $\mathcal{A}_7 := \bigcup_{e_1\ne e_2\in V(\mc{G}):~e_2\not\in \mc{N}(e_1)} A_{7,e_1,e_2}$. Note that if $\mc{A}_7$ does not hold, then $H$ is $D^{1-(\beta/2)}$-uncommon, that is (7) holds. Thus we desire to show that $\Prob{\mc{A}_7} \le 0.01$ and hence by the union bound it suffices to show that for all $e_1\ne e_2\in V(\mc{G})$ with $e_2\not\in \mc{N}(e_1)$ that
$$\Prob{A_{7,e_1,e_2}} \le \frac{1}{100\cdot n^{2q}} \le \frac{1}{100 \cdot v(\mc{G})^2},$$
since $v(\mc{G})\le n^q$.

To that end, fix $e_1\ne e_2\in V(\mc{G})$ with $e_2\not\in \mc{N}(e_1)$. Since $(G,A,X,\mc{H})$ is $(\alpha,g)$-regular, we have that  $|N_{\mc{G}}(e_1,e_2)| \le D^{1-\beta}$. Hence $A_{7,e_1,e_2}$ is a subset of the event that $|N_{H}(e_1,e_2)| - |N_{\mc{G}}(e_1,e_2)| > g^3\cdot D^{1-\beta}$ since $D^{1-(\beta/2)} > (g^3+1)\cdot D^{1-\beta}$ since $D$ is large enough.

For $f\in \mc{G}$ and $i\in \{1,2\}$, let 
$$\mc{N}_{j}(e_i,f) := \left\{Z: Z\cup\{e_i,f\}\in{\rm Extend}_g(\mc{B})^{(2,j)}(\{e_i,f\})\right\}.$$
We define a multi-hypergraph $J$ 
$$J:= \bigcup_{f\in V(\mc{G})}~\bigcup_{k_1, k_2\in [g-2]} \bigg\{ Z_1\cup Z_2: Z_i\in \mc{N}_{k_i}(e_i,f)~\forall i\in\{1,2\} \bigg\}.$$
Let $n_0:= n^{2g(b+r+q)}$. Note that $v(J) \le e({\rm Extend}_g(\mc{B}))^2 \le n^{2g(b+r+q)}$ which follows since $n$ is large enough. Also $\log^2 n\ge \log n_0$ since $n$ is large enough. Note that for $i\in \{1,2\}$ and $F_i=\{e_i,f\}$ where $f\in V(\mc{G})$, we have $F_i\in H$ if and only if there exists $Z_i\in{\rm Extend}_g(\mc{B})^{(2,k_i)}(F_i)$ for some $k_i\in [g-2]$ such that $Z_i\setminus F_i\subseteq \mc{B}_p$. Thus $A_{7,e_1,e_2}$ is a subset of the event $e(J_p) > g^3\cdot D^{1-\beta}$.

Note that $e(J_p) = \sum_{k\in [2(g-2)]} e\left(J^{(k)}_p\right)$. For $k\in [g-2]$, let $A_{7,e_1,e_2,k}$ be the event that $e\left(J^{(k)}_p\right) > D^{1-\beta}$. Hence $A_{7,e_1,e_2}$ is a subset of $\bigcup_{k \in [g-2]} A_{7,e_1,e_2,k}$. Thus it suffices to show for each  $k\in [g-2]$ that
$$\Prob{A_{7,e_1,e_2,k}} \le \frac{1}{100\cdot g \cdot n^{2q}}.$$

Now we seek to upper bound $e(J^{(k)})$. To that end, for $f\in \mc{G}$ and $k_1,k_2\in [g-2]$ and $k_3\in [\min\{k_1,k_2\}]$, let
$$\mc{N}_{k_1,k_2,k_3}(e_1,e_2,f):= \{(Z_1,Z_2): Z_i\in \mc{N}_{k_i}(e_i,f)~\forall i\in\{1,2\},~|Z_1\cap Z_2|=k_3\}.$$
Let 
$$\mc{K}_k := \{(k_1,k_2,k_3): k_1,k_2\in [g-2], k_3\in \{0,\ldots, \min\{k_1,k_2\} \}, k=k_1+k_2-k_3\}.$$ 
First we calculate that
$$e\left(J^{(k)}\right) \le \sum_{f\in V(\mc{G})}~\sum_{(k_1,k_2,k_3)\in \mc{K}_k} |\mc{N}_{k_1,k_2,k_3}(e_1,e_2,f)|.$$
Now we calculate that
\begin{align*}
&\sum_{f\in V(\mc{G})}~\sum_{(k_1,k_2,k_3)\in \mc{K}(k): k_1 < k} |\mc{N}_{k_1,k_2,k_3}(e_1,e_2,f)| \\
&\le \sum_{k_1: 1\le k_1 < k}~~\sum_{(f,Z_1)\in {\rm Extend}_g(\mc{B})^{(2,k_1)}(e_1)} ~~\sum_{Z_3\subseteq Z_1} |{\rm Extend}_g(\mc{B})^{(2,~k-k_1+|Z_3|)}(\{e_2,f\} \cup Z_3)|\\
&\le \sum_{k_1: 1\le k_1 < k}~~\sum_{(f,Z_1)\in {\rm Extend}_g(\mc{B})^{(2,k_1)}(e_1)} ~\sum_{k_3\in [k_1]}~\sum_{Z_3\subseteq Z_1: |Z_3|=k_3} |{\rm Extend}_g(\mc{B})^{(2,~k-k_1+k_3)}(\{e_2,f\} \cup Z_3)| \\
&\le \sum_{k_1: 1\le k_1 < k}~~\sum_{(f,Z_1)\in {\rm Extend}_g(\mc{B})^{(2,k_1)}(e_1)} ~\sum_{k_3\in [k_1]}~\sum_{Z_3\subseteq Z_1: |Z_3|=k_3} \Delta_{(2,k_3)}\left({\rm Extend}_g(\mc{B})^{(2,k-k_1+k_3)}\right).
\end{align*}
By Lemma~\ref{lem:ExtendConfigBoosterDegrees}(1), we find that 
$$\Delta_{(2,k_3)}\left({\rm Extend}_g(\mc{B})^{(2,k-k_1+k_3)}\right) \le C'\cdot \binom{n}{b}^{k-k_1}\cdot \frac{1}{n} $$
Hence
\begin{align*}
\sum_{f\in V(\mc{G})} \sum_{(k_1,k_2,k_3)\in \mc{K}(k): k_1 < k} |\mc{N}_{k_1,k_2,k_3}(e_1,e_2,f)| \le \sum_{k_1: 1\le k_1 < k}~|{\rm Extend}_g(\mc{B})^{(2,k_1)}(e_1)|\cdot 2^{k_1} \cdot  C'\cdot \binom{n}{b}^{k-k_1}\cdot \frac{1}{n}.
\end{align*}
By Lemma~\ref{lem:ExtendConfigBoosterDegrees}(2), we find that 
$$\Delta_{(1,0)}\left({\rm Extend}_g(\mc{B})^{(2,k_1)}\right) \le C'\cdot \binom{n}{q-r}\cdot \binom{n}{b}^{k_1}\cdot \frac{1}{n} $$
Combining we find that
\begin{align*}
\sum_{f\in V(\mc{G})}~\sum_{(k_1,k_2,k_3)\in \mc{K}(k): k_1 < k} |\mc{N}_{k_1,k_2,k_3}(e_1,e_2,f)| &\le \sum_{k_1: 1\le k_1 < k} C'\cdot \binom{n}{q-r}\cdot \binom{n}{b}^{k_1}\cdot \frac{1}{n}\cdot 2^{k_1} \cdot  C'\cdot \binom{n}{b}^{k-k_1}\cdot \frac{1}{n}\\
&\le k\cdot 2^k \cdot (C')^2 \cdot \binom{n}{q-r}\cdot \binom{n}{b}^k \cdot \frac{1}{n}.
\end{align*}
By symmetry, we find that
$$\sum_{f\in V(\mc{G})}~\sum_{(k_1,k_2,k_3)\in \mc{K}(k): k_2 < k} |\mc{N}_{k_1,k_2,k_3}(e_1,e_2,f)| \le k\cdot 2^k \cdot (C')^2 \cdot \binom{n}{q-r}\cdot \binom{n}{b}^k \cdot \frac{1}{n}.$$

So it remains to calculate $\sum_{f\in V(\mc{G})} |\mc{N}_{k,k,k}(e_1,e_2,f)|$. The key fact is that since $|e_1 \cap e_2|\le r$ as $e_2\not\in \mc{N}(e_1)$, we find that $(Z_1,Z_2)\in \bigcup_{f\in V(\mc{G})} \mc{N}_{k,k,k}(e_1,e_2,f)$ only if $Z_1=Z_2$ and $Z_1\cup \{f\}\in {\rm Extend}_q(\mc{B})^{(2,k)}(e_1)$ with $v\in V(Z_1\cup \{f\})$ for some $v\in e_2\setminus e_1$ (as otherwise $|(Z_2\cup \{f\})\cap e_2|\le r$ which is impossible if $Z_2\cup \{e_2,f\} \in E(\mc{G})$ since then $v(Z_2\cup \{f\}) \le (q-r)(|Z_2\cup \{f\}|)+r$ contradicting that $Z_2\cup \{e_2,f\}$ is an Erd\H{o}s configuration).

Let $C''$ be as in Lemma~\ref{lem:UncommonDegree}. Hence by Lemma~\ref{lem:UncommonDegree}, we find that
$$\sum_{f\in V(\mc{G})} |\mc{N}_{k,k,k}(e_1,e_2,f)| \le \sum_{v\in e_2\setminus e_1} \left|\left\{\mc{Z}\in {\rm Extend}_q(\mc{B})^{(2,k)}(e_1): v\in V(\mc{Z})\right\}\right| \le q\cdot C'\cdot \binom{n}{q-r}\cdot \binom{n}{b}^k \cdot \frac{1}{n}.$$

Combining, we find that
\begin{align*}
e\left(J^{(k)}\right) &\le \sum_{f\in V(\mc{G})}~\sum_{(k_1,k_2,k_3)\in \mc{K}_k} |\mc{N}_{k_1,k_2,k_3}(e_1,e_2,f)| \\
&\le 2\cdot k\cdot 2^k \cdot (C')^2 \cdot \binom{n}{q-r}\cdot \binom{n}{b}^k \cdot \frac{1}{n}+ q\cdot C'\cdot \binom{n}{q-r}\cdot \binom{n}{b}^k \cdot \frac{1}{n} \\
&\le 3qk\cdot 2^k\cdot (C')^2 \cdot \binom{n}{q-r}\cdot \binom{n}{b}^k \cdot \frac{1}{n}.
\end{align*}

Now fix $i\in [k]$ and $U\subseteq V(J^{(k)})$ with $|U|=i$. Fix a partition $(U_1,U_2,U_3)$ of $U$ into 3 (possibly empty) sets and let $i_j:= |U_j|$ for $i\in [3]$. Then we define for $(k_1,k_2,k_3)\in \mc{K}_k$ where $k_j \ge i_j$ for all $j\in [3]$, the following
$$\mc{N}_{k_1,k_2,k_3}(e_1,e_2,f)[U_1,U_2,U_3] := \{(Z_1,Z_2)\in \mc{N}_{k_1,k_2,k_3}(e_1,e_2,f): U_1\subseteq Z_1\setminus Z_2,~U_2\subseteq Z_2\setminus Z_1,~U_3\subseteq Z_1\cap Z_2\}.$$
If $i_1+i_3\ge 1$, then
\begin{align*}
&|\mc{N}_{k_1,k_2,k_3}(e_1,e_2,f)[U_1,U_2,U_3]| \\
&\le \sum_{(f,Z_1)\in {\rm Extend}_g(\mc{B})^{(2,k_1)}(\{e_1\}\cup U_1\cup U_3)}~\sum_{Z_3\subseteq Z_1: U_3\subseteq Z_3, |Z_3|=k_3} |{\rm Extend}_g(\mc{B})^{(2,k_2)}(\{e_2,f\}\cup Z_3\cup U_2)| \\
&\le \sum_{(f,Z_1)\in {\rm Extend}_g(\mc{B})^{(2,k_1)}(\{e_1\}\cup U_1\cup U_3)} 2^{k_1} \cdot \Delta_{(2,i_2+k_3)}\left({\rm Extend}_g(\mc{B})^{(2,k_2)}\right)\\
&\le 2^{k_1}\cdot \Delta_{(1,i_1+i_3)}\left({\rm Extend}_g(\mc{B})^{(2,k_1)}\right)\cdot \Delta_{(2,i_2+i_3)}\left({\rm Extend}_g(\mc{B})^{(2,k_2)}\right)\\
&\le 2^{g}\cdot C'\cdot \binom{n}{q-r}\cdot \binom{n}{b}^{k_1-(i_1+i_3)}\cdot \frac{1}{n} \cdot C'\cdot \binom{n}{b}^{k_2-(i_2+k_3)}\\
&= 2^{g}\cdot (C')^2 \cdot \binom{n}{q-r}\cdot \binom{n}{b}^{k-i}\cdot \frac{1}{n},
\end{align*}
where we used Lemma~\ref{lem:ExtendConfigBoosterDegrees}(1) and (2) each once. Similarly the same holds if $i_2+i_3\ge 1$ by symmetry. Since $i_1+i_2+i_3=i\ge 1$, at least one of these two outcomes hold. Hence in all cases
$$|\mc{N}_{k_1,k_2,k_3}(e_1,e_2,f)[U_1,U_2,U_3]|\le 2^{g}\cdot (C')^2 \cdot \binom{n}{q-r}\cdot \binom{n}{b}^{k-i}\cdot \frac{1}{n}$$
Thus by considering all $(k_1,k_2,k_3)\in \mc{K}_k$ and partitions $(U_1,U_2,U_3)$ of $U$ into $3$ possibly non-empty sets, we find that
\begin{align*}
|J^{(k)}(U)| &\le g^4\cdot 2^{g}\cdot (C')^2 \cdot \binom{n}{q-r}\cdot \binom{n}{b}^{k-i}\cdot \frac{1}{n}.
\end{align*}.
Hence, we find for all $i\in [k]$ that
$$\Delta_i\left(J^{(k)}\right) \le g^4\cdot 2^{g}\cdot (C')^2 \cdot \binom{n}{q-r}\cdot \binom{n}{b}^{k-i}\cdot \frac{1}{n}.$$

Let $\kappa_k:= \frac{D^{1-\beta}}{2} \cdot \frac{\binom{n}{b}^k}{\log^{ka} n}$. Note that $\kappa_k\ge e\left(J^{(k)}\right)$ since $n \ge D^{2\beta} \ge 2\cdot 3qk\cdot 2^k\cdot (C')^2\cdot D^{\beta} \cdot \log^{ka} n$ since $\beta \le \frac{1}{2(q-r)}$ and $n$ is large enough. Now we check that $(*_i)$ holds for $i\in [k]$ as follows. Namely for all $i\in [k]$, we have as $p=\frac{\log^a n}{\binom{n}{b}}$ that
$$\frac{\Delta_i\left(J^{(k)}\right)}{\kappa_k}\cdot \log^{4k+2}n_0\le 2g^4\cdot 2^g\cdot (C')^2\cdot \frac{1}{\binom{n}{b}^i}\cdot \frac{D^{\beta}}{n} \cdot \log^{ka+(8k+4)}n \le \frac{\log^{ia} n}{\binom{n}{b}^i} = p^i$$ since $n \ge 2g^4\cdot 2^g\cdot (C')^2\cdot D^{\beta} \cdot \log^{ak+8k+4} n$ as $n\ge D^{2\beta}$ and $n$ is large enough. 

Thus by Corollary~\ref{cor:KimVu}, we find with probability at most
$(n_0)^{-\log n_0}$ (which is at most $n^{-\log n}$ since $n\le n_0$), we have that
$$e\left(J^{(k)}_p\right) > 2\cdot p^k\cdot \kappa_k = D^{1-\beta}.$$ 
Since $n$ is large enough then, we have that
$$\Prob{e\left(J^{(k)}_p\right) > D^{1-\beta} }\le n^{-\log n} \le  \frac{1}{100\cdot g\cdot n^{2q}}$$
as desired since $n$ is large enough.\end{proofclaim}

By Claims~\ref{cl:5Holds}-~\ref{cl:11Holds}, we have by the union bound that with probability at least $0.9$ all of (1)-(7) hold. That is, with probability at least $0.9$, we have that ${\rm Proj}_g(B,A,G,X)$ is $\bigg(\binom{n}{q-r},~(1-2\alpha) \cdot \binom{n}{q-r},~\frac{1}{4g(q-r)},~2\alpha\bigg)$-regular as desired.
\end{lateproof}

\section{High Cogirth Pairs of High Girth Steiner Systems}\label{s:CogirthPair}

In this section, we discuss the modifications necessary to our proof of Theorem~\ref{thm:HighGirthSteiner} to yield a proof of Theorem~\ref{thm:HighCogirthSteiner}. We note that despite much thought, it did not seem possible to prove the following stronger version of Theorem~\ref{thm:HighCogirthSteiner}: that for every $(n,q,r)$-Steiner system $S_1$ of girth at least $g$ there exists an $(n,q,r)$-Steiner system $S_2$ of girth at least $g$ whose cogirth with $S_1$ is at least $g$. The issue is that the extendability of high girth designs is highly non-trivial due to configurations between what is already given and what needs to be produced. Indeed, it is plausible to us that the stronger version mentioned above may not be true. 

Similarly one of the original motivations for finding $K_q^r$ decomposition of graphs with large minimum degree is that they imply that $K_q^r$ packings of small maximum degree can be extended to a full $K_q^r$ decomposition. This implication is no longer true for high girth designs due to the same configuration issue. While some minimum degree version of Theorem~\ref{thm:HighCogirthSteiner} is true, it is conceivable to us that the extension version may not be true. That said, the extension versions mentioned above may be true if the given object is sufficiently quasi-random. However, proving that our proof of Theorem~\ref{thm:HighGirthSteiner} yields a sufficiently quasi-random high girth Steiner system seemed hard; so we adopt a different approach to prove Theorem~\ref{thm:HighCogirthSteiner} as follows.

To prove Theorem~\ref{thm:HighCogirthSteiner}, we will build \emph{in parallel} two $(n,q,r)$-Steiner systems $S_1$ and $S_2$ of girth at least $g$ and cogirth at least $g$. To that end, we let $G_1$ and $G_2$ be two disjoint copies of $K_n^r$. Here is a proof overview then:

\begin{enumerate}
    \item[(1)] `Reserve' a random subset $X$ of $E(K_n^r)$ independently with some small probability $p$ via Lemma~\ref{lem:RandomXTreasury}. For $i\in \{1,2\}$, let $X_i$ be the subset of $G_i$ corresponding to $X$. 
    \item[(2)] Construct a $K_q^r$ omni-absorber $A$ of $X$ as in Theorem~\ref{thm:Omni}. For $i\in \{1,2\}$, let $A_i$ be the subset of $G_i$ corresponding to $A$.
    \item[(2')] For each $i\in\{1,2\}$ construct a $K_q^r$-omni-booster $B_i$ of $A_i$ that has girth at least $g$, that together have cogirth at least $g$, and do shrink ``too many" configurations (which now includes `cogirth' configurations). Let $A_i' := A_i\cup B_i$.
    \item[(3)] For each $i\in \{1,2\}$, ``regularity boost'' the cliques of $G_i\setminus (A_i'\cup X_i)$ using Lemma~\ref{lem:RegBoost}.
    \item[(4)] Apply Theorem~\ref{thm:ForbiddenSubmatchingReserves} to the union of the resulting treasuries to find two $K_q^r$ packings, one $Q_1$ of $(G_1\setminus A_1')$ and the other $Q_2$ of $(G_2\setminus A_2')$ where $Q_1$ covers $(G_1\setminus (A_1'\cup X_1')$ and $Q_2$ covers $(G_2\setminus (A_2'\cup X_2'))$ and then for each $i\in\{1,2\}$ extend $Q_i$ to a $K_q^r$ decomposition $S_i$ of $G_i$ by definition of $K_q^r$ omni-absorber.  
\end{enumerate}

We omit the actual proof of Theorem~\ref{thm:HighCogirthSteiner} since it tracks closely to the proof of Theorem~\ref{thm:HighGirthSteiner} with the modifications mentioned above. Instead, we devote the rest of this section to the necessary modifications to prove a cogirth analog of Theorem~\ref{thm:HighGirthAbsorber}.

\subsection{Cogirth Configuration Hypergraphs}

Thus the main additional difficulty in the plan above lies in Step (2'). The first key is that we need to introduce an additional configuration hypergraph to handle cogirth as follows.

\begin{definition}[Erd\H{o}s cogirth configuration]
Let $q>r\ge 1$, $g\ge 3$. Let $G$ be an $r$-graph. Let $w_1,w_2\ge 1$ be integers. A \emph{$(w_1,w_2)$-cogirth-$g$-configuration} of a $G$ is $(\mc{B}_1, \mc{B}_2)$ where for $i\in \{1,2\}$, $\mc{B}_i$ is a $K_q^r$-packing of $G$ of girth at least $g$ such that $\mc{B}_1\cup \mc{B}_2$ contains a $(i(q-r)+r-1,i)$-configuration for some $2\le i \le g-1$.

A \emph{$(w_1,w_2)$-cogirth-$g$-Erd\H{o}s-configuration} of a $G$ is a $(w_1,w_2)$-cogirth-$g$-configuration of $G$ that does not contain inclusion-wise a $(w_1',w_2')$-cogirth-$g$-configuration of $G$ for some $w_1'\in [1,w_1]$, $w_2'\in [1,w_2]$ such that $w_1'+w_2' < w_1+w_2$.
\end{definition}

\begin{definition}[Cogirth Configuration Hypergraph]
Let $q> r\ge 1$ be integers and let $g\ge 3$ be an integer. Let $G$ be an $r$-uniform hypergraph and for $i\in \{1,2\}$, let $\mathcal{D}_i$ be a copy of ${\rm Design}_{K_q^r}(G)$. 
Then the \emph{cogirth-$g$ $K_q^r$-configuration hypergraph} of $G$, denoted ${\rm Cogirth}_{K_q^r}^g(G)$ is the configuration hypergraph $H$ of $\mathcal{D}_1\cup \mc{D}_2$ with $V(H):=E(\mc{D}_1)\cup E(\mc{D}_2)$ and \begin{align*}
E(H) := \{S_1\cup S_2:~&S_i\subseteq \mc{D}_i~\forall i\in\{1,2\}, \\
&(S_1, S_2) \text{ is a $(w_1,w_2)$-cogirth-g-Erd\H{o}s-configuration of } G \text{ for some $w_1,w_2\ge 1$}\}.    
\end{align*}
\end{definition}

We note the following upper bound on the degrees and codegrees of ${\rm Cogirth}_{K_q^r}^g(G)$. 

\begin{proposition}\label{prop:CogirthDegrees}
For all integers $q> r\ge 1$ and $g\ge 3$, there exists an integer $C\ge 1$ such that for all integers $n$ and $1\le t < s \le g$, we have that 
$$\Delta_{t}\bigg({\rm Cogirth}_{K_q^r}^g(K_n^r)^{(s)}\bigg)\le C\cdot \binom{n}{q-r}^{s-t} \cdot \frac{1}{n}.$$
\end{proposition}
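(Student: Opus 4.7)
The plan is to prove the bound via two structural claims followed by a standard vertex-counting argument.

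The first claim is that if $(S_1,S_2)$ is any $(w_1,w_2)$-cogirth-$g$-Erd\H{o}s-configuration with total size $s=w_1+w_2$, then $|V(S_1\cup S_2)|\le s(q-r)+r-1$. To prove this, I would invoke the definition: $S_1\cup S_2$ contains some $(j(q-r)+r-1,j)$-configuration $S'$ for some $2\le j\le g-1$. Since each $S_i$ has girth at least $g$ as a packing, no such configuration can lie entirely within $S_1$ or within $S_2$ (a $(j(q-r)+r-1,j)$-configuration is in particular a $(j(q-r)+r,j)$-configuration), so $|S'\cap S_1|,|S'\cap S_2|\ge 1$. Then $(S'\cap S_1,S'\cap S_2)$ is itself a cogirth-$g$-configuration (the subpackings inherit girth $\ge g$), and by Erd\H{o}s-minimality of $(S_1,S_2)$, it cannot be proper; hence $S'=S_1\cup S_2$, $j=s$, and the vertex bound follows.

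The second claim is that for any $U\subseteq S_1\cup S_2$ with $|U|=t$ and $1\le t<s$, we have $|V(U)|\ge t(q-r)+r$. If $U$ lies entirely in $S_1$ or in $S_2$, the girth-$\ge g$ condition on the respective packing gives the (stronger) bound $|V(U)|\ge t(q-r)+r+1$ when $t\ge 2$, while the case $t=1$ is immediate as $|V(U)|=q$. If $U$ is mixed (meets both $S_1$ and $S_2$), then $t\ge 2$; were $|V(U)|\le t(q-r)+r-1$, the pair $(U\cap S_1,U\cap S_2)$ would be a proper cogirth-$g$-configuration (with $2\le t\le g-1$ as $t<s\le g$), contradicting the Erd\H{o}s-minimality of $(S_1,S_2)$.

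Given these two claims, fix $U\subseteq V({\rm Cogirth}_{K_q^r}^g(K_n^r))$ with $|U|=t$; any edge of ${\rm Cogirth}_{K_q^r}^g(K_n^r)^{(s)}$ containing $U$ extends $U$ by $s-t$ further cliques, and by the two claims the total vertex set of the extended configuration has size $v$ with $t(q-r)+r\le |V(U)|\le v\le s(q-r)+r-1$. So the number of new vertices to introduce is at most $v-|V(U)|\le(s-t)(q-r)-1$. There are at most $O(n^{(s-t)(q-r)-1})$ choices for those vertices, and then only $O(1)$ ways (depending on $s,q,r$) to choose which $q$-subsets of the extended vertex set become new cliques and to distribute them between $\mathcal{D}_1$ and $\mathcal{D}_2$. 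Hence
$$\Delta_t\big({\rm Cogirth}_{K_q^r}^g(K_n^r)^{(s)}\big)\le C'\cdot n^{(s-t)(q-r)-1}\le C\cdot \binom{n}{q-r}^{s-t}\cdot\frac{1}{n}$$
for an appropriate constant $C=C(q,r,g)$.

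The main obstacle is the Erd\H{o}s-minimality argument in the first claim, which requires observing that a girth-$\ge g$ packing cannot host the small $(j(q-r)+r-1,j)$-witness internally so the witness must be mixed, and then chasing minimality to force the witness to exhaust all of $S_1\cup S_2$. Once those structural constraints on cogirth-Erd\H{o}s-configurations are in place, the vertex counting is routine.
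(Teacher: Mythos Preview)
Your approach is essentially the same as the paper's: bound the vertex count of an Erd\H{o}s cogirth configuration from above by $s(q-r)+r-1$, bound the vertex count of any $t$-subset from below by $t(q-r)+r$, and then do the standard counting. The paper's proof asserts both of these vertex bounds without justification, whereas you supply the arguments (via the ``witness must be mixed and hence exhausts the configuration'' step for the upper bound, and the case split into pure/mixed subsets using Erd\H{o}s-minimality for the lower bound). Both arguments are correct; in particular your observation that the witness $S'$ cannot lie inside a single $S_i$ (as $S_i$ has girth $\ge g$) and hence by minimality must equal $S_1\cup S_2$ is exactly the point the paper leaves implicit. As a byproduct you also see that Erd\H{o}s cogirth configurations have size at most $g-1$, so the case $s=g$ is vacuous.
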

\begin{proof}
For brevity, let $\mc{C}:= {\rm Cogirth}_{K_q^r}^g(K_n^r)$. Fix $U\subseteq V(\mc{C})$ with $|U|=t$. Note $|\mc{C}^{(s)}(U)|=0$ unless $U=U_1\cup U_2$ where for all $i\in \{1,2\}$, $U_i$ is a (possibly empty) $K_q^r$-packing of $G$ of girth at least $g$, and $|V(U_1)\cup V(U_2)| ]\ge t(q-r)+r$. But then
\begin{align*}
|\mc{C}^{(s)}(U)| &\le \binom{qs}{q}^s \cdot n^{s(q-r)+r-1 - (t(q-r)+r)} \le  \binom{qs}{q}^s \cdot n^{(s-t)(q-r)-1}\\
&\le C\cdot \binom{n}{q-r}^{s-t} \cdot \frac{1}{n},
\end{align*}
where for the last inequality we used that $C$ is large enough.
\end{proof}

\subsection{High Cogirth Omni-Absorbers and Cogirth Treasuries}

Analogous to Definition~\ref{def:GirthTreasury}, we define a new cogirth treasury as follows.

\begin{definition}
Let $q > r \ge 1$ be integers and let $g\ge 3$ be an integer. Let $G$ be an $r$-uniform hypergraph and let $X.G'\subseteq G$. The \emph{cogirth-$g$ $K_q^r$-design treasury} of $G$ with reserve $X$, denoted ${\rm CoTreasury}_q^g(G,G_1',G_2',X)$, is the treasury $(\mc{D}_1\cup \mc{D}_2,~ \mc{R}_1\cup \mc{R}_2,~\mc{G}_1\cup \mc{G}_2\cup \mc{C})$.
where for $i\in\{1,2\}$, $\mc{D}_i$ is a copy of ${\rm Design}_{K_q^r}(G_i'\setminus X)$, $\mc{R}_i$ is a copy of ${\rm Reserve}_{K_q^r}(G, G_i'\setminus X, X)$, and $\mc{G}_i$ is a copy of ${\rm Girth}_{K_q^r}^g(G)$; and $\mc{C}$ is ${\rm Cogirth}_{K_q^r}^g(K_n^r)$.
\end{definition}

Analogous to Definition~\ref{def:OmniAbsorberCollectiveGirth}, we define a notion of collective cogirth for two omni-absorber as follows.

\begin{definition}\label{def:OmniAbsorberCollectiveCogirth}
Let $X$ be an $r$-uniform hypergraph. We say two $K_q^r$-omni-absorbers $A_1,A_2$ for $X$ with decomposition functions $\mathcal{Q}_{A_1}, \mc{Q}_{A_2}$ have \emph{collective cogirth at least $g$} if $A_1$ and $A_2$ each have collective girth at least $g$ and for every pair of $K_q^r$-divisible subgraphs $L_1,L_2$ of $X$, $\mathcal{Q}_{A_1}(L_1)$ and $\mathcal{Q}_{A_2}(L_2)$ have cogirth at least $g$. 
\end{definition}

Analogous to Definition~\ref{def:OmniAbsorberProj}, we define a common projection of two omni-absorbers as follows.

\begin{definition}[Cogirth $g$ Projection of Two Omni-Absorbers]\label{def:OmniAbsorberCoProj}
Let $G$ be an $r$-uniform hypergraph, let $X$ be a subgraph of $G$, and let $A_1,A_2\subseteq G$ be two $K_q^r$-omni-absorbers of $X$ with decomposition function $\mathcal{Q}_{A_1}$ and $\mc{Q}_{A_2}$ respectively. We define the \emph{cogirth-$g$ projection treasury} of $A_1$ and $A_2$ on to $G$ and $X$ as follows: we let
$${\rm CoProj}_g(A_1,A_2,G,X):= {\rm CoTreasury}_q^g(G,~G\setminus A_1,~G\setminus A_2,~X)  \perp \mathcal{M}(A_1,A_2)$$
where
$$\mathcal{M}(A_1,A_2) := \{ M_1\cup M_2: M_1\in \mc{M}(A_1),~M_2\in \mc{M}(A_2)\}.$$
\end{definition}
Note that in the definition above, $\mc{M}(A_1)$ is a set of matchings in the first copy of ${\rm Design}_{K_q^r}(G)$ while $\mc{M}(A_2)$ is a set of matchings in the second copy of ${\rm Design}_{K_q^r}(G)$.

Analogous to Theorem~\ref{thm:HighGirthAbsorber}, here then is our high cogirth omni-absorber theorem.

\begin{thm}[High Cogirth Omni-Absorber Theorem]\label{thm:HighCogirthAbsorber}
For all integers $q>r\ge 1$ and $g\ge 3$ and real $\alpha \in \left(0,~\frac{1}{2(q-r)}\right)$, there exist integers $a,n_0\ge 1$ such that the following holds for all $n\ge n_0$: 

Suppose that Theorem~\ref{thm:HighCogirthBooster} holds for $r':=r-1$. If $X$ is a spanning subgraph of $K_n^r$ with $\Delta(X) \le \frac{n}{\log^{ga} n}$ such that ${\rm CoTreasury}^g_q(K_n^r,K_n^r,X)$ is $\bigg(\binom{n}{q-r},~\alpha\cdot \binom{n}{q-r},~\frac{1}{2g(q-r)},~\alpha\bigg)$-regular and we let $\Delta:= \max\left\{ \Delta(X),~n^{1-\frac{1}{r}}\cdot \log n\right\}$, then there exists two $K_q^r$-omni-absorbers $A_1,A_2$ for $X$ with $\Delta(A_i)\le a\Delta\cdot \log^{a} \Delta$ for $i\in \{1,2\}$ and collective cogirth at least $g$, together with a subtreasury $T$ of ${\rm CoProj}_g(A_1,A_2 K_n^r, X)$ that is 
$$\Bigg(\binom{n}{q-r},~4\alpha \cdot \binom{n}{q-r},~\frac{1}{8g(q-r)},~3\alpha\Bigg){\rm - regular}.$$
\end{thm}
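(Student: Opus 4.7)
The plan is to closely mimic the proof of Theorem~\ref{thm:HighGirthAbsorber}, but applied in parallel to two copies of $X$, while tracking an additional configuration hypergraph encoding cogirth. First, apply Theorem~\ref{thm:Omni} twice (independently) to obtain two $C_0$-refined $K_q^r$-omni-absorbers $A_1^0, A_2^0$ for $X$ with $\Delta(A_i^0) \le C_0 \cdot \Delta$ and decomposition families $\mc{H}_1, \mc{H}_2$. We then wish to adjoin omni-boosters $B_1, B_2$ (whose canonical decompositions live in edge-disjoint copies $G_1, G_2$ of $K_n^r$) such that the pair $(A_1 := A_1^0 \cup B_1,~A_2 := A_2^0 \cup B_2)$ satisfies (i) each $A_i$ individually has collective girth $\ge g$, and (ii) for any two $K_q^r$-divisible $L_1, L_2 \subseteq X$, the unions $\mc{Q}_{A_1}(L_1)$ and $\mc{Q}_{A_2}(L_2)$ have cogirth $\ge g$.

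To produce $B_1, B_2$, I would introduce a \emph{bi-quantum omni-booster}: fix a rooted $K_q^r$-booster $B_0$ of rooted girth $\ge g$ (Theorem~\ref{thm:HighRootedGirthBooster}), take full $B_0$-type quantum collections $\mc{B}^{(1)}, \mc{B}^{(2)}$ for $A_1^0, A_2^0$ respectively, and independently $p$-sparsify both with $p = \log^a n / \binom{n}{v(B_0)-q}$. The intrinsic properties, namely $\Delta(\bigcup \mc{B}^{(i)}_p) \le a \Delta \log^a \Delta$ and ${\rm Disjoint}(\mc{B}^{(i)}_H) \cap {\rm HighGirth}_g(\mc{B}^{(i)}_H)\neq \emptyset$ for each $H$, follow directly from Lemma~\ref{lem:RandomQuantumIntrinsic} applied separately to $i=1,2$. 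The only new intrinsic condition to enforce is that for each pair $(H_1, H_2) \in \mc{H}_1 \times \mc{H}_2$ one can choose boosters $B^{(1)}_{H_1}, B^{(2)}_{H_2}$ in the sparsifications that avoid any bad cogirth configuration; here I would define, in analogy with ${\rm Girth}_g(\mc{B})$, an auxiliary \emph{cogirth-$g$ booster-configuration hypergraph} whose vertices are $V(\mc{B}^{(1)}) \cup V(\mc{B}^{(2)})$ and whose edges encode selections of boosters across the two sides that collectively witness an edge of ${\rm Cogirth}_{K_q^r}^g(K_n^r)$. Using Lemma~\ref{lem:BoostersContainingSetOfCliques} together with Proposition~\ref{prop:CogirthDegrees} to bound degrees and codegrees of this new hypergraph, a Kim--Vu application (Corollary~\ref{cor:KimVu}) in the style of Claim~\ref{cl:4Holds} shows that with high probability, for every $H_i$, the set of boosters in $(\mc{B}^{(i)}_{H_i})_p$ involved in such a bad cogirth configuration is of size $o(\log^a n)$, so a simple greedy/counting argument inside the surviving sparsifications selects one good $B^{(i)}_H$ per $H \in \mc{H}_i$ for each $i \in \{1,2\}$.

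For the extrinsic side, the goal is to show that ${\rm CoProj}_g(A_1, A_2, K_n^r, X)$ contains a subtreasury that is $\big(\binom{n}{q-r},~ 4\alpha \binom{n}{q-r},~ 1/(8g(q-r)),~ 3\alpha\big)$-regular. The treasury has two copies of ${\rm Design}_{K_q^r}(K_n^r\setminus X)$ and of ${\rm Reserve}_{K_q^r}$, and its configuration hypergraph is the disjoint union of two copies of ${\rm Girth}_{K_q^r}^g(K_n^r)$ together with ${\rm Cogirth}_{K_q^r}^g(K_n^r)$. The ``within-copy'' contributions to each regularity parameter follow verbatim from Lemma~\ref{lem:RandomQuantumExtrinsic} (applied to each side and shifted from constants $2\alpha, 1/(4g(q-r))$ to $3\alpha, 1/(8g(q-r))$ by the usual slack in the computations). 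The genuinely new work is to control how booster choices in $\mc{B}^{(1)}_p \cup \mc{B}^{(2)}_p$ project the cogirth hypergraph: I would define an \emph{extended cogirth configuration hypergraph} ${\rm Extend}_g^{\rm co}(\mc{B}^{(1)}, \mc{B}^{(2)})$ whose $(s_1, s_2, s_3, s_4)$-tuples record $s_1$ cliques from copy $1$, $s_2$ cliques from copy $2$, $s_3$ quantum boosters from side $1$ and $s_4$ from side $2$ that, after choosing `on'/`off', fill out an edge of ${\rm Cogirth}_{K_q^r}^g$. Using Proposition~\ref{prop:CogirthDegrees} in place of the trivial bound $\Delta_{s,t}(\mc{G}) \le (qs)^{qs} n^{(q-r)(s-t)-1}$ used in Lemma~\ref{lem:GirthConfigPrelim}, the same inductive counting that proved Lemmas~\ref{lem:GirthConfigPrelim} and~\ref{lem:ExtendConfigBoosterDegrees} gives codegree bounds of the form $\Delta_{(t_1, t_2, t_3, t_4)} \le C' \binom{n}{q-r}^{s_1+s_2-t_1-t_2} \binom{n}{b}^{s_3+s_4 - t_3 - t_4}/n$, with the same $\Delta/n$ exception in the single-clique-seed case.

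The main obstacle is precisely the last step: verifying that after this extended cogirth projection, the Kim--Vu applications mirroring Claims~\ref{cl:5Holds}--\ref{cl:11Holds} still go through with the stated parameters. The nontrivial codegree checks are those for cross-copy pairs of cliques (one from $\mc{D}_1$, one from $\mc{D}_2$), where the maximum $2$-codegree and maximum common $2$-degree on ${\rm Cogirth}_{K_q^r}^g$ must be shown to be $O(D^{1-\beta})$; this uses that the two copies of $\mc{D}$ are edge-disjoint in the treasury so that the `common-vertex' trick of Lemma~\ref{lem:UncommonDegree} applies with $e_2 \setminus e_1$ replaced by a witnessing vertex from the cogirth configuration. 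Once the resulting hypergraph admits the regularity claimed, the union bound across Claims~\ref{cl:5Holds}--\ref{cl:11Holds} (each now indexed additionally by a choice of copy or pair of copies, contributing at worst a $n^{O(1)}$ union bound that is absorbed by the $n^{-\log n}$ Kim--Vu tail) finishes the proof, and the resulting bi-sparsification yields $A_1, A_2$ satisfying the conclusion of the theorem.
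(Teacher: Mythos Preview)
Your overall architecture is correct and matches the paper's: start from refined omni-absorbers, attach rooted $K_q^r$-boosters of high rooted girth via a randomly sparsified full quantum booster, and control both a cogirth booster-configuration hypergraph (intrinsic) and an extended cogirth configuration hypergraph (extrinsic) with Kim--Vu. The paper in fact only sketches this theorem, so your level of detail is comparable.

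There are two points where your route diverges from the paper's, and in both cases the paper's choice is simpler. First, the paper applies Theorem~\ref{thm:Omni} \emph{once} to obtain a single $C$-refined omni-absorber $A$ with decomposition family $\mc{H}$, and then works with two copies $\mc{B}_1,\mc{B}_2$ of the \emph{same} full quantum booster $\mc{B}\in{\rm Full}_{B_0}(K_n^r,A,X)$ on two disjoint copies of $K_n^r$; the two sparsifications are taken independently. You instead build two independent omni-absorbers $A_1^0,A_2^0$ with distinct families $\mc{H}_1,\mc{H}_2$, which forces a genuinely four-index codegree analysis $(s_1,s_2,s_3,s_4)$. Your version works, but the paper's symmetric setup collapses this to a two-index analysis $(s_1,s_2)$ exactly as in Lemma~\ref{lem:ExtendConfigBoosterDegrees}.

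Second, and more substantively, you overestimate the difficulty of the cogirth codegree bounds. A cogirth Erd\H{o}s configuration of size $s$ spans at most $s(q-r)+r-1$ vertices, one fewer than a girth configuration, so Proposition~\ref{prop:CogirthDegrees} gives $\Delta_t\big({\rm Cogirth}_{K_q^r}^g(K_n^r)^{(s)}\big)\le C\binom{n}{q-r}^{s-t}/n$ uniformly for all $t\ge 1$. Consequently, in the cogirth analogues of Lemmas~\ref{lem:GirthConfigBoosterDegrees} and~\ref{lem:ExtendConfigBoosterDegrees} there is \emph{no} $\Delta/n$ exceptional case: one always gets the clean $1/n$ factor. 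In particular, the paper explicitly notes that the analogue of Lemma~\ref{lem:UncommonDegree} is not needed for cogirth, and the analogues of Claims~\ref{cl:10Holds} and~\ref{cl:11Holds} are subsumed by a strengthened version of Claim~\ref{cl:7Holds}. Your proposed adaptation of the ``common-vertex trick'' for cross-copy pairs would go through, but it is unnecessary work; recognising the extra saving from the $r-1$ in the cogirth vertex count is what makes the extrinsic analysis for cogirth strictly easier than for girth.
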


We note the parameters differ slightly from those of Theorem~\ref{thm:HighGirthAbsorber}, namely they allow slightly more irregularity, slightly smaller reserve degrees and slightly larger  codegrees. This is because we could reuse our previous work (Lemmas~\ref{lem:RandomQuantumIntrinsic} and~\ref{lem:RandomQuantumExtrinsic}) but also incorporate the cogirth configuration hypergraph (for which we need to sligtly relax the parameters).

\subsection{Cogirth Omni-Boosters}

Analogous to Definition~\ref{def:OmniBoosterProj}, we next define a projection treasury for pair of omni-boosters as follows.

\begin{definition}[Cogirth $g$ Projection of Two Omni-Boosters]\label{def:OmniBoosterCoproj}
Let $(G,A,X,\mc{H})$ be a $K_q^r$-sponge. Let $B_1,B_2$ be two $K_q^r$-omni-boosters for $A$ with booster families $\mathcal{B}_i=(B_{i,H}:H\in\mathcal{H})$ for $i\in \{1,2\}$. 

We define the \emph{cogirth-$g$ projection treasury} of $B_1$ and $B_2$ on to $G$ and $X$ as:
$${\rm Proj}_g(B_1,B_2,A,G,X) := {\rm CoTreasury}_q^g(G,~G\setminus (A\cup B_1),~G\setminus (A\cup B_2),~X)\perp \mathcal{M}(\mc{B}_1,\mc{B}_2)$$
where
$$\mathcal{M}(\mc{B}_1,\mc{B}_2) := \{ M_1\cup M_2: M_1\in \mc{M}(\mc{B}_1),~ M_2\in \mc{M}(\mc{B}_2)\}$$
\end{definition}
Again we note that in the definition above, $\mc{M}(\mc{B}_1)$ is a set of matchings in the first copy of ${\rm Design}_{K_q^r}(G)$ while $\mc{M}(\mc{B}_2)$ is a set of matchings in the second copy of ${\rm Design}_{K_q^r}(G)$.

Thus Theorem~\ref{thm:HighCogirthAbsorber} is proved via the following pair of omni-boosters theorem, analogous to Theorem~\ref{thm:HighGirthOmniBooster}. 

\begin{thm}[High Cogirth Omni-Booster Theorem]\label{thm:HighCogirthOmniBooster}
For all integers $q>r\ge 1$, $C\ge 1$ and $g\ge 3$ and real $\alpha \in \left(0,~\frac{1}{2(q-r)}\right)$, there exist integers $a,n_0\ge 1$ such that the following holds for all $n\ge n_0$: 

Suppose that Theorem~\ref{thm:HighCogirthBooster} holds for $r':=r-1$. If $(K_n^r, A, X)$ is an $(\alpha,g)$-regular $ga$-bounded $C$-refined $K_q^r$-sponge, then there exist two $K_q^r$-omni-boosters $B_1,B_2$ for $A$ and $X$ with $\Delta(B_i)\le a\Delta\cdot \log^{a} \Delta$ for $i\in \{1,2\}$ and collective cogirth at least $g$, and a subtreasury $T$ of ${\rm CoProj}_g(B_1,B_2,A,G,X)$ such that $T$ is 
$$\Bigg(\binom{n}{q-r},~4\alpha \cdot \binom{n}{q-r},~\frac{1}{8g(q-r)},~3\alpha\Bigg)-{\rm regular}.$$
\end{thm}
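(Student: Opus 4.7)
The plan is to mirror the proof of Theorem~\ref{thm:HighGirthOmniBooster} but now run the sparsification argument in parallel across two copies of the design hypergraph. Fix (via Theorem~\ref{thm:HighRootedGirthBooster}) a rooted $K_q^r$-booster $B_0$ of rooted girth at least $g$, and for each $i\in\{1,2\}$ pick an arbitrary $\mc{B}^{(i)}\in {\rm Full}_{B_0}(K_n^r,A,X)$. Let $\mc{B}^{(i)}_p$ be the random $p$-sparsifications with $p = \log^a n/\binom{n}{v(B_0)-q}$, chosen independently across $i\in\{1,2\}$. The output boosters $B_1,B_2$ will be realized by selecting, for each $H\in\mc{H}$ and each $i$, an actual element of $(\mc{B}^{(i)}_H)_p$ that is both edge-disjoint from everything else and avoids the relevant girth/cogirth configurations.

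For the intrinsic half, apply Lemma~\ref{lem:RandomQuantumIntrinsic} separately to $\mc{B}^{(1)}_p$ and $\mc{B}^{(2)}_p$; each succeeds with probability at least $0.9$, producing degree bounds $\Delta(\bigcup\mc{B}^{(i)}_p)\le a\Delta\log^a\Delta$ and the existence of ``good'' boosters within each $(\mc{B}^{(i)}_H)_p$. We then need one additional ``cogirth-intrinsic'' claim: with high probability, for every $H_1,H_2\in\mc{H}$, sufficiently many elements of $(\mc{B}^{(1)}_{H_1})_p\times (\mc{B}^{(2)}_{H_2})_p$ are not involved in any cogirth-$g$-Erd\H{o}s configuration with the rest. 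This is proved by defining an auxiliary ``cogirth booster configuration'' hypergraph analogous to Definition~\ref{def:GirthConfig}, whose edges are pairs $(\mc{Z}_1,\mc{Z}_2)$ with $\mc{Z}_i\subseteq \mc{B}^{(i)}$ realizing a $(w_1,w_2)$-cogirth-$g$-Erd\H{o}s-configuration in the sense of Section~\ref{s:CogirthPair}. Since the total number of cliques across the two copies in any such configuration is at most $g-1$, and Proposition~\ref{prop:CogirthDegrees} gives the right codegree bounds, the argument of Lemma~\ref{lem:GirthConfigBoosterDegrees} transfers essentially verbatim, yielding upper bounds of the form $C'\binom{n}{b}^{s_1+s_2-t_1-t_2}\cdot\Delta/n$ on the codegrees. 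A single application of Corollary~\ref{cor:KimVu} to the joint sparsification (viewed as a $p$-sparsification of the disjoint union $\mc{B}^{(1)}\sqcup\mc{B}^{(2)}$) then controls the count of ``bad'' pairs per $(H_1,H_2)$ below $\tfrac{1}{12}\log^a n$, exactly as in Claim~\ref{cl:4Holds}.

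For the extrinsic half, apply Lemma~\ref{lem:RandomQuantumExtrinsic} separately to each $\mc{B}^{(i)}_p$: this makes each single-copy projection treasury regular with the desired parameters (with a $2\alpha$ slack). The new ingredient is that ${\rm CoProj}_g(B_1,B_2,A,K_n^r,X)$ also contains a copy of ${\rm Cogirth}_{K_q^r}^g(K_n^r)$ projected through $\mc{M}(\mc{B}_1,\mc{B}_2)$. We need to bound $\Delta_t(H^{(s)})$ for these new edges, plus the associated $2$-codegree and common $2$-degree conditions. The analysis follows the templates in Claims~\ref{cl:7Holds}--\ref{cl:11Holds}: for each fixed starting configuration $Q\subseteq V(\mc{C})$, define a hypergraph $J$ whose edges are the booster-tuples $(Z_1,Z_2)$ that could extend $Q$ to a full projected cogirth-edge of $H$. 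By Proposition~\ref{prop:CogirthDegrees} the ``skeleton'' contributes the same $\binom{n}{q-r}^{s-t}/n$ factor, and the remaining $k=k_1+k_2$ booster slots are handled exactly as before by Lemma~\ref{lem:ExtendConfigBoosterDegrees} applied to an appropriate extended configuration hypergraph on the disjoint union $\mc{B}^{(1)}\sqcup\mc{B}^{(2)}$. Applying Corollary~\ref{cor:KimVu} separately for each $k\le 2(g-2)$ and union-bounding over the polynomially many $Q$, the bounds $D^{s-t-\beta'}$ (with $\beta' = \tfrac{1}{8g(q-r)}$) hold with probability at least $0.99$, giving a slightly looser codegree exponent that accounts for the relaxed parameter.

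The main obstacle is conceptually the last paragraph: cogirth configurations span both copies, so the relevant Kim-Vu events involve products of two independent sparsifications, and one must verify that the Kim-Vu hypothesis $(*_i)$ continues to hold when the auxiliary hypergraph encodes a configuration whose vertices are split between the two copies. This works because the total exponent of $p$ we gain is still $k = k_1+k_2$, matching the total number of booster slots; the probability bookkeeping is essentially identical to that in the proof of Claim~\ref{cl:9Holds}, and the slight weakening of parameters to $\bigl(\binom{n}{q-r},\,4\alpha\binom{n}{q-r},\,\tfrac{1}{8g(q-r)},\,3\alpha\bigr)$ is exactly what absorbs the extra slack coming from (i) doubling the number of bad events to union-bound over and (ii) halving the codegree exponent $\beta$ to $\beta/2$ one additional time when projecting through the pair $\mc{M}(\mc{B}_1,\mc{B}_2)$ instead of a single $\mc{M}(\mc{B})$. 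A final union bound over the two intrinsic applications, the two extrinsic applications, the new cogirth-intrinsic claim, and the new cogirth-extrinsic claim gives a positive probability of a simultaneously good outcome, from which the desired $B_1,B_2$ are extracted.
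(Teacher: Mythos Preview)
Your proposal is correct and follows essentially the same route as the paper: reduce to a quantum version (Theorem~\ref{thm:HighCogirthQuantumOmniBooster}), run the random $p$-sparsification on two copies, reuse Lemmas~\ref{lem:RandomQuantumIntrinsic} and~\ref{lem:RandomQuantumExtrinsic} verbatim for the girth parts, and add cogirth-analogues (the hypergraphs ${\rm Cogirth}_g(\mc{B}_1,\mc{B}_2)$ and ${\rm CoExtend}_g(\mc{B}_1,\mc{B}_2)$) handled by Kim--Vu, with the slack in the parameters absorbing the extra projection.

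One point where the paper is sharper than your write-up: because Proposition~\ref{prop:CogirthDegrees} already gives a $\tfrac{1}{n}$ factor for \emph{all} $t\ge 1$ (not just $t\ge 2$), the cogirth codegree bounds in Lemmas~\ref{lem:CogirthConfigBoosterDegrees} and~\ref{lem:CoExtendConfigBoosterDegrees} are uniformly $C'\binom{n}{b}^{s-t}\cdot\tfrac{1}{n}$, with no $\tfrac{\Delta}{n}$ special case. Consequently the cogirth-extrinsic analysis needs no analogue of Lemma~\ref{lem:UncommonDegree} and no analogues of Claims~\ref{cl:10Holds}--\ref{cl:11Holds}; a strengthened version of Claim~\ref{cl:7Holds} alone suffices. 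Your $\Delta/n$ bookkeeping would still go through, but it is more work than necessary. Also, the cogirth-intrinsic claim should be phrased per $(i,H)$ (bounding $|(\mc{B}_{i,H})_p\setminus {\rm HighCogirth}_g((\mc{B}_{i,H})_p)|$) rather than per pair $(H_1,H_2)$, matching the paper's ${\rm HighCogirth}_g$ definition.
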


Theorem~\ref{thm:HighCogirthOmniBooster} is proved meanwhile via pairs of quantum omni-boosters. 

\subsection{Cogirth Quantum Omni-Boosters}

Thus analgous to Definition~\ref{def:QuantumOmniBoosterProj}, we next define a cogirth projection for a pair of quantum omni-boosters as follows. 

\begin{definition}[Cogirth $g$ Projection of Quantum Omni-Booster]
Let $(G,A,X,\mc{H})$ be a $K_q^r$-sponge. Let $B_1,B_2$ be two quantum $K_q$-omni-boosters for $A$ with quantum booster collection $\mathcal{B}_i = (\mathcal{B}_{i,H}: H\in\mathcal{H})$ for $i\in \{1,2\}$.

We define the \emph{cogirth-$g$ projection treasury} of $B_1$ and $B_2$ on to $G$ and $X$ as: 
$${\rm Proj}_g(B_1,B_2,A,G,X):= {\rm CoTreasury}_q^g(G,~G\setminus A,~G\setminus A,~X) \perp \mathcal{M}(\mc{B}_1,\mc{B}_2)$$
where
$$\mathcal{M}(\mc{B}_1,\mc{B}_2) := \{ M_1\cup M_2: M_1\in \mc{M}(\mc{B}_1),~ M_2\in \mc{M}(\mc{B}_2)\}.$$
\end{definition}

Analogous to a related part of Definition~\ref{def:QuantumOmniBooster}, we next define a high cogirth hypergraph for a pair of quantum boosters as follows.

\begin{definition}[High Cogirth Hypergraph for Two Quantum Omni-Boosters]
Let $q > r\ge 1$ be integers. Let $(G,A,X,\mc{H})$ be a $K_q^r$-sponge. Let $\mc{B} \in {\rm Full}_{B_0}(G,A,X)$ for some rooted $K_q^r$-booster $B_0$.  Let $\mc{B}_1, \mc{B}_2$ be two copies of $\mc{B}$ on disjoint copies of $G$.

For an integer $g\ge 2$ and $i\in \{1,2\}$, we define
\begin{align*}{\rm HighCogirth}_g(\mathcal{B}_{i,H}) := \bigg\{B_{i,H} \in \mathcal{B}_{i,H}: \nexists &R\in {\rm Cogirth}_{K_q^r}^g(G) \text{ where } R\subseteq M \text{ for some } M \in \mc{M}(\mc{B}_1,\mc{B}_2) \\
&\text{ and } R\cap \big( (B_{i,H})_{\rm on}\cup (B_{i,H})_{\rm off}\big)\ne \emptyset. \bigg\}.
\end{align*}
\end{definition}

Thus to prove Theorem~\ref{thm:HighCogirthOmniBooster}, we will prove the following theorem about a pair of quantum omni-boosters, analogous to Theorem~\ref{thm:HighGirthQuantumOmniBooster}.

\begin{thm}[High Cogirth Quantum Omni-Booster Theorem]\label{thm:HighCogirthQuantumOmniBooster}
For all integers $q>r\ge 1$, $C\ge 1$ and $g\ge 3$ and real $\alpha \in \left(0,~\frac{1}{2(q-r)}\right)$, there exist integers $a, n_0\ge 1$ such that the following holds for all $n\ge n_0$: 

Suppose that Theorem~\ref{thm:HighCogirthBooster} holds for $r':=r-1$. If $(K_n^r, A, X,\mc{H})$ is an $(\alpha,g)$-regular $ga$-bounded $C$-refined $K_q^r$-sponge, then there exists two quantum $K_q^r$-omni-boosters $B_1,B_2$ for $A$ and $X$ with quantum booster collections $\mc{B}_i=(\mathcal{B}_{i,H}: H\in \mathcal{H})$ for $i\in \{1,2\}$ such that both of the following hold:
\begin{enumerate}
    \item[(1)] $\Delta(B_i)\le \Delta\cdot \log^{a} \Delta$ for all $i\in\{1,2\}$ and for each $H\in \mathcal{H}$, 
    $${\rm Disjoint}(\mc{B}_{i,H}) \cap {\rm HighGirth}_g(\mc{B}_{i,H}) \cap {\rm HighCogirth}_g(\mc{B}_{i,H}) \ne \emptyset,$$
    \item[(2)] ${\rm Proj}_g(B_1,B_2,A,K_n^r,X)$ is $\Bigg(\binom{n}{q-r},~3\alpha \cdot \binom{n}{q-r},~\frac{1}{4g(q-r)},~3\alpha\Bigg)-{\rm regular}$.
\end{enumerate} 
\end{thm}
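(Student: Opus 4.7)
The plan is to mimic the proof of Theorem~\ref{thm:HighGirthQuantumOmniBooster}, namely fix a rooted $K_q^r$-booster $B_0$ of rooted girth at least $g$ (which exists by Theorem~\ref{thm:HighRootedGirthBooster} applied to $r$, using the hypothesis that Theorem~\ref{thm:HighCogirthBooster} holds for $r-1$), fix arbitrary $\mc{B}_i\in {\rm Full}_{B_0}(K_n^r,A,X)$ for $i\in\{1,2\}$ using the two disjoint copies of ${\rm Design}_{K_q^r}(K_n^r)$ from the cogirth treasury, and let $p:=\frac{\log^a n}{\binom{n}{v(B_0)-q}}$. We then take the \emph{independent} random $p$-sparsifications $(\mc{B}_1)_p$ and $(\mc{B}_2)_p$ and show that all desired properties hold simultaneously with probability at least $0.9$ via a union bound. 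For each $i\in\{1,2\}$ separately, applying Lemma~\ref{lem:RandomQuantumIntrinsic} already yields $\Delta(\bigcup (\mc{B}_i)_p)\le \Delta\cdot\log^a\Delta$ and, for every $H\in \mc{H}$, both ${\rm Disjoint}((\mc{B}_i)_p)_H\cap {\rm HighGirth}_g((\mc{B}_i)_p)_H\ne\emptyset$, with failure probability $0.1$ each. So the remaining work is to prove the cogirth analogues of (1) and of the extrinsic regularity in (2).

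For the new intrinsic cogirth condition, I would define for each pair of indices $(i,H)\in\{1,2\}\times\mc{H}$ an auxiliary multi-hypergraph $J_{i,H}$ on vertex set $\mc{B}_1\cup\mc{B}_2$ whose edges are the booster-configurations arising from ${\rm Cogirth}_{K_q^r}^g(K_n^r)$ that contain some $B_{i,H}\in(\mc{B}_i)_H$, exactly as in Claim~\ref{cl:4Holds}. By the same logic, $|(\mc{B}_{i,H})_p\setminus {\rm HighCogirth}_g((\mc{B}_{i,H})_p)|\le e((J_{i,H})_p)$, and so it suffices to bound each $e(J_{i,H}^{(k_1+k_2)}{}_p)$ for $k_1,k_2\in [g-2]$ using Corollary~\ref{cor:KimVu}. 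The required degree and codegree bounds follow from straightforward cogirth analogues of Lemma~\ref{lem:GirthConfigBoosterDegrees} and Lemma~\ref{lem:GirthConfigPrelim}: in every vertex stratum, a cogirth-$g$-Erd\H{o}s-configuration spans at most $s(q-r)+r-1$ vertices (one less than an Erd\H{o}s configuration), which is precisely the improvement recorded in Proposition~\ref{prop:CogirthDegrees} that compensates for the extra sum over $k_1+k_2$ instead of $k$. The sparsification probability $p$ is raised to the power $k_1+k_2$ in the Kim-Vu bound, matching the extra $\binom{n}{b}$ factor in the degree counts.

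For the extrinsic cogirth regularity, I would extend the seven-claim argument inside the proof of Lemma~\ref{lem:RandomQuantumExtrinsic} to the combined treasury ${\rm CoTreasury}_q^g(K_n^r, K_n^r, K_n^r, X)\perp \mc{M}(\mc{B}_1,\mc{B}_2)$. Claims~\ref{cl:5Holds}--\ref{cl:7Holds} (quasi-regularity of $G_1$, reserve degrees of $G_2$, and the $\Delta_1(H^{(i)})$ bound) decouple across the two copies since the configurations in ${\rm Girth}_{K_q^r}^g$ live entirely within one copy; so we apply Lemma~\ref{lem:RandomQuantumExtrinsic} to each $\mc{B}_{i,p}$ separately and lose at most a factor of two in the failure probability. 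Claim~\ref{cl:8Holds} (simple codegrees) is unchanged. The genuinely new work is to reprove Claims~\ref{cl:9Holds}--\ref{cl:11Holds} with ${\rm Girth}$ replaced by ${\rm Girth}\cup {\rm Cogirth}$. For each such bound we build an auxiliary hypergraph using the \emph{cogirth-extended configuration hypergraph} --- i.e.\ the cogirth analogue of Definition~\ref{def:ExtendConfig}, where we allow the unboosted part of a configuration to be an edge of either $\mc{G}$ or $\mc{C}$ and the boosted parts to come from either $\mc{B}_1$ or $\mc{B}_2$. Using Proposition~\ref{prop:CogirthDegrees} and the natural cogirth analogues of Lemmas~\ref{lem:ExtendConfigBoosterDegrees} and~\ref{lem:UncommonDegree}, the degree/codegree bounds mirror the ones used in the proof of Lemma~\ref{lem:RandomQuantumExtrinsic} up to a constant factor, which is absorbed by the $\alpha\to 3\alpha$ and $\beta\to\beta/2$ slack in the parameters (the extrinsic output of the girth-only lemma already had $2\alpha$ and $\beta/2$; bumping these to $3\alpha$ and $\beta/4$ exactly accommodates a second copy and the cogirth contribution).

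The main obstacle --- and the only place the proof really diverges from the one-booster case --- is the analogue of Claim~\ref{cl:11Holds} (maximum common $2$-degree) when the two test edges $e_1,e_2$ lie in \emph{different} copies of ${\rm Design}_{K_q^r}(K_n^r)$, so that the only hyperedges of $H$ containing both must come from ${\rm Cogirth}_{K_q^r}^g(K_n^r)$. Here the crucial gain is the $\frac{1}{n}$ in Proposition~\ref{prop:CogirthDegrees}: since a $(w_1+w_2,\cdot)$-cogirth configuration spans one fewer vertex than an Erd\H{o}s configuration of the same total size, the common $2$-degree analysis still yields the extra $\frac{1}{n}$ factor needed to absorb $D^{\beta}$ and pass the Kim-Vu check. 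All other checks go through essentially verbatim. Union-bounding the $O(1)$ many failure events gives the desired probability $\ge 0.9$, and choosing $a$ large enough to satisfy all the Kim-Vu threshold inequalities (uniformly in $q,r,g,C,\alpha$) completes the argument.
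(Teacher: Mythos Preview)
Your proposal is essentially correct and follows the same route as the paper: fix $B_0$, take two independent random $p$-sparsifications of full quantum omni-boosters, reuse Lemmas~\ref{lem:RandomQuantumIntrinsic} and~\ref{lem:RandomQuantumExtrinsic} for the girth parts, and handle the cogirth parts by building cogirth analogues of the auxiliary hypergraphs and applying Kim--Vu with bounds supplied by Proposition~\ref{prop:CogirthDegrees}.

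One point where you work harder than necessary: you flag the analogue of Claim~\ref{cl:11Holds} (maximum common $2$-degree across the two copies) as ``the main obstacle'' and invoke a cogirth analogue of Lemma~\ref{lem:UncommonDegree}. The paper observes that this is not needed. Because Proposition~\ref{prop:CogirthDegrees} gives a uniform $\tfrac{1}{n}$ factor even when $t=1$, the cogirth analogue of Lemma~\ref{lem:ExtendConfigBoosterDegrees} (namely Lemma~\ref{lem:CoExtendConfigBoosterDegrees}) has no exceptional $\tfrac{\Delta}{n}$ case. In particular, the cogirth contribution to $\Delta\big(H^{(2)}\big)$ after sparsification is already $O\big(D\cdot \tfrac{\mathrm{polylog}\,n}{n}\big)\le D^{1-\beta}$, and since both the maximum $2$-codegree and the maximum common $2$-degree are trivially bounded by $\Delta\big(H^{(2)}\big)$, the cogirth analogues of Claims~\ref{cl:10Holds} and~\ref{cl:11Holds} follow immediately from the (stronger) cogirth analogue of Claim~\ref{cl:7Holds}. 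Relatedly, your statement that Claims~\ref{cl:5Holds}--\ref{cl:7Holds} ``decouple across the two copies'' is accurate for Claims~\ref{cl:5Holds} and~\ref{cl:6Holds} but not for Claim~\ref{cl:7Holds}: $H$ in the cotreasury includes ${\rm Cogirth}_{K_q^r}^g$, so the cogirth contribution to $\Delta\big(H^{(i)}\big)$ must be bounded too --- but as just noted, this is straightforward from Lemma~\ref{lem:CoExtendConfigBoosterDegrees}.
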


The proof of Theorem~\ref{thm:HighCogirthQuantumOmniBooster} proceeds similarly to that of Theorem~\ref{thm:HighGirthQuantumOmniBooster}; namely we choose a subset $\mc{B}_{i,p}$ of $\mc{B}_i$ by choosing each element independently with probability $p := \frac{\log^a n}{\binom{n}{v(B_0)-q}}$. Then in fact, most of the desired properties directly from Lemmas~\ref{lem:RandomQuantumIntrinsic} and~\ref{lem:RandomQuantumExtrinsic}. The only additional properties needed to prove Theorem~\ref{thm:HighCogirthQuantumOmniBooster} are to ensure that $|(\mc{B}_{i,H})_p \setminus {\rm HighCogirth}_g((\mc{B}_{i,H})_p)| < \frac{1}{12}\cdot \log^a n$ for all $H\in \mc{H}$ and $i\in \{1,2\}$ and to prove that projecting out by $\mc{M}(\mc{B}_1,\mc{B}_2)$ in ${\rm Cogirth}_{K_q^r}^g(G)$ does not change the regularity too much. This part proceeds via proofs similar to those given in Lemmas~\ref{lem:RandomQuantumIntrinsic} and~\ref{lem:RandomQuantumExtrinsic} but with different auxiliary hypergraphs which we define in the next subsections. That said, we omit the proofs of these desired properties as they follow via similar applications of Kim-Vu (in fact, they are easier since we have the better degree and codegrees bounds from Proposition~\ref{prop:CogirthDegrees}).

\subsection{Cogirth Booster Configuration Hypergraph}

Analogous to Definition~\ref{def:GirthBoosterConfig}, we define the following. 

\begin{definition}
Let $q>r\ge 2$ and $g\ge 3$ be integers. Let $(G,A,X,\mc{H})$ be a $K_q^r$-sponge. Let $\mc{B} \in {\rm Full}_{B_0}(G,A,X)$ for some rooted $K_q^r$-booster $B_0$. Let $\mc{B}_1, \mc{B}_2$ be two copies of $\mc{B}$ on disjoint copies of $G$.

An \emph{$g$-cogirth-booster-configuration} of $\mc{B}_1\cup \mc{B}_2$ is a subset $U =U_1\cup U_2$ where $\{U_{i,1},\ldots,U_{i,m_i}\}\subseteq \mc{B}_i$ for all $i\in \{1,2\}$ with $m_1,m_2\ge 1$ and such that $|U\cap \mc{B}_{i,H}|\le 1$ for all $H\in \mc{H}$ and there exists $F_1\cup F_2\in {\rm Cogirth}_{K_q^r}^g(G)$ and for $i\in \{1,2\}$, a partition of $F_i$ into $m_i$ nonempty sets, $F_{i,1},\ldots, F_{i,m_i}$ such that for each $j\in [m_i]$, we have $F_{i,j}\subseteq (U_{i,j})_{\rm on}$ or $F_{i,j}\subseteq (U_{i,j})_{\rm off}$.
\end{definition}

Analogous to Definition~\ref{def:GirthConfig}, we define the following. 

\begin{definition}
Let $q>r\ge 2$ and $g\ge 3$ be integers. Let $(G,A,X,\mc{H})$ be a $K_q^r$-sponge. Let $\mc{B} \in {\rm Full}_{B_0}(G,A,X)$ for some rooted $K_q^r$-booster $B_0$. Let $\mc{B}_1,\mc{B}_2$ be two copies of $\mc{B}$ on disjoint copies of $G$.

Define the \emph{cogirth-$g$ configuration hypergraph} of $\mc{B}_1\cup  \mc{B}_2$, denoted ${\rm Cogirth}_g(\mc{B}_1,\mc{B}_2)$, as
$$V({\rm Cogirth}_g(\mc{B}_1,\mc{B}_2)) := \bigcup_{H\in \mc{H}} \mc{B}_{i,H},$$
$$E({\rm Cogirth}_g(\mc{B}_1,\mc{B}_2)):= \{ \mc{Z}\subseteq V({\rm Cogirth}_g(\mc{B}_1,\mc{B}_2)): \mc{Z} \text{ is a $g$-cogirth-booster-configuration of } \mc{B}_1\cup \mc{B}_2\}.$$
\end{definition}

Analogous to Lemma~\ref{lem:GirthConfigBoosterDegrees}, we have the following lemma, except here there is no special case with a factor of $\frac{\Delta}{n}$ given the better values in Proposition~\ref{prop:CogirthDegrees}.

\begin{lem}\label{lem:CogirthConfigBoosterDegrees}
For all integers $q>r\ge 1$, $C\ge 1$ and $g\ge 3$, and $K_q^r$ booster $B_0$ of rooted girth at least $g$, there exists an integer $C'\ge 1$ such that the following holds: Let $(G,A,X,\mc{H})$ be a $C$-refined $K_q^r$-sponge and let $\Delta := \max\left\{ \Delta(X),~v(G)^{1-\frac{1}{r}}\cdot \log v(G)\right\}$ and let $b:=v(B_0)-q$. If $\mc{B} \in {\rm Full}_{B_0}(G,A,X)$ and $\mc{B}_1,\mc{B}_2$ are two copies of $\mc{B}$ on disjoint copies of $G$, then for all $2g\ge s > t \ge 1$, we have
$$\Delta_{t}\left({\rm Cogirth}_g(\mc{B})^{(s)}\right)\le C'\cdot \binom{n}{b}^{s-t}\cdot \frac{1}{n}.$$  
\end{lem}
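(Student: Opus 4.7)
The plan is to mirror the architecture of Lemma~\ref{lem:GirthConfigBoosterDegrees} while exploiting the stronger codegree bounds available for the cogirth configuration hypergraph. Fix $U\subseteq V({\rm Cogirth}_g(\mc{B}_1,\mc{B}_2))$ with $|U|=t$ and decompose $U=U_1\cup U_2$ where $U_i\subseteq \bigcup_{H\in\mc{H}} \mc{B}_{i,H}$ and $|U_i|=t_i$ (so $t_1+t_2=t$). For each $i\in\{1,2\}$, let $\mc{I}_{t_i'}(U_i)$ denote the set of $\mc{G}$-avoiding matchings $Z_i$ of $\mc{D}_i$ of size $t_i'$ contained in $\bigcup_{B\in U_i} (B_{\rm off}\cup B_{\rm on})$ that meet each booster in $U_i$. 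Then any edge of ${\rm Cogirth}_g(\mc{B}_1,\mc{B}_2)^{(s)}$ containing $U$ is determined by such a pair $(Z_1,Z_2)$, an extending pair $(F_1,F_2)$ with $F_i\supseteq Z_i$ and $F_1\cup F_2 \in \mc{C}^{(s')}$ (where $\mc{C} := {\rm Cogirth}_{K_q^r}^g(G)$), and a partitioning of the remaining cliques of $F_1\cup F_2$ into the $s-t$ additional boosters, each of which is then chosen from $\mc{B}_1$ or $\mc{B}_2$.

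The central estimate is the bound on the number of extensions of $(Z_1,Z_2)$. Since $|Z_1\cup Z_2|=t_1'+t_2' \geq 1$ (whenever $t\ge 1$), Proposition~\ref{prop:CogirthDegrees} gives
\[
|\{F\in \mc{C}^{(s')} : Z_1\cup Z_2\subseteq F\}|~\le~C\cdot \binom{n}{q-r}^{s'-(t_1'+t_2')}\cdot \frac{1}{n}.
\]
This is precisely where the analysis is cleaner than the girth case: unlike in Lemma~\ref{lem:GirthConfigPrelim}, there is no degenerate $t'=1$ regime where the factor $1/n$ must be replaced by $\Delta/n$. The gain comes from the definition of a $(i(q-r)+r-1,i)$-configuration, which forces one fewer vertex than an Erd\H{o}s configuration and therefore gives a uniform $1/n$ saving.

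Given $F_1\cup F_2$, define the cogirth analogue of $\mc{B}^*$: sum over partitions $(F_{i,1},\ldots,F_{i,m_i})$ of $F_i\setminus Z_i$ into nonempty parts, and over choices of boosters in $\mc{B}_i$ covering each part. For each part $F_{i,j}$, Lemma~\ref{lem:BoostersContainingSetOfCliques}(1)+(2) gives $|\mc{B}(F_{i,j})|\le 2C_0\cdot n^{b-(q-r)|F_{i,j}|}$. Multiplying across the $m_1+m_2=s-t$ parts yields a factor $O(1)^{s-t}\cdot n^{b(s-t)-(q-r)(s'-(t_1'+t_2'))}$. Combining with the cogirth codegree bound absorbs the power of $n^{(q-r)(s'-(t_1'+t_2'))}$ into $\binom{n}{q-r}^{s'-(t_1'+t_2')}$, and the rogue $1/n$ factor is preserved throughout, giving each summand a bound of the form $O(1)\cdot \binom{n}{b}^{s-t}\cdot \frac{1}{n}$.

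Finally, summing over the bounded number of choices of $(t_1',t_2')$, $(s_1',s_2')$, the partitions, the matchings $Z_i\in \mc{I}_{t_i'}(U_i)$ (of which there are at most $(\binom{b+q}{q}t)^{t_1'+t_2'}$ many, a constant since $t\le 2g$), and the split $t=t_1+t_2$, yields the claimed bound after choosing $C'$ large enough in terms of $q,r,g,C,B_0$. The main bookkeeping obstacle is simply tracking that both ``sides'' $i\in\{1,2\}$ receive an appropriate share of the $s-t$ outstanding boosters in a consistent way; since $\mc{B}_1$ and $\mc{B}_2$ sit on disjoint copies of $G$ their contributions multiply independently, so no subtle interaction appears, and the computation is a clean parallel of Lemma~\ref{lem:GirthConfigBoosterDegrees} with the uniform $1/n$ gain from Proposition~\ref{prop:CogirthDegrees} eliminating the split into cases.
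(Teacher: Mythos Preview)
Your proposal is correct and takes essentially the same approach as the paper: the paper does not give an explicit proof but simply states that the lemma is analogous to Lemma~\ref{lem:GirthConfigBoosterDegrees}, with the simplification that Proposition~\ref{prop:CogirthDegrees} supplies a uniform $1/n$ codegree bound (eliminating the $\Delta/n$ case split). You have faithfully unpacked exactly this architecture, correctly identifying that the saving comes from the one-fewer-vertex in the cogirth configuration definition and that the two sides $i\in\{1,2\}$ contribute multiplicatively without interaction.
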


\subsection{Cogirth Extended Configuration Hypergraph}

Analogous to Definition~\ref{def:CliqueBoosterConfig}, we define the following. 

\begin{definition}
Let $q>r\ge 2$ and $g\ge 3$ be integers. Let $(G,A,X,\mc{H})$ be a $K_q^r$-sponge. Let $\mc{B} \in {\rm Full}_{B_0}(G,A,X)$ for some rooted $K_q^r$-booster $B_0$.  Let $\mc{B}_1, \mc{B}_2$ be two copies of $\mc{B}$ on disjoint copies of $G$.

An \emph{$g$-clique-booster-cogirth-configuration} of $\mc{B}$ is a subset $U\subseteq V({\rm Cogirth}^g_{K_q^r}(G))\cup \mc{B}$ such that $U'= U\cap V({\rm Cogirth}^g_{K_q^r}(G))\ne \emptyset$, $U\cap \mc{B} =\{U_1,\ldots,U_m\}\subseteq \mc{B}$ with $m\ge 1$, $|U\cap \mc{B}_{i,H}|\le 1$ for all $H\in \mc{H}$, and there exists $F\in {\rm Cogirth}_{K_q^r}^g(G)$ with $U'\subseteq F$ and a partition of $F\setminus U'$ into $m$ nonempty sets, $F_1,\ldots, F_m$, such that for each $i\in [m]$, we have $F_i\subseteq (U_i)_{\rm on}$ or $F_i\subseteq (U_i)_{\rm off}$.
\end{definition}

Analogous to Definition~\ref{def:ExtendConfig}, we define the following. 

\begin{definition}
Let $q>r\ge 2$ and $g\ge 3$ be integers. Let $(G,A,X,\mc{H})$ be a $K_q^r$-sponge. Let $\mc{B} \in {\rm Full}_{B_0}(G,A,X)$ for some rooted $K_q^r$-booster $B_0$. Let $\mc{B}_1,\mc{B}_2$ be two copies of $\mc{B}$ on two disjoint copies of $G$.

Define the \emph{girth-$g$ extended cogirth configuration hypergraph}, ${\rm CoExtend}_g(\mc{B})$, as
$$V\bigg({\rm CoExtend}_g(\mc{B}_1,\mc{B}_2)\bigg) := V\left({\rm Cogirth}^g_{K_q^r}(G)\right)\cup V\bigg({\rm Cogirth}_g(\mc{B}_1,\mc{B}_2)\bigg),$$
\begin{align*}
E({\rm CoExtend}_g(\mc{B}_1,\mc{B}_2)):= &\{ \mc{Z} \subseteq V({\rm CoExtend}^g(\mc{B}_1,\mc{B}_2)): \mc{Z} \text{ is a $g$-clique-booster-cogirth-configuration }\}
\end{align*}
For integers $s_1, s_2\ge 1$, we define 
\begin{align*}
{\rm CoExtend}_g(\mc{B}_1,\mc{B}_2)^{(s_1,s_2)}:= \{Z_1\cup Z_2\in {\rm CoExtend}_g(\mc{B}_1,\mc{B}_2):~&Z_1\subseteq V({\rm Cogirth}^g_{K_q^r}(G)), Z_2\subseteq V({\rm Cogirth}_g(\mc{B}_1,\mc{B}_2)), \\
&|Z_1|=s_1,~|Z_2|=s_2\},    
\end{align*}
and for integers $t_1,t_2$ with $s_1\ge t_1\ge 0$ and $s_2\ge t_2\ge 0$, we define
\begin{align*}
&\Delta_{(t_1,t_2)}\left({\rm CoExtend}_g(\mc{B}_1,\mc{B}_2)^{(s_1,s_2)}\right) \\
&:= \max_{U_1\subseteq V({\rm Cogirth}^g_{K_q^r}(G)), |U_1|=t_1}~~\max_{U_2\subseteq V({\rm Cogirth}_g(\mc{B}_1,\mc{B}_2)), |U_2|=t_2}~~|{\rm CoExtend}_g(\mc{B}_1,\mc{B}_2)^{(s_1,s_2)}(U_1\cup U_2)|.    
\end{align*}
\end{definition}

Once again, there is a bi-regularity apparent in the upper bounds for the degrees and codegrees of ${\rm CoExtend}_g(\mc{B}_1,\mc{B}_2)$. Hence the definitions above. Analogous to Lemma~\ref{lem:ExtendConfigBoosterDegrees}, we have the following lemma, except here there is no special case with a factor of $\frac{\Delta}{n}$ given the better values in Proposition~\ref{prop:CogirthDegrees}.

\begin{lem}\label{lem:CoExtendConfigBoosterDegrees}
For all integers $q>r\ge 1$, $C\ge 1$ and $g\ge 3$, and $K_q^r$ booster $B_0$ of rooted girth at least $g$, there exists an integer $C'\ge 1$ such that the following holds: Let $(G,A,X,\mc{H})$ be a $C$-refined $K_q^r$-sponge and let $\Delta := \max\left\{ \Delta(X),~v(G)^{1-\frac{1}{r}}\cdot \log v(G)\right\}$ and let $b:= v(B_0)-q$. If $\mc{B} \in {\rm Full}_{B_0}(G,A,X)$ and $\mc{B}_1,\mc{B}_2$ are two copies of $\mc{B}$ on disjoint copies of $G$, then for all $2g\ge s > t \ge 1$, then for integers $s_1 \ge t_1 \ge 0$, $s_2\ge t_2\ge 0$ with $s_1+s_2 > t_1+t_2 \ge 1$, we have
$$\Delta_{(t_1,t_2)}\left({\rm CoExtend}_g(\mc{B})^{(s_1,s_2)}\right)\le C'\cdot \binom{n}{q-r}^{s_1-t_1}\cdot \binom{n}{b}^{s_2-t_2}\cdot \frac{1}{n}.$$
\end{lem}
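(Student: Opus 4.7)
The plan is to mirror the proof of Lemma~\ref{lem:ExtendConfigBoosterDegrees}, with the crucial simplification that the cogirth configuration hypergraph has uniformly better codegree bounds by Proposition~\ref{prop:CogirthDegrees}. In that earlier proof, a special factor of $\Delta/n$ appeared (instead of $1/n$) precisely in the edge case where a partial matching $Z$ had size $1$ and so the chosen Erd\H{o}s configuration of $\mc{G}$ extending it was only forced to have codegree $n^{(q-r)(s'-1)}$ rather than $n^{(q-r)(s'-t')-1}$. In our setting, the fact that every edge of ${\rm Cogirth}_{K_q^r}^g(G)$ already crosses both copies of $G$ (so it contains at least one clique from each copy) will mean that the analogous extremal count always saves a factor of $1/n$, eliminating the bad case.

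More concretely, I would first prove a cogirth analogue of Lemma~\ref{lem:GirthConfigPrelim}: for any ${\rm Cogirth}_{K_q^r}^g(G)$-avoiding matching $Z$ of the two-copy design hypergraph $\mc{D}_1\cup \mc{D}_2$ with $|Z|\le 2g-2$, and integers $s,s',m$ with $s\ge |Z|\ge 1$ and $1\le m\le s'-s$,
\[
|\mc{B}^{*}(Z,s',s,m)| \le C' \cdot \binom{n}{q-r}^{s-|Z|}\cdot \binom{n}{b}^{m}\cdot \frac{1}{n},
\]
where $\mc{B}^*$ is defined with respect to $\mc{G}:={\rm Cogirth}_{K_q^r}^g(G)$ rather than the girth hypergraph. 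The key input is Proposition~\ref{prop:CogirthDegrees}, which bounds $\Delta_{|Z|}\!\left(\mc{G}^{(s')}\right)$ by $C\cdot \binom{n}{q-r}^{s'-|Z|}\cdot \tfrac{1}{n}$ regardless of whether $|Z|=1$ or $|Z|\ge 2$. Combined with Lemma~\ref{lem:BoostersContainingSetOfCliques}(1)--(2) to bound $|\mc{B}(F_i)|\le 2C_0\cdot n^{b-(q-r)|F_i|}$ for each part of the partition $F_1,\dots,F_m$ of $F\setminus F'$, multiplying through and using $\sum_i|F_i|=s'-s$ yields the claimed estimate with no case split.

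Given this cogirth analogue, I fix $U_1\subseteq V(\mc{G})$ with $|U_1|=t_1$ and $U_2\subseteq V({\rm Cogirth}_g(\mc{B}_1,\mc{B}_2))$ with $|U_2|=t_2$. Writing $U:=U_1\cup U_2$, any element of ${\rm CoExtend}_g(\mc{B}_1,\mc{B}_2)^{(s_1,s_2)}(U)$ arises by first selecting, for each booster in $U_2$, a nonempty matching realized inside its on/off decomposition, forming a matching $Z'\in\mc{I}_{t_2'}(U_2)$ of the cogirth design hypergraph with some $t_2'\ge t_2$, then choosing an extension to an Erd\H{o}s cogirth configuration of size $s'$ containing $U_1\cup Z'$, and finally partitioning and covering the remaining $s_1+s_2-t_1-t_2'$ cliques by boosters. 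Bounding $|\mc{I}_{t_2'}(U_2)|\le (\binom{b+q}{q}t_2)^{t_2'}$ and applying the cogirth analogue to each term gives
\[
|{\rm CoExtend}_g(\mc{B}_1,\mc{B}_2)^{(s_1,s_2)}(U)|
\le \sum_{t_2'}\sum_{Z'}\sum_{s'}|\mc{B}^{*}(U_1\cup Z',s',s_1+t_2',s_2-t_2)|,
\]
which, upon substituting the uniform bound, telescopes to $C'\cdot \binom{n}{q-r}^{s_1-t_1}\cdot\binom{n}{b}^{s_2-t_2}\cdot\tfrac{1}{n}$ after absorbing $g$-dependent constants into $C'$.

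The only subtle point, and what I expect to be the main obstacle, is making sure the uniform $1/n$ saving survives even in the extreme cases $t_1+t_2'=1$ (which in the girth setting forced the $\Delta/n$ factor). Here we use the structural fact that edges of ${\rm Cogirth}_{K_q^r}^g(G)$ require at least one clique from each of the two copies of $\mc{D}$, so whenever a term in the sum is nonzero it corresponds to an Erd\H{o}s cogirth configuration of size $s'\ge 2$ straddling both copies; in particular, once $U_1\cup Z'$ is specified the remaining cliques span at least $(q-r)(s'-|U_1\cup Z'|)+1$ new vertices, giving the necessary saving and completing the count without any $\Delta/n$ loss.
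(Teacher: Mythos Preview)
Your approach is correct and matches the paper's intended argument: the paper omits the proof of this lemma, noting only that it follows analogously to Lemma~\ref{lem:ExtendConfigBoosterDegrees} with the simplification that Proposition~\ref{prop:CogirthDegrees} gives a uniform $\tfrac{1}{n}$ saving for all $t\ge 1$, eliminating the $\tfrac{\Delta}{n}$ case split. Your proposed cogirth analogue of Lemma~\ref{lem:GirthConfigPrelim} and the subsequent summation over $t_2'$, $Z'$, $s'$ are exactly the right structure.

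One small slip in your final paragraph: you wrote that ``the remaining cliques span at least $(q-r)(s'-|U_1\cup Z'|)+1$ new vertices, giving the necessary saving,'' but the saving runs the other way. The point is that a cogirth Erd\H{o}s configuration of size $s'$ has at most $s'(q-r)+r-1$ vertices total, while the fixed set $U_1\cup Z'$ (being a girth-$g$ packing in each copy) already occupies at least $|U_1\cup Z'|(q-r)+r$ vertices; hence the remaining cliques contribute at most $(s'-|U_1\cup Z'|)(q-r)-1$ new vertices, and it is this upper bound that yields the factor $\tfrac{1}{n}$ in the count. This is precisely what Proposition~\ref{prop:CogirthDegrees} encodes, so once you invoke that proposition the argument goes through without further case analysis.
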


Given the better values above, we do not require the analog of Lemma~\ref{lem:UncommonDegree} for cogirth. Now the proof of Theorem~\ref{thm:HighCogirthQuantumOmniBooster} follows similarly to that Theorem~\ref{thm:HighGirthQuantumOmniBooster} except that in the analog of Lemma~\ref{lem:RandomQuantumExtrinsic} we note that we do not require the analogues of Claims~\ref{cl:10Holds} and~\ref{cl:11Holds} since a better version of Claim~\ref{cl:7Holds} holds given the better values above. 

\section{Concluding Remarks and Further Directions}\label{s:Conclusion}

We note that our proof of Theorem~\ref{thm:HighGirthSteiner} straightforwardly generalizes to prove a minimum degree generalization of the High Girth Existence Conjecture as follows.

\begin{thm}[Existence of High Girth  Designs - Minimum Degree Version]\label{thm:HighGirthMinDegree}
For all integers $q > r \geq 2$ and every integer $g\ge 2$, there exist $n_0 \ge 1$ and $\varepsilon > 0$ such that for all $K_q^r$-divisible $r$-uniform hypergraphs $G$ with $v(G)\ge n_0$ and $\delta(G) \ge (1-\varepsilon)\cdot v(G)$, there exists a $K_q^r$-decomposition of $G$ with girth at least $g$.
\end{thm}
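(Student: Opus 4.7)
The plan is to follow the same four-step outline used for Theorem~\ref{thm:HighGirthSteiner}: (1) reserve a random subset $X \subseteq E(G)$, (2) build a high girth omni-absorber $A$ of $X$, (3) regularity-boost the cliques of $G \setminus (A \cup X)$, and (4) apply the forbidden submatching with reserves theorem to assemble a decomposition. The key observation is that each of the four ingredients has already been stated (or, for step (2), is readily proved) under the minimum-degree hypothesis $\delta(G) \ge (1-\varepsilon) v(G)$ rather than only for $K_n^r$. Specifically, Lemma~\ref{lem:RandomXTreasury} is already formulated for hypergraphs with $\delta(G) \ge (1-\varepsilon) v(G)$; Lemma~\ref{lem:RegBoost} is stated for an arbitrary spanning $r$-graph $J$, which we apply to $J := G \setminus (A \cup X)$; and Theorem~\ref{thm:ForbiddenSubmatchingReserves} is an abstract matching theorem about treasuries, blind to the host graph.

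The only substantive task is to prove a minimum-degree version of Theorem~\ref{thm:HighGirthAbsorber}. The refined omni-absorber theorem (Theorem~\ref{thm:Omni}) is already stated for $G$ with $\delta(G) \ge (1-\varepsilon)n$, so a $C$-refined $K_q^r$-omni-absorber $A \subseteq G$ of $X$ with $\Delta(A) \le C\Delta$ exists exactly as before. What requires checking is that Theorem~\ref{thm:HighGirthOmniBooster}, and in turn Theorem~\ref{thm:HighGirthQuantumOmniBooster}, continues to hold when the host $K_n^r$ is replaced by a spanning subhypergraph $G$. The upper-bound counts in Lemmas~\ref{lem:BoostersContainingSetOfCliques}, \ref{lem:GirthConfigBoosterDegrees}, \ref{lem:ExtendConfigBoosterDegrees}, and \ref{lem:UncommonDegree} are made inside $K_n^r$ and overcounting is always harmless, so they remain valid for any $G \subseteq K_n^r$. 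Consequently, every invocation of Corollary~\ref{cor:KimVu} in Lemmas~\ref{lem:RandomQuantumIntrinsic} and~\ref{lem:RandomQuantumExtrinsic} goes through verbatim, since verification of the conditions $(*_i)$ only requires upper bounds on codegrees of an auxiliary hypergraph.

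The hard part, and the only place the minimum-degree hypothesis is actively needed, is in maintaining regularity of the projection treasury ${\rm Proj}_g(B,A,G,X)$: specifically, one must still know that every $v \in V(G_1)$ has degree at least $(1-2\alpha)\binom{v(G)}{q-r}$ in $G_1$ and every $v \in V(G_1) \cap V(G_2)$ has degree at least $D^{1-2\alpha}$ in $G_2$. The base estimate $d_{\mc{D}}(v) \ge (1-\alpha)\binom{v(G)}{q-r}$ is precisely the content of Proposition~\ref{prop:Turan} applied to $G$; the base estimate for $d_{\mc{R}}(v)$ is the output of Lemma~\ref{lem:RandomX} (and hence Lemma~\ref{lem:RandomXTreasury}), both already stated under a minimum-degree hypothesis. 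With these base degrees in hand, the deficit induced by projecting out $\mathcal{M}(\mathcal{B})$ is controlled by counts that only decrease on passing from $K_n^r$ to $G$, so the analogues of Claims~\ref{cl:5Holds}–\ref{cl:7Holds} in the proof of Lemma~\ref{lem:RandomQuantumExtrinsic} hold essentially unchanged. The remaining regularity conditions (codegree bounds, weighted configuration degrees, the common 2-degree bound) are upper-bound conditions and thus transfer trivially.

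Stitching the extended theorems together: the minimum-degree version of Theorem~\ref{thm:HighGirthAbsorber} produces a $K_q^r$-omni-absorber $A$ inside $G$ with $\Delta(A) \le a \Delta \cdot \log^a \Delta$ and collective girth at least $g$, together with a $\bigl(\binom{v(G)}{q-r},~3\alpha\binom{v(G)}{q-r},~\tfrac{1}{4g(q-r)},~2\alpha\bigr)$-regular subtreasury of ${\rm Proj}_g(A,G,X)$. One then repeats the proof of Theorem~\ref{thm:HighGirthSteiner} mutatis mutandis: apply Lemma~\ref{lem:RegBoost} inside $J = G \setminus (A \cup X)$ to obtain a regular enough $G_1'$; apply Theorem~\ref{thm:ForbiddenSubmatchingReserves} to the resulting subtreasury, which is regular by the same calculation as in the claim inside the proof of Theorem~\ref{thm:HighGirthSteiner}; and finally invoke Proposition~\ref{prop:FindSteiner}, using the hypothesis that $G$ itself is $K_q^r$-divisible, to convert the perfect matching into a $K_q^r$-decomposition of $G$ of girth at least $g$.
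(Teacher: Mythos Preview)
Your proposal is correct and matches the paper's approach exactly: the paper simply states that the proof of Theorem~\ref{thm:HighGirthSteiner} ``straightforwardly generalizes'' and omits the details, and you have supplied precisely those details by checking that each ingredient (Lemmas~\ref{lem:RandomXTreasury} and~\ref{lem:RegBoost}, Theorems~\ref{thm:Omni}, \ref{thm:HighGirthOmniBooster}, \ref{thm:ForbiddenSubmatchingReserves}, and Proposition~\ref{prop:Turan}) is already stated under, or transfers to, a minimum-degree hypothesis. One small point you might make explicit is the lower bound $|\mathcal{S}(H)| \ge \tfrac{1}{2}\binom{n}{b}$ needed in Claim~\ref{cl:1Holds} of Lemma~\ref{lem:RandomQuantumIntrinsic}: this now also requires that $G$ contains enough rooted copies of $B_0$, which follows from $\delta(G)\ge(1-\varepsilon)n$ for $\varepsilon$ small enough relative to $v(B_0)$.
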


We omit the proof details. That said, it would be interesting what the best possible minimum degree for Theorem~\ref{thm:HighGirthMinDegree}; in particular when $r=2$, could it match the best known bounds for $K_q$-decompositions of graphs of high minimum degree?


As to other research directions, it would be interesting to investigate whether high girth designs exist in random settings. In particular, would results with Tom Kelly about thresholds for the existence of Steiner systems in $G(n,p)$~\cite{DKPIII} and in the $q$-uniform random binomial graph~\cite{DKPIV} that we also obtained via refined absorption hold for high girth Steiner systems? Given the intricacies involved in all these results such a task might seem daunting. That said, it is not inconceivable that one theorem that generalizes all these results and even incorporates nearly optimal bounds on high minimum degree might be true.


\bibliographystyle{plain}
\bibliography{mdelcourt, ref}

\end{document}